\numberwithin{equation}{section} 
\theoremstyle{plain}
\newtheorem{theo+}           {Theorem}      [section]
\newtheorem{prop+}  [theo+]  {Proposition}
\newtheorem{coro+}  [theo+]  {Corollary}
\newtheorem{lemm+}  [theo+]  {Lemma}
\newtheorem{defi+}  [theo+]  {Definition}
\newtheorem{conj+}  [theo+]  {Conjecture}
\theoremstyle{definition}
\newtheorem{rema+}  [theo+]  {Remark}
\newtheorem{prob+}  [theo+]  {Problem}
\newtheorem{exam+}  [theo+]  {Example}
\newenvironment{theorem}{\begin{theo+}}{\end{theo+}}
\newenvironment{proposition}{\begin{prop+}}{\end{prop+}}
\newenvironment{corollary}{\begin{coro+}}{\end{coro+}}
\newenvironment{lemma}{\begin{lemm+}}{\end{lemm+}}
\newenvironment{remark}{\begin{rema+}}{\end{rema+}}
\newenvironment{definition}{\begin{defi+}}{\end{defi+}}
\newenvironment{conjecture}{\begin{conj+}}{\end{conj+}}
\newcommand{\al}{\alpha}
\newcommand{\be}{\beta}
\newcommand{\la}{\lambda}
\newcommand{\om}{\omega}
\newcommand{\tha}{\theta}
\newcommand{\iceR}{\psline{->}(0,0)(.7,0) \psline(1,0)}
\newcommand{\iceL}{\psline{->}(0,0)(-.7,0) \psline(-1,0)}
\newcommand{\iceU}{\psline{->}(0,0)(0,.7) \psline(0,1)}
\newcommand{\iceD}{\psline{->}(0,0)(0,-.7) \psline(0,-1)}
\begin{document}

\baselineskip 18pt
\larger[2]
\title
[Three-colour model with domain wall boundary conditions] 
{The three-colour model with\\ domain wall boundary conditions}
\author{Hjalmar Rosengren}
\address
{Department of Mathematical Sciences
\\ Chalmers University of Technology\\SE-412~96 G\"oteborg, Sweden}
\email{hjalmar@chalmers.se}
\urladdr{http://www.math.chalmers.se/{\textasciitilde}hjalmar}
 \keywords{Three-colour model, eight-vertex-solid-on-solid model, domain wall boundary conditions, partition function, alternating sign matrix}
\subjclass[2000]{05A15, 33E05, 82B23}

\thanks{Research  supported by the Swedish Science Research
Council (Vetenskapsr\aa det)}

\dedicatory{\Large Dedicated to Dennis Stanton\\
on his 60th birthday}

\begin{abstract}
We study the partition function for the three-colour model with domain wall boundary conditions. We  express it in terms of certain special polynomials, which can be constructed recursively. Our method generalizes
 Kuperberg's proof of the alternating sign matrix theorem, replacing the   
 six-vertex model used by Kuperberg with the  eight-vertex-solid-on-solid model.
As applications, we obtain some combinatorial results
 on  three-colourings. We also conjecture an explicit formula for the free energy of the model.
\end{abstract}

\maketitle

\section{Introduction}

An \emph{alternating sign matrix} is a square matrix with entries $1$, $-1$  and $0$, such that the non-zero entries in each row and column alternate in sign and add up to $1$.  Mills, Robbins and Rumsey \cite{mrr} conjectured 
that the number of $n\times n$ alternating sign matrices is
\begin{equation}\label{an}A_n=\frac {1!4!7!\dotsm(3n-2)!}{n!(n+1)!\dotsm(2n-1)!}.\end{equation}
This  conjecture was proved by Zeilberger \cite{z1}. Soon afterwards, a much simpler proof was found by Kuperberg \cite{ku}, using the
\emph{six-vertex model} of statistical mechanics.

The six-vertex model is an example of an \emph{ice model}, whose states can
be identified with what we call ice graphs;
see \eqref{ig} below.
Alternating sign matrices can be identified with 
ice graphs satisfying \emph{domain wall boundary conditions}.
For the six-vertex model, there is a closed formula for the corresponding partition function, the \emph{Izergin--Korepin determinant} \cite{i,ik}.
Kuperberg observed that in a special case, when all parameters of the model are  cubic roots of unity, the partition function  simply counts the number of states. He could then prove \eqref{an} by computing the corresponding limit of the determinant.

There is a natural two-parameter extension of the six-vertex model known as the eight-vertex-solid-on-solid (8VSOS) model. This model was introduced by Baxter \cite{b} as a tool for solving the eight-vertex model. It is elliptic, that is, the Boltzmann weights are elliptic functions of the parameters. We stress that, in contrast to the eight-vertex model, the 8VSOS model is an ice model. 
In particular, for domain wall boundary conditions, its states can be identified with alternating sign matrices.

It is natural to ask what happens to the 8VSOS model under Kuperberg's specialization of the parameters. The answer turns out to be very satisfactory: it degenerates to the \emph{three-colour model}. It is an observation of Lenard   that ice graphs  are in bijection with three-colourings of a square lattice, such that adjacent squares have distinct colour \cite{li}. The three-colour model is defined by assigning independent weights to the three colours; the partition function is simply the corresponding generating function \cite{b2}.
Thus, one may hope that 
extending Kuperberg's work to the 8VSOS model would lead to new applications of statistical mechanics to combinatorics. That is precisely the object of the present study.

Apparently, the first step in this program is to generalize the Izergin--Korepin formula to the 8VSOS model.  In a recent paper \cite{r} we found such a generalization.  In the trigonometric limit (which is intermediate between the six-vertex model and the general elliptic 8VSOS model), we 
used it
 to obtain a closed formula for a special case of the three-colour partition function, see \eqref{ttf}. In the present paper, we consider the general case. Although there seems to be no  very simple  formula for the general three-colour partition function, we can express it in terms of certain special polynomials, which have remarkable properties and deserve further study. 
To obtain these results has not been straight-forward; in particular, we have not been able to work directly with the explicit formulas from \cite{r}. 
Rather, we  combine a simple consequence of those formulas
with several further ideas. 

The plan of the paper is as follows. In \S \ref{tccs}, we  describe the three-colour model with domain wall boundary conditions, and its relation to ice graphs and alternating sign matrices. 
We refer to the states of the model as \emph{three-coloured chessboards}.
In \S \ref{srs}, we state our main results in elementary form. \S \ref{ps} contains preliminaries on theta functions and the 8VSOS model. In \S \ref{qrs}, we prove our first main result, Theorem \ref{st}, which expresses the three-colour partition function $Z_n^{3C}$ in terms of special polynomials $q_n$ and $r_n$. This is a rather easy consequence of results in \cite{r}.

We then turn towards an alternative way of expressing $Z_n^{3C}$. In \S \ref{phs}, we introduce a function $\Phi_n$, which provides a one-parameter extension of $Z_n^{3C}$. For the six-vertex model, a similar function appears in  Zeilberger's proof of the \emph{refined} alternating sign matrix conjecture \cite{z2}.
In \S \ref{sfs}, $\Phi_n$ is generalized to a multivariable  theta function $\Psi_n$. These functions play a similar role as 
Schur polynomials do in Stroganov's  proof of the alternating sign matrix theorem \cite{s} (see also \cite{o}). However, while the Schur polynomials are instances of
the six-vertex partition function (with the crossing parameter a cubic root of unity), the function
 $\Psi_n$ is  \emph{not}  directly related to the 8VSOS partition function.
The function $\Psi_n$ has two important properties, the first being a determinant formula reminiscent of the Izergin--Korepin determinant. 
The second property 
  is a symmetry with respect to inversion of 
 each variable, meaning that it naturally lives on the Riemann sphere rather than the torus. As a consequence,  $\Psi_n$ can be identified with a  symmetric polynomial $S_n$. In \S \ref{pps}, we specialize the variables in $S_n$, obtaining certain two-variable polynomials $P_n$  and one-variable polynomials $p_n$. 
Using minor relations for the determinant defining $\Psi_n$, 
  we  obtain recursions for these polynomials, 
which can  be used to derive many further properties. 
In \S \ref{rtcs}, we return to the three-colour model, expressing
 $Z_n^{3C}$  in terms of the polynomials $p_n$. This result has  combinatorial consequences. For instance, 
for three-coloured chessboards of fixed size, 
we can compute the maximal and minimal possible number of squares of each 
colour,  see Corollary \ref{ecc}. 

Finally, in \S \ref{tds} we study the thermodynamic limit  $n\rightarrow\infty$. Using non-rigorous arguments, we are led to an explicit formula for the free energy of the three-colour model with domain wall boundary conditions (Conjecture \ref{tc}). From the viewpoint of physics, this is the main result of the paper.
 To prove Conjecture \ref{tc} rigorously is an interesting problem, which 
we expect to be  rather difficult.  For the six-vertex model with domain wall boundary conditions, a rigorous analysis has been done only  recently \cite{bf,bl1,bl2,bl3}.

{\bf Note added in proof:}
Immediately after seeing an earlier version of the present paper, 
Vladimir Bazhanov and Vladimir Mangazeev sent me an interesting conjectured recursion for the polynomials $p_n$. 
They  have also obtained a similar conjecture for $P_n$. These recursions give a much faster way of computing the three-colour partition function than those obtained in the present paper. Both conjectures can be found in \cite{bm4}, where the authors stress the  resemblance to polynomials occurring  in their analysis of the eight-vertex model  \cite{bm1,bm2,bm3}. It would be  interesting to investigate if that relation can be made precise; for instance, that may give a link between the present work and the Painlev\'e VI equation.

{\bf Acknowledgements:} 
This work was partly carried out while participating in the programme 
Discrete Integrable Systems at
 the Isaac Newton Institute for Mathematical Sciences, and I  thank the institute and the programme organizers for their support. 
 I would also like to thank Vladimir Bazhanov, Vladimir Mangazeev, 
 Jacques Perk and, in particular, Don Zagier for their interest and for valuable comments and suggestions. Finally, I thank the anonymous referees for useful comments.

\section{Three-coloured chessboards}
\label{tccs}

We will refer to a state of the three-colour model  with domain wall boundary conditions as a \emph{three-coloured chessboard}. 
Fixing $n$, consider a chessboard of size $(n+1)\times(n+1)$. 
The squares will be labelled with  three colours, which we identify with the three residue classes $0,1,2\ \operatorname{mod}\ 3$. We impose the following two rules. First, vertically or horizontally adjacent squares have distinct colours. Second, the north-west and south-east squares are labelled $0$ and, as one proceeds away from these squares along the boundary, the colours increase  with respect to the cyclic order
\begin{equation}\label{co}0<1<2<0.\end{equation}
In particular, the north-east and south-west square are labelled $n$ $\operatorname{mod}\ 3$. As an example, when $n=3$ there are seven three-coloured chessboards; these are displayed in Figure \ref{cbf}.

\begin{figure}[htb]
\caption{The seven three-coloured chessboards of size $n=3$, where $0$ is pictured as black, $1$ as red and $2$ as yellow.}
\begin{center}
\vspace*{1ex}
\includegraphics[height=2cm]{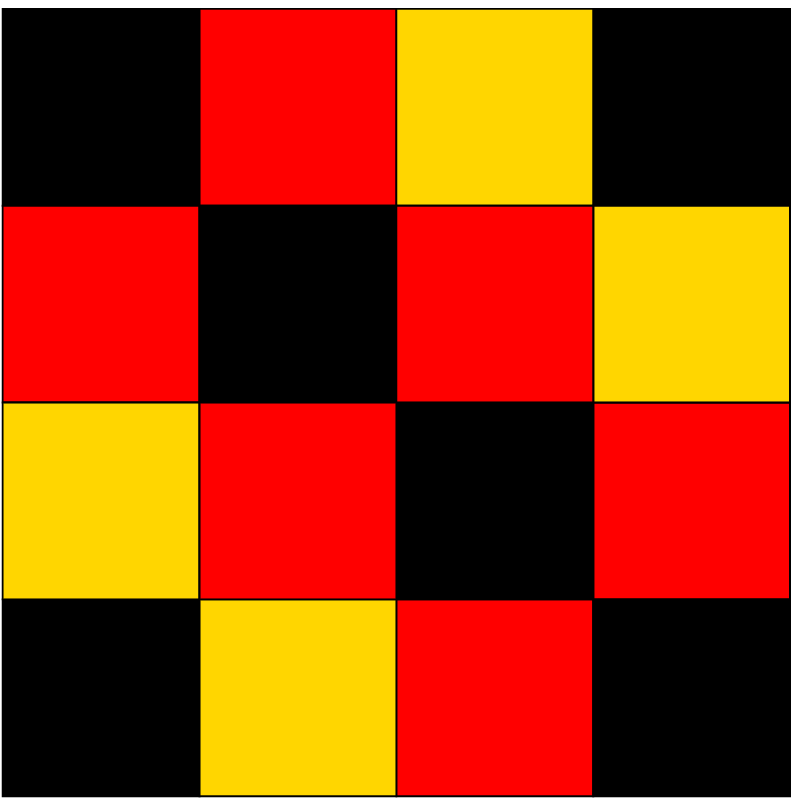}
\hspace{.5cm}
\includegraphics[height=2cm]{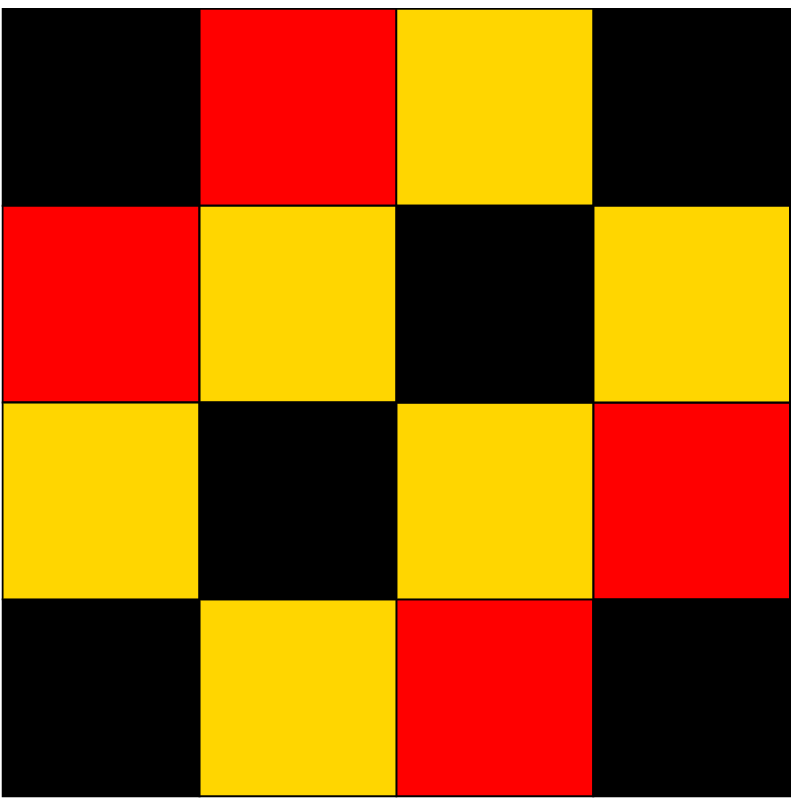}
\hspace{.5cm}
\includegraphics[height=2cm]{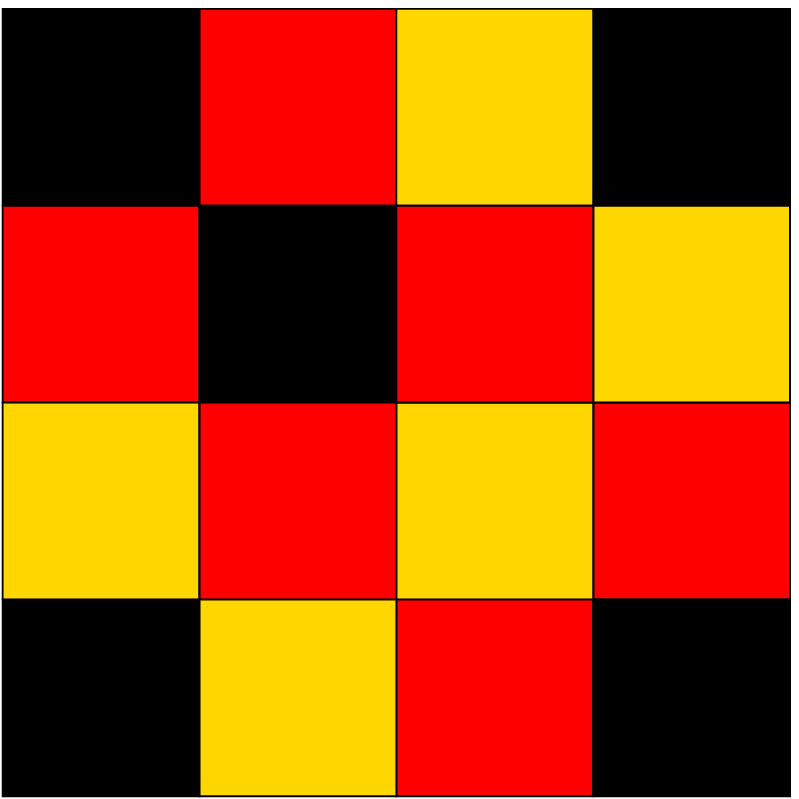}
\hspace{.5cm}
\includegraphics[height=2cm]{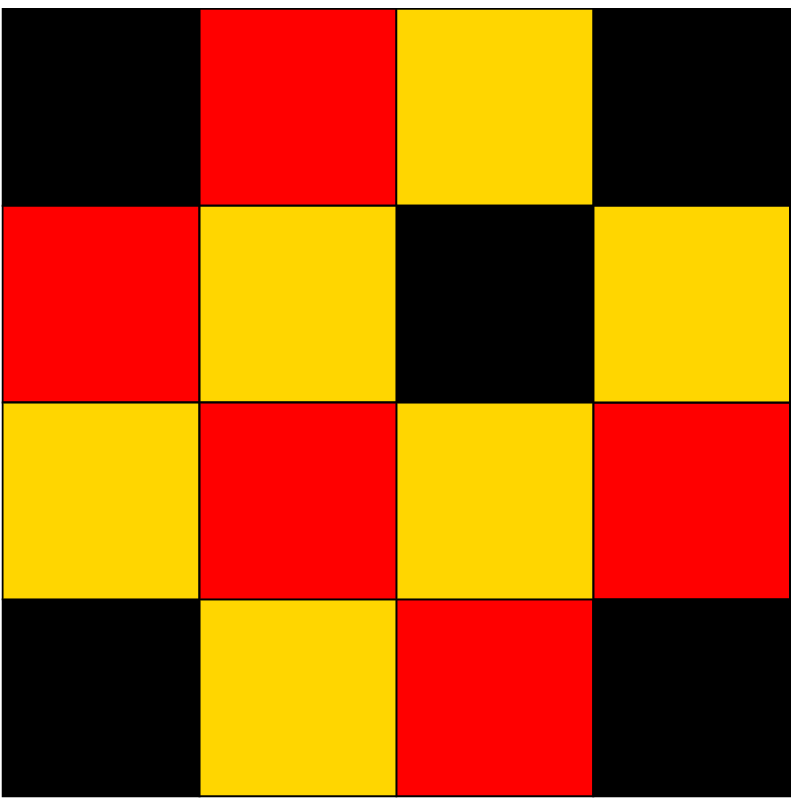}\\[.7cm]
\end{center}
\begin{center}
\includegraphics[height=2cm]{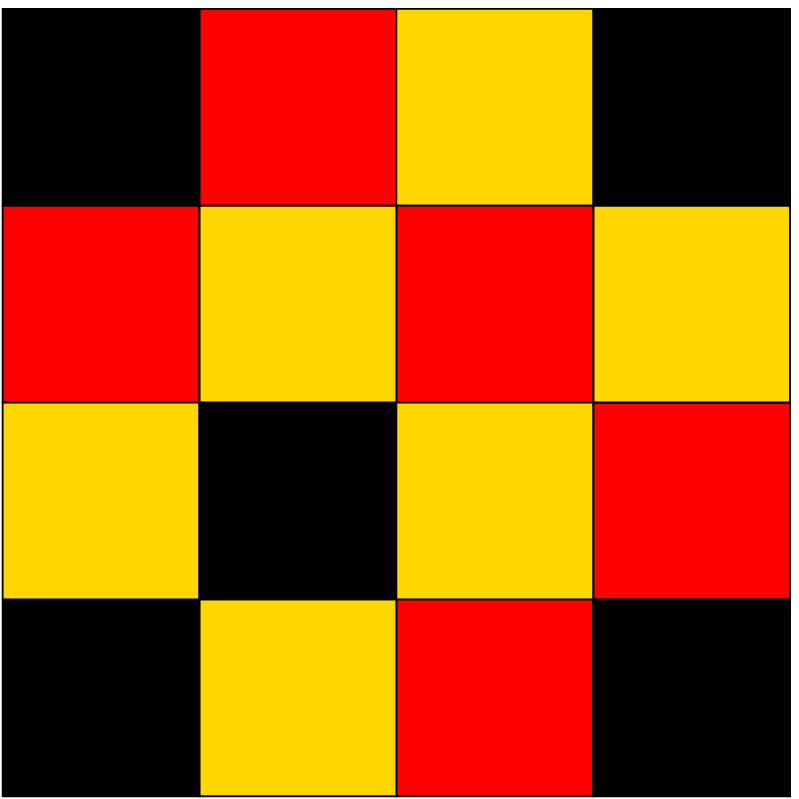}
\hspace{.5cm}
\includegraphics[height=2cm]{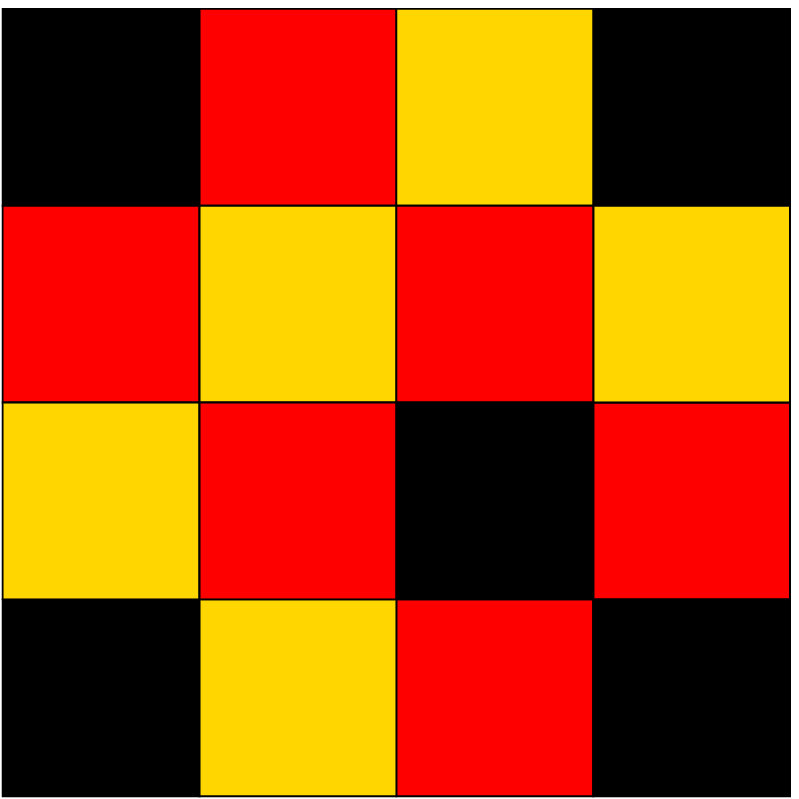}
\hspace{.5cm}
\includegraphics[height=2cm]{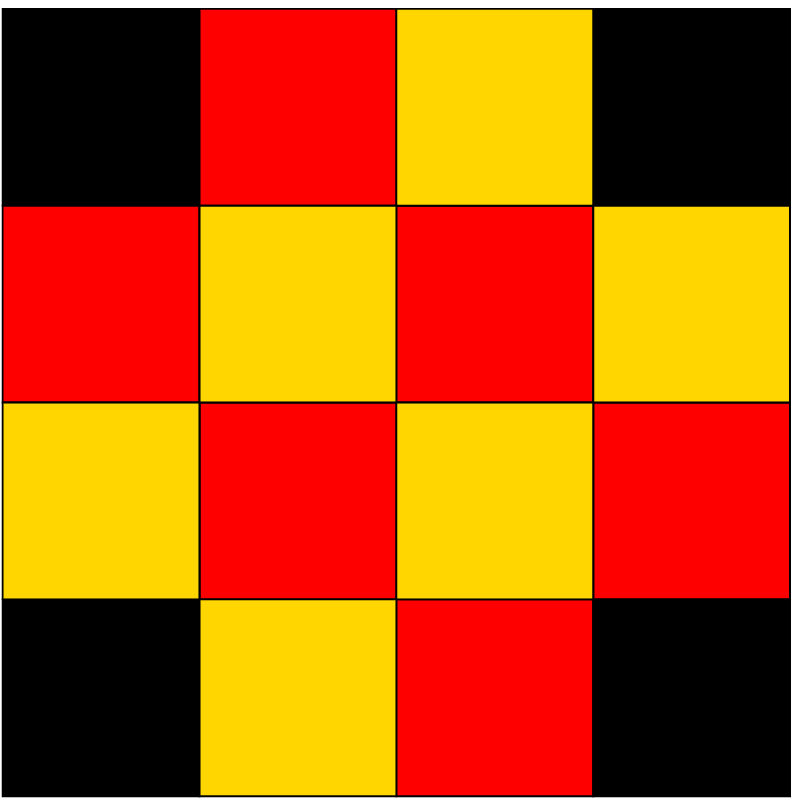}
\end{center}
\label{cbf}
\end{figure}

We will briefly explain the bijections to alternating sign matrices and ice graphs mentioned in the introduction. 
 Let $\left(\begin{smallmatrix}a&b\\ c&d\end{smallmatrix}\right)$
be a $2\times 2$-block of adjacent squares from a three-coloured chessboard, and choose representatives of the residue classes so that adjacent labels differ by exactly $1$. For instance, if the original block is 
$\left(\begin{smallmatrix}0&2\\ 2&1\end{smallmatrix}\right)$,
we may choose the representatives  
$\left(\begin{smallmatrix}3&2\\ 2&1\end{smallmatrix}\right)$. Having made such a choice, we contract each  block to the  number $(b+c-a-d)/2$, obtaining an $n\times n$-matrix with entries $-1,0,1$. 
For instance, from the last chessboard
 in \eqref{cbe}
we obtain
$$\left(\begin{matrix}0&1&0\\1&-1&1\\0&1&0\end{matrix}\right).$$
This  gives a bijection from three-coloured chessboards to  alternating sign matrices.

To obtain the bijection to ice graphs, we draw an arrow between any two adjacent squares in such a way that the larger label, with respect to the order \eqref{co}, is to the right. For instance, the last chessboard in \eqref{cbe} corresponds to the arrow configuration
\begin{equation}\label{ig}\begin{pspicture}
(0,0)(2.8,2.8)
\psset{xunit=.7,yunit=.7}
\psset{arrowsize=5pt}
\rput(0,3){\iceR}
\rput(0,2){\iceR}
\rput(0,1){\iceR}
\rput(1,3){\iceR}
\rput(1,1){\iceR}
\rput(2,2){\iceR}
\rput(2,2){\iceL}
\rput(3,3){\iceL}
\rput(3,1){\iceL}
\rput(4,3){\iceL}
\rput(4,2){\iceL}
\rput(4,1){\iceL}
\rput(1,3){\iceU}
\rput(2,3){\iceU}
\rput(3,3){\iceU}
\rput(1,2){\iceU}
\rput(3,2){\iceU}
\rput(2,1){\iceU}
\rput(2,3){\iceD}
\rput(1,2){\iceD}
\rput(3,2){\iceD}
\rput(1,1){\iceD}
\rput(2,1){\iceD}
\rput(3,1){\iceD}
\end{pspicture}.
\end{equation}
The result is a directed graph, where each internal vertex has two incoming and two outgoing edges. Considering vertices as oxygen atoms and incoming edges as hydrogen bonds, this can be viewed as a model for a two-dimensional sheet of ice.

We are interested in the generating function
\begin{equation}\label{zd}\begin{split}Z_n^{\text{3C}}(t_0,t_1,t_2)&=\sum_{\text{chessboards}}\,\prod_{\text{squares}}t_{\text{colour}}\\
&=\sum_{k_0+k_1+k_2=(n+1)^2}N(k_0,k_1,k_2)t_0^{k_0}t_1^{k_1}t_2^{k_2},\end{split} \end{equation}
where
$N(k_0,k_1,k_2)$ denotes the number of three-coloured chessboards with exactly $k_i$ squares of colour $i$. In physics terminology, $Z_n^{\text{3C}}$ is the \emph{partition function} of the three-colour model with domain wall boundary conditions. 

It is difficult to study  $Z_n^{\text{3C}}$  by direct methods. Indeed, to compute
$Z_n^{\text{3C}}(1,1,1)=A_n$
was an unsolved problem for more than a decade.
In \cite{r}, we generalized that enumeration using the 
trigonometric 8VSOS model. Namely, we found 
 a  closed expression for $Z_n^{\text{3C}}(t_0,t_1,t_2)$ when
\begin{equation}\label{tt}\frac{(t_0t_1+t_0t_2+t_1t_2)^3}{(t_0t_1t_2)^2}=27. 
\end {equation}
This surface can be parametrized by 
$t_i(\la,\mu)=\mu/(1-\la\om^i)^3$,
where, as throughout the paper, 
$$\om=e^{2\pi i/3}. $$
 By homogeneity, we may take $\mu=1$. 
Then \cite[Cor.\ 8.4]{r},
\begin{multline}\label{ttf}Z_n^{\text{3C}}\left(\frac 1{(1-\la)^3},\frac 1{(1-\la\om)^3},\frac 1{(1-\la\om^2)^3}\right)\\
=\frac{(1-\la\om^2)^2(1-\la\om^{n+1})^2}{(1-\la^3)^{n^2+2n+3}}\big(A_n(1+\om^{n}\la^2 )+(-1)^{n} C_n\om^{2n}\la\big),\end{multline}
where
$A_n$ is as in \eqref{an} and
\begin{equation}\label{cn}C_n=\frac{2\cdot 5\cdots (3n-1)}{1\cdot 4\cdots(3n-2)}\,A_n
\end{equation}
is  the number of cyclically symmetric plane partitions in a cube of size $n$ \cite{a}. 
Although \eqref{tt} is a strong restriction, it is sufficient for computing the moments
$$\sum_{k_0+k_1+k_2=(n+1)^2}N(k_0,k_1,k_2)k_i,\qquad i=0,1,2, $$
see \cite[Cor.\ 8.5]{r}.

\section{Statement of results}
\label{srs}

\subsection{Polynomials $q_n$ and $r_n$.}
In this Section, we state our main results. 
We begin with the following fact.
As we will see in \S \ref{qrs}, it is  a rather  straight-forward consequence of results in \cite{r}.

\begin{theorem}\label{st}
Let \begin{equation}\label{om}T=T(t_0,t_1,t_2)=\frac{(t_0t_1+t_0t_2+t_1t_2)^3}{(t_0t_1t_2)^2}. 
\end{equation}
Then,
there exist polynomials $q_n$ and $r_n$ such that, for 
 $n\equiv 0\ \operatorname{mod}\ 3$, 
 $Z_n^{3C}(t_0,t_1,t_2)$ equals
$$(-1)^{n+1}(t_0t_1t_2)^{\frac{n(n+2)}3}\left(\frac{(t_0t_1+t_0t_2+t_1t_2)^2}{t_0t_1t_2}\,q_{n}(T)-2^{\chi(n \text{\emph{ odd}})}t_0r_{n}(T)\right), $$
 while for $n\equiv 1\ \operatorname{mod}\ 3$ it equals 
$$(-1)^{n+1}(t_0t_1t_2)^{\frac{n(n+2)}3}\left(\frac{t_0t_1}{t_2}\,q_{n}(T)-2^{\chi(n \text{\emph{ odd}})}\frac{t_0t_1+t_0t_2+t_1t_2}{t_2}\,r_{n}(T)\right) $$
 and for $n\equiv 2\ \operatorname{mod}\ 3$ it equals
 $$(-1)^{n+1}(t_0t_1t_2)^{\frac{(n+1)^2}3}\left(q_{n}(T)-2^{\chi(n \text{\emph{ odd}})}\frac{t_0t_1+t_0t_2+t_1t_2}{t_0t_2}\,r_{n}(T)\right). $$
Here,  $\chi(\text{\emph{true}})=1$,  $\chi(\text{\emph{false}})=0$.
\end{theorem}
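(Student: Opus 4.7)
My plan is to derive Theorem~\ref{st} directly from the explicit formula for the trigonometric 8VSOS partition function at crossing parameter $\eta=\pi/3$ proved in \cite{r}, by tracking its dependence on the three-colour weights $(t_0,t_1,t_2)$ and exploiting the natural $\mathbb{Z}/3\mathbb{Z}$-symmetry of the parametrization $t_i(\la,\mu)=\mu/(1-\la\om^i)^3$.

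First I would write out the explicit expression from \cite{r} for the trigonometric 8VSOS partition function in a gauge that realises the three-colour specialisation. The two-parameter family $t_i(\la,\mu)$ alone covers only the surface $T=27$, but the 8VSOS formula carries an additional gauge (twist) parameter which, when kept generic, provides the third degree of freedom needed to describe $Z_n^{\text{3C}}$ on a dense open subset of the full $(t_0,t_1,t_2)$-space. Substituting the parametrisation, the formula becomes a rational expression in $\la$ and the extra parameter whose overall structure is forced by the structure of the 8VSOS determinant.

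Next I would decompose the resulting expression according to the cyclic action $\la\mapsto\om\la$, which corresponds to the cyclic permutation $(t_0,t_1,t_2)\mapsto(t_1,t_2,t_0)$. Sorting terms by their transformation rule under this action gives a two-term decomposition whose pieces will be $q_n(T)$ and $r_n(T)$ times explicit prefactors. The residue class of $n$ modulo $3$ enters through the total cyclic weight of the formula, producing the three prefactor templates in the statement. Matching degrees in $(t_0,t_1,t_2)$ against the known total degree $(n+1)^2$ of $Z_n^{\text{3C}}$ then forces the monomial factors $(t_0t_1t_2)^{n(n+2)/3}$ or $(t_0t_1t_2)^{(n+1)^2/3}$, while the factors $2^{\chi(n\text{ odd})}$ should reflect the behaviour of the formula under an additional involutive symmetry that acts nontrivially only when $n$ is odd (the same parity issue that appears already in \eqref{ttf} through the $(-1)^n$ there).

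The principal algebraic step is then to verify that the two resulting scalar coefficients genuinely depend only on the single combination $T$, rather than on a second independent symmetric invariant of $(t_0,t_1,t_2)$. A priori the quotient of $(t_0,t_1,t_2)$-space by scaling and the $S_3$ action is two-dimensional, so this is a real restriction. I expect this collapse to be the main obstacle: it requires extracting the right cancellations from the formula of \cite{r} and relies essentially on the cubic-root-of-unity specialisation $\eta=\pi/3$, which is precisely the point where the analysis degenerates sufficiently for the cubic invariant $T$ to be the only one surviving. Once this collapse is established, polynomiality of $q_n$ and $r_n$ in $T$ follows from the polynomiality of $Z_n^{\text{3C}}$ in $(t_0,t_1,t_2)$ together with the explicit pole structure of the prefactors, completing the proof.
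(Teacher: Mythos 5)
Your proposal founders at the very first step. You propose to work with the \emph{trigonometric} 8VSOS formula and to recover the missing third degree of freedom in $(t_0,t_1,t_2)$ from ``an additional gauge (twist) parameter'' of the model. No such parameter exists: the parameter $\gamma$ appearing in the explicit formula \eqref{mmi} is a free parameter on which the partition function does not actually depend, and the crossing parameter is pinned to a cubic root of unity precisely to reach the three-colour model. In the trigonometric limit the three-colour weights are forced into the two-parameter family $t_i=\mu/(1-\la\om^i)^3$, which covers only the surface $T=27$ of \eqref{tt} --- this is exactly why \cite{r} obtains only the restricted evaluation \eqref{ttf} there. The genuine third parameter is the elliptic nome $p$, via the parametrization $t_i=\theta(\la\om^i;p)^{-3}$ of \eqref{ti}; one must therefore work with the full elliptic model, and a surjectivity statement (the Jacobian computation of Lemma \ref{cpl}) is needed to pass from identities along the parametrization to identities in $(t_0,t_1,t_2)$. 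Without this, the theorem cannot even be formulated for general $T$.

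Two further steps are not what you describe. First, the two-term decomposition does not come from sorting by the cyclic action $\la\mapsto\om\la$ (whose isotypic decomposition would have three components); it comes from Proposition \ref{lap}: the function \eqref{tct} is an $A_1$ theta function of $\la$ of fixed nome and norm, hence lies in a two-dimensional space spanned by $\tha(\la\om^{2n};p)^2$ and $\tha(\la\om^{2n+1},\la\om^{2n+2};p)$, giving \eqref{zfd}. Second, the ``collapse'' onto the single invariant $T$ --- which you correctly flag as the principal obstacle but leave unargued --- is the content of Lemma \ref{lil}: the coefficients $X_n(p),Y_n(p)$ are $\la$-independent by construction, are $S_3$-symmetric in the $t_i$ because $\la\mapsto\om\la$ and $\la\mapsto\la^{-1}$ generate $S_3$ on the weights, and cannot involve $t_0+t_1+t_2$ because $(t_0+t_1+t_2)\tha(\la^3;p^3)^3$ is nonzero at $\la=1$ while $\tha(\la^3;p^3)$ vanishes there, so any $e_1$-dependence would create an uncancellable pole in $\la$. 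This pole mechanism, together with Lemma \ref{fil} identifying $T=\tau(p)^3$, is the missing idea in your sketch; hoping for ``the right cancellations'' in the explicit determinant formula is not a substitute, and indeed the paper explicitly avoids working with \eqref{mmi} directly.
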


The first few instances of the polynomials $q_n$ and $r_n$ are given in Table \ref{pqt}.

Note that, in each case, $Z_n^{3C}$ is symmetric in the two variables
$t_{-n\pm 1\, \operatorname{mod}\, 3}$. This  is explained by the fact that reflection in the vertical (say) axis, followed by  interchanging the colours $-n\pm 1\ \operatorname{mod}\ 3$, defines an involution on three-coloured chessboards.  Theorem \ref{st} shows that $Z_n^{3C}$ is  very nearly symmetric in all three variables, being a linear combination of two symmetric polynomials,  where the coefficients are  polynomials in $t_{-n\, \operatorname{mod}\, 3}$ of  low order. 
Moreover, the symmetric polynomials depend only on the second and third 
elementary symmetric polynomial, being independent of $t_0+t_1+t_2$.

\begin{table}[htb]
\caption{The polynomials $q_n$ and $r_n$.}
\begin{tabular}{lll}
$n$ & $q_n(x)$ & $r_n(x)$\\
\hline
$0$ & $0$ & $1$ \\
$1$ & $1$ & $0$ \\
$2$ & $1$ & $1$ \\
$3$ & $1$ & $1$ \\
$4$ & $x+3$ & $x-3$\\
$5$ & $x^2-4x+6$ & $x+6$\\
$6$ & $x^2-2x+40$ & $x^3-8x^2+20$ \\   
$7$ & $x^4 -10x^3+ 15x^2+ 100x+ 50$ & $x^3+75x-50$\\
\hline
\end{tabular}
\label{pqt}
\end{table}

The following result seems  much deeper than Theorem \ref{st}. We  need considerable preparation for its proof, which is given in
\S \ref{qrss}.

\begin{theorem}\label{qrdp}
The polynomials $q_n$ and $r_n$ are monic. Moreover, their degrees  are  given by
\begin{align*}
\deg(q_n)+1=\deg(r_n)&=\frac{n^2}{12},& n&\equiv 0\ \operatorname{mod}\ 6,\\
\deg(q_n)=\deg(r_n)+1&=\frac{n^2-1}{12},& n&\equiv \pm 1\ \operatorname{mod}\ 6, \\
\deg(q_n)=\deg(r_n)&=\frac{n^2-4}{12},& n&\equiv \pm 2\ \operatorname{mod}\ 6,\\
\deg(q_n)=\deg(r_n)&=\frac{n^2-9}{12},& n&\equiv 3\ \operatorname{mod}\ 6.
\end{align*}
\end{theorem}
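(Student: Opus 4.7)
The plan is to obtain both assertions by inducting through a recursion for $(q_n, r_n)$ derived from the machinery of the later sections.

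Step 1 (dictionary). I would start by combining Theorem~\ref{st} with the expression for $Z_n^{3C}$ in terms of the one-variable polynomials $p_n$ given in Section~\ref{rtcs}. Equating the two representations and solving yields $q_n$ and $r_n$ as explicit linear combinations of $p_n$ (and, plausibly, $p_{n-1}$ or specialisations of $P_n$). The identification is unique because the two building blocks in each branch of Theorem~\ref{st} are linearly independent after dividing out the common $T$-dependence: they sit in distinct monomial slots in $t_0,t_1,t_2$ whose supports do not overlap once the factor $(t_0t_1t_2)^{n(n+2)/3}$ or $(t_0t_1t_2)^{(n+1)^2/3}$ is stripped off.

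Step 2 (recursion). The polynomials $p_n$ (and $P_n$) inherit recursions from the minor relations satisfied by the Cauchy-like determinant defining $\Psi_n$ (Section~\ref{sfs}--\ref{pps}). Via the dictionary of Step~1, these translate into a recursion relating $(q_{n+k}, r_{n+k})$ to $(q_n, r_n)$ for a small step $k$, with explicit polynomial coefficients in $T$. Because the 8VSOS specialisation produces cube roots of unity $\omega^n$, and because Theorem~\ref{st} also carries the parity factor $2^{\chi(n\,\text{odd})}$, the effective period of the recursion is $6$, and the analysis naturally splits into the four cases modulo $6$ appearing in the statement of Theorem~\ref{qrdp}.

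Step 3 (induction). I would then prove the degree formula and monicity jointly by strong induction, reading base cases from Table~\ref{pqt} and checking that the recursion preserves the predicted degree and the leading coefficient~$1$ in each residue class $n \bmod 6$. As cross-checks I would use the special evaluations where $Z_n^{3C}$ is known in closed form: the ASM point $t_0=t_1=t_2=1$, which gives $T=27$ and $Z_n^{3C}=A_n$, and the curve~\eqref{tt} where $T=27$ and the closed form~\eqref{ttf} constrains a linear combination of $q_n(27)$ and $r_n(27)$ via $A_n$ and $C_n$.

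The main obstacle lies in Step~3. The polynomiality of $Z_n^{3C}$ alone yields only the soft upper bound $\deg q_n,\deg r_n \leq \tfrac{(n-1)(n+3)}{6}$, which is roughly twice the asserted value $\sim n^2/12$. So both the sharp degree bound and monicity rest on showing that no accidental cancellation occurs among the top-order contributions of the recursion, and that the leading contributions combine to coefficient exactly~$1$. I would address this by strengthening the inductive hypothesis to record both the degree and the leading coefficient of $q_n$ and $r_n$, and by doing a careful leading-order analysis of the recursion coefficients in each of the four residue classes. The subtle point — and what I expect to be the hard part — is arguing that the alternating structure ($\deg q_n \gtrless \deg r_n$ shifting by~$\pm 1$ depending on $n \bmod 6$) is forced by the top-order terms of the recursion rather than by a more delicate cancellation deeper in the polynomial.
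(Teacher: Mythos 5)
Your Step 1 is on the right track: the paper does exactly this, equating Theorem \ref{st} with the $p_{n-1}$-expression of Corollary \ref{tcpca} (via the change of basis \eqref{ctb} and the identification $T=2(\zeta^2+4\zeta+1)^3/\zeta(\zeta+1)^4$ from Corollary \ref{tzc}) to obtain the dictionary of Theorem \ref{pqrt}. But Steps 2--3 diverge from what is needed, and the divergence is precisely at the point you yourself flag as "the hard part". The paper never forms a recursion for $(q_n,r_n)$ and never inducts on their degrees or leading coefficients; a recursion-based argument would have to run through Proposition \ref{pyr}, which involves the auxiliary polynomials $y_n$ and the reversal $\tilde p_n$, and controlling top-order cancellation there is exactly the unproved content you defer. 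As written, your proposal establishes only the dictionary and then postpones the entire assertion of the theorem to an unspecified "careful leading-order analysis".

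The missing idea is that degree and monicity both drop out of a single evaluation of the dictionary identity at $\zeta=0$ (equivalently $T\to\infty$), using the normalization $p_{n-1}(0)=1$ of Corollary \ref{pctc} (which is the easy induction from Proposition \ref{pyr}: the $\zeta^{n+1}\tilde p_n y_n$ term dies at $\zeta=0$). Concretely, for $n\equiv 0\bmod 6$ one has
\begin{equation*}
p_{n-1}(\zeta)-\zeta^{\frac n2}\tilde p_{n-1}(\zeta)=(1-\zeta)^2(\zeta^2+4\zeta+1)^{\frac{n^2-4}4}\,u^{\frac{n^2}{12}-1}q_n(1/u),\qquad u=\frac{\zeta(\zeta+1)^4}{2(\zeta^2+4\zeta+1)^3}.
\end{equation*}
As $\zeta\to 0$ the left-hand side tends to $p_{n-1}(0)=1$, while $u\to 0$ and $u^{n^2/12-1}q_n(1/u)=\sum_j c_j u^{n^2/12-1-j}$ blows up if $\deg q_n>n^2/12-1$, vanishes if $\deg q_n<n^2/12-1$, and tends to the leading coefficient if equality holds; hence $q_n$ is monic of degree $n^2/12-1$, with no induction on $n$ and no cancellation analysis. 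The remaining residue classes are identical, the shifts $\pm1$ between $\deg q_n$ and $\deg r_n$ coming from the explicit exponents $\frac{n^2}4,\frac{n^2\pm\text{const}}4$ and the extra factors $(1+\zeta)$, $(1+\zeta)^3$ in Theorem \ref{pqrt}, not from a delicate recursion. Your $T=27$ cross-checks cannot substitute for this, since they constrain only the values $q_n(27)$, $r_n(27)$ and say nothing about degrees or leading terms.
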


As an application, we can determine the 
maximal and minimal number of squares of each colour.
These bounds restrict the 
counting function $N$ introduced in \eqref{zd} to an 
equilateral triangle. We can also 
explicitly evaluate  the restriction of $N$ to the boundary. To formulate  the result, 
we introduce some notation. Fixing $n$, let
$$\bar N(x,y,z)
=\begin{cases}
N(x,y,z), & n\equiv 0\ \operatorname{mod}\ 3,\\
N(y,z,x),  & n\equiv 1\ \operatorname{mod}\ 3,\\
N(z,x,y),  & n\equiv 2\ \operatorname{mod}\ 3,
\end{cases} $$
so that
$\bar N(x,y,z)=\bar N(x,z,y)$.
Moreover, let
$$M=\left[\frac{5n^2+8n+11}{12}\right]=\begin{cases}
(5n^2+8n)/12, & n\equiv 0,\,2\ \operatorname{mod}\ 6,\\
(5n^2+8n+11)/12, & n\equiv 1\ \operatorname{mod}\ 6,\\
(5n^2+8n+3)/12, & n\equiv 3,\,5\ \operatorname{mod}\ 6,\\
(5n^2+8n+8)/12, & n\equiv 4\ \operatorname{mod}\ 6,
\end{cases} $$
$$m=\begin{cases}
(n^2+4n)/6, & n\equiv 0,\,2\ \operatorname{mod}\ 6,\\
(n^2+4n+7)/6, & n\equiv 1\ \operatorname{mod}\ 6,\\
(n^2+4n+3)/6, & n\equiv 3,\,5\ \operatorname{mod}\ 6,\\
(n^2+4n+4)/6, & n\equiv 4\ \operatorname{mod}\ 6,
\end{cases} $$
$$\varepsilon=\begin{cases}
1, & n\equiv 0,\,2\ \operatorname{mod}\ 6,\\
-2, & n\equiv 1\ \operatorname{mod}\ 6,\\
0, & n\equiv 3,\,5\ \operatorname{mod}\ 6,\\
-1, & n\equiv 4\ \operatorname{mod}\ 6,
\end{cases} $$
\begin{equation}\label{de}\delta=\left[\frac{n^2}4\right]=\begin{cases}
(n^2-1)/4, & n\text{ odd},\\
n^2/4, & n\text{ even}.
\end{cases} \end{equation}
Note that $2M+m+\varepsilon=(n+1)^2$.
Let
$$\Delta=\{(x,y,z)\in \mathbb Z^3;\, x+y+z=(n+1)^2,\ x\leq M+\varepsilon,\
y,\,z\leq M \}. $$
Then, $\Delta$ is an equilateral triangle, with $\delta$ lattice points on each side. We denote its corners by
$$P=(m+\varepsilon,M,M),\quad Q=(M+\varepsilon,m,M),\quad R=(M+\varepsilon, M,m). $$

\begin{corollary}\label{ecc}
The convex hull of the support of $\bar N$ is equal to $\Delta$ when $n$ is odd and $\Delta\setminus\{P\}$ when $n$ is even. In particular, the maximal number of squares of each of the colours $-n\pm 1\ \operatorname{mod}\ 3$ is  equal to $M$, and the minimal number of such squares is $m$.  The maximal number of squares of colour $-n\ \operatorname{mod}\ 3$ is  $M+\varepsilon$, and the minimal number  is $m+\varepsilon$ if $n$ is odd and  $m+\varepsilon+1$ if $n$ is even. Moreover, the restriction of $\bar N$ to $\partial \Delta$ is given by
 $$\bar N\left(\frac{kP+(\delta-k)Q}{\delta}\right)=\bar N\left(\frac{kP+(\delta-k)R}{\delta}\right)=\begin{cases}\displaystyle\binom{\delta-1}k, & n \text{ \emph{even}},\\[5mm]
\displaystyle\binom{\delta}k, & n \text{ \emph{odd}},
\end{cases} $$
$$\bar N\left(\frac{kQ+(\delta-k)R}{\delta}\right)=\begin{cases}\displaystyle\binom{\delta}k, & n \text{ \emph{even}},\\[5mm]
\displaystyle\binom{\delta-2}k+\binom{\delta-2}{k-2}, & n \text{ \emph{odd}},
\end{cases} $$
where $0\leq k\leq \delta$.
\end{corollary}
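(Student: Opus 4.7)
The proof is a direct analysis of the polynomial expressions in Theorem \ref{st}, using the degree data of Theorem \ref{qrdp}. Writing $e_2 = t_0 t_1 + t_0 t_2 + t_1 t_2$ and $e_3 = t_0 t_1 t_2$, the fact that $q_n, r_n$ are monic of degrees $d_q, d_r$ yields
\begin{equation*}
e_3^{2d_q} q_n(T) = \sum_{j=0}^{d_q} a_j e_2^{3j} e_3^{2(d_q - j)}, \qquad e_3^{2d_r} r_n(T) = \sum_{j=0}^{d_r} b_j e_2^{3j} e_3^{2(d_r - j)},
\end{equation*}
with $a_{d_q} = b_{d_r} = 1$. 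Substituting into Theorem \ref{st} expresses $Z_n^{\text{3C}}$ as a finite $\mathbb Z$-linear combination of monomials $e_2^c e_3^d$ and $t_i \cdot e_2^c e_3^d$, where the single $t_i$ is the ``distinguished'' variable $t_{-n \bmod 3}$ read off from the prefactor of $r_n$. Since $e_3$ contributes $1$ to every exponent $k_i$ and $e_2$ contributes $0$ or $1$, the support of $N$ is accessible by direct bookkeeping.

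For the exponent bounds, the maximum and minimum $t_i$-degrees of $e_2^c e_3^d$ are $c+d$ and $d$ respectively. Applied summand-by-summand and optimized over $j$ using the degree formulas of Theorem \ref{qrdp}, these give $k_i \in [m, M]$ for $i \equiv -n \pm 1 \pmod 3$ and $k_i \in [m+\varepsilon, M+\varepsilon]$ for $i \equiv -n \pmod 3$. For $n$ even, the coefficient of $t_{-n \bmod 3}^{m+\varepsilon}$ receives extremal contributions from both branches that produce identical monomials in the other two variables with opposite signs, so they cancel; this is the algebraic mechanism for the exclusion of the corner $P$. For $n$ odd, the factor $2^{\chi(n \text{ odd})} = 2$ in Theorem \ref{st} breaks the cancellation, so the corner survives with $\bar N(P) = 1$.

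For the boundary of $\Delta$, the extremal $t_i$-coefficient of $e_2^c$ is $(t_j + t_l)^c$ (from $e_2 = t_i(t_j + t_l) + t_j t_l$) and that of $e_3^d$ is $(t_j t_l)^d$. Extracting the coefficient of $t_i^{\max}$ from $Z_n^{\text{3C}}$ therefore yields an expression of the form $t_j^a t_l^b (t_j + t_l)^{\delta'}(t_j t_l)^{m'}$ for a single-branch contribution, or a combination of two such terms. On edges $PQ$ and $PR$ one obtains $(t_j + t_l)^{\delta}(t_j t_l)^m$ for $n$ odd and an expression with $(t_j + t_l)^{\delta - 1}$ together with an extra monomial factor for $n$ even, producing $\binom{\delta}{k}$ and $\binom{\delta-1}{k}$ respectively. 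On edge $QR$ for $n$ odd, both branches contribute at the same $t_i$-degree and combine via $(t_j + t_l)^2 - 2 t_j t_l = t_j^2 + t_l^2$ into $(t_j + t_l)^{\delta - 2}(t_j t_l)^{m'}(t_j^2 + t_l^2)$, whose expansion yields $\binom{\delta - 2}{k} + \binom{\delta - 2}{k - 2}$. The arithmetic identities $M = \delta + m$ and $M - m = \delta$, immediate from the definitions, align these binomial expansions with the lattice-point parametrizations in the statement.

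The substance of the work is a systematic case analysis across the six residue classes of $n \bmod 6$ (determining $M, m, \varepsilon, \delta$) and the three dictionaries for $\bar N$ determined by $n \bmod 3$. The main obstacle is not conceptual but clerical: tracking the sign $(-1)^{n+1}$, the factor $2^{\chi(n \text{ odd})}$, and the correct identification of each $t_i$ with one of the coordinates $x, y, z$ of $\bar N$. Once these are organized for each case, the match between the computed coefficients and the stated binomial expressions is automatic.
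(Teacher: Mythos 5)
Your route is the one the paper itself follows: substitute the monic leading terms of $q_n$ and $r_n$ (Theorem \ref{qrdp}) into Theorem \ref{st}, expand $T=e_2^3/e_3^2$, extract the extremal coefficient in each $t_i$, and read off the boundary values of $\bar N$ as binomial expansions of $(t_j+t_l)^{\delta'}$, possibly corrected by $(t_j+t_l)^2-2t_jt_l=t_j^2+t_l^2$. The paper only works out the case $n\equiv 1\ \operatorname{mod}\ 3$, colour $2$ (the edge $QR$ for odd $n$) and calls the rest a tedious exercise, so your organization is essentially equivalent, and your treatment of the edge $QR$ for odd $n$ is exactly the paper's computation.

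There is, however, one concretely wrong step: your mechanism for the corner $P$ when $n$ is odd. You assert that both branches of Theorem \ref{st} contribute to the coefficient of $t_{-n\,\operatorname{mod}\,3}^{\,m+\varepsilon}$ and that the factor $2^{\chi(n\text{ odd})}=2$ ``breaks the cancellation.'' If the two monic leading terms really aligned at that monomial, they would contribute $1-2=-1$, i.e.\ $\bar N(P)=-1$, which is absurd for a counting function. What actually happens is that for odd $n$ the degree relations of Theorem \ref{qrdp} ($\deg q_n=\deg r_n$ for $n\equiv 3\ \operatorname{mod}\ 6$, $\deg q_n=\deg r_n+1$ for $n\equiv\pm1\ \operatorname{mod}\ 6$) push the two branches apart: the minimal degree of the distinguished variable attained by the $r_n$ branch exceeds that attained by the $q_n$ branch by exactly $2$, so only the $q_n$ branch reaches the corner and $\bar N(P)=+1$ by monicity. (Check $n=3$: $Z_3=(t_0t_1t_2)^4e_2^2-2t_0(t_0t_1t_2)^5$, and the monomial $t_0^4t_1^6t_2^6$ comes only from $e_2^2$.) The same degree bookkeeping decides, for each edge and parity, whether one branch or both reach the extremal degree: for odd $n$ the branches align only on $QR$ (which is the only place the factor $2$ enters, via $t_j^2+t_l^2$), while the edges $PQ$, $PR$ and the corner $P$ are single-branch; for even $n$ it is $P$ and $PQ$, $PR$ where the branches align (giving total, respectively partial, cancellation with coefficients $1-1$), while $QR$ comes from the $r_n$ branch alone. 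Your write-up conflates these situations, and the case analysis you defer to would not close as described without this correction.
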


It is  straight-forward to derive Corollary \ref{ecc} from Theorem \ref{qrdp}; some details are given in \S \ref{qrss}.

As we explain at the end of \S \ref{qrs}, it follows easily from Theorem \ref{st} that the polynomials $q_n$, $r_{2n}$ and $2r_{2n+1}$ have integer coefficients. However, the following fact is not obvious, see \S \ref{ics}.

\begin{proposition}\label{rip} The polynomial $r_{2n+1}$ has integer coefficients.
\end{proposition}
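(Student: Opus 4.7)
My approach is to leverage the alternative expression for $Z_n^{3C}$ developed in \S \ref{rtcs} in terms of the one-variable polynomials $p_n$ from \S \ref{pps}, and to prove integrality of the $p_n$'s directly. The idea is that the decomposition of $Z_n^{3C}$ provided by Theorem \ref{st} produces an artificial factor of $2$ in front of $r_n$ when $n$ is odd, whereas the decomposition via $p_n$ should contain a compensating factor of $2$ on the other side, so that the two representations combine to give $r_n$ itself — not just $2r_n$ — as an integer polynomial.

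The first step is to establish that $p_n \in \mathbb{Z}[x]$. The polynomial $p_n$ arises as a specialization of the symmetric polynomial $S_n$ corresponding to $\Psi_n$, which satisfies the Izergin--Korepin-type determinant identity derived in \S \ref{sfs}. In \S \ref{pps} this determinant is manipulated via classical minor relations to yield a recursion expressing $p_{n+1}$ polynomially in $p_n$, $p_{n-1}$, and related quantities. One checks that every coefficient appearing in this recursion is an integer, and that the base values of $p_n$ for small $n$ are integer polynomials; the claim $p_n \in \mathbb{Z}[x]$ then follows by induction.

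The second step is to combine the expression of $Z_n^{3C}$ in terms of $p_n$ from \S \ref{rtcs} with Theorem \ref{st}. Since both expressions must produce the same polynomial, one obtains an identity relating $q_n(T)$ and $r_n(T)$ to the $p_n$'s. For $n$ odd, the $p_n$-based formula naturally supplies an explicit factor of $2$ multiplying the term that corresponds to $q_n$, and isolating the part proportional to $r_n$ produces an identity of the schematic form $r_n(T) \cdot (\text{integer polynomial factor}) = (\text{integer polynomial in } p_n\text{'s})$. Inverting this relation (using the monicity of $r_n$ established in Theorem \ref{qrdp} and the explicit degree information there) gives $r_n$ as an integer polynomial combination of the $p_n$'s, hence $r_{2n+1} \in \mathbb{Z}[x]$.

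The main obstacle will be making this matching precise — in particular, verifying that the factor of $2$ appearing on the $p_n$ side is exactly the same factor of $2$ appearing in Theorem \ref{st}, and that no further fractional denominators enter when solving for $r_n$. A clean way to handle this is to carry out the comparison at the level of specialized monomials (e.g., comparing leading coefficients using Theorem \ref{qrdp}, then arguing inductively on degree), rather than attempting to invert the whole linear system. If that route proves cumbersome, a fallback is to specialize the variables $t_0,t_1,t_2$ to eliminate the $q_n$-contribution entirely and read off $r_n$ from a $p_n$-identity at that specialization; but the primary plan via direct matching of the $p_n$- and $(q_n,r_n)$-expressions is cleaner and more systematic.
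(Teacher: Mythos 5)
Your overall architecture --- establish an integrality property of the $p_n$ and then transfer it to $r_{n+1}$ by matching the $p_n$-expression for $Z_n^{3C}$ against Theorem \ref{st} --- is the same as the paper's. But your first step contains a false claim that undermines the whole argument: the polynomials $p_n$ do \emph{not} have integer coefficients. Already $p_3(\zeta)=\tfrac 12(35\zeta^6+\dots+2)$, and $p_6$ carries a denominator $8$ (see Table \ref{pt}); the paper only asserts coefficients in $\mathbb Z[\tfrac 12]$, with the power of $2$ growing with $n$ (Corollary \ref{pic}). Consequently no induction on the recursion of \S\ref{pps} can give $p_n\in\mathbb Z[\zeta]$ --- and note also that the recursion \eqref{le}/\eqref{pyri} determines $p_{n+1}$ only after \emph{dividing} by $(\zeta+1)^2p_{n-1}$, so it does not manifestly preserve integrality in any case. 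The correct substitute is the much more delicate Lemma \ref{pip}: $2^{\left[\frac{n(n+2)}2\right]}p_n(\zeta/2)\in\mathbb Z[\zeta]$, which the paper proves not from the recursion but from a confluent determinant formula for $p_n$ (Lemma \ref{pdl}), rescaling $\zeta\mapsto\zeta/2$ inside the determinant and exploiting that the resulting denominator is monic.

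Because the genuine integrality statement involves an $n$-dependent power of $2$ and the rescaling $\zeta\mapsto\zeta/2$, your second step also cannot be carried out by simply ``isolating the part proportional to $r_n$'': the identity \eqref{pqre} relating $p_n$ to $\tilde q_{n+1}$ and $\tilde r_{n+1}$ must be rewritten with all three families of coefficients ($b_k$ from $2^{\frac{n(n+2)}2}p_n(\zeta/2)$, $c_k$ from $\tilde q_{n+1}$, $d_k$ from $\tilde r_{n+1}$) and then attacked coefficient-by-coefficient from the top degree down, multiplying the $m$-th equation by $2^m$ to obtain a triangular system with integer off-diagonal entries, from which $d_m\in\mathbb Z$ follows by induction. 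This $2$-adic bookkeeping is precisely the content of the proof, and your proposal, as written, assumes it away. So the plan is repairable in outline but has a concrete gap at its foundation.
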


This result has a simple combinatorial meaning. 
If  $n\equiv 3$ or $5$ $\operatorname{mod}\ 6$, Theorem~\ref{st} expresses $Z_n^{3C}$ as a sum of a symmetric polynomial and a polynomial with even coefficients. Thus, the function 
$$(k_0,k_1,k_2)\mapsto N(k_0,k_1,k_2)\ \operatorname{mod}\ 2$$
 is symmetric. Similarly, 
when $n\equiv 1\ \operatorname{mod}\ 6$, 
$$(k_0,k_1,k_2)\mapsto N(k_0,k_1,k_2-2)\ \operatorname{mod}\ 2$$ 
is symmetric.
In the notation of Corollary \ref{ecc}, these facts can be stated as follows.

\begin{corollary}\label{psc} When $n$ is odd,  $\bar N\ \operatorname{mod}\ 2$ is invariant under the action of $S_3$ as the symmetry group of $\Delta$.
\end{corollary}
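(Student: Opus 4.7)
The plan is to translate the two symmetry statements for $N\ \operatorname{mod}\ 2$ stated in the paragraph preceding the corollary into assertions about $\bar N\ \operatorname{mod}\ 2$, using the explicit geometry of the triangle $\Delta$.

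The reflection symmetry $\bar N(x,y,z)=\bar N(x,z,y)$ is essentially immediate: a vertical reflection of a three-coloured chessboard, composed with the transposition of the colours $-n\pm 1\ \operatorname{mod}\ 3$, is an involution on chessboards, and $\bar N$ is defined so that these two colours sit in the second and third arguments. Hence it suffices to verify invariance under a $3$-cycle $\sigma$ generating the rotation subgroup of the symmetry group of $\Delta$.

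To obtain this, I would combine Theorem~\ref{st} with Proposition~\ref{rip}. The theorem writes $Z_n^{3C}$ as a sum of a $q_n$-term and an $r_n$-term, where for $n$ odd the latter carries the factor $2^{\chi(n\text{ odd})}=2$. By Proposition~\ref{rip} the $r_n$-term has integer coefficients, so it vanishes modulo $2$. Case-by-case inspection of the surviving $q_n$-term shows: for $n\equiv 3,5\ \operatorname{mod}\ 6$ it is a fully symmetric polynomial in $(t_0,t_1,t_2)$, while for $n\equiv 1\ \operatorname{mod}\ 6$ the only asymmetric factor is $t_0t_1/t_2$, which becomes fully symmetric after multiplication by $t_2^2$. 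These are exactly the statements that $N(k_0,k_1,k_2)\ \operatorname{mod}\ 2$ is $S_3$-symmetric in the former cases, and that $N(k_0,k_1,k_2-2)\ \operatorname{mod}\ 2$ is $S_3$-symmetric in the latter case.

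It remains to identify the $3$-cycle $\sigma$ in coordinates. Using barycentric coordinates on $\Delta$ with respect to the vertices $P,Q,R$, the $120^\circ$ rotation acts as $(a,b,c)\mapsto(c,a,b)$, and a direct calculation from the explicit formulas for $P,Q,R$ gives
\[
\sigma(x,y,z)=(z+\varepsilon,\,x-\varepsilon,\,y).
\]
For $\varepsilon=0$ (i.e.\ $n\equiv 3,5\ \operatorname{mod}\ 6$), this is the plain coordinate cycle, and the invariance $\bar N\circ\sigma\equiv\bar N\ \operatorname{mod}\ 2$ follows directly from full symmetry of $N\ \operatorname{mod}\ 2$ together with the definition of $\bar N$. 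For $n\equiv 1\ \operatorname{mod}\ 6$ one has $\varepsilon=-2$, and the shifts in $\sigma$ are precisely compensated by the shift of $2$ appearing in the symmetry statement for $N(k_0,k_1,k_2-2)$. The main obstacle is this final bookkeeping step, where one must verify that the coordinate shift built into the geometry of $\Delta$ is exactly matched by the shift arising from the structure of the $q_n$-term for $n\equiv 1\ \operatorname{mod}\ 6$; the remaining cases fall out immediately from Proposition~\ref{rip} and the preceding paragraph.
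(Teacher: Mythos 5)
Your proposal is correct and follows the paper's own route: the paper derives the two symmetry statements for $N\ \operatorname{mod}\ 2$ (respectively $N(k_0,k_1,k_2-2)\ \operatorname{mod}\ 2$) from Theorem \ref{st} together with Proposition \ref{rip} in the paragraph immediately preceding the corollary, and then treats the corollary as a restatement of these facts in the coordinates of Corollary \ref{ecc}. The only thing you add is the explicit identification $\sigma(x,y,z)=(z+\varepsilon,x-\varepsilon,y)$ of the rotation and the verification that the shift by $\varepsilon=-2$ matches the shift in the symmetry statement for $n\equiv 1\ \operatorname{mod}\ 6$ — bookkeeping the paper leaves implicit, and which you carry out correctly.
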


To illustrate  Corollaries  \ref{ecc} and \ref{psc}, we give
two examples. When $n=4$, the non-zero values of  $N$ are
$$\big(N(6+i,6+j,13-i-j)\big)_{i,j=0}^{4}=
\left(\begin{matrix}&&&&1\\
&&&4&3\\
&&6&6&3\\
&4&6&&1\\
1&3&3&1&
\end{matrix}\right). $$
Since $n$ is even, the lower right corner is missing from the support of $N$, and all boundary entries are  binomial coefficients. When $n=5$, the non-zero values are 
$$\big(N(8+i,20-i-j,8+j)\big)_{i,j=0}^{6}
=\left(\begin{matrix}&&&&&&1\\ 
&&&&&4&6\\
&&&&7&18&15\\
&&&8&12&36&20\\
&&7&12&36&40&15\\
&4&18&36&40&24&6\\
1&6&15&20&15&6&1
\end{matrix}\right).$$
In this case, the diagonal entries  are numbers of the form $\binom 4k+\binom 4{k-2}$. Note  the symmetric distribution of the odd entries, which 
is peculiar to the case of odd $n$.

\subsection{Polynomials $p_n$ and $P_n$}
The polynomials $q_n$ and $r_n$ are closely related to a third class of polynomials, which we  denote $p_n$. 
In the following result, which is  proved in \S \ref{qrss}, we use the notation
\begin{equation}\label{tn}\tilde p(x)=x^{\deg p}p(1/x);\end{equation}
that is, $\tilde p$ denotes the polynomial obtained from $p$ by reversing the coefficients. 

\begin{theorem}\label{pqrt}
There exist  polynomials  $p_n$ of degree $n(n+1)/2$ such that, for $n$  odd,
$$p_{n-1}(\zeta)-\zeta^{\frac{n+1}2}\tilde p_{n-1}(\zeta)=(1-\zeta)(\zeta^2+4\zeta+1)^{\frac{n^2-1}{4}}\tilde q_{n}\left(\frac{\zeta(\zeta+1)^4}{2(\zeta^2+4\zeta+1)^3}\right),$$
$$p_{n-1}(\zeta)-\zeta^{\frac{n-1}2}\tilde p_{n-1}(\zeta)=(1-\zeta)(1+\zeta)^3(\zeta^2+4\zeta+1)^{\frac{n^2-9}{4}}\tilde r_{n}\left(\frac{\zeta(\zeta+1)^4}{2(\zeta^2+4\zeta+1)^3}\right),$$
whereas  for $n$ even,
$$p_{n-1}(\zeta)-\zeta^{\frac{n+2}2}\tilde p_{n-1}(\zeta)=(1-\zeta)(\zeta^2+4\zeta+1)^{\frac{n^2}{4}}\tilde r_{n}\left(\frac{\zeta(\zeta+1)^4}{2(\zeta^2+4\zeta+1)^3}\right),$$
$$p_{n-1}(\zeta)-\zeta^{\frac{n}2}\tilde p_{n-1}(\zeta)=(1-\zeta^2)(\zeta^2+4\zeta+1)^{\frac{n^2-4}{4}}\tilde q_{n}\left(\frac{\zeta(\zeta+1)^4}{2(\zeta^2+4\zeta+1)^3}\right).$$
\end{theorem}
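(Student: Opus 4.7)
The plan is to compare two expressions for the three-colour partition function $Z_n^{3C}$: the one provided by Theorem \ref{st} in terms of $q_n(T)$ and $r_n(T)$, and the alternative expression (to be established in \S \ref{rtcs} via the theta-function machinery of \S \ref{phs}--\S \ref{pps}) which writes $Z_n^{3C}$, under an appropriate one-parameter specialization $(t_0,t_1,t_2)=(t_0(\zeta),t_1(\zeta),t_2(\zeta))$, as a monomial prefactor times a linear combination of $p_{n-1}(\zeta)$ and $\tilde p_{n-1}(\zeta)$.

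First I would fix the specialization so that the invariant $T$ of \eqref{om} satisfies
$$T=\frac{2(\zeta^2+4\zeta+1)^3}{\zeta(\zeta+1)^4}.$$
A key observation is that this rational function is invariant under $\zeta\mapsto 1/\zeta$, so the map $\zeta\mapsto T$ is generically $2$-to-$1$ with deck transformation $\zeta\mapsto 1/\zeta$. This involution interchanges $p_{n-1}(\zeta)$ with $\zeta^{-n(n-1)/2}\tilde p_{n-1}(\zeta)$ via the definition \eqref{tn}, which is precisely the mechanism producing the combinations $p_{n-1}(\zeta)-\zeta^a\tilde p_{n-1}(\zeta)$ on the left-hand sides of the four identities.

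Next I would substitute the specialization into Theorem \ref{st} and use the reverse-polynomial identity $q_n(T)=T^{\deg q_n}\tilde q_n(1/T)$, and similarly for $r_n$. The exponents $(n^2-1)/4$, $(n^2-4)/4$, $(n^2-9)/4$ and $n^2/4$ in the statement arise exactly because, by Theorem \ref{qrdp}, they equal $3\deg q_n$ or $3\deg r_n$ in each residue class modulo $6$, which is what is needed to clear the cubic denominator $(\zeta^2+4\zeta+1)^3$ present in $1/T$. The prefactors $(1-\zeta)$, $(1+\zeta)^3$ and $(1-\zeta^2)$ emerge from evaluating the monomials $t_0$, $t_0t_1+t_0t_2+t_1t_2$ and $t_0t_1+t_0t_2+t_1t_2$ (divided by $t_2$ or $t_0t_2$) of Theorem \ref{st} at the specialization, after cancellation with the overall scalar $(t_0t_1t_2)^{n(n+2)/3}$ or its variant.

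Finally, to isolate $\tilde q_n$ from $\tilde r_n$, I would apply the specialization in two essentially different ways, related by the $\zeta\mapsto 1/\zeta$ involution. For each parity of $n$ this yields a $2\times 2$ linear system for the two unknowns $\tilde q_n$ and $\tilde r_n$, whose rows are the expressions $p_{n-1}(\zeta)-\zeta^a\tilde p_{n-1}(\zeta)$ for the two relevant exponents $a$ attached to that parity; inverting this system produces the four stated identities. The main obstacle is calibrating the specialization with the correct normalization so that the exponents $a\in\{(n\pm 1)/2,\,n/2,\,(n\pm 2)/2\}$ and the exact linear prefactors emerge from the elementary symmetric polynomials of $(t_0(\zeta),t_1(\zeta),t_2(\zeta))$; once this calibration is performed, matching degrees and leading coefficients via Theorem \ref{qrdp} makes the rest of the comparison mechanical.
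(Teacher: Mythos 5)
Your high-level plan --- compare the $(q_n,r_n)$ expression of Theorem \ref{st} with the $(p_{n-1},\tilde p_{n-1})$ expression coming from \S \ref{rtcs} on the locus where $T=2(\zeta^2+4\zeta+1)^3/\zeta(\zeta+1)^4$ --- is indeed the paper's strategy. But your mechanism for extracting \emph{two} independent scalar identities (one for $\tilde q_n$ and one for $\tilde r_n$) from a single comparison does not work. You propose to get a $2\times2$ system by ``applying the specialization in two essentially different ways, related by $\zeta\mapsto1/\zeta$.'' Since $T$ is invariant under $\zeta\mapsto1/\zeta$, and since by the definition \eqref{tn} each combination $p_{n-1}(\zeta)-\zeta^{a}\tilde p_{n-1}(\zeta)$ is carried into $-\zeta^{-a-n(n-1)/2}\bigl(p_{n-1}(\zeta)-\zeta^{a}\tilde p_{n-1}(\zeta)\bigr)$ by that involution, the ``second'' equation is the first one multiplied by a unit: the system is degenerate and you cannot separate $q_n$ from $r_n$ this way. (The inversion symmetry is a consistency check on the shape of the four identities, not a source of new information.)

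The missing ingredient is the spectral parameter $\lambda$. Under the parametrization \eqref{ti} the level set $T=\mathrm{const}$ is a whole curve parametrized by $\lambda$ (with $\zeta$ a function of $p$ alone), and by Proposition \ref{lap} the normalized partition function \eqref{tct} is an $A_1$ theta function of $\lambda$ living in a two-dimensional space. Writing it in the basis $\bigl(\tha(\la\om^{2n};p)^2,\tha(\la\om^{2n+1},\la\om^{2n+2};p)\bigr)$ gives coefficients $X_n,Y_n$ proportional to $q_n(T)$ and $r_n(T)$ via \eqref{aba}, while Corollary \ref{tcpca} writes the same function in the basis $\bigl(\tha(-\om^n\la^2;p^2),\la\tha(-p\om^n\la^2;p^2)\bigr)$ with coefficients proportional to $p_{n-1}(\zeta)$ and $\tilde p_{n-1}(\zeta)$; the explicit change of basis \eqref{ctb} then produces \emph{both} identities for each parity of $n$ in one stroke. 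Two further points: your appeal to Theorem \ref{qrdp} to fix the exponents is circular in this paper (Theorem \ref{qrdp} is itself proved by this very comparison, using $p_{n-1}(0)=1$ from Corollary \ref{pctc} to force $q_n,r_n$ monic of the stated degrees), and the exponent is not $3\deg q_n$ in every residue class (e.g.\ for $n\equiv 3\ \operatorname{mod}\ 6$ one has $3\deg q_n=(n^2-9)/4$, and the extra $+2$ comes from the factor $T/\tau(p)=\tau(p)^2$ in \eqref{abaa} together with Corollary \ref{tzc}).
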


The shift in $n$ is introduced for  convenience. 
See Table \ref{pt} for the first few instances of the polynomials $p_n$.

\begin{table}[htb]
\caption{The polynomials $p_n$.}
\begin{tabular}{ll}
$n$ & $p_n(\zeta)$ \\
\hline
$-1$ & $1$  \\
$0$ & $1$  \\
$1$ & $3\zeta+1$  \\
$2$ & $5\zeta^3+15\zeta^2+7\zeta+1$  \\
$3$ & $\frac 12(35\zeta^6+231\zeta^5+504\zeta^4+398\zeta^3+147\zeta^2+27\zeta+2)$\\[1mm]
$4$ & $\frac 12(63\zeta^{10}+798\zeta^9+4122\zeta^8+11052\zeta^7+16310\zeta^6+13464\zeta^5+6636\zeta^4$ \\
& $\quad+2036\zeta^3+387\zeta^2+42\zeta+2)$\\
$5$ & $\frac 12(231\zeta^{15}+4554\zeta^{14}+39468\zeta^{13}+196922\zeta^{12}+622677\zeta^{11}+1298446\zeta^{10}$\\
&$\quad +1816006\zeta^9+1726302\zeta^8+1140593\zeta^7+535478\zeta^6+181104\zeta^5$\\
&$\quad +44134\zeta^4+7603\zeta^3+882\zeta^2+62\zeta+2)$ \\   
$6$ & $\frac 18(1716\zeta^{21}+50193\zeta^{20}+673530\zeta^{19}+5484050\zeta^{18}+30199260\zeta^{17}$\\
&$\quad+118703208\zeta^{16}+342834244\zeta^{15}+738954900\zeta^{14}+1198556100\zeta^{13}$\\
&$\quad+1470762970\zeta^{12}+1373623128\zeta^{11}+984509064\zeta^{10}+546520100\zeta^{9}$\\
&$\quad+236837400\zeta^{8}+80476380\zeta^{7}+21422188\zeta^6+4430904\zeta^5+699405\zeta^4$\\
&$\quad+81550\zeta^3+6630\zeta^2+336\zeta+8)$\\
\hline 
\end{tabular}\label{pt}
\end{table}

To indicate the meaning of Theorem \ref {pqrt}, we solve for $p_n$, obtaining for $n$ even
\begin{multline}\label{pqre}p_n(\zeta)=\Bigg((\zeta^2+4\zeta+1)^{\frac{n(n+2)}{4}}\tilde q_{n+1}\left(\frac{\zeta(\zeta+1)^4}{2(\zeta^2+4\zeta+1)^3}\right)\\
-\zeta(1+\zeta)^3(\zeta^2+4\zeta+1)^{\frac{(n-2)(n+4)}{4}}\tilde r_{n+1}\left(\frac{\zeta(\zeta+1)^4}{2(\zeta^2+4\zeta+1)^3}\right)\Bigg) \end{multline}
and for $n$ odd
\begin{multline*}p_n(\zeta)=\Bigg((\zeta^2+4\zeta+1)^{\frac{(n+1)^2}{4}}\tilde r_{n+1}\left(\frac{\zeta(\zeta+1)^4}{2(\zeta^2+4\zeta+1)^3}\right)\\
-\zeta(1+\zeta)(\zeta^2+4\zeta+1)^{\frac{(n-1)(n+3)}{4}}\tilde q_{n+1}\left(\frac{\zeta(\zeta+1)^4}{2(\zeta^2+4\zeta+1)^3}\right)\Bigg). \end{multline*}
A priori, the right-hand sides are polynomials of degree 
$n(n+2)/2$, $(n+1)^2/2$, respectively. The degree bound  $n(n+1)/2$
 imposes  relations between high coefficients of the polynomials $q_n$ and $r_n$.
 In the notation of Corollary \ref{ecc}, this implies relations between values of
 $\bar N$ close to the boundary $\partial \Delta$; the details will not be worked out here.

An important property of
the polynomials  $p_n$  is that they appear as solutions to certain linear  equations. 

\begin{theorem}\label{prt}
Consider the 
polynomial equation
\begin{equation}\label{le}AX-BY=C,\end{equation}
where
$$A=(\zeta+1)^2p_{n-1}(\zeta),\qquad B=\zeta^{n+1} \tilde p_n(\zeta),$$
$$C=(1+2\zeta)^{1+\chi(n \text{\emph{ even}})}\left(1+\frac \zeta 2\right)^{1+\chi(n \text{\emph{ odd}})}p_n(\zeta)^2.$$
Then, this equation has a solution $(X,Y)$ such that $X=p_{n+1}$ and $\deg Y= \deg A$.
\end{theorem}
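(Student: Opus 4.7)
The plan is to recast Theorem~\ref{prt} as a divisibility statement and verify the two coprime factors of $B$ separately, using the recursions for $p_n$ from Section~\ref{pps} in one part and a Plücker-type identity coming from the determinantal structure of $\Psi_n$ in the other.

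\emph{Step 1 (reformulation and degree count).} Substituting $X = p_{n+1}$ in \eqref{le}, the existence of a polynomial $Y$ is equivalent to
\begin{equation*}
(\zeta+1)^2\,p_{n-1}(\zeta)\,p_{n+1}(\zeta) - c(\zeta)\,p_n(\zeta)^2 \equiv 0 \pmod{\zeta^{n+1}\tilde p_n(\zeta)},
\end{equation*}
where $c(\zeta)$ denotes the explicit prefactor of $p_n(\zeta)^2$ in $C$. Since $\tilde p_n(0)$ equals the leading coefficient of $p_n$, which is nonzero (by Theorem~\ref{pqrt} combined with Table~\ref{pt}), the factors $\zeta^{n+1}$ and $\tilde p_n(\zeta)$ are coprime, so the congruence splits. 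A routine count using $\deg p_k = k(k+1)/2$ gives $\deg(Ap_{n+1}) = \deg C = n^{2}+n+3$, and once the divisibility by $B$ is established, $Y = (Ap_{n+1} - C)/B$ automatically has degree $\deg A = 2 + n(n-1)/2$.

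\emph{Step 2 (vanishing modulo $\zeta^{n+1}$).} This part should follow by analyzing the Taylor expansion of the left-hand side at $\zeta=0$. The recursions of Section~\ref{pps} control the low-order coefficients of $p_k$ in a uniform way, and an induction on $n$ should show that the first $n+1$ Taylor coefficients vanish. The constant-term check reduces to a numerical identity relating $p_{n-1}(0)$, $p_n(0)$, $p_{n+1}(0)$ and $c(0)$, all of which are accessible directly from the determinantal definition of $p_n$ as a specialization of $S_n$.

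\emph{Step 3 (vanishing modulo $\tilde p_n$).} This is the crux of the argument. At a root $\alpha\neq 0$ of $\tilde p_n$, the number $1/\alpha$ is a root of $p_n$, and the required congruence becomes the identity
\begin{equation*}
(\alpha+1)^2\,p_{n-1}(\alpha)\,p_{n+1}(\alpha) = c(\alpha)\,p_n(\alpha)^2
\end{equation*}
for every such $\alpha$. After the substitution $\zeta \mapsto 1/\eta$ this becomes a bilinear identity in $p_{n-1}, p_n, p_{n+1}$ evaluated at roots of $p_n$, which is precisely the kind of relation produced by Desnanot--Jacobi/Plücker identities applied to the determinant defining $\Psi_n$ in Section~\ref{sfs}, followed by the specialization chain $\Psi_n\to S_n\to P_n\to p_n$ from Section~\ref{pps}. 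The prefactors $(\zeta+1)^2$ and $c(\zeta)$ are expected to appear as Jacobian-type factors along this specialization. The main obstacle lies here: one must identify exactly the right minor identity (the shape of $c(\zeta)$ and the presence of $(\zeta+1)^2$ tightly constrain which rows and columns must be removed) and then verify that the extraneous terms arising from the Plücker expansion are all absorbed into the factor $\zeta^{n+1}\tilde p_n(\zeta)$ against which we are reducing.
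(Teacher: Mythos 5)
Your proposal correctly identifies the toolbox (the Desnanot--Jacobi/Pl\"ucker minor identities for the determinant defining $S_n$) and the right reformulation (divisibility of $Ap_{n+1}-C$ by $B$), but it stops exactly where the actual work begins: both Step 2 (``an induction on $n$ should show\dots'') and Step 3 (``one must identify exactly the right minor identity\dots the main obstacle lies here'') are left as programmes rather than proofs. The missing idea, which the paper supplies, is that no case-splitting into the coprime factors $\zeta^{n+1}$ and $\tilde p_n$ is needed: applying Jacobi's identity to $S_{n+1}$ (Lemma \ref{tp}, in the form \eqref{abr}) at the single specialization $a=2\zeta+1$, $b=\zeta/(\zeta+2)$, $z=2\zeta+1$ produces the exact three-term bilinear identity of Proposition \ref{pyr},
\begin{equation*}
(\zeta+1)^2p_{n+1}p_{n-1}-c(\zeta)\,p_n^2=\zeta^{n+1}\tilde p_n\,y_n,
\end{equation*}
in which the quotient $Y=y_n$ is not an unknown to be shown polynomial but is itself an explicit specialization of $S_n$ (with $n+2$ copies of $2\zeta+1$ and $n-1$ copies of $\zeta/(\zeta+2)$, normalized as in \eqref{ye}); the three polynomials $p_{n\pm1},p_n,\tilde p_n,y_n$ all arise as entries of the same family $S_n(a^{j},b^{k})$, and the prefactors $(\zeta+1)^2$ and $c(\zeta)$ come from the elementary evaluations $G(2\zeta+1,x)=2(\zeta+1)^2x^2$ and \eqref{gsp}. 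This one identity yields both divisibilities simultaneously.

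Two further points in your sketch would need repair even if the identity were granted. First, your Step 3 argues by evaluating at the roots of $\tilde p_n$; this only establishes divisibility if $\tilde p_n$ is squarefree, which is not known (the paper only conjectures irreducibility of $p_n$), so you would have to phrase the argument as a congruence in $\mathbb{Q}[\zeta]$ rather than pointwise, at which point you are back to needing an algebraic identity. Second, $\deg Y=\deg A$ is not ``automatic'' from the degree count: the count only gives $\deg Y\le\deg A$, and equality requires checking that the leading coefficients of $Ap_{n+1}$ and $C$ do not cancel; the paper does this in Corollary \ref{ydc} by extracting the leading term of $y_n$ from the identity using the explicit leading coefficient $2^{-[(n+2)/2]}\binom{2n+2}{n+1}$ of $p_n$ (Proposition \ref{psv}).
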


Theorem \ref{prt} follows from Proposition \ref{pyr} and Corollary \ref{ydc}.
Although we have not been able to prove it, we believe that
the polynomials $A$ and $B$ are always relatively prime. 
In fact, we believe that $p_n$ is irreducible over $\mathbb Q$
for $n\geq 2$. 
Assuming that this is the case, \eqref{le} has a unique solution $(X_0,Y_0)$ with $\deg Y_0<\deg A$. The solutions with
 $\deg Y\leq \deg A$  then have the form $(X,Y)=(X_0+\la B,Y_0+\la A)$, $\lambda\in\mathbb C$. We can specify the solution $X=p_{n+1}$ in this space by 
declaring that the leading term  is
$$2^{-\left[\frac {n+2}2\right]}\binom{2n+2}{n+1}\zeta^{n(n+1)/2},$$
see Proposition \ref{psv}.
Then, Theorem \ref{prt} gives a recursive procedure for constructing the polynomials $p_n$. This gives a comparatively fast
method for computing the partition function $Z_n^{3C}$. 
(As was mentioned at the end of the Introduction, a much faster method has been suggested by Bazhanov and Mangazeev \cite{bm4}.)

The polynomials $p_n$ have coefficients in $\mathbb Z/2^{\mathbb Z}$; see Corollary \ref{pic} for a more precise statement. Moreover, the following facts seem to be true.

\begin{conjecture}\label{pcc}
The polynomials  $p_n$ have positive coefficients.
\end{conjecture}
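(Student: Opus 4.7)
My plan is to try to exhibit $p_n(\zeta)$ as the value of a manifestly positive object, most naturally as a specialization of the three-colour partition function itself. Since $Z_{n+1}^{3C}(t_0,t_1,t_2)$ is, by its definition \eqref{zd}, a sum of monomials with positive integer coefficients, any substitution $t_i=t_i(\zeta)$ by polynomials with non-negative coefficients yields a polynomial in $\zeta$ with non-negative coefficients. The task is therefore to find such a substitution, and a positive monomial prefactor $M(\zeta)$, so that
\[
M(\zeta)\, p_n(\zeta) \;=\; Z_{n+1}^{3C}\bigl(t_0(\zeta),t_1(\zeta),t_2(\zeta)\bigr).
\]

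To implement this, I would combine the inversion formula \eqref{pqre} with Theorem \ref{st}. For each residue of $n$ modulo $6$, one computes explicitly the prefactors multiplying $q_{n+1}(T)$ and $r_{n+1}(T)$ in \eqref{pqre}, using the degrees from Theorem \ref{qrdp} to simplify the powers of $\zeta^2+4\zeta+1$. Matching the resulting linear combination of $q_{n+1}(T)$ and $r_{n+1}(T)$ against the bracket in the appropriate case of Theorem \ref{st} gives a system of three equations for $(t_0,t_1,t_2)$: two constraints fixing the ratio of the coefficients of $q_{n+1}$ and $r_{n+1}$, together with the constraint $T(t_0,t_1,t_2) = 2(\zeta^2+4\zeta+1)^3/(\zeta(\zeta+1)^4)$. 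If one can solve this system with polynomial, non-negative $t_i(\zeta)$, Conjecture \ref{pcc} follows immediately.

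The main obstacle, and the reason the conjecture is hard, is that a naive attempt to solve this system forces $t_0t_1t_2 = c\,\zeta(\zeta+1)^4/2$ and $t_0t_1+t_0t_2+t_1t_2 = c'(\zeta^2+4\zeta+1)$ for certain scalars, and the remaining equation for $t_0+t_1$ typically produces a rational, not polynomial, expression (e.g.\ factors of $\zeta(\zeta+3)/(\zeta+1)$ appear). This reflects the fact that $q_n$ and $r_n$ individually do \emph{not} have positive coefficients, as already visible in Table \ref{pqt}; the positivity of $p_n$ must arise through delicate cancellation between the two terms in \eqref{pqre}. A promising way around this would be to work one level deeper, with the multivariable theta function $\Psi_n$ (equivalently the symmetric polynomial $S_n$) introduced in \S\ref{sfs}: if one can choose a specialization of the variables of $S_n$ in which $p_n$ appears and for which $S_n$ admits a positive expansion (for instance in a suitably chosen monomial or Schur-like basis), positivity would follow.

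A complementary route is induction using the recursion of Theorem \ref{prt}, or alternatively the faster recursion of Bazhanov--Mangazeev \cite{bm4}. Here the hard part is again the subtraction $AX - BY = C$: propagating positivity of $p_{n+1}$ from positivity of $p_{n-1}$ and $p_n$ requires controlling the sign pattern of the auxiliary polynomial $Y$ and the cancellations between $AX$ and $BY$. I expect that a successful inductive argument will need a strengthened inductive hypothesis — for example, a log-concavity or interlacing statement on the coefficients of $p_n$, or a factorization $p_n = \sum_k c_{n,k}(\zeta) f_k(\zeta)$ into manifestly positive pieces compatible with the recursion — and that identifying the right strengthening is the principal difficulty. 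Given the connection noted in \cite{bm4} to the eight-vertex polynomials of \cite{bm1,bm2,bm3} and possibly to Painlev\'e VI, the cleanest proof may ultimately come from that direction rather than from the three-colour side.
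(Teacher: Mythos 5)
What you have written is not a proof, and you should be aware that the statement you are addressing is Conjecture \ref{pcc} of the paper: the author does not prove it either, but only reports a computational verification up to $n=16$ using the recursion of Theorem \ref{prt}. Each of the three routes you sketch is abandoned at the decisive step. For the first, you correctly observe that the positivity of the counting function $N(k_0,k_1,k_2)$ in \eqref{zd} would transfer to $p_n$ under a substitution of the $t_i$ by polynomials in $\zeta$ with non-negative coefficients, but you also correctly conclude that no such substitution is available along the lines you describe. Indeed, the actual passage from $Z_n^{3C}$ to the polynomials $p_{n-1}$ and $\tilde p_{n-1}$ (Corollary \ref{tcpca}) exhibits them as the two coordinates of the partition function in a two-dimensional space of theta functions of $\lambda$, and extracting a single coordinate inevitably introduces differences, so positivity is not inherited. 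Table \ref{pqt} already shows that $q_n$ and $r_n$ have negative coefficients, so any argument through \eqref{pqre} must explain a delicate cancellation between the two terms, which you do not do.

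For the inductive route via $AX-BY=C$ you identify the obstacle (controlling the sign pattern of $Y=y_n$ and the cancellation in the subtraction) but propose no mechanism to overcome it; ``a strengthened inductive hypothesis'' is named but never formulated, let alone shown to propagate through Proposition \ref{pyr}. The suggestion to find a manifestly positive expansion of the symmetric polynomials $S_n$ is likewise only a hope, with no candidate basis exhibited (note that the only positive expansions the paper establishes are the $\zeta=-2$ character formulas of Theorem \ref{tst}, which sit at a specialization where the relevant sign information is lost). In short, your diagnosis of why the obvious approaches fail is accurate and consistent with the paper's treatment, but no part of the proposal constitutes a proof or even a reduction of the conjecture to a verifiable claim; the statement remains open.
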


\begin{conjecture}\label{umc}
The polynomials  $p_n$ are unimodal in the sense that, if 
$$p_n(\zeta)=\sum_{k=0}^{n(n+1)/2}a_k\zeta^k, $$
then, for some $N$,
$$a_0<a_1<\dots<a_{N-1}<a_N>a_{N+1}>\dots>a_{n(n+1)/2}. $$
\end{conjecture}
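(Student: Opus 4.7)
The plan is to combine the recursion in Theorem \ref{prt} with classical techniques for establishing unimodality, while also exploiting the combinatorial meaning of $p_n$ described in Corollary \ref{ecc} and the subsequent discussion. A natural first attempt is to prove the stronger statement that $p_n$ has only real negative roots. Positivity of coefficients (Conjecture \ref{pcc}) together with real-rootedness implies log-concavity via Newton's inequalities, and log-concavity of a positive sequence with no internal zeros forces unimodality. A quick check on the polynomials in Table \ref{pt} is the first thing I would do: verify numerically that the roots of $p_n$ are all real and negative, say up to $n=10$, before committing to this route.

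If real-rootedness holds, the inductive step would proceed from Theorem \ref{prt}: assuming $p_{n-1}$ and $p_n$ have real negative interlacing roots, try to show that the distinguished solution $X=p_{n+1}$ of \eqref{le} inherits the same property. The obstruction is that \eqref{le} is not a three-term linear recursion but is quadratic in $p_n$ (through $C$), so standard interlacing theorems (Hermite--Biehler, or the three-term recursion theory of orthogonal polynomials) do not apply verbatim. One would have to analyse the discriminant of $p_{n+1}$ as a function of the coefficients of $p_n$ and $p_{n-1}$, perhaps invoking the Gauss--Lucas theorem on critical points, or try a generating-function argument in which $\sum_n p_n(\zeta)x^n$ satisfies a workable functional equation.

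As a fall-back, I would try a combinatorial route. By the results of Section \ref{rtcs}, the coefficients of $p_n$ essentially count three-coloured chessboards on diagonal slices of the triangle $\Delta$ of Corollary \ref{ecc}. Unimodality then reduces to constructing, for each $k$ below the mode, an injection from the chessboards counted by $a_k$ into those counted by $a_{k+1}$, and symmetrically above, in the spirit of the classical injection proof of unimodality of Gaussian binomial coefficients. A more structural variant would be to realize $p_n$ as the Hilbert series of a graded module carrying an $\mathfrak{sl}_2$-action or satisfying a hard-Lefschetz type property — such a realization is plausible in view of the role of the 8VSOS transfer matrix, but making it precise would require significant new input. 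The main obstacle, and the reason this conjecture is subtle, is that unimodality of combinatorially defined sequences is notoriously resistant to direct induction: the mode $N$ is not given by any obvious closed form (it appears to lie near but not exactly at $k=n(n+1)/4$), and any inductive argument must track it implicitly. I expect the decisive idea to come from a hidden geometric or representation-theoretic structure carried by $p_n$, rather than from the recursion alone.
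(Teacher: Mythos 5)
This statement is an open conjecture in the paper: the author only verifies it computationally up to $n=16$ (using Theorem \ref{prt}), and offers no proof. So the question is whether your proposal supplies one, and it does not. Your primary route --- real-rootedness plus positivity, hence log-concavity via Newton's inequalities, hence unimodality --- fails at the very first step. The polynomial $p_n$ has degree $n(n+1)/2$ but, according to Conjecture \ref{zc} and Proposition \ref{zp} (and visibly in Figure \ref{zf}, which plots the $105$ complex zeroes of $p_{14}$), only about $[(n+1)/2]$ of its zeroes are real; the overwhelming majority are genuinely complex. The numerical check you propose as a safeguard would indeed reveal this, but it means the entire first paragraph and the interlacing induction of the second paragraph are moot. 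Note also that even the positivity of the coefficients is only Conjecture \ref{pcc}, so you cannot take it as an ingredient.

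The combinatorial fall-back is also not available in the form you describe. The coefficients $a_k$ of $p_n$ are not shown anywhere in the paper to count three-coloured chessboards: the passage from $p_n$ to $Z_n^{3C}$ (Theorem \ref{pqrt} combined with Theorem \ref{st}) goes through the substitution $T=\zeta(\zeta+1)^4/2(\zeta^2+4\zeta+1)^3$ and its inverse, which mixes the coefficients in a complicated way, and the $a_k$ are not even integers in general (they lie in $\mathbb Z/2^{\mathbb Z}$, cf.\ Table \ref{pt} and Corollary \ref{pic}). So there is no established injection target. Your closing remarks correctly identify why the problem is hard (the mode $N$ has no known closed form and shifts between $n(n+2)/4$ and $(n-1)(n+3)/4$ according to the data after Conjecture \ref{umc}), but the proposal as written does not close, and could not be expected to, since the statement remains unproved in the paper itself.
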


Using Theorem \ref{prt}, 
we have verified Conjectures \ref{pcc} and \ref{umc} up to $n=16$. For these values, the maximal coefficient $a_N$ occurs at
$$N=\begin{cases}
n(n+2)/4, & n \text{ even},\quad n\leq 16\\
(n+1)^2/4, & n \text{ odd},\quad  n\leq 7\\
(n-1)(n+3)/4, & n \text{ odd},\quad  9\leq n\leq 15.
\end{cases} $$

Numerical experiments suggest that
the zeroes of $p_n$  form quite remarkable patterns, see Figure \ref{zf}.

\begin{figure}[htb]\caption{The $105$ complex zeroes of $p_{14}$.}\label{zf}
\begin{center}
\vspace*{2ex}
\includegraphics[width=7cm]{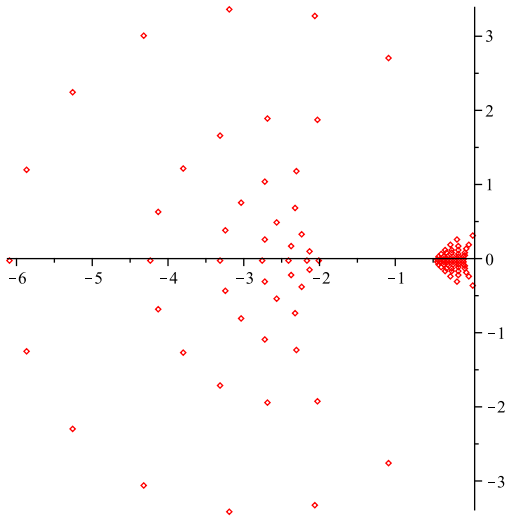}
\end{center}
\end{figure}

We have some partial results on the real zeroes, see  \S \ref{zs} for proofs.

 \begin{conjecture}\label{zc}
The polynomial $p_n$ has exactly  $[(n+1)/2]$ real zeroes.
\end{conjecture}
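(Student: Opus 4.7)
The plan is to approach Conjecture \ref{zc} in two stages: first the lower bound, that $p_n$ has at least $[(n+1)/2]$ real zeroes, and then the matching upper bound, which I expect to be the principal difficulty.

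For the lower bound, the key tool is the Wronskian-type identity of Theorem \ref{prt}, rewritten as
$$(\zeta+1)^2\,p_{n-1}(\zeta)\,p_{n+1}(\zeta) - \zeta^{n+1}\,\tilde p_n(\zeta)\,Y_n(\zeta) = C_n(\zeta)\,p_n(\zeta)^2,$$
where $C_n(\zeta) = (1+2\zeta)^{1+\chi(n\text{ even})}\bigl(1+\tfrac\zeta 2\bigr)^{1+\chi(n\text{ odd})}$. At any real zero $\alpha\ne 0,-1$ of $p_n$, this collapses to
$(\alpha+1)^2\,p_{n-1}(\alpha)\,p_{n+1}(\alpha) = \alpha^{n+1}\,\tilde p_n(\alpha)\,Y_n(\alpha)$,
a relation which rigidly ties the signs of $p_{n-1}$ and $p_{n+1}$ at the real zeros of $p_n$. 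Together with the anchor values $p_n(0)=1$ and controlled evaluations at the special points $\zeta=-1,\,-\tfrac12,\,-2$ (where distinct factors of the identity vanish), I would then run a double induction on $n$ producing, every time $n$ increases by $2$, at least one additional sign change of $p_n$ on the negative real axis. This yields the lower bound $\geq[(n+1)/2]$.

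For the upper bound, I would route through Theorem \ref{pqrt}, which expresses the palindromic combinations $p_{n-1}(\zeta)\mp\zeta^{\bullet}\tilde p_{n-1}(\zeta)$ as explicit polynomials in the single algebraic variable
$$u = \frac{\zeta(\zeta+1)^4}{2(\zeta^2+4\zeta+1)^3}.$$
Since the rational map $\zeta\mapsto u$ has degree six and $\deg q_n,\deg r_n\sim n^2/12$, every real zero of $p_n$ should correspond to a real $\zeta$-preimage (under this map) of a real zero of $q_n$ or $r_n$. Bounding the real zeros of $q_n$ and $r_n$ separately, and carefully bookkeeping how many of the six branches of the inverse give real $\zeta$, should in principle match the conjectured count.

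The principal obstacle is the second stage: the degree-six cover has to be accounted for exactly, and one must rule out spurious real roots arising from unexpected cancellations between the two identities in Theorem \ref{pqrt}. Absent a clean Galois-theoretic account of this cover, an alternative route would be via the thermodynamic limit (Conjecture \ref{tc}), whose predicted zero-distribution, if it can be made rigorous, would pin down the real-root count asymptotically and plausibly also for finite~$n$.
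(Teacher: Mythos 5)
First, a point of status: the statement you are addressing is Conjecture~\ref{zc} of the paper. The author does not prove it; the only results offered about the real zeroes (Proposition~\ref{zp}, Proposition~\ref{zcp}) are \emph{conditional} on Conjecture~\ref{zc} or Conjecture~\ref{nvc}, deriving simplicity, location and interlacing \emph{assuming} the count you are trying to establish. So there is no proof in the paper to compare against, and your text is, by its own admission, a plan rather than a proof. Judged as a plan, your lower-bound stage has a concrete defect. Evaluating the identity of Proposition~\ref{pyr} at a real zero $\alpha$ of $p_n$ kills the $p_n^2$ term and leaves $(\alpha+1)^2p_{n-1}(\alpha)p_{n+1}(\alpha)=\alpha^{n+1}\tilde p_n(\alpha)y_n(\alpha)$, which involves the auxiliary polynomial $y_n$, about whose signs and real zeroes nothing is known (only its degree and leading coefficient are recorded, in Corollary~\ref{ydc}). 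Hence the signs of $p_{n-1}$ and $p_{n+1}$ at the zeroes of $p_n$ are \emph{not} rigidly tied. The paper's own conditional interlacing argument instead evaluates at zeroes of $\tilde p_n$, where the $y_n$ term drops out and one is left with $(\zeta+1)^2p_{n-1}p_{n+1}=C_n\,p_n^2$; but even that version cannot be bootstrapped into an unconditional lower bound, because producing sign changes of $p_{n+1}$ requires already knowing how many real zeroes $\tilde p_n$ has and how $p_{n-1}$ alternates between them --- which is exactly the statement being proved.

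Your upper-bound stage fails at its first step. Theorem~\ref{pqrt} expresses the \emph{combinations} $p_{n-1}(\zeta)\mp\zeta^{\bullet}\tilde p_{n-1}(\zeta)$, not $p_{n-1}$ itself, through $\tilde q_n(u)$ and $\tilde r_n(u)$. Solving back, as in \eqref{pqre}, writes $p_n$ as a $\zeta$-dependent linear combination $\alpha(\zeta)\,\tilde q_{n+1}(u)-\beta(\zeta)\,\tilde r_{n+1}(u)$, so a real zero of $p_n$ is a point where $\alpha\tilde q_{n+1}=\beta\tilde r_{n+1}$, not a real preimage under $\zeta\mapsto u$ of a real zero of $q_{n+1}$ or of $r_{n+1}$. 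The proposed reduction of the real-zero count of $p_n$ to real-zero counts of $q_{n+1}$ and $r_{n+1}$ therefore does not get off the ground, independently of any bookkeeping over the degree-six cover; and the alternative route through the (itself conjectural) thermodynamic limit could at best control the asymptotic density of zeroes, not an exact count for each finite $n$.
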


\begin{conjecture}\label{nvc}
The polynomial $p_n$ does not vanish in the interval $-2<\zeta<-1/2$.
\end{conjecture}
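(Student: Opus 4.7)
The strategy is to use Theorem~\ref{pqrt}, solved for $p_n$, to write $p_n(\zeta)$ on $(-2,-1/2)$ as an explicit combination of $\tilde q_{n+1}$ and $\tilde r_{n+1}$ evaluated at $f(\zeta)=\zeta(\zeta+1)^4/\bigl(2(\zeta^2+4\zeta+1)^3\bigr)$, and then to analyse the sign of that combination. As in \eqref{pqre},
\[
p_n(\zeta)=(\zeta^2+4\zeta+1)^{(n-2)(n+4)/4}\Bigl[(\zeta^2+4\zeta+1)^2\,\tilde q_{n+1}(f(\zeta))-\zeta(1+\zeta)^3\,\tilde r_{n+1}(f(\zeta))\Bigr]
\]
for $n$ even, with an analogous formula for $n$ odd. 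Since the roots $-2\pm\sqrt 3$ of $\zeta^2+4\zeta+1$ lie outside $(-2,-1/2)$, the prefactor is non-vanishing on the interval, and the conjecture reduces to non-vanishing of the bracket.

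\textbf{Change of variable.} The substitution $u=(\zeta+1)^2/\zeta$ (equivalently $u=\eta+2$ with $\eta=\zeta+1/\zeta$) gives $f(\zeta)=u^2/\bigl(2(u+2)^3\bigr)$, so $f$ depends only on $u$ and is invariant under $\zeta\mapsto 1/\zeta$. The map $\zeta\mapsto u$ is a two-to-one covering of $(-1/2,0)$ by $(-2,-1/2)\setminus\{-1\}$, and $u\mapsto f$ is a homeomorphism $(-1/2,0]\to[0,1/27)$. Using monicity of $q_{n+1}$ and $r_{n+1}$ (Theorem~\ref{qrdp}), direct evaluation yields $p_n(-1)=(-2)^{n(n+2)/4}$ for $n$ even and $p_n(-1)=(-2)^{(n+1)^2/4}$ for $n$ odd, both nonzero. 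At the endpoints $\zeta=-2$ and $\zeta=-1/2$, both of which map to $f=1/27$, the bracket becomes a specific $\mathbb Z$-linear combination of $\tilde q_{n+1}(1/27)$ and $\tilde r_{n+1}(1/27)$; these values can be evaluated in terms of $A_{n+1}$ and $C_{n+1}$ by matching Theorem~\ref{st} with \eqref{ttf} at the trigonometric point $T=27$, and the resulting combinations are easily checked to be nonzero.

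\textbf{Interior.} For a putative zero $\zeta_0\in(-2,-1/2)\setminus\{-1\}$ of $p_n$, vanishing of the bracket forces
\[
(\zeta_0^2+4\zeta_0+1)^2\,\tilde q_{n+1}(f_0)=\zeta_0(1+\zeta_0)^3\,\tilde r_{n+1}(f_0),\qquad f_0=f(\zeta_0).
\]
Since $f(1/\zeta_0)=f_0$, the companion equation coming from $p_n(1/\zeta_0)=0$ reads, after clearing denominators,
\[
(\zeta_0^2+4\zeta_0+1)^2\,\tilde q_{n+1}(f_0)=(1+\zeta_0)^3\,\tilde r_{n+1}(f_0).
\]
If both held, subtraction would force $\tilde q_{n+1}(f_0)=\tilde r_{n+1}(f_0)=0$, i.e.\ $q_{n+1}$ and $r_{n+1}$ would share a real root at $1/f_0\geq 27$. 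Any such common root would, via Theorem~\ref{st}, make $Z_{n+1}^{3C}$ vanish at some real positive weights $(t_0,t_1,t_2)$ with $T(t_0,t_1,t_2)\geq 27$ (by AM--GM every $T\geq 27$ is attained at positive $t_i$), contradicting the manifest positivity of the partition function as a weighted count.

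\textbf{Main obstacle.} The previous step only rules out \emph{symmetric pairs} $\{\zeta_0,1/\zeta_0\}$ of zeros in $(-2,-1/2)$; an isolated zero with $p_n(1/\zeta_0)\neq 0$ is not immediately excluded, and this is the crux of the difficulty. My plan is to close the gap by induction on $n$ using Theorem~\ref{prt}: at a putative $\zeta_0$ with $p_n(\zeta_0)=0$ the identity $(\zeta_0+1)^2 p_{n-1}(\zeta_0)p_{n+1}(\zeta_0)=\zeta_0^{n+1}\tilde p_n(\zeta_0)Y(\zeta_0)$ couples $p_{n\pm 1}$ to the implicitly defined $Y$, and combining the inductive hypothesis $p_{n-1}\neq 0$ on $(-2,-1/2)$ with the degree bound $\deg Y=\deg A$ and the uniqueness normalisation of Proposition~\ref{psv} should constrain $Y$ enough to force a contradiction. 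Pinning $Y$ down sharply enough to execute this step is precisely the obstacle which keeps the full Conjecture~\ref{nvc} open, and until it is resolved the plan above only delivers the partial results that $p_n$ is nonzero at $\zeta\in\{-2,-1,-1/2\}$ and at any symmetric pair of potential zeros in the interval.
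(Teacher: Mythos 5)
The statement you are proving is Conjecture~\ref{nvc} of the paper, and the paper does \emph{not} prove it: it is left open, and the only thing established about it is Proposition~\ref{zp}, which derives it \emph{conditionally} from Conjecture~\ref{zc} (on the number of real zeroes of $p_n$) by an interlacing induction based on the recursion of Proposition~\ref{pyr}. So there is no proof in the paper to match yours against, and any complete argument you gave would have been a new result. Your proposal, as you yourself say in the final paragraph, is not complete either; the honest conclusion is that you have obtained some correct partial information but not the conjecture.

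The partial steps are essentially sound. Reducing to the bracket via \eqref{pqre} is fine since $\zeta^2+4\zeta+1$ has no root in $(-2,-1/2)$; the evaluation $p_n(-1)\neq 0$ follows from monicity (Theorem~\ref{qrdp}) and agrees with \eqref{seva}; and the symmetric-pair exclusion is a genuinely nice observation not in the paper: if both $\zeta_0$ and $1/\zeta_0$ were zeroes, subtracting the two cleared equations forces $(\zeta_0-1)(1+\zeta_0)^3\tilde r_{n+1}(f_0)=0$, hence a common root of $q_{n+1}$ and $r_{n+1}$ at $T=1/f_0>27$, which via Theorem~\ref{st} would make $Z_{n+1}^{3C}$ vanish at positive weights (every $T>27$ is attained, e.g.\ with $t_0=t_1=1$), contradicting positivity of the generating function. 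The gap is exactly where you locate it: $p_n$ is not palindromic, so $p_n(\zeta_0)=0$ gives no information about $p_n(1/\zeta_0)$, and an unpaired zero in $(-2,-1/2)$ is not excluded. The proposed repair via Theorem~\ref{prt} does not obviously close this: at a putative zero the identity of Proposition~\ref{pyr} only equates $(\zeta_0+1)^2p_{n-1}(\zeta_0)p_{n+1}(\zeta_0)$ with $\zeta_0^{n+1}\tilde p_n(\zeta_0)y_n(\zeta_0)$, and since $y_n(\zeta_0)$ is not controlled (only its degree and leading coefficient are known, Corollary~\ref{ydc}), no sign contradiction follows; this is the same obstruction that leaves the conjecture open in the paper, where the only available route is through the stronger Conjecture~\ref{zc}.
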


\begin{proposition}\label{zp}
Assume \emph{Conjecture \ref{zc}}. Then,  all real zeroes of  $p_n(\zeta)$ are simple, and contained in the interval $-1/2<\zeta<0$ if $n$ is odd and in  $\zeta<-2$ if $n$ is even. Moreover, the real zeroes of $ p_{2n}$ interlace the real zeroes of $\tilde p_{2n+1}$, which in turn alternate left of the real  zeroes of $ p_{2n+2}$. In particular, \emph{Conjecture  \ref{zc}} implies 
\emph{Conjecture \ref{nvc}}.
\end{proposition}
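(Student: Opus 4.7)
The plan is induction on $n$ using the linear equation $A p_{n+1} - B Y_n = C$ from Theorem \ref{prt}, where
$$A = (\zeta+1)^2 p_{n-1},\quad B = \zeta^{n+1}\tilde p_n,\quad C = (1+2\zeta)^{1+\chi(n\text{ even})}\bigl(1+\tfrac{\zeta}{2}\bigr)^{1+\chi(n\text{ odd})}p_n^2,$$
with base cases $n=0,1$ read from Table \ref{pt}. The essential feature of $C$ is that it is $p_n^2$ times an explicit sign factor, so its sign on each of $(-\infty,-2)$, $(-2,-1/2)$, $(-1/2,0)$, $(0,\infty)$ is transparent, and it vanishes precisely at the real zeros of $p_n$ together with $\zeta=-1/2$ and $\zeta=-2$.

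The core step is to evaluate the equation at a real zero $\eta$ of $\tilde p_n$, which gives $(\eta+1)^2 p_{n-1}(\eta)\,p_{n+1}(\eta) = C(\eta)$. The inductive hypothesis places the real zeros of $\tilde p_n$ in the interval opposite to those of $p_n$, and the assumed interlacing with $p_{n-1}$ determines the sign of $p_{n-1}(\eta)$ at each such $\eta$. Together with the known sign of $C(\eta)$, this forces $p_{n+1}$ to take strictly alternating signs along the real zeros of $\tilde p_n$, producing $[(n+2)/2]-1$ sign changes of $p_{n+1}$ in the predicted interval. The missing sign change is recovered by comparing with $p_{n+1}(0)=1$ (from evaluating the equation at $\zeta=0$, using $p_{n-1}(0)=p_n(0)=1$) and with the sign of $p_{n+1}$ at $-\infty$ (from the positive leading coefficient of Proposition \ref{psv} together with the parity of $\deg p_{n+1}$), which locates one further real zero at the appropriate end of the chain. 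Conjecture \ref{zc} then forces the total number of real zeros to equal $[(n+2)/2]$, so the zeros produced in this way are all of them: they are simple, lie entirely in the claimed interval, and interlace with the zeros of $\tilde p_n$ as asserted.

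This in one stroke settles simplicity, location, and the interlacing $p_{2n}\leftrightarrow \tilde p_{2n+1}$; the ``alternate left of'' relation between $\tilde p_{2n+1}$ and $p_{2n+2}$ falls out by running the same analysis one step further and exploiting the fact that $\zeta=-2$ is a \emph{double} zero of $C$ when $n$ is odd. Conjecture \ref{nvc} is then immediate, since every real zero has now been placed outside $(-2,-1/2)$. The main obstacle is bookkeeping: $\deg p_n$ has period $4$ parity in $n$, the signs of $1+2\zeta$ and $1+\zeta/2$ flip as one moves between the three real intervals, and the inductive hypothesis must simultaneously carry the location of the real zeros of $p_n$, their interlacing with those of $p_{n-1}$ through $\tilde p_n$, and enough information at the endpoints $-1/2$ and $-2$ to guarantee that the one ``extra'' real zero found above lands on the correct side of the extremal zero of $\tilde p_n$.
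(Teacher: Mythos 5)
Your core mechanism coincides with the paper's: induct on $n$, evaluate the relation of Proposition \ref{pyr} at a real zero $\eta$ of $\tilde p_n$ (where the term $\zeta^{n+1}\tilde p_n(\zeta)y_n(\zeta)$ drops out) to get $(\eta+1)^2p_{n-1}(\eta)p_{n+1}(\eta)=C(\eta)$, read off the sign of $p_{n+1}$ at these points from the inductive interlacing together with the sign of $C$, and then invoke Conjecture \ref{zc} to conclude that the zeroes so produced are all of them. Up to that point the proposal is sound and is exactly what the paper does.

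The genuine gap is in how you anchor the extremal zero(es) of the chain. You propose to use $p_{n+1}(0)=1$ and the sign of $p_{n+1}$ at $-\infty$. This data only confines the leftover zero to the complement of the chain inside $(-\infty,0)$: in the even-index case it places the last zero in $(b_{\max},0)$, where $b_{\max}<-2$ is the largest real zero of $\tilde p_{2n+1}$, and nothing in your argument prevents it from landing in $[-2,0)$. That would destroy both the location claim ($\zeta<-2$ for even index) and the deduction of Conjecture \ref{nvc}, which is the point of the proposition. The paper closes this hole with the explicit special values of Proposition \ref{psv}: $p_{2n+2}(-2)=A_{2n+3}>0$ from \eqref{sevc} pins the last zero in $(b_{\max},-2)$, and in the complementary step the value $p(-1/2)$ from \eqref{sevd} plays the analogous role at the other critical endpoint. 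You do acknowledge needing ``enough information at the endpoints $-1/2$ and $-2$,'' but that information is precisely the missing ingredient, not deferred bookkeeping; without \eqref{sevc} and \eqref{sevd} the induction does not close. A secondary slip: in the step producing $p_{n+1}$ from $\tilde p_n$ with $n$ even, $\tilde p_n$ has only $[(n+1)/2]=n/2$ real zeroes and hence $n/2-1$ gaps, while $p_{n+1}$ has $n/2+1$ real zeroes, so \emph{two} zeroes remain to be located (one at each end of the chain), not one; your count of ``$[(n+2)/2]-1$ sign changes'' is off by one there, and the two extra zeroes must be anchored at $0$ and at $-1/2$ respectively, again via Proposition \ref{psv} rather than via the sign at $-\infty$.
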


The polynomials $p_n$ can be embedded in a more general family of polynomials  $P_n(x,\zeta)$, which are of degree $n$ in
 $x$  and degree $[n(n+2)/2]$ in $\zeta$. To be precise,
$$P_n(1+2\zeta,\zeta)=(1+2\zeta)^{\left[\frac n2\right]}p_n(\zeta).$$
We give the first few instances of $P_n$ in  Table \ref{ppt}.

\begin{table}[htb]
\caption{The polynomials $P_n$.}
\begin{tabular}{ll}
$n$ & $P_n(x,\zeta)$ \\
\hline
$0$ & $1$\\
$1$ & $x+\zeta$ \\
$2$ & $\frac12\Big((\zeta+2)(3\zeta+1)x^2+\zeta(\zeta+3)(3\zeta+1)x+\zeta^2(\zeta+3)(2\zeta+1)\Big)$\\
$3$ & $\frac12\Big((\zeta+2)(5\zeta^3+15\zeta^2+7\zeta+1)x^3+\zeta(\zeta+4)(5\zeta^3+15\zeta^2+7\zeta+1)x^2$\\
&$\quad+\zeta^2(4\zeta+1)(\zeta^3+7\zeta^2+15\zeta+5)x
+\zeta^3(2\zeta+1)(\zeta^3+7\zeta^2+15\zeta+5)\Big)$  \\
$4$ & $\frac 18\Big((\zeta+2)^2(35\zeta^6+231\zeta^5+504\zeta^4+398\zeta^3+147\zeta^2+27\zeta+2)x^4$\\
&$\quad+\zeta(\zeta+5)(\zeta+2)(35\zeta^6+231\zeta^5+504\zeta^4+398\zeta^3+147\zeta^2+27\zeta+2)x^3$ \\
&$\quad + 3\zeta^2(10\zeta^8+139\zeta^7+790\zeta^6+2245\zeta^5$\\
&\hfill$+3232\zeta^4+2245\zeta^3+790\zeta^2+139\zeta+10)x^2 $\\
&$\quad+\zeta^3(2\zeta+1)(5\zeta+1)(2\zeta^6+27\zeta^5+147\zeta^4+398\zeta^3+504\zeta^2+231\zeta+35)x$\\
& $\quad +\zeta^4(2\zeta+1)^2(2\zeta^6+27\zeta^5+147\zeta^4+398\zeta^3+504\zeta^2+231\zeta+35)\Big)$\\
\hline
\end{tabular}\label{ppt}
\end{table}

The polynomials $P_n$ 
 satisfy a three-term recursion of the form
$$A_nP_{n+1}=B_nP_n+C_nP_{n-1}, $$
where $A_n=A_n(x,\zeta)$ and $C_n=C_n(x,\zeta)$ are quadratic polynomials in $x$, while $B_n=B_n(x,\zeta)$ is  cubic in $x$, see Proposition \ref{prp}.
This is   reminiscent of the recursion 
satisfied by ortho\-gonal polynomials (where $A_n$ and $C_n$ are 
constants and $B_n$ is linear). It also seems that $P_n$ resemble orthogonal polynomials with respect to their zeroes.
Indeed, the following fact is proved in \S \ref{zs}.

\begin{proposition}\label{zcp} Assume \emph{Conjecture \ref{nvc}} (or, in view of \emph{Proposition \ref{zp}}, \emph{Conjecture \ref{zc}}). Then, if $-2<\zeta<-1/2$ and $\zeta\neq -1$,  all zeroes of $P_n(x,\zeta)$ are simple
and positive. 
Moreover,  the zeroes of $P_{n+1}$
interlace those of  $P_n$.
\end{proposition}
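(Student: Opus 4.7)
The plan is to proceed by induction on $n$, using the three-term recursion from Proposition \ref{prp} to mimic the classical argument that orthogonal polynomials have simple, real, interlaced zeroes.

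For the base cases, $P_0\equiv 1$ has no zeroes, and $P_1(x,\zeta)=x+\zeta$ has the single zero $x=-\zeta$, which lies in $(1/2,2)$ for $\zeta\in(-2,-1/2)$, trivially interlacing the empty zero set of $P_0$. For the inductive step, I would assume that $P_n(\cdot,\zeta)$ has $n$ simple positive zeroes $0<x_1<\cdots<x_n$ interlacing the $n-1$ zeroes of $P_{n-1}(\cdot,\zeta)$. Evaluating the recursion at $x=x_i$ kills the middle term and yields
$$A_n(x_i,\zeta)\,P_{n+1}(x_i,\zeta)=C_n(x_i,\zeta)\,P_{n-1}(x_i,\zeta).$$
Assuming that the quadratic-in-$x$ polynomials $A_n(x,\zeta)$ and $C_n(x,\zeta)$ each keep a constant non-zero sign on $x>0$ for $\zeta$ in the specified range, the ratio $C_n(x_i,\zeta)/A_n(x_i,\zeta)$ has a single sign across $i$, so the signs of $P_{n+1}(x_i,\zeta)$ are governed entirely by those of $P_{n-1}(x_i,\zeta)$. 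By the interlacing hypothesis, $P_{n-1}$ alternates sign at the $x_i$'s, producing at least one zero of $P_{n+1}$ in each of the $n-1$ open intervals $(x_i,x_{i+1})$.

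To supply the two remaining zeroes and complete the count, I would examine the boundary behavior of $P_{n+1}$ at $x=0$ and $x=+\infty$. The sign at infinity is that of the leading coefficient in $x$, which is visibly a product of $(\zeta+2)$ and $p_{n}$-type factors (cf.\ Table \ref{ppt}); by Conjecture \ref{nvc} this has a definite sign on $\zeta\in(-2,-1/2)\setminus\{-1\}$. The value $P_{n+1}(0,\zeta)$ similarly factors as a product of $\zeta^{k}$, $(2\zeta+1)^{\ell}$ and other factors manifestly non-vanishing on the specified interval. Comparing these signs with the signs at $x_1$ and $x_n$ forces one further zero in each of $(0,x_1)$ and $(x_n,\infty)$, yielding $n+1$ sign-change zeroes of a polynomial of exact degree $n+1$; thus all zeroes are simple and accounted for, and the interlacing with $P_n$ is automatic. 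Conjecture \ref{nvc} also enters through the identity $P_n(1+2\zeta,\zeta)=(1+2\zeta)^{[n/2]}p_n(\zeta)$: since $1+2\zeta<0$ on the interval, this confirms that $x=1+2\zeta$ is never a zero of $P_n$, which keeps the negative axis free of zeroes throughout the induction.

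The main obstacle is verifying the sign hypothesis on the recursion coefficients $A_n(x,\zeta)$ and $C_n(x,\zeta)$ for $x>0$ and $\zeta\in(-2,-1/2)\setminus\{-1\}$. Unlike in the orthogonal-polynomial setting, where these coefficients are constants in $x$, here they are quadratic, so I must rule out positive $x$-roots. I would attempt this by reading off their explicit form from Proposition \ref{prp} and exhibiting factorizations in which each $x$-root is manifestly non-positive on the given $\zeta$-interval, perhaps involving $\zeta$-dependent shifts of the form $x+(\text{something positive})$. The exclusion of $\zeta=-1$ and of the endpoints $\zeta=-2,-1/2$ is presumably dictated precisely by vanishing loci of $A_n$ or $C_n$, or of the leading and constant coefficients of $P_n$, as strongly suggested by the $(\zeta+2)$, $\zeta$, and $(2\zeta+1)$ factors appearing in Table \ref{ppt}. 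Once this sign control is in place, the induction runs routinely.
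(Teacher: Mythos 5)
Your proposal follows essentially the same route as the paper: induction via the three-term recursion of Proposition \ref{prp}, evaluating at the zeroes of $P_n$ so that the signs of $P_{n+1}$ there are controlled by those of $P_{n-1}$, and then picking up the two extra zeroes from the signs of $P_{n+1}(0,\zeta)$ and of the leading coefficient. The sign control you flag as the main obstacle is resolved exactly as you anticipate: the $x$-roots of $A_n$ are $2\zeta+1$ and $\zeta/(\zeta+2)$, both negative on the given interval, the remaining factors reduce via \eqref{ppc} to $p_n$, $\tilde p_n$, $p_{n-1}$, $\tilde p_{n-1}$, whose signs (and non-vanishing) are supplied by Conjecture \ref{nvc} through Lemma \ref{sl}.
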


Computer calculations suggest that, if $-2<\zeta<-1$, the zeroes are in fact contained in the interval $x>1$, while if  $-1<\zeta<-1/2$, they are contained
in $0<x<1$.

\subsection{Symmetric polynomials $S_n$}

The key to the proof of most our results is that the
 polynomials $P_n$ and  $p_n$ can be obtained as specializations of  certain  symmetric polynomials $S_n$ of $2n+1$ variables.
To define them, we introduce the elementary polynomials
\begin{equation}\label{fgh}f(x)=(\zeta+2)x-\zeta, \qquad g(x)=(x-1)(x-\zeta), \qquad h(x)=x^2(x-(2\zeta+1)),\end{equation}
 and then let
\begin{equation}\label{p}\begin{split}F(x,y,z)&=\frac 1{(y-x)(z-x)(z-y)}\,\det\left(\begin{matrix}f(x) & f(y) &f(z)\\
g(x)&g(y)&g(z)\\
h(x)&h(y)&h(z)
\end{matrix}\right)\\
&=(\zeta+2)xyz-\zeta(xy+yz+xz+x+y+z)+\zeta(2\zeta+1),\end{split}\end{equation}
\begin{multline}\label{q}G(x,y)=\frac 1{y-x}\,\det\left(\begin{matrix}f(x) & f(y)\\
h(x)&h(y)
\end{matrix}\right)\\
=(\zeta+2)xy(x+y)-\zeta(x^2+y^2)-2(\zeta^2+3\zeta+1)xy+\zeta(2\zeta+1)(x+y).
\end{multline}
In this notation, 
\begin{multline}\label{sn}S_n(x_1,\dots,x_n,y_1,\dots,y_n,z)\\
=\frac{\prod_{i,j=1}^nG(x_i,y_j)}{\prod_{1\leq i<j\leq n}(x_j-x_i)(y_j-y_i)}\,\det_{1\leq i,j\leq n}\left(\frac{F(x_i,y_j,z)}{G(x_i,y_j)}\right),
 \end{multline}
the dependence on  $\zeta$ being suppressed from the notation. 
Though it is not  apparent from this definition, 
 $S_n$ is a symmetric polynomial of all $2n+1$ variables, see
 \S \ref{us}. 
 Moreover,
 \begin{multline*}S_n\Big(x,\underbrace{2\zeta+1,\dots,2\zeta+1}_{n},\underbrace{\frac\zeta
{\zeta+2},\dots,\frac\zeta{\zeta+2}}_{n}\Big)\\
=(-1)^{\binom{n+1}2}\zeta^{n^2}(1+\zeta)^{n^2}(1+2\zeta)^{\big[\frac{(n-1)^2}4\big]}\left(1+\frac\zeta2\right)^{\big[\frac{(n-1)^2}4\big]-n^2}P_n(x,\zeta),
\end{multline*}
 see Proposition \ref{efp}.

We mention that the special case $\zeta=-2$  of  $S_n$ is an orthogonal
character: 
$$S_n(x_1,\dots,x_{2n+1})\Big|_{\zeta=-2}=2^{n^2}\chi_{(n,n,n-1,n-1,\dots,1,1,0)}^{\mathfrak{so}(4n+3)}(t_1,\dots,t_{2n+1}),$$
where $x_i=-(1+t_i+t_i^{-1})$; 
see Theorem \ref{tst}  for a more general result. For general $\zeta$,  $S_n$  
can probably be interpreted as  an affine Lie algebra character; see Remark~\ref{alr}.

\subsection{Thermodynamic limit}
All  results mentioned so far concern properties of the partition function for fixed $n$. However, from the viewpoint of statistical mechanics,
 the main problem is to investigate the asymptotics  as $n\rightarrow\infty$. A particularly important quantity is the 
 free energy per volume, which we identify with the limit
\begin{equation}\label{fe}f(t_0,t_1,t_2)=\lim_{n\rightarrow\infty}\frac{\log Z_n^{3C}(t_0,t_1,t_2)}{n^2}. \end{equation}
 We propose the following explicit expression for $f$. 
In \S \ref{tds} we present a formal derivation of this result; to give a rigorous proof is presumably quite difficult.

\begin{conjecture}\label{tc}
Assume that the parameters $t_i$ are all positive, and let
 $\zeta$ be any positive solution of
\begin{equation}\label{tz}T=\frac{(t_0t_1+t_0t_2+t_1t_2)^3}{(t_0t_1t_2)^2}=\frac{2(\zeta^2+4\zeta+1)^3}{\zeta(\zeta+1)^4}.\end{equation}
Then, 
the free energy per volume is given by
\begin{equation}\label{fee}f(t_0,t_1,t_2)=\frac 13\log(t_0t_1t_2)+\log
\left(\frac{(\zeta+2)^{\frac 34}(2\zeta+1)^{\frac34}}{2^\frac 23\zeta^{\frac{1}{12}}(\zeta+1)^{\frac43}}\right). \end{equation}
\end{conjecture}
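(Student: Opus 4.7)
The plan is to reduce the conjecture to the asymptotic growth rate of the polynomials $p_n(\zeta)$, and then to analyze this growth via a WKB-type ansatz driven by the recursion in Theorem~\ref{prt}. By Theorem~\ref{st}, the polynomial part of $Z_n^{3C}$ factors as $(t_0t_1t_2)^{n(n+2)/3+O(n)}$ times a polynomial in $T=T(t_0,t_1,t_2)$ of degree $O(n^2)$. Dividing by $n^2$ and passing to the limit immediately yields the first summand $\tfrac13\log(t_0t_1t_2)$; only the non-symmetric piece depending on $T$ (or equivalently on $\zeta$, via the substitution of Theorem~\ref{pqrt}) contributes to the non-trivial term in \eqref{fee}. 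So the task is to evaluate
\[
\phi(\zeta)\;:=\;\lim_{n\to\infty}\frac{\log p_n(\zeta)}{n^2},
\]
and then combine it with the explicit prefactors $(\zeta^2+4\zeta+1)^{n^2/4}$, $(1\pm\zeta)$, etc.\ that appear in Theorem~\ref{pqrt}, cancelling the $\log(\zeta^2+4\zeta+1)$ contributions and producing the claimed combination $\log\bigl((\zeta+2)^{3/4}(2\zeta+1)^{3/4}/(2^{2/3}\zeta^{1/12}(\zeta+1)^{4/3})\bigr)$.

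For $\phi(\zeta)$ itself, I would assume (non-rigorously, following the spirit of a saddle-point/WKB analysis) the existence of an expansion $\log p_n(\zeta)=n^2\phi(\zeta)+n\psi(\zeta)+O(1)$ for $\zeta$ in an appropriate real interval. Plugging this ansatz into the identity $AX-BY=C$ of Theorem~\ref{prt}, with $A=(\zeta+1)^2 p_{n-1}$, $B=\zeta^{n+1}\tilde p_n$, and $C=(1+2\zeta)^{\cdot}(1+\zeta/2)^{\cdot}p_n^2$, and using $\log\tilde p_n(\zeta)=\tfrac{n(n+1)}{2}\log\zeta+\log p_n(1/\zeta)$, the dominant $n^2$-balance produces a relation between $\phi(\zeta)$, $\phi(1/\zeta)$ and the $\log$ of the elementary factors, in the style of a Riemann--Hilbert/density-of-zeroes equation. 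Combined with the two boundary conditions $\phi(0)=0$ (from $p_n(0)\to 1$, cf.\ Table~\ref{pt}) and $\phi(\zeta)\sim\tfrac12\log\zeta$ as $\zeta\to\infty$ (from the known leading coefficient $2^{-[(n+2)/2]}\binom{2n+2}{n+1}$ in Proposition~\ref{psv}), this should pin down $\phi(\zeta)$ uniquely. Integrating the functional equation and checking the sanity point $\zeta=1$, $T=27$ (where the conjecture reduces to the known $A_n\sim(3\sqrt3/4)^{n^2}$, i.e.\ $f=\log(3\sqrt3/4)$) provides a consistency check.

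The main obstacle is justifying the WKB ansatz itself: proving that $n^{-2}\log p_n(\zeta)$ converges, and that the dominant balance in the recursion extracts a clean algebraic/differential equation for $\phi$. A complication is that the coefficient $B=\zeta^{n+1}\tilde p_n$ already carries an $n$-dependent exponential $\zeta^{n+1}$ that couples non-trivially with the leading growth of $\tilde p_n$, so one must track both $\phi(\zeta)$ and $\phi(1/\zeta)$ simultaneously; equivalently, one must identify which of the three terms $AX$, $BY$, $C$ dominates in each $\zeta$-region. A more tractable substitute for the recursion of Theorem~\ref{prt} might be the simpler conjectural recursion of Bazhanov--Mangazeev \cite{bm4} mentioned in the note added in proof, or alternatively a saddle-point analysis of the $(2n+1)$-variable determinantal representation \eqref{sn} of the symmetric polynomial $S_n$. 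In either case, the non-rigorous character of the argument is unavoidable at this stage and should be openly flagged.
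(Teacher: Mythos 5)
Your overall strategy---reduce \eqref{fee} to the growth rate $\lim_n n^{-2}\log p_n(\zeta)$ and extract that rate from the recursion of Proposition \ref{pyr}/Theorem \ref{prt}---is exactly the paper's, and you correctly flag that the argument can only be heuristic. But there is a genuine gap at the heart of your plan: you never actually determine $\phi(\zeta)$. You observe that the term $B Y=\zeta^{n+1}\tilde p_n(\zeta)y_n(\zeta)$ couples $\phi(\zeta)$ to $\phi(1/\zeta)$ and that one must decide which of $AX$, $BY$, $C$ dominates, and you leave this as ``the main obstacle.'' Pointwise in $\zeta$ this obstacle is real: at $\zeta=1$, for instance, all three terms are of the same exponential order $e^{cn^2}$, so no clean dominant balance emerges, and the proposed functional equation plus the boundary conditions $\phi(0)=0$, $\phi(\zeta)\sim\tfrac12\log\zeta$ is not shown to have a unique (or any explicit) solution. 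The paper's derivation resolves precisely this point by working in the topology of formal power series in $\zeta$: there the term $\zeta^{n+1}\tilde p_n y_n$ is $\mathcal O(\zeta^{n+1})$ and hence negligible, so Proposition \ref{pyr} collapses to the two-term relation $(1+\zeta)^2p_{n+1}p_{n-1}=(1+2\zeta)^{1+\chi(n\text{ even})}(1+\tfrac\zeta2)^{1+\chi(n\text{ odd})}p_n^2+\mathcal O(\zeta^{n+1})$. Iterating once (to average out the parity-dependent exponents) and assuming regularity of the limit in the form $\phi_n^2/(\phi_{n+2}\phi_{n-2})\to1$ for $p_n=g^{n^2}\phi_n$ gives directly $g(\zeta)^8=(1+2\zeta)^6(1+\tfrac\zeta2)^6/(1+\zeta)^8$, with no reference to $\phi(1/\zeta)$ at all. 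Without this (or an equivalent) mechanism your outline stalls exactly where the computation has to happen.

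A secondary difference: you propose to assemble the free energy from Theorems \ref{st} and \ref{pqrt}, which forces you to track the prefactors $(\zeta^2+4\zeta+1)^{n^2/4}$ and the full polynomials $q_n$, $r_n$. The paper instead feeds $g(\zeta)$ into Corollary \ref{tcpca}, where $Z_n^{3C}$ is already written in terms of $p_{n-1}$, $\tilde p_{n-1}$, $\eta^{n^2/4}$ and $\theta(\lambda^3;p^3)^{n^2+2n+3}$; since $t_0t_1t_2=\theta(\lambda^3;p^3)^{-3}$ and $\zeta(\zeta+1)^4=2\eta^3$ (Lemma \ref{ecl}), the two summands of \eqref{fee} drop out immediately. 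Your route could be made to work but would require additional bookkeeping of the leading behaviour of $q_n$ and $r_n$ that the paper avoids.
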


Note that, by the  arithmetic-geometric inequality,
$T\geq 27$.
Using that
\begin{equation}\label{tr}\frac{2(\zeta^2+4\zeta+1)^3}{\zeta(\zeta+1)^4}-27=\frac{(\zeta-1)^4(\zeta+2)(2\zeta+1)}{\zeta(\zeta+1)^4}, \end{equation}
it is easy to check that \eqref{tz} always has two positive solutions, except in the case $T=27$, when $\zeta=1$.
Moreover, the two solutions are related by $\zeta\mapsto \zeta^{-1}$, which does not change the right-hand side of \eqref{fee}.

As an example, when  $t_0=t_1=t_2=\zeta=1$, \eqref{fee} gives
$$f(1,1,1)=\log\left(\frac{3\sqrt 3}4\right), $$
which agrees with the known asymptotics \cite{bf}
\begin{equation}\label{aa}Z_n^{3C}(1,1,1)=A_n\sim  Cn^{-\frac 5{36}}\left(\frac{3\sqrt 3}4\right)^{n^2},\qquad n\rightarrow\infty. \end{equation}

One should compare Conjecture \ref{tc} with Baxter's result for \emph{periodic} boundary conditions \cite{b2}. In that case, one still has
$$f(t_0,t_1,t_2)=\frac 13\log(t_0t_1t_2)+\log W,$$
with $W$  a function only of $T$.  Baxter gives the formula
$$W^2=\frac{64(1-9t^2)^{\frac 23}}{27(1+t)^3(1-3t)}, $$
where $t$ is the unique solution of
$$\frac T{27}=\frac{(1-3t^2)^3}{1-9t^2} $$
such that $0\leq t<1/3$. It is straight-forward to check that
$t=(\zeta-1)/3(\zeta+1)$, where  $\zeta$ is the unique solution of \eqref{tz} such that $\zeta\geq 1$. This gives
$$W=W_{\text{per}}=\frac{2^{\frac 53}\zeta^\frac 13 (\zeta+1)^\frac 43}{(2\zeta+1)^\frac 32}, $$
which is manifestly different from our conjectured formula 
$$W_{\text{DWBC}}=\frac{(\zeta+2)^{\frac 34}(2\zeta+1)^{\frac 34}}{2^\frac 23\zeta^{\frac{1}{12}}(\zeta+1)^{\frac 43}} $$
for domain wall boundary conditions. Note
 the intriguing relation
$$W_{\text{DWBC}}(\zeta)=\frac 2{\sqrt{W_{\text{per}}(\zeta)W_{\text{per}}(1/\zeta)}}.$$

\section{Preliminaries}
\label{ps}

\subsection{Theta functions}\label{tfs}

We will work on the multiplicative  torus $\mathbb C^\ast/\{z=pz\}$, where 
$\mathbb C^\ast =\mathbb C\setminus\{0\}$ and 
$0<|p|<1$. We introduce the  theta function
$$\theta(x;p)=\prod_{j=0}^\infty(1-p^j x)(1-p^{j+1}/ x). $$
 We often employ condensed notation such as
$$\tha(a_1,\dots,a_n;p)=\tha(a_1;p)\dotsm\tha(a_n;p), $$
$$\tha(xy^{\pm};p)=\tha(xy;p)\tha(xy^{-1};p). $$

The most fundamental relations for the theta function are
$$\tha(px;p)= \tha(x^{-1};p)=-x^{-1}\tha(x;p), $$
together with the addition formula
\begin{equation}\label{tad}\tha(xz^{\pm},yw^{\pm};p)
-\tha(xw^{\pm},yz^{\pm};p)
=\frac yz\,\tha(xy^{\pm},zw^{\pm};p).\end{equation}
Other elementary identities that we will use include
$$\tha(x^2;p^2)=\tha(\pm x;p)
=\theta(\pm x,\pm px;p^2),$$
from which one can derive
\begin{subequations}\label{tq}
\begin{equation}\label{tqa}\theta(-1,\pm p ;p^2)=2, \end{equation}
\begin{equation}\label{tqb}\theta(-\om,\pm p\om;p^2)=-\om^2, \end{equation}
\end{subequations}
where $\om=e^{2\pi i/3}$.

We will use the following terminology  from \cite{rs}.

\begin{definition}
A holomorphic function on $\mathbb C^\ast$ is called
 an $A_{n-1}$ theta function of   nome $p$ and norm $t$ if it satisfies 
$$f(px)=\frac{(-1)^n}{tx^n}\,f(x). $$
It is called a $BC_n$ theta function of nome $p$ if
$$f(px)=\frac{1}{p^nx^{2n+1}}f(x),\qquad f(x^{-1})=-x^{-1}f(x). $$
Finally, it is called a $D_n$ theta function of nome $p$ if
$$f(px)=\frac{1}{p^{n-1}x^{2n-2}},\qquad f(x^{-1})=f(x). $$
\end{definition}

\begin{lemma}[{\cite[Lemma 3.2]{rs}}]\label{tfl}
A function $f$ is an  $A_{n-1}$ theta function of nome $p$ and norm $t$  if and only if it can be factored as 
$$f(x)=C\theta(a_1x,\dots,a_nx;p), $$
where $C\in\mathbb C$ and $a_i\in \mathbb C^\ast$ with  $a_1\dotsm a_n=t$.
It  is a $BC_n$ theta function of nome $p$ if and only if 
$$f(x)=C\theta(x,\pm\sqrt px;p)\theta(a_1x^{\pm},\dots,a_{n-1}x^{\pm};p), $$
where  $C\in\mathbb C$ and $a_i\in \mathbb C^\ast$. Finally, it is a $D_n$ theta function of nome $p$ if and only if 
$$f(x)=C\theta(a_1x^{\pm},\dots,a_{n-1}x^{\pm};p), $$
where  $C\in\mathbb C$ and $a_i\in \mathbb C^\ast$.
\end{lemma}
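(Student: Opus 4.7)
The plan is to prove each part by verifying the ``if'' direction via direct calculation and the ``only if'' direction by a standard zero-counting and division argument on the elliptic curve $\mathbb C^\ast/\langle p\rangle$.

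For the ``if'' direction, I would substitute each proposed factorization into the defining functional equations, using only $\theta(px;p) = -x^{-1}\theta(x;p)$ and $\theta(x^{-1};p) = -x^{-1}\theta(x;p)$. For instance, $f(x) = C\prod_{i=1}^n \theta(a_i x;p)$ with $a_1\cdots a_n = t$ gives $f(px) = \prod_i(-a_i^{-1}x^{-1})f(x) = (-1)^n(tx^n)^{-1}f(x)$, and the inversion symmetries in (2) and (3) drop out analogously from the parity of the associated products.

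For the converse in part~(1), I would let $f$ be a nonzero $A_{n-1}$ theta function and count its zeros in the fundamental annulus $\{1\leq|x|<|p|^{-1}\}$ by contour integration of $d\log f$; the quasi-periodicity reduces the boundary terms to $n$. A parallel contour integration of $x\,d\log f$ yields the classical zero-product identity $z_1\cdots z_n = 1/t$. Setting $g(x) = f(x)\big/\prod_i\theta(z_i^{-1}x;p)$ produces a holomorphic nowhere-zero function whose multiplier, thanks to $\prod z_i = 1/t$, simplifies to $1$. Hence $g$ descends to a nonvanishing holomorphic function on $\mathbb C^\ast/\langle p\rangle$ and is constant by compactness, giving the factorization with $a_i = z_i^{-1}$.

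For parts (2) and (3), I would apply the same machinery but first pair up zeros using the inversion symmetry. A nonzero $BC_n$ theta function has $2n+1$ zeros in the fundamental annulus, and $f(x^{-1}) = -x^{-1}f(x)$ pairs generic zeros as $\{a,a^{-1}\}$; at the four involution-fixed points $x = \pm 1,\pm\sqrt{p}$ on the torus, direct substitution forces $f(1)=0$, and combining both functional equations at $x=\pm\sqrt{p}$ (using periodicity to compare $f(\sqrt{p})$ with $f(p^{-1/2})$) forces zeros there as well, whereas $x = -1$ gives no constraint. This accounts for three forced zeros plus $n-1$ symmetric pairs, matching the stated product. The $D_n$ case is easier: no fixed point is constrained, and all $2n-2$ zeros fall into $n-1$ symmetric pairs. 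In each case one divides by the candidate product and applies the Liouville argument as in part~(1). The principal obstacle will be the fixed-point bookkeeping in the $BC_n$ case---ensuring that the extra factor $\theta(x,\pm\sqrt{p}\,x;p)$ has zeros of exactly the right multiplicity at $1,\pm\sqrt{p}$ and nowhere else in the fundamental domain, so that the final division leaves a genuine $p$-periodic holomorphic function to which the compactness argument applies.
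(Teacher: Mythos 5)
The paper offers no proof of this lemma: it is imported verbatim from \cite[Lemma 3.2]{rs}, so there is nothing internal to compare against. Your argument is the standard one (and essentially the one in the cited reference): verify the ``if'' direction from $\theta(px;p)=\theta(x^{-1};p)=-x^{-1}\theta(x;p)$, and for the converse count zeros in a fundamental annulus, locate the forced zeros at the involution-fixed points, divide by the candidate product, and apply Liouville on the torus. The structure is correct, including the fixed-point analysis in the $BC_n$ case (combining $f(x^{-1})=-x^{-1}f(x)$ with quasi-periodicity does force $f(1)=f(\pm\sqrt p)=0$ and leaves $x=-1$ unconstrained). Two small points to tighten. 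First, the product of the zeros comes from integrating $\log x\,d\log f$ around the annulus, not $x\,d\log f$ (the latter gives the sum), and the resulting identity $z_1\dotsm z_n=1/t$ holds only modulo $p^{\mathbb Z}$, so you must choose representatives of the zeros for which it holds exactly before forming $\prod_i\theta(z_i^{-1}x;p)$; only then does the quotient have trivial multiplier. Second, the claim that the remaining zeros ``fall into symmetric pairs'' needs the observation that a zero sitting at an involution-fixed point has even multiplicity: writing $x=\pm e^w$ near such a point, the symmetry makes $e^{-w/2}f(\pm e^w)$ an even (respectively, for the forced points, odd) function of $w$, so after removing the simple forced zeros everything that remains pairs up correctly. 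With those two details supplied, the proof is complete.
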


The terminology  is motivated by
Macdonald's theory of affine root systems \cite{m}.
In each case, the Macdonald identity for the affine root system $R$ 
is equivalent to evaluating a determinant $\det(f_i(x_j))$, where $f_i$ runs through a basis in the space of $R$ theta functions, see 
\cite[Proposition 6.1]{rs}. We will need two  special cases, which are both classical theta function identities. For $R=A_1$, the
 Macdonald identity  can be written
\begin{equation}\label{et}\tha(ax,bx;p)=\frac 1{\tha(p;p^2)}\left(\tha(-pa/b,-abx^2;p^2)-bx\tha(-a/b,-pabx^2;p^2)\right); \end{equation}
this can also be obtained from \eqref{tad}.
The Macdonald identity for  $BC_1$ is  Watson's quintuple product \cite{w}
\begin{equation}\label{qp}\tha(x,\pm\sqrt px;p)=\frac{(p^3;p^3)_\infty}{(p;p)_\infty}\left(\tha(-px^3;p^3)-x\tha(-px^{-3};p^3)\right), \end{equation}
where
\begin{equation}\label{ep}(p;p)_\infty=\prod_{j=1}^\infty(1-p^{j}).\end{equation}

Finally, we recall some classical facts on uniformization.
The Riemann surface $S=\mathbb C^\ast/\{z=pz=z^{-1}\}$ is a sphere, the analytic automorphisms 
$\xi:\,S\rightarrow\mathbb C\cup\{\infty\}$ being given by
$$\xi(z)=C\frac{\tha(az^{\pm};p)}{\tha(bz^{\pm};p)}, $$
where $C\in\mathbb C^\ast$ and $a,b$ are distinct in $\mathbb C^\ast/\{z=pz=z^{-1}\}$. 
Accordingly,
any $D_{n+1}$ theta function $f$ can be written
$$f(z)=\tha(bz^{\pm};p)^{n}P(\xi(z)), $$
where
$P$ is a polynomial of degree at most $n$. 
We will refer to the passage from $f$ to $P$ as \emph{uniformization}. When  $f$ is factored as in Lemma~\ref{tfl},
 the explicit uniformization is
\begin{multline}\label{eu}\tha(a_1z^{\pm},\dots,a_nz^{\pm};p)=
\frac{a_1\dotsm a_n\tha(ba_1^{\pm},\dots,ba_n^{\pm};p)}{C^na^n\tha(ba^{\pm};p)^n}\\
\times\tha(bz^{\pm};p)^n(\xi(z)-\xi(a_1))\dotsm(\xi(z)-\xi(a_n)).
 \end{multline}
This follows immediately from the case $n=1$, which is equivalent
to  \eqref{tad}.

\subsection{The 8VSOS model}\label{tcs}

The 8VSOS model is an ice model, so for domain wall boundary conditions  states can be identified with three-coloured chessboards. 
Fixing the size of the chessboards to  $(n+1)\times (n+1)$, the model
 depends on parameters
 $$x_1,\dots,x_n,y_1,\dots,y_n,\lambda,p\in\mathbb C^\ast,\qquad |p|<1.$$
In general, there is also a crossing parameter $q$, but for our purposes it can be fixed to   a cubic root of unity.

We will assign a weight 
to each $2\times 2$-block of adjacent squares. These blocks can be viewed as
entries of an $n\times n$ matrix,  and are given coordinates $1\leq i,j\leq n$ in a standard way. A block  $\left(\begin{smallmatrix}a&b\\ c&d\end{smallmatrix}\right)$ with coordinates $(i,j)$ is then given the weight
$R^{b-a,d-b}_{d-c,c-a}(\lambda \om^a,x_i/y_j),$
where 
$$R^{++}_{++}(\lambda,u)=R^{--}_{--}(\lambda,u)=\frac{\theta(\om u;p)}{\theta(u;p)}, $$
$$R^{+-}_{+-}(\lambda,u)=\frac{\theta(u,\om\lambda;p)}{\theta(\om,\lambda;p)},\qquad R^{-+}_{-+}(\lambda,u)=\om\frac{\theta(u,\om^2\lambda;p)}{\theta(\om,\lambda;p)},$$
$$R^{-+}_{+-}(\lambda,u)=\frac{\theta(\lambda u;p)}{\theta(\lambda;p)},\qquad R^{+-}_{-+}(\lambda,u)=u\frac{\theta(\lambda/u;p)}{\theta(\lambda;p)}.$$
Here, $\pm$ is a short-hand for $\pm 1$ $\operatorname{mod}\ 3$.

The partition function is now defined as
$$Z_n^{\text{8VSOS}}(x_1,\dots,x_n;y_1,\dots,y_n;\lambda,p)=\sum_{\text{chessboards}}\,\prod_{\text{blocks}}\,\text{weight}(\text{block}).$$
This differs slightly from the normalization used in  \cite{r}. We have
$$Z_n^{\text{8VSOS}}(x;y;\lambda,p)=(y_1\dots y_n)^{-n}\tilde Z_n(x;y;\lambda), $$
where $\tilde Z_n$ is as in \cite[\S 7]{r}.

In the specialization  $x_i\equiv \om$, $y_i\equiv 1$, the 8VSOS 
model  reduces to  the three-colour model with parameters
\begin{equation}\label{ti}t_i=\frac{1}{\theta(\lambda\omega^i;p)^3}.\end{equation}
More precisely, in the case of domain wall boundary conditions,
\begin{multline}\label{ter}Z_n^{3C}\left(\frac{1}{\theta(\lambda;p)^3},\frac{1}{\theta(\lambda\omega;p)^3},\frac{1}{\theta(\lambda\omega^2;p)^3}\right)\\
=\omega^{n(n+1)}\frac{\theta(\lambda\omega^2,\lambda\omega^{n+1};p)^2}{\theta(\lambda\omega^{n};p)\theta(\lambda^3;p^3)^{n^2+2n+2}}\, Z_n^{\text{8VSOS}}(\omega,\dots,\omega;1,\dots,1;\lambda,p).\end{multline}
See \cite[\S 8]{r} for the trigonometric case $p=0$; the discussion there 
carries over \emph{verbatim} to general $p$.

We need to mention the recursion
\begin{multline}\label{znr}
Z_n^{\text{8VSOS}}(x_1,\dots,x_n;y_1,\dots,y_n;\lambda,p)\Big|_{\om x_1=y_1}\\
=\om^{n+1}\frac{\tha(\la \om^n;p)\prod_{k=2}^n\tha(y_1\om^2/y_k,x_k/y_1;p)}{\tha(\la \om^{n-1};p)\tha(\om;p)^{2n-2}}\,Z_{n-1}^{\text{8VSOS}}(x_2,\dots,x_n;y_2,\dots,y_n;\la,p), 
\end{multline}
see e.g.\ \cite[Lemma 3.3]{r}, and the crossing symmetry 
\begin{multline}\label{cs}
Z_n^{\text{8VSOS}}(\om^2/x_1,\dots,\om^2/x_n;1/y_1,\dots,1/y_n;\om^{2n}/\lambda,p)\\
=\frac{\om^{n(n-1)}\theta(\lambda;p)Y^n}{\theta(\lambda \om^n;p)X^n}\,
Z_n^{\text{8VSOS}}(x;y;\lambda,p),
\end{multline}
where
$$X=x_1\dotsm x_n, \qquad Y=y_1\dotsm y_n.$$
The equation \eqref{cs} can be derived from a corresponding symmetry of the Boltzmann weights, and is also apparent from the explicit formulas
for the partition function given in \cite{r}.

The main result of \cite {r} is the explicit expression
\begin{multline}\label{mmi} Z_n^{\text{8VSOS}}(x;y;\lambda,p)=
\frac{(-1)^{\binom n2}\theta(\lambda \om^n;p)}{\theta(\om;p)^{n^2}\theta(\gamma;p)^n Y^{n+1}\theta(X\lambda\gamma \om^n/Y;p)}\\
\begin{split}&\times\frac{\prod_{i,j=1}^ny_j^2 \theta(x_i/y_j,\om x_i/y_j;p)}{\prod_{1\leq i<j\leq n}x_jy_j\theta(x_i/x_j,y_i/y_j;p)}\\
&\times\sum_{S\subseteq \{1,\dots,n\}}(-1)^{|S|}
\frac{\theta(\lambda\gamma \om^{n-|S|};p)}{\theta(\lambda \om^{n-|S|};p)}\det_{1\leq i,j\leq n}\left(\frac{\theta(\gamma x_i^S/y_j;p)}{\theta(x_i^S/y_j;p)}\right),
\end{split}\end{multline}
where
$$x_i^S=\begin{cases}x_i\om,& i\in S,\\ x_i,& i\notin S\end{cases} $$
and $\gamma$ is arbitrary.
This can be viewed as an analogue of the Izergin--Korepin formula for the six-vertex model.
We will not work with this formula directly, though we need the following immediate consequence.

\begin{proposition}[{\cite[Corollary 5.4]{r}}]\label{lap}
As a function of $\lambda$,
$$\theta(\lambda\omega^{n+1},\lambda\omega^{n+2};p)Z_n^{\text{\emph{8VSOS}}}(x;y;\lambda,p) $$
is analytic on $\mathbb C^\ast$. More precisely, it is an $A_1$ theta function of nome $p$ and norm $\om^{2n}Y/X$.
\end {proposition}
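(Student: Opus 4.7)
The plan is to read both claims directly off the explicit formula (\ref{mmi}) for $Z_n^{\text{8VSOS}}$. The only $\lambda$-dependent pieces on its right-hand side are the prefactor $\theta(\lambda\omega^n;p)/\theta(X\lambda\gamma\omega^n/Y;p)$ and, inside each summand indexed by $S\subseteq\{1,\dots,n\}$, the ratio $\theta(\lambda\gamma\omega^{n-|S|};p)/\theta(\lambda\omega^{n-|S|};p)$. Once the behaviour of these few factors under $\lambda\mapsto p\lambda$ and their singularity loci are identified, both the $A_1$ quasi-periodicity and the analyticity will fall out.

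For the quasi-periodicity, applying $\theta(pz;p)=-z^{-1}\theta(z;p)$ factor by factor, every ratio inside the sum transforms by the common multiplier $1/\gamma$, while the prefactor contributes $(-1/(\lambda\omega^n))\cdot(-X\lambda\gamma\omega^n/Y)=X\gamma/Y$. The $\gamma$'s cancel and $Z_n^{\text{8VSOS}}(p\lambda)=(X/Y)Z_n^{\text{8VSOS}}(\lambda)$. Since the extra factor $\theta(\lambda\omega^{n+1},\lambda\omega^{n+2};p)$ transforms by $1/(\lambda^2\omega^{2n+3})=1/(\lambda^2\omega^{2n})$, the product has quasi-period $X/(Y\omega^{2n}\lambda^2)$, which by Lemma \ref{tfl} is exactly the transformation law of an $A_1$ theta function of nome $p$ and norm $\omega^{2n}Y/X$.

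For analyticity, the denominator zeros in (\ref{mmi}) give candidate simple poles on $\mathbb C^\ast$ at $\lambda=Yp^j/(X\gamma\omega^n)$ (from $\theta(X\lambda\gamma\omega^n/Y;p)$) and at $\lambda=\omega^{k-n}p^j$ for $k=0,\dots,n$ (from the summand with $|S|=k$). The first family has $\gamma$-dependent location, and since the left-hand side of (\ref{mmi}) is manifestly $\gamma$-independent and the identity holds for generic $\gamma$, the associated residues must vanish identically. Of the remaining $\gamma$-independent candidates, those with $k\equiv 1\pmod 3$ lie at $\lambda=\omega^{-n-2}p^j$ and are killed by $\theta(\lambda\omega^{n+2};p)$, those with $k\equiv 2\pmod 3$ lie at $\lambda=\omega^{-n-1}p^j$ and are killed by $\theta(\lambda\omega^{n+1};p)$, and the single remaining location $\lambda=\omega^{-n}p^j$ (from $k\equiv 0\pmod 3$) coincides with the simple zero of the prefactor $\theta(\lambda\omega^n;p)$ in (\ref{mmi}), which cancels the at most simple pole contributed by the partial-fraction sum.

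The one point I expect to need elaboration is the spurious-pole argument: a meromorphic function of $\lambda$ on $\mathbb C^\ast$ cannot have a pole whose location depends continuously on an auxiliary parameter $\gamma$ while itself being $\gamma$-independent, so the $\gamma$-dependent candidates above carry zero residues. Once this is granted, combining the pole cancellation with the computed quasi-period and Lemma \ref{tfl} identifies $\theta(\lambda\omega^{n+1},\lambda\omega^{n+2};p)Z_n^{\text{8VSOS}}(x;y;\lambda,p)$ as an $A_1$ theta function of nome $p$ and norm $\omega^{2n}Y/X$.
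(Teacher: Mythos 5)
Your proposal is correct and follows exactly the route the paper intends: the paper imports this statement from \cite[Cor.~5.4]{r} as an ``immediate consequence'' of the explicit formula \eqref{mmi}, and your factor-by-factor computation of the quasi-period together with the pole bookkeeping (the $\gamma$-dependent candidates being removable by $\gamma$-independence of the left-hand side, the $k\equiv 1,2\pmod 3$ poles killed by $\theta(\lambda\omega^{n+2};p)$ and $\theta(\lambda\omega^{n+1};p)$, and the $k\equiv 0$ poles cancelled by the prefactor $\theta(\lambda\omega^n;p)$) is precisely that deduction. All the individual identities check out, so nothing further is needed.
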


By \eqref{ter}, it follows that
\begin{equation}\label{tct}\frac{\theta(\lambda^3;p^3)^{n^2+2n+3}}{\theta(\lambda\omega^2,\lambda\omega^{n+1};p)^2}\,Z_n^{3C}\left(\frac{1}{\theta(\lambda;p)^3},\frac{1}{\theta(\lambda\omega;p)^3},\frac{1}{\theta(\lambda\omega^2;p)^3}\right) 
\end{equation}
is analytic in $\lambda\in\mathbb C^\ast$.
This remarkable fact is a key result for the present work. We do not know how to prove it except as a consequence of \eqref{mmi}.

We  also need the following functional equation, which was recently obtained by Razumov and Stroganov.

\begin{proposition}[{\cite{ras}}]\label{ras}
Let 
$$F_n(x;y;\lambda,p)=\theta(\lambda\omega^{n+1},\lambda\omega^{n+2};p)
\Delta(x,y;p)
Z_n^{\text{\emph{8VSOS}}}(\omega x;y;\lambda;p), $$
where 
\begin{equation}\label{evdm}\Delta(x_1,\dots,x_N;p)=\prod_{1\leq i<j\leq N}x_j\theta(x_i/x_j;p). \end{equation}
Then,
$$\sum_{k=0}^2F_n(\omega^kx_1,x_2,\dots,x_n;y;\omega^{-k}\lambda,p)=0. $$
\end{proposition}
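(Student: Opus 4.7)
The strategy is to combine the theta function structure of $F_n$ in $\lambda$ with an induction on $n$, using the specialization formula \eqref{znr} to reduce the identity to the $(n-1)$-variable version of itself.

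First, I would analyze each summand as a function of $\lambda$. By Proposition~\ref{lap}, for any fixed values of the other variables, $\theta(\lambda\omega^{n+1},\lambda\omega^{n+2};p)\,Z_n^{\text{8VSOS}}(\omega x;y;\lambda,p)$ is an $A_1$ theta function of nome $p$, with norm depending on the $x_i$ and $y_j$. Hence the $k$-th summand $F_n(\omega^k x_1,x_2,\ldots;y;\omega^{-k}\lambda,p)$, viewed as a function of $\mu=\omega^{-k}\lambda$, is an $A_1$ theta function of $\mu$, so each term has a definite quasi-periodicity under $\lambda\mapsto p\lambda$. Moreover, the Vandermonde prefactor $\Delta(x,y;p)$ cancels precisely the denominator factors $\prod_{i<j}\theta(x_i/x_j;p)$ in \eqref{mmi}, so $F_n$ is analytic in each $x_i\in\mathbb{C}^*$. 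The cyclic sum therefore lies in a concrete finite-dimensional space of multivariable theta functions in $\lambda$ (and is analytic in $x_1$).

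Next, I would induct on $n$ by specializing $x_1$ to a value that triggers the recursion \eqref{znr}. Setting $x_1=\omega^{-1}y_1$ makes $\omega x_1=y_1$ in the $k=0$ summand, and \eqref{znr} expresses that term as an explicit theta prefactor times $Z_{n-1}^{\text{8VSOS}}(\omega x_2,\ldots;y_2,\ldots;\lambda,p)$. For the $k=1$ and $k=2$ summands, the specialized argument satisfies $\omega^{k+1}x_1=\omega^{k}y_1\neq y_1$; the plan is to rewrite these using the crossing symmetry \eqref{cs} together with elementary theta identities, so that after inversion of the first argument they also fall under the recursion \eqref{znr} at an appropriate index. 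Tracking the prefactors (which involve products like $\theta(y_1\omega^{2}/y_k;p)$ and powers of $\omega$), the three reductions should assemble into a nonzero factor times the cyclic sum of $F_{n-1}$, which vanishes by induction. Repeating at two more specializations of $x_1$ (e.g. $x_1=\omega^{0}y_1$ and $x_1=\omega y_1$) produces additional zeros of the $x_1$-dependence, which combined with Step~1 suffice to force the sum to vanish identically. The base case $n=1$ reduces to a direct theta function identity, essentially an instance of the addition formula \eqref{tad}.

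The main obstacle is the asymmetric treatment of the three summands under the chosen specialization: only one of them fits \eqref{znr} cleanly, while the other two must be maneuvered into the right form using crossing symmetry and the cubic-root-of-unity structure of the Boltzmann weights, all the while keeping careful track of prefactors. An alternative — and potentially more direct — route would be to work from the explicit formula \eqref{mmi}, exploiting the freedom in the auxiliary parameter $\gamma$ and applying \eqref{tad} inside the sum over subsets $S$; this would bypass the induction but replaces it by a nontrivial combinatorial identity on the level of determinants, which looks comparably involved.
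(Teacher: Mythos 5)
First, a point of reference: the paper does not prove this proposition at all — it is imported verbatim from Razumov--Stroganov \cite{ras} — so there is no internal argument to compare yours against; I can only judge the proposal on its own merits. Your Step 1 is sound: since the shift $\lambda\mapsto\omega^{-k}\lambda$ compensates the shift $x_1\mapsto\omega^k x_1$ in the norm $\omega^{2n}Y/X$ of Proposition~\ref{lap}, all three summands are $A_1$ theta functions of $\lambda$ of the \emph{same} norm, so the sum lives in a two-dimensional space in $\lambda$. The trouble starts with the induction.

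Inside $F_n$ the first argument of $Z_n^{\text{8VSOS}}$ is $\omega x_1$, so for the $k$-th summand the recursion \eqref{znr} is triggered at $\omega^{k+2}x_1=y_1$, the crossing-derived recursion (apply \eqref{cs}, then \eqref{znr}) at $\omega^{k+1}x_1=y_1$, and the factor $\theta(\omega^k x_1/y_1;p)$ coming from $\Delta$ kills the $k$-th summand at $\omega^k x_1=y_1$. Running $k=0,1,2$ over the three points $x_1\in\{y_1,\omega y_1,\omega^2 y_1\}$, you find that at each such point exactly one summand vanishes and the other two reduce to size-$(n-1)$ partition functions — but with \emph{different} dynamical parameters (the crossing route shifts $\lambda$ by a power of $\omega$). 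So the specialization never produces the three-term sum $\sum_k F_{n-1}(\dots;\omega^{-k}\lambda)=0$ of the inductive hypothesis; it produces a putative \emph{two}-term cancellation, which neither follows from nor is implied by the hypothesis. (Your statement that setting $x_1=\omega^{-1}y_1$ makes \eqref{znr} apply to the $k=0$ term is also off by a power of $\omega$ for the same reason: at that point the first argument of $Z_n$ equals $y_1$, which is the crossing recursion point, not the \eqref{znr} point.) Separately, the closing "enough zeros in $x_1$" step fails: each summand is, in $x_1$, a product of $3n-1$ theta factors ($n$ from $Z_n$ by \cite[Lemma 3.2]{r} and $2n-1$ from $\Delta$), and since $3n-1$ is not divisible by $3$ the three summands acquire \emph{different} multipliers under $x_1\mapsto px_1$; their sum therefore does not lie in a single space of theta functions of fixed norm, and vanishing at the $O(n)$ accessible specializations does not force identical vanishing. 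The two-dimensional $\lambda$-space is the right place to finish, but the recursions are conditions on $x_1/y_j$, not on $\lambda$, so they do not hand you the needed $\lambda$-specializations. Your alternative route through \eqref{mmi} — pairing the term $(k,S\ni 1)$ with $(k+1,S\setminus\{1\})$, for which $n-|S|-k$ is invariant while $(-1)^{|S|}$ flips — is in fact the more promising direction, but as written it is only a gesture, not a proof.
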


Consider $F_n $ as a function of $\lambda$. 
By Proposition \ref{lap}, it is an 
$A_1$ theta function of nome $p$ and norm $\om^{n}Y/X$.
By \eqref{et}, it can be decomposed as
$$F_n(x;y;\lambda,p)=F_n^{(0)}(x;y;p)\theta(-\omega^n\lambda^2Y/X;p^2)
+
F_n^{(1)}(x;y;p)\lambda\theta(-p\omega^n\lambda^2Y/X;p^2).$$
The following result is then  immediate from Proposition \ref{ras}.

\begin{corollary}\label{fec} For $i=0,1$,
$$\sum_{k=0}^2F_n^{(i)}(\omega^kx_1,x_2,\dots,x_n;y)=0.$$
\end{corollary}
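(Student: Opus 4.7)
The plan is to obtain Corollary \ref{fec} as an immediate coefficient extraction from Proposition \ref{ras}, using the two-dimensional space of $A_1$ theta functions of nome $p$ and norm $\om^n Y/X$ in which $F_n$ lives. First I would substitute the decomposition
$$F_n(x;y;\la,p)=F_n^{(0)}(x;y;p)\,\tha(-\om^n\la^2 Y/X;p^2) + F_n^{(1)}(x;y;p)\,\la\,\tha(-p\om^n\la^2 Y/X;p^2)$$
directly into the Razumov--Stroganov identity $\sum_{k=0}^2 F_n(\om^k x_1,x_2,\dots,x_n;y;\om^{-k}\la,p)=0$.

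The crucial observation is that the combination $\om^n\la^2 Y/X$ is invariant under the simultaneous substitution $x_1\mapsto \om^k x_1$ (which sends $X\mapsto \om^k X$) and $\la\mapsto \om^{-k}\la$: indeed, it picks up an overall factor $\om^{-2k}\om^{-k}=\om^{-3k}=1$. Consequently, under this substitution both basis theta functions $\tha(-\om^n\la^2 Y/X;p^2)$ and $\la\,\tha(-p\om^n\la^2 Y/X;p^2)$ are preserved, the latter only up to the scalar $\om^{-k}$ coming from the explicit factor of $\la$. Hence each term
$$F_n(\om^k x_1,x_2,\dots;y;\om^{-k}\la,p)$$
lies in the span of the same two basis elements, and the sum collapses to a linear combination of
$$\tha(-\om^n\la^2 Y/X;p^2) \quad\text{and}\quad \la\,\tha(-p\om^n\la^2 Y/X;p^2)$$
whose coefficients are (up to normalization) the sums $\sum_{k=0}^2 F_n^{(i)}(\om^k x_1,x_2,\dots,x_n;y)$ for $i=0,1$.

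Since these two functions of $\la$ are linearly independent—they constitute a basis for the two-dimensional space of $A_1$ theta functions of norm $\om^n Y/X$ as per \eqref{et}—the two coefficient sums must each vanish, which is exactly the claim of the corollary. There is no real obstacle in this argument beyond the bookkeeping of the $\om$-factors; the whole proof is essentially two or three lines, since all the substantive content has been concentrated in the prior propositions \ref{lap} (the theta-function character of $F_n$ in $\la$) and \ref{ras} (the Razumov--Stroganov identity). The only mild subtlety is noticing that the norm $\om^n Y/X$ is invariant under the joint substitution precisely because $\om$ is a cube root of unity, so that the three shifts in Proposition \ref{ras} conspire with the three basis elements of the decomposition.
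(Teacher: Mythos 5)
Your strategy is exactly the one the paper intends (the paper itself offers nothing beyond ``immediate from Proposition \ref{ras}''), and for $i=0$ your argument is complete and correct: the combination $\om^n\la^2Y/X$ is invariant under the joint substitution $x_1\mapsto\om^kx_1$, $\la\mapsto\om^{-k}\la$, so the coefficient of $\tha(-\om^n\la^2Y/X;p^2)$ in the $k$-th summand of the Razumov--Stroganov identity is precisely $F_n^{(0)}(\om^kx_1,x_2,\dots,x_n;y)$, and the linear independence of the two basis functions (one even, one odd in $\la$) finishes the proof.

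The gap is in the $i=1$ case, and it hides inside your phrase ``up to normalization''. As you yourself note, the second basis element transforms as $\la\,\tha(-p\om^n\la^2Y/X;p^2)\mapsto\om^{-k}\la\,\tha(-p\om^n\la^2Y/X;p^2)$; but this factor $\om^{-k}$ depends on the summation index $k$, so it cannot be pulled out of the sum as an overall constant. What your computation actually establishes is
$$\sum_{k=0}^2\om^{-k}F_n^{(1)}(\om^kx_1,x_2,\dots,x_n;y)=0,$$
i.e.\ the vanishing of a different Fourier component of $F_n^{(1)}$ in the variable $x_1$ than the one asserted, and the two statements are genuinely inequivalent. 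Indeed, a direct check at $n=1$, where $F_1^{(1)}(x;y)=-y\,\tha(x/y;p)\tha(-x/y;p^2)/\tha(p;p^2)$ and the expansion in $u=x/y$ begins $-(1-u^2)+\mathcal O(p)$ with a nonzero constant term, shows that the unweighted sum $\sum_{k}F_1^{(1)}(\om^kx;y)$ does \emph{not} vanish, while the weighted sum does (only exponents $\equiv 0,2 \bmod 3$ occur). So the issue is real rather than cosmetic; the method of substituting the decomposition into Proposition \ref{ras} can only ever yield the weighted identity for $i=1$. Note that only the $i=0$ case is used later (in establishing property (ii) of Proposition \ref{php}), so nothing downstream is affected; but as written your proof does not establish the corollary in the stated form for $i=1$, and you should either insert the weight $\om^{-k}$ into that identity or acknowledge that the unweighted version requires a different (and, it appears, unavailable) argument.
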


\section{Proof of Theorem \ref{st}}\label{qrs}

Consider \eqref{tct} as a function of $\lambda$. 
By Proposition \ref{lap},
it is  an $A_1$ theta function of nome $p$ and norm $\om^{n}$. Since the space of such functions is  spanned by
$\tha(\la\om^{2n};p)^2$ and $\tha(\la\om^{2n+1},\la\om^{2n+2};p)$, there exist functions $X_n$ and $Y_n$  such that
\begin{multline}\label{zfd}
\frac{\theta(\lambda^3;p^3)^{n^2+2n+3}}{\theta(\lambda\omega^2,\lambda\omega^{n+1};p)^2}\,Z_n^{3C}(t_0,t_1,t_2)\\
=X_n(p)\tha(\la\om^{2n};p)^2+Y_n(p)\tha(\la\om^{2n+1},\la\om^{2n+2};p),
\end{multline}
where $t_i$ are as in \eqref{ti}.
We will see that this decomposition corresponds to Theorem~\ref{st}, the $p$-dependence being encoded in the polynomials $q_n$ and $r_n$.

We first give some preliminary results.

\begin{lemma}\label{fil}
There exists a function $p\mapsto  \tau(p)$ such that,
under the parametrization \eqref{ti},
\begin{equation}\label{ssf}\frac 1{t_0}+\frac 1{t_1}+\frac 1{t_2}=\tau(p)\tha( \la^3;p^3). \end{equation}
Moreover, the quantity $T(t_0,t_1,t_2)$, defined in \eqref{om}, equals
$\tau(p)^3$.
 \end{lemma}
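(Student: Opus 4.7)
Introduce $f(\la;p) = \tha(\la;p)^3 + \tha(\la\om;p)^3 + \tha(\la\om^2;p)^3$, so that under the parametrization \eqref{ti} one has $1/t_0+1/t_1+1/t_2 = f(\la;p)$. The strategy is to identify $f$ by quasi-periodicity and then derive the formula for $T$ from a triple-product identity.

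First I would note two symmetries of $f$. It is invariant under $\la\mapsto\om\la$, which merely permutes the three summands. Using $\tha(p\la;p) = -\la^{-1}\tha(\la;p)$ and $\om^3=1$, each of the three cubes transforms by the same factor, yielding $f(p\la;p) = -\la^{-3} f(\la;p)$. Since $f$ is holomorphic on $\mathbb{C}^\ast$ and invariant under the $\om$-rotation, its Laurent expansion contains only powers of $\la^3$. Consequently there is an $F(\mu;p)$ holomorphic on $\mathbb{C}^\ast$ with $f(\la;p) = F(\la^3;p)$, and the transformation law becomes $F(p^3\mu;p) = -\mu^{-1}F(\mu;p)$. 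This exhibits $F$ as an $A_0$ theta function of nome $p^3$ and norm $1$. By Lemma \ref{tfl}, any such function is a scalar multiple of $\tha(\mu;p^3)$, so $F(\mu;p) = \tau(p)\tha(\mu;p^3)$ for a uniquely determined $\tau(p)$, which is \eqref{ssf}.

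For the second assertion, I would first record the factorization
$$\tha(\la^3;p^3) = \tha(\la;p)\tha(\la\om;p)\tha(\la\om^2;p),$$
which follows from the infinite product definition and the elementary identity $(1-a)(1-a\om)(1-a\om^2) = 1-a^3$ applied term-by-term. This immediately gives $t_0 t_1 t_2 = \tha(\la^3;p^3)^{-3}$. Rewriting $T = (t_0t_1+t_0t_2+t_1t_2)^3/(t_0t_1t_2)^2$ as $T = t_0t_1t_2\,(1/t_0+1/t_1+1/t_2)^3$ and substituting \eqref{ssf} causes the $\tha(\la^3;p^3)$ factors to cancel, leaving $T = \tau(p)^3$.

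The whole argument is essentially formal, so I do not anticipate a real obstacle; the only step that might require a moment's care is verifying that $F$ is genuinely holomorphic on $\mathbb{C}^\ast$ (rather than on a branched cover), but this is immediate from the $\om$-invariance of $f$.
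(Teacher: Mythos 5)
Your proof is correct, and for the first assertion it takes a slightly different route from the paper's. The paper observes that each $\tha(\la\om^i;p)^3$ and $\tha(\la^3;p^3)$ are $A_2$ theta functions of nome $p$ and norm $1$ (a three-dimensional space), and then pins down the proportionality by checking that $\sum_i 1/t_i$ vanishes at the three points $\la=1,\om,\om^2$ where $\tha(\la^3;p^3)$ vanishes. You instead exploit the invariance of the sum under $\la\mapsto\om\la$ to descend to a function $F(\mu)$ of $\mu=\la^3$ satisfying $F(p^3\mu)=-\mu^{-1}F(\mu)$, i.e.\ an $A_0$ theta function of nome $p^3$ and norm $1$; since that space is one-dimensional and spanned by $\tha(\mu;p^3)$, the identification is automatic and no vanishing check is needed. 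Both arguments rest on Lemma \ref{tfl}; yours trades the explicit evaluation at $\la=1,\om,\om^2$ for the symmetry reduction, which is arguably cleaner. Your treatment of the second assertion --- the factorization $\tha(\la^3;p^3)=\tha(\la,\la\om,\la\om^2;p)$ giving $t_0t_1t_2=\tha(\la^3;p^3)^{-3}$, followed by rewriting $T=t_0t_1t_2\,(1/t_0+1/t_1+1/t_2)^3$ --- is exactly the paper's.
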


\begin{proof}
Each of the functions $1/t_i=\tha(\la\om^i;p)^3$, as well as $\tha( \la^3;p^3)$, are $A_2$ theta functions  of  nome $p$ and norm $1$. Thus,  the first statement 
can be reduced to the trivial verification that 
 $\sum_i 1/t_i$ vanishes at the  points $\lambda=1,\om,\om^2$. 
The second statement follows using
\begin{equation}\label{tsf}t_0t_1t_2=\frac 1{\tha(\la^3;p^3)^3}.\end{equation}
\end{proof}

It was pointed out to us by Don Zagier that
\begin{equation}\label{cp}
\tau(p)=3\left(1+9p\frac{(p^9;p^9)_\infty^3}{(p;p)_\infty^3}\right),
\end{equation}
where we use the notation \eqref{ep}. We also have
(see the remark after Lemma \ref {ztl})
\begin{equation}\label{omp}\tau(p)^3=27\left(1+27p\frac{(p^3;p^3)_\infty^{12}}{(p;p)_\infty^{12}}\right). \end{equation}
This is consistent with \eqref{cp} in view of the identity
$$ \left(1+9p\frac{(p^9;p^9)_\infty^3}{(p;p)_\infty^3}\right)^3=1+27p\frac{(p^3;p^3)_\infty^{12}}{(p;p)_\infty^{12}},$$
which can be found in Ramanujan's notebooks \cite[p.\ 345]{br}. 
We mention that
the function 
$$p\frac{(p^3;p^3)_\infty^{12}}{(p;p)_\infty^{12}}$$
is well-known in the theory of modular forms. It 
is automorphic under the  group
$$\Gamma_0(3)=\left\{\left(\begin{matrix}a&b\\c&d\end{matrix}\right)\in\mathrm{SL}(2,\mathbb Z);\,c\equiv 0\ \operatorname{mod}\ 3\right\} $$
(acting by
 $\tau\mapsto(a\tau+b)/(c\tau+d)$,  
where $p=e^{2\pi i\tau}$)
and in fact generates the field of all such functions \cite[Thm.\ 21]{sc}.

\begin{lemma}\label{cpl}
Let $f$ be a homogeneous rational function in three variables. Suppose that, 
 under the parametrization \eqref{ti}, $f(t_0,t_1,t_2)\equiv 0$. Then, $f\equiv  0$.
\end{lemma}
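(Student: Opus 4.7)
The plan is to show that the parametrization
$$\Phi : (\lambda, p) \mapsto [t_0 : t_1 : t_2] \in \mathbb{P}^2, \qquad t_i = \theta(\lambda \omega^i; p)^{-3},$$
has locally open image in $\mathbb{P}^2$. Once this is in hand, the conclusion follows by a routine reduction: write $f = P/Q$ with $P, Q$ homogeneous polynomials; then $f \circ \Phi \equiv 0$ forces $P \circ \Phi \equiv 0$ away from the zeros of $Q \circ \Phi$, hence by continuity on the whole image of $\Phi$; and a homogeneous polynomial vanishing on a non-empty open subset of $\mathbb{P}^2$ is identically zero, giving $f \equiv 0$.

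To establish that the image is locally open, I verify that the differential $d\Phi$ has rank $2$ at a generic point $(\lambda_0, p_0)$, so that the inverse function theorem applies. The crucial input is Lemma \ref{fil}: $T \circ \Phi = \tau(p)^3$ depends only on $p$, and from \eqref{cp} we have $\tau(p) = 3 + 27p + O(p^2)$, so $\tau'$ is not identically zero. Viewing $T$ as a rational function on $\mathbb{P}^2$ with differential $dT$, this yields
$$dT(\partial_\lambda \Phi) = 0, \qquad dT(\partial_p \Phi) = 3\tau(p)^2 \tau'(p),$$
the latter being nonzero for generic $p$. Meanwhile, $\partial_\lambda \Phi \ne 0$ at generic $\lambda_0$, since $\Phi(\cdot, p_0)$ is a non-constant map for fixed $p_0$ — already visible at $p_0 = 0$ from \eqref{tt}. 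Hence $\partial_\lambda \Phi$ and $\partial_p \Phi$ are linearly independent in $T_{\Phi(\lambda_0, p_0)} \mathbb{P}^2$, so $d\Phi$ has rank $2$; by the inverse function theorem, $\Phi$ is locally biholomorphic near $(\lambda_0, p_0)$, and its image contains a non-empty open subset of $\mathbb{P}^2$.

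The main (minor) technical point is the bookkeeping of projective coordinates: one works in a local affine chart on $\mathbb{P}^2$, say $t_0 = 1$, and computes the Jacobian of $\Phi$ with respect to the ratios $t_1/t_0$ and $t_2/t_0$. Beyond this, no further ingredients are needed; the argument is essentially a two-line transversality computation exploiting that $T$ is conserved in the $\lambda$-direction but varies in the $p$-direction.
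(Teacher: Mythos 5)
Your argument is correct, and it reaches the paper's conclusion by a noticeably different computation. The paper also shows that the parametrization is generically submersive, but it does so head-on: it adjoins a scaling variable $\mu$ (to absorb the homogeneity of $f$), forms the map $(\la,\mu,p)\mapsto(\mu t_0,\mu t_1,\mu t_2)$, and expands its $3\times 3$ Jacobian determinant to first order in $p$, finding a nonzero coefficient. You instead work projectively and replace the determinant computation by a transversality argument built on the conserved quantity $T$: by Lemma \ref{fil} (which is proved just before Lemma \ref{cpl} and independently of it, so there is no circularity), $T\circ\Phi=\tau(p)^3$, whence $\partial_\la\Phi$ lies in $\ker dT$ while $\partial_p\Phi$ does not, and linear independence follows at once from $\partial_\la\Phi\neq 0$. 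This is conceptually cleaner and makes transparent \emph{why} the rank is $2$ — the fibres of $T$ are the $\la$-orbits and $p$ moves transversally to them — at the cost of importing Lemma \ref{fil} and a statement about $\tau$. The paper's version is more pedestrian but entirely self-contained. Your reduction of the homogeneous rational function to a homogeneous polynomial $P$ vanishing on an open subset of $\mathbb{P}^2$ is a standard and correct substitute for the paper's use of the auxiliary variable $\mu$.

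One point to shore up: you invoke \eqref{cp} for $\tau(p)=3+27p+O(p^2)$, but \eqref{cp} is merely asserted in the paper (attributed to Zagier) and not proved at that stage; its companion \eqref{omp} is only justified much later, via results that ultimately depend on Lemma \ref{cpl} itself. All you actually need, however, is that $\tau$ is a non-constant function of $p$, and this follows from a one-line expansion of $\sum_i\theta(\la\om^i;p)^3/\theta(\la^3;p^3)$ using $\theta(x;p)=(1-x)\big(1-(x+x^{-1})p\big)+\mathcal{O}(p^2)$ — essentially the same first-order computation the paper performs on its determinant. State that explicitly (or cite it as such) rather than leaning on \eqref{cp}, and also record that your two genericity conditions ($\partial_\la\Phi\neq 0$ and $\tau'(p)\neq 0$, together with regularity of $T$ at the image point) each fail only on proper analytic subsets, so they can be met simultaneously.
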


\begin{proof}  It is enough to show that the Jacobian of the map 
 $(\la,\mu,p)\mapsto(\mu t_0,\mu t_1,\mu t_2)$
does not vanish identically. Clearing denominators, the Jacobian is proportional to
$$\det\left(\begin{matrix}\tha(\la;p) & \tha_\la'(\la;p) & \tha_p'(\la;p) \\
\tha(\la\om;p) & \om\tha_\la'(\la\om;p) & \tha_p'(\la\om;p) \\
\tha(\la\om^2;p) & \om^2\tha_\la'(\la\om^2;p) & \tha_p'(\la\om^2;p)
\end{matrix}
\right). $$
Using that 
$$\tha(x;p)=(1-x)\left(1-(x+x^{-1})p+(1-x-x^{-1})p^2\right)+\mathcal{O}(p^3), $$
one can check that the determinant is 
$$\frac{3\sqrt 3 i(1+2\la^3)(1-\la^3)}{\la^2}\,p+\mathcal{O}(p^2). $$
\end{proof}

\begin{lemma}\label{lil}
Let $f$ be a Laurent polynomial in three variables, homogeneous of degree $0$. Suppose that, under the parametrization \eqref{ti}, $f(t_0,t_1,t_2)$ is independent of $\lambda$. Then, $f$ is a polynomial in $T$.
\end{lemma}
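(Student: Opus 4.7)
The plan is to first show that the hypothesis forces $f$ to be $S_3$-symmetric, and then to exploit the structure of the ring of symmetric Laurent polynomials of degree zero to reduce to a polynomial in $T$.

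\textbf{Step 1 (symmetrization).} The parametrization \eqref{ti} enjoys two natural symmetries. Under $\lambda\mapsto\lambda\omega$, $t_i$ becomes $t_{i+1}$ (indices mod $3$), implementing a cyclic permutation of the $t_i$. Under $\lambda\mapsto\lambda^{-1}$, the identity $\theta(x^{-1};p)=-x^{-1}\theta(x;p)$ yields $t_i(\lambda^{-1})=-\lambda^3\,t_{-i}(\lambda)$; since $f$ is homogeneous of degree zero, the common factor $-\lambda^3$ cancels, and the net effect is the transposition $t_1\leftrightarrow t_2$. Applying Lemma \ref{cpl} to the homogeneous rational functions $f(t_0,t_1,t_2)-f(t_1,t_2,t_0)$ and $f(t_0,t_1,t_2)-f(t_0,t_2,t_1)$, each of which vanishes under the parametrization by the assumed independence of $\lambda$, I conclude that $f$ is invariant under both a $3$-cycle and a transposition, hence under all of $S_3$.

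\textbf{Step 2 (invariant ring).} Every $S_3$-symmetric Laurent polynomial of degree zero in $t_0,t_1,t_2$ lies in the subring of $\mathbb{C}[e_1,e_2,e_3,e_3^{-1}]$ generated by
$$X=e_1^3/e_3,\qquad Y=e_1e_2/e_3,\qquad Z=e_2^3/e_3^2=T.$$
Indeed, the monoid $\{(i,j,m)\in\mathbb{Z}_{\geq 0}^3:i+2j=3m\}$ of admissible exponents of $e_1^ie_2^j/e_3^m$ is generated by $(3,0,1)$, $(1,1,1)$, $(0,3,2)$, and the unique relation is $Y^3=XZ$. Consequently $f$ admits a unique expansion
$$f=f_0(X,Z)+Y f_1(X,Z)+Y^2 f_2(X,Z),\qquad f_i\in\mathbb{C}[X,Z].$$

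\textbf{Step 3 (reduction).} Under the parametrization, $Z$ equals $T(p)$, which is independent of $\lambda$; and $Y^3=XZ$ forces $X=Y^3/T(p)$ on the image. Substituting $X$ yields $f$ as a polynomial in $Y$ with coefficients depending on $p$; because every exponent $n\geq 0$ has a unique decomposition $n=3i+k$ with $k\in\{0,1,2\}$, the coefficients coming from $f_0,f_1,f_2$ do not interact. Since $Y$ is a nonconstant function of $\mu=\lambda^3$—verified at $p=0$ by the explicit formula $Y=9(1+7\mu+\mu^2)/(1-\mu)^2$—and $T(p)$ takes infinitely many values as $p$ varies, independence of $\lambda$ forces every $Y^n$-coefficient for $n\geq 1$ to vanish identically. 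This kills $f_1$, $f_2$, and the $X$-dependence of $f_0$, leaving $f=f_0(Z)$, a polynomial in $T$.

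The only mildly delicate part is Step 2, namely keeping track of the $\mathbb{Z}$-grading in the Laurent setting to establish the presentation $R\cong\mathbb{C}[X,Y,Z]/(Y^3-XZ)$; this is standard, and I expect no serious obstacle anywhere in the argument.
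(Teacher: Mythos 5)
Your proof is correct, and its first step is identical to the paper's: both use $\lambda\mapsto\omega\lambda$ and $\lambda\mapsto\lambda^{-1}$ together with Lemma \ref{cpl} to reduce to the case of symmetric $f$. The second half takes a genuinely different route. The paper expands $f=\sum_{k+2l+3m=0}C_{klm}e_1^ke_2^le_3^m$, substitutes \eqref{ssf} and \eqref{tsf}, and disposes of all terms with $k\geq 1$ by a single local observation: $\phi(\lambda,p)=e_1\theta(\lambda^3;p^3)^3$ satisfies $\phi(1,p)=\theta(\omega,\omega^2;p)^3\neq 0$, so the terms of $e_1$-degree $k$ produce a pole of order exactly $k$ at $\lambda=1$, which cannot occur in a $\lambda$-independent function. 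You instead present the degree-zero invariant ring as $\mathbb{C}[X,Y,Z]/(Y^3-XZ)$ and argue globally; this is sound (the monoid generation and the uniqueness of the expansion $f_0+Yf_1+Y^2f_2$ both check out, and your formula $Y=9(1+7\mu+\mu^2)/(1-\mu)^2$ at $p=0$ is correct), but it requires two nondegeneracy inputs that the pole argument bypasses: that $Y$ is nonconstant in $\lambda$ for the relevant $p$ (you verify $p=0$ and should add that this persists for nearby $p$ by continuity), and that $T(p)$ takes infinitely many values (true by \eqref{omp}, or by applying Lemma \ref{cpl} to $T-c$, but asserted without justification). What your route buys is a transparent geometric picture --- the parametrization lands on the surface $Y^3=XZ$ fibred over the $T$-line, and $\lambda$-independence is exactly constancy along the fibres, which are coordinatized by $Y$. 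What the paper's route buys is brevity: no presentation of the invariant ring and only the one nonvanishing check at $\lambda=1$.
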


\begin{proof}
By the change of variables $\lambda\mapsto \om\lambda$, it is seen that 
$f(t_0,t_1,t_2)=f(t_1,t_2,t_0)$. A priori, this holds when $t_i$ are as in \eqref{ti}, but by Lemma \ref{cpl} it is valid in general.
Similarly, $\lambda\mapsto \lambda^{-1}$
gives $f(t_0,t_1,t_2)=f(t_0,t_2,t_1)$. Since these two transformations generate
$S_3$, $f$ is symmetric, and can thus be expressed as
$$f(t_0,t_1,t_2)=\sum_{k+2l+3m=0,\,k,l\geq 0}C_{klm}(t_0+t_1+t_2)^k(t_0t_1+t_0t_2+t_1t_2)^l(t_0t_1t_2)^m. $$
Introducing the function
$$\phi(\la,p)=(t_0+t_1+t_2)\tha(\la^3;p^3)^3 $$
and using \eqref{ssf} and \eqref{tsf} gives
$$f(t_0,t_1,t_2)=\sum_{k\geq 0}\frac{\phi(\la,p)^k}{\tha(\la^3;p^3)^k}\sum_{2l+3m=-k,\ l\geq 0}C_{klm}\tau(p)^l.$$
We observe that, since
$$\phi(1,p)=\tha(\om,\om^2;p)^3\neq 0, $$
the $k$th term has a pole at $\lambda=1$ of order exactly $k$. In particular, if $f$ is independent of $\lambda$, then all terms with $k\neq 0$ vanish. Thus,
$$f(t_0,t_1,t_2)=\sum_{2l+3m=0,\,l\geq 0}C_{0lm}(t_0t_1+t_0t_2+t_1t_2)^l(t_0t_1t_2)^m,  $$
which  is indeed a polynomial in $T$.
\end{proof}

We are now ready to prove  Theorem \ref{st}. Assume that $n\equiv 0\ \operatorname{mod}\ 3$. Then, \eqref{zfd} can be written
\begin{equation*}\begin {split}Z_n^{\text{3C}}(t_0,t_1,t_2)&=\frac{1}{\tha(\la^3;p^3)^{n(n+2)}}\left(\frac{X_n(p)}{\tha(\la^3;p^3)}+\frac{Y_n(p)}{\tha(\la;p)^3}\right)
\\
&=(t_0t_1t_2)^{\frac{n(n+2)}3}\left(\frac{X_n(p)\tau(p)t_0t_1t_2}{t_0t_1+t_0t_2+t_1t_2}+Y_n(p)t_0\right),
\end{split}\end{equation*}
where $\tau(p)$ is as in Lemma \ref{fil}. 
This implies
\begin{equation}\label{xz}X_n(p)\tau(p)=\frac{t_0t_1+t_0t_2+t_1t_2}{(t_0t_1t_2)^{\frac{n(n+2)}3+1}}\cdot\frac{t_1Z_n(t_0,t_1,t_2)-t_0Z_n(t_1,t_0,t_2)}{t_1-t_0}.
\end{equation}
Since the right-hand side vanishes when $T=0$, it follows from Lemma  \ref{lil} that $X_n(p)\tau(p)$ is a polynomial in $T$ divisible by $T$. 
 Similarly,
$$Y_n(p)=\frac 1{(t_0t_1t_2)^{\frac{n(n+2)}3}}\cdot\frac{Z_n(t_0,t_1,t_2)-Z_n(t_1,t_0,t_2)}{t_1-t_0}, $$
so $Y_n(p)$ is a polynomial in $T$. Writing
\begin{subequations}\label{aba}
\begin{equation}\label{abaa}X_n(p)\tau(p)=(-1)^{n+1}Tq_n(T), \end{equation}
\begin{equation}Y_n(p)=(-1)^n2^{\chi (n \text{ odd})}r_n(T), \end{equation}
we obtain Theorem \ref{st} for $n\equiv 0\ \operatorname{mod}\ 3$. The cases 
$n\equiv\pm 1\ \operatorname{mod}\ 3$ are similar and we do not give the details. For later use we note that in both cases
\begin{equation}X_n(p)=(-1)^{n+1}q_n(T), \end{equation}
\begin{equation}Y_n(p)\tau(p)^2=(-1)^n2^{\chi (n \text{ odd})}Tr_n(T). \end{equation}
\end{subequations}

Finally, we comment on the claim made before stating Proposition
\ref{rip}, that  $q_n$, $r_n$ and $2r_{2n+1}$ have integer coefficients. Consider the case of $q_n$ when $n\equiv 0\ \operatorname{mod}\ 3$; all other cases follow similarly. By \eqref{xz} and \eqref{abaa}, 
\begin{multline*}\frac{t_1Z_n(t_0,t_1,t_2)-t_0Z_n(t_1,t_0,t_2)}{t_1-t_0}\\
=(-1)^{n+1}
(t_0t_1t_2)^{\frac{n(n+2)}3-1}(t_0t_1+t_0t_2+t_1t_2)^2q_n\left(\frac{(t_0t_1+t_0t_2+t_1t_2)^3}{(t_0t_1t_2)^2}\right).
 \end{multline*}
The left-hand side is a symmetric polynomial in $t_i$ with integer coefficients. By \cite[I.2.4]{mb}, it can be expanded as an integer linear combination of 
elementary symmetric polynomials. By the identity above, the non-zero coefficients in that expansion are coefficients of the polynomial $q_n$.

\section{The function $\Phi_n$}\label{phs}

In \S  \ref{qrs}, we  expressed the function \eqref{tct} 
in terms of  the basis
$$\left(\tha(\la\om^{2n};p)^2,\tha(\la\om^{2n+1},\la\om^{2n+2};p)\right).$$
The first main idea for analyzing the partition function further is suggested by   the trigonometric case $p=0$. The relevant space is then the polynomials $A+B\lambda+C\lambda^2$ such that $C=\om^nA$. 
From  \eqref{ttf}, it appears that the most natural basis to use is not
 $\left((1-\lambda\om^{2n})^2,(1-\lambda\om^{2n+1})(1-\lambda\om^{2n+2})\right)$
but rather $(1+\om^n\la^2,\la)$. An elliptic analogue of the latter basis is
$\left(\tha(-\om^{n}\la^2;p^2),\la\tha(-p\om^{n}\la^2;p^2)\right).$  
As we will eventually see,  this change of basis corresponds to expressing
$Z_n^{\text{3C}}$ in terms of the polynomials $p_n$ and $\tilde p_n$ rather than $q_n$ and $r_n$.

The next main idea is to consider a one-parameter extension of 
$Z_n^{\text{3C}}$, which is given by
 $Z_n^{\text{8VSOS}}$ with $\lambda$,  $p$ and $x_1$ free but all other parameters fixed. This corresponds to incorporating $p_n(\zeta)$ in the two-variable polynomial $P_n(x,\zeta)$. 

Combining these two ideas, we consider the function
\begin{equation}\label{nsp}\tha(\la\om^{n+1},\la\om^{n+2};p)Z_n^{\text{8VSOS}}(\om t,\om,\dots,\om;1,\dots,1;\lambda,p).\end{equation}
By  Proposition \ref{lap} and \eqref{et}, it is a linear combination of
$\tha(-\om^n\la^2/t;p^2)$ and $\la\tha(-p\om^n\la^2/t;p^2)$, 
with coefficients independent of $\lambda$. It will be convenient to
write the corresponding decomposition as
\begin{multline*}Z_n^{\text{8VSOS}}(\om t,\om,\dots,\om;1,\dots,1;\lambda,p)
=\frac 1{\tha(\la\om^{n+1},\la\om^{n+2};p)\tha(t;p)^{2n-1}}\\
\times\left(p^{\frac{3n-4}2}t^{\frac{9n-4}2}\tha(-\om^n\la^2/t;p^2)\tilde\Phi_n(t)-p^{-1}t^{\frac{3n-2}2}\om^{2n}\la\tha(-p\om^n\la^2/t;p^2)\Phi_n(t)\right).
 \end{multline*}
for $n$ even,
while for odd $n$  we write
\begin{multline*}Z_n^{\text{8VSOS}}(\om t,\om,\dots,\om;1,\dots,1;\lambda,p)
=\frac 1{\tha(\la\om^{n+1},\la\om^{n+2};p)\tha(t;p)^{2n-1}}\\
\times\left(t^{\frac{3n-1}2}\tha(-\om^n\la^2/t;p^2)\Phi_n(t)+p^{\frac{3n-3}2}t^{\frac{9n-5}2}\om^{2n}\la\tha(-p\om^n\la^2/t;p^2)\tilde\Phi_n(t)\right).
 \end{multline*}
The functions
$\Phi_n$ and $\tilde \Phi_n$ depend implicitly on $p$ but are independent of $\lambda$.

\begin{lemma}
The functions $\Phi_n$ and $\tilde \Phi_n$ are related by
$\tilde \Phi_n(t)=\Phi_n(pt)$. Moreover,
\begin{equation}\label{pqp}\Phi_n(p^2t)=\frac{1}{p^{6n-4}t^{6n-3}}\,\Phi_n(t).
\end{equation}
\end{lemma}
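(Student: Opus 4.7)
The plan is to compare the defining decomposition of $Z_n^{\text{8VSOS}}(\om t,\om,\dots,\om;1,\dots,1;\la,p)$, evaluated at $t$ and at $pt$, using a quasi-periodicity of the 8VSOS partition function in its first spectral parameter. From the explicit formula \eqref{mmi}, tracking every factor that depends on $x_1$ under $x_1\to px_1$---namely the denominator $\tha(X\la\gamma\om^n/Y;p)$ (which contributes $-X\la\gamma\om^n/Y$ via $\tha(pz;p)=-z^{-1}\tha(z;p)$), the products $\prod_j\tha(x_1/y_j,\om x_1/y_j;p)$ and $\prod_{j>1}\tha(x_1/x_j;p)$, and the first row of the determinant (which transforms uniformly by $1/\gamma$ since $\tha(p\gamma z;p)/\tha(pz;p)=\gamma^{-1}\tha(\gamma z;p)/\tha(z;p)$)---the powers of $p$, $\gamma$, and $\om$ conspire to cancel and I obtain
\begin{equation*}
Z_n^{\text{8VSOS}}(px_1,x_2,\dots,x_n;y;\la,p) = \frac{(-1)^n\la Y}{x_1^n}\,Z_n^{\text{8VSOS}}(x_1,\dots,x_n;y;\la,p).
\end{equation*}
Specialising to $x_1=\om t$, $x_i=\om$ for $i\geq 2$, and $y_j=1$, this says $Z_n^{\text{8VSOS}}(\om t,\dots;\la,p)$ is multiplied by $(-1)^n\la/(\om^n t^n)$ under $t\to pt$. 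Combined with $\tha(pt;p)^{2n-1}=-t^{-(2n-1)}\tha(t;p)^{2n-1}$, the cleared LHS $\tha(\la\om^{n+1},\la\om^{n+2};p)\tha(t;p)^{2n-1}Z_n^{\text{8VSOS}}(\om t,\dots;\la,p)$ of the defining decomposition is multiplied by the scalar $(-1)^{n+1}\la/(\om^n t^{3n-1})$ under $t\to pt$.

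On the other hand, substituting $t\to pt$ directly in the RHS of the decomposition and using the identities
\begin{equation*}
\tha(-\om^n\la^2/(pt);p^2) = \frac{\om^n\la^2}{pt}\,\tha(-p\om^n\la^2/t;p^2), \qquad \tha(-p\om^n\la^2/(pt);p^2) = \tha(-\om^n\la^2/t;p^2),
\end{equation*}
which are immediate from $\tha(p^2 z;p^2)=-z^{-1}\tha(z;p^2)$, re-expresses the shifted RHS in the original basis $\bigl\{\tha(-\om^n\la^2/t;p^2),\,\la\tha(-p\om^n\la^2/t;p^2)\bigr\}$ of the two-dimensional space of $A_1$ theta functions in $\la$ of norm $\om^n/t$, with coefficients now involving $\Phi_n(pt)$ and $\tilde\Phi_n(pt)$.

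Equating the two expressions for the shifted LHS and comparing coefficients of the linearly independent basis functions yields two scalar identities. After simplifying exponents and using $\om^{3n}=1$, the coefficient of $\tha(-\om^n\la^2/t;p^2)$ gives $\tilde\Phi_n(t)=\Phi_n(pt)$, while the coefficient of $\la\tha(-p\om^n\la^2/t;p^2)$ gives $\Phi_n(t)=p^{6n-4}t^{6n-3}\tilde\Phi_n(pt)$; substituting the first identity into the second produces \eqref{pqp}. The main obstacle is the meticulous bookkeeping of exponents of $p$, $t$, $\om$, and $\la$---the clean form of the final identities (with no stray constants) depends sensitively on the specific prefactors $p^{(3n-4)/2}t^{(9n-4)/2}$, $p^{(3n-3)/2}t^{(9n-5)/2}$, etc.\ appearing in the definitions of $\Phi_n$ and $\tilde\Phi_n$, and the even and odd cases must be handled separately because the placement of $\Phi_n$ versus $\tilde\Phi_n$ and the signs between the two terms differ, though both cases yield precisely the same pair of identities.
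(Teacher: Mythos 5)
Your proof is correct and follows essentially the same route as the paper: the paper's entire proof is the one-line observation that, as a function of $t$, \eqref{nsp} is an $A_{n-1}$ theta function of nome $p$ and norm $\om^n/\la$ (citing \cite[Lemma 3.2]{r}), from which the lemma ``follows easily''. Your derivation of that quasi-periodicity $Z_n^{\text{8VSOS}}(px_1,\dots)=(-1)^n\la Y x_1^{-n}Z_n^{\text{8VSOS}}(x_1,\dots)$ directly from \eqref{mmi} is a valid substitute for the citation, and your subsequent comparison of coefficients in the basis $\bigl\{\tha(-\om^n\la^2/t;p^2),\,\la\tha(-p\om^n\la^2/t;p^2)\bigr\}$ is precisely the omitted ``easy'' step; the exponent bookkeeping checks out in both parities.
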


\begin{proof}
This follows easily from the fact that,
as  a function of $t$, \eqref{nsp} is an $A_{n-1}$ theta function of nome $p$ and norm $\om^n/\la$, see \cite[Lemma 3.2]{r}.
\end{proof}

As we will see,  $\Phi_n$ is uniquely determined by the following properties.

\begin{proposition}\label{php}
The function $\Phi_n$ 
 has the following properties:
\begin{enumerate}[(i)]
\item $\Phi_n$ is a $BC_{3n-2}$ theta function of nome $p^2$;
\item 
$\displaystyle\sum_{k=0}^2\omega^k\Phi_n(\omega^kt)=0$, that is,
$\Phi_n(t)=f(t^3)+tg(t^3)$, with  $f$ and $g$ analytic on $\mathbb C^\ast$;
\item $t=1$ and $t=p$ are zeroes of $\Phi_n(t)$ of multiplicity at least
 $2n-1$;
\item $\displaystyle \Phi_n(\om)=-p^{1+3\left[\frac{n-2}2\right]}\om\tha(\om;p)^{2n-1}\lim_{t\rightarrow p}\frac{\Phi_{n-1}(t)}{\tha(t;p)^{2n-3}}$;
\item $\displaystyle \Phi_1(t)= \frac{\om \tha(t,\pm pt;p^2)}{\tha(p;p^2)}$.
\end{enumerate}
\end{proposition}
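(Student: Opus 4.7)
The plan is to deduce each of the five properties of $\Phi_n$ by pulling back a corresponding property of $Z_n^{\text{8VSOS}}$ through the defining decomposition, exploiting that the two theta functions $\theta(-\omega^n\lambda^2/t;p^2)$ and $\lambda\theta(-p\omega^n\lambda^2/t;p^2)$ which span the relevant $A_1$ space are linearly independent as functions of $\lambda$. This linear independence means that any identity imposed on $\theta(\lambda\omega^{n+1},\lambda\omega^{n+2};p)\,Z_n^{\text{8VSOS}}(\omega t,\omega,\dots,\omega;1,\dots,1;\lambda,p)$ splits cleanly into separate identities for $\Phi_n(t)$ and $\tilde\Phi_n(t)$.

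For (i), the quasi-periodicity \eqref{pqp} already handles the $p^2$-shift, so only the inversion symmetry $\Phi_n(t^{-1})=-t^{-1}\Phi_n(t)$ remains. This I would extract from the crossing symmetry \eqref{cs} specialised at $x_1=\omega t$, $x_j=\omega$ for $j\ge 2$, $y_j=1$: the left side becomes the $\Phi_n$-generating function with $t\mapsto 1/t$ and $\lambda\mapsto\omega^{2n}/\lambda$, the right side is a simple explicit multiple of the same function at $t$ and $\lambda$, and equating coefficients in the $\lambda$-basis (after applying $\theta(u^{-1};p)=-u^{-1}\theta(u;p)$ to the inverted arguments) produces the inversion law. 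For (iii), the central observation is that $Z_n^{\text{8VSOS}}(\omega t,\omega,\dots;1,\dots;\lambda,p)$ is holomorphic in $t$ at $t=1$ and $t=p$, since the only Boltzmann-weight poles occur at $x_i/y_j\in p^{\mathbb{Z}}$, which is avoided there. The defining decomposition carries an explicit $\theta(t;p)^{2n-1}$ in the denominator, so the numerator must vanish to order at least $2n-1$ at these points; linear independence in $\lambda$ then forces $\Phi_n$ and $\tilde\Phi_n$ to vanish individually, and the relation $\tilde\Phi_n(t)=\Phi_n(pt)$ converts the vanishing of $\tilde\Phi_n$ at $t=1$ into the required vanishing of $\Phi_n$ at $t=p$.

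For (ii), I apply Corollary~\ref{fec} to the specialisation $x_1=t$, $x_j=1$ for $j\ge 2$, $y_j=1$. Factoring $\Delta(x,y;p)=\bigl(\prod_{j\ge 2}x_j\theta(x_1/x_j;p)\bigr)\Delta'(x_2,\dots,y_n;p)$ isolates the $x_1$-dependence, and dividing the Razumov--Stroganov identity by the $x_1$-independent common vanishing $\Delta'$ (and by the common theta prefactor $\theta(\lambda\omega^{n+1},\lambda\omega^{n+2};p)$) reduces it to a nontrivial identity. The key simplification is that the theta arguments $\omega^n\lambda^2/t$ and $p\omega^n\lambda^2/t$ are invariant under the simultaneous rotation $t\mapsto\omega^k t$, $\lambda\mapsto\omega^{-k}\lambda$ since $\omega^{-3k}=1$; matching the $\lambda$-basis coefficients term by term then collapses the identity to $\sum_{k=0}^2\omega^k\Phi_n(\omega^k t)=0$ (with the analogous identity for $\tilde\Phi_n$ following automatically). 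For (iv), I specialise the recursion \eqref{znr} at $x_1=\omega t$, $x_j=\omega$, $y_j=1$, where the hypothesis $\omega x_1=y_1$ becomes $t=\omega$. The left side is read from the defining decomposition at $t=\omega$; the right side is a prefactor times $Z_{n-1}^{\text{8VSOS}}(\omega,\dots,\omega;1,\dots,1;\lambda,p)$, which is the $\Phi_{n-1}$-generating function at $t=1$. By (iii) both $\Phi_{n-1}$ and $\tilde\Phi_{n-1}(t)=\Phi_{n-1}(pt)$ vanish at $t=1$ to order $2n-3$, so cancelling the common $\theta(t;p)^{2n-3}$ in numerator and denominator converts the value into finite limits; the $\tilde\Phi_{n-1}$-limit transforms into $\lim_{t\to p}\Phi_{n-1}(t)/\theta(t;p)^{2n-3}$ exactly as required, and matching the $\lambda$-basis coefficient paired with $\Phi_n$ gives (iv).

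Finally, (v) is a direct calculation: the unique admissible $n=1$ Boltzmann configuration yields $Z_1^{\text{8VSOS}}(\omega t;1;\lambda,p)=\omega t\,\theta(\lambda\omega^2/t;p)/\theta(\lambda;p)$, and decomposing $\theta(\lambda\omega^2,\lambda\omega^2/t;p)$ via \eqref{et} in the $\lambda$-basis together with the splitting $\theta(t;p)\theta(-pt;p^2)=\theta(t,\pm pt;p^2)$ reproduces the claimed $\Phi_1$. The main obstacle I expect is the bookkeeping in (ii): the specialisation collapses $\Delta$ entirely, so the Razumov--Stroganov identity is trivial $0=0$ before one divides out the common zeroes, and care is needed to identify the correct finite residual identity and verify that its $\lambda$-coefficient extraction reproduces precisely the cyclic sum $\sum_k\omega^k\Phi_n(\omega^k t)$.
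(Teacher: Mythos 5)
Your proposal follows essentially the same route as the paper's proof: each property is pulled back from the corresponding property of $Z_n^{\text{8VSOS}}$ --- (i) from the quasi-periodicity already established plus the crossing symmetry \eqref{cs}, (ii) from Corollary \ref{fec} after identifying $\Phi_n$ with a specialization of $F_n^{(0)}$ divided by the reduced Vandermonde $\Delta(\hat x,y;p)$, (iii) from regularity of the partition function at $t=1,p$ against the explicit $\theta(t;p)^{2n-1}$ denominator, (iv) from the recursion \eqref{znr} at $t=\omega$, and (v) by direct computation of the single $n=1$ Boltzmann weight. The only cosmetic difference is that the paper specializes $F_n^{(0)}$ at $x_1=pt$ for even $n$ (and $x_1=t$ for odd $n$) so as to land directly on $\Phi_n$ rather than $\tilde\Phi_n$, but since $\tilde\Phi_n(t)=\Phi_n(pt)$ your uniform choice $x_1=t$ yields the same cyclic identity.
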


\begin{proof}
We have already noted the quasi-periodicity \eqref{pqp}. For (i), it remains to  show that $\Phi_n(t^{-1})=-t^{-1}\Phi_n(t)$. This is a special case of \eqref{cs}.

Property 
(ii) is a special case of Corollary \ref{fec}, where we should note 
that
$$\Phi_n(t)=\begin{cases}
\displaystyle t^{(3n-2)/2}\frac{F_n^{(0)}(x;y)}{\Delta(\hat x,y;p)}\Bigg|_{x_1=pt,\,x_2=\dots=x_n=y_1=\dots=y_n=1}, & n \text{ even},\\[5mm]
\displaystyle t^{(1-3n)/2}
\frac{F_n^{(0)}(x;y)}{\Delta(\hat x,y;p)}\Bigg|_{x_1=t,\,x_2=\dots=x_n=y_1=\dots=y_n=1}
, & n \text{ odd},
\end{cases} $$
where $\Delta$ is as in \eqref{evdm} and the hats indicate omission of $x_1$.

Property (iii) is obvious, and
 property (iv) is a special case of \eqref{znr}.  

Finally, to check (v), we note that
$$F_1(x;y;\la)=y\tha(\om^2\la,x/y;p)R^{+-}_{-+}(\la,\om x/y)=
\om x\tha(x/y,\om^2\la,\om^2\la y/ x;p). $$
By \eqref{et}, this gives
$$F_1^{(0)}(x;y)=\om x\tha(x/y;p)\frac{\tha(-px/y;p^2)}{\tha(p;p^2)}, $$
$$\Phi_1(t)=\frac 1t\,F_1^{(0)}(t;1)=\frac{\om \tha(t;p)\tha(-pt;p^2)}{\tha(p;p^2)}=\frac{\om \tha(t,\pm pt;p^2)}{\tha(p;p^2)}. $$
\end{proof}

Finally, we use \eqref{ter} to express $Z_n^{3C}$ in terms of $\Phi_n$.

\begin{corollary}\label{tcpc}
When $t_i$ are as in \eqref{ti} and $n$ is even, then
\begin{multline*}Z_n^{3C}(t_0,t_1,t_2)=-\omega^{n(n+1)}\frac{\theta(\lambda\omega^2,\lambda\omega^{n+1};p)^2}{\theta(\lambda^3;p^3)^{n^2+2n+3}}\\
\times\left(p^{\frac{3n-4}2}\tha(-\om^n\la^2;p^2)\lim_{t\rightarrow p}\frac{\Phi_n(t)}{\tha(t;p)^{2n-1}}+p^{-1}\om^{2n}\la\tha(-p\om^n\la^2;p^2)\lim_{t\rightarrow 1}\frac{\Phi_n(t)}{\tha(t;p)^{2n-1}}\right),\end{multline*}
while if  $n$ is odd
\begin{multline*}Z_n^{3C}(t_0,t_1,t_2)=\omega^{n(n+1)}\frac{\theta(\lambda\omega^2,\lambda\omega^{n+1};p)^2}{\theta(\lambda^3;p^3)^{n^2+2n+3}}\\
\times\left(\tha(-\om^n\la^2;p^2)\lim_{t\rightarrow 1}\frac{\Phi_n(t)}{\tha(t;p)^{2n-1}}-p^{\frac{3n-3}2}\om^{2n}\la\tha(-p\om^n\la^2;p^2)\lim_{t\rightarrow p}\frac{\Phi_n(t)}{\tha(t;p)^{2n-1}}\right).\end{multline*}
\end{corollary}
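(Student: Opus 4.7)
The plan is to specialize $t=1$ in the defining formulas for $\Phi_n$ and $\tilde\Phi_n$ given just before Proposition~\ref{php}, and then convert the resulting identity for $Z_n^{\text{8VSOS}}(\om,\dots,\om;1,\dots,1;\la,p)$ into one for $Z_n^{3C}$ via \eqref{ter}. When $t=1$, the left-hand side of the defining relation is precisely the 8VSOS partition function appearing in \eqref{ter}, and each of the factors $\tha(-\om^n\la^2/t;p^2)$ and $\tha(-p\om^n\la^2/t;p^2)$ loses its $t$-dependence. Only the factor $\tha(t;p)^{2n-1}$ in the denominator requires care: it vanishes at $t=1$, but by property~(iii) of Proposition~\ref{php} both $\Phi_n(t)$ and $\tilde\Phi_n(t)=\Phi_n(pt)$ vanish to order $2n-1$ at $t=1$ (the latter because $\Phi_n$ has a zero of order $2n-1$ at $t=p$), so the resulting limits are finite.

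The next step is to reduce the limit involving $\tilde\Phi_n$ at $t=1$ to a limit involving $\Phi_n$ at $t=p$. Substituting $s=pt$ and using the quasi-periodicity $\tha(s/p;p)=-(s/p)\tha(s;p)$ gives
$$\lim_{t\to 1}\frac{\tilde\Phi_n(t)}{\tha(t;p)^{2n-1}}=\lim_{s\to p}\frac{\Phi_n(s)}{\bigl(-(s/p)\tha(s;p)\bigr)^{2n-1}}=-\lim_{s\to p}\frac{\Phi_n(s)}{\tha(s;p)^{2n-1}},$$
the minus sign arising because $2n-1$ is odd and $(p/s)^{2n-1}\to 1$ as $s\to p$. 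The same substitution handles the $\tilde\Phi_n$ term in the odd-$n$ case.

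Finally, I would collapse the prefactors. Multiplying $1/\tha(\la\om^{n+1},\la\om^{n+2};p)$ from the definition of $\Phi_n,\tilde\Phi_n$ into the prefactor from \eqref{ter} produces a denominator containing $\tha(\la\om^n;p)\tha(\la\om^{n+1};p)\tha(\la\om^{n+2};p)$. By the classical identity $\tha(\la,\la\om,\la\om^2;p)=\tha(\la^3;p^3)$, which follows immediately from $\prod_{j=0}^{2}(1-\om^jx)=1-x^3$ applied to each of the two infinite products defining the theta function, this triple product equals $\tha(\la^3;p^3)$, leaving the prefactor $\om^{n(n+1)}\tha(\la\om^2,\la\om^{n+1};p)^2/\tha(\la^3;p^3)^{n^2+2n+3}$ stated in the Corollary.

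I do not anticipate a serious obstacle: the argument is a direct substitution combined with one theta-function limit computation. The only point requiring genuine care is the sign tracking in the conversion from the $t\to 1$ limit of $\tilde\Phi_n$ to the $t\to p$ limit of $\Phi_n$; once this is correctly performed, running the even and odd cases of the definition of $\Phi_n$ separately through the three steps above yields the two formulas asserted in the Corollary.
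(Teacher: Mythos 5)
Your proposal is correct and is exactly the argument the paper intends (the paper derives Corollary \ref{tcpc} simply by combining \eqref{ter} with the decomposition defining $\Phi_n$ and $\tilde\Phi_n$ at $t=1$, without writing out the details). All the delicate points — the finiteness of the limits via property (iii), the sign $\tha(s/p;p)=-(s/p)\tha(s;p)$ raised to the odd power $2n-1$, and the collapse of $\tha(\la\om^n,\la\om^{n+1},\la\om^{n+2};p)$ into $\tha(\la^3;p^3)$ — are handled correctly.
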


\section{Symmetric functions}
\label{sfs}

We are now faced with two problems: to 
construct a function satisfying all properties of Proposition \ref{php}, and to show that this function is unique. The key for solving both problems is to replace property (iii) by a generic vanishing condition. This leads to certain symmetric multivariable theta functions, which after uniformization become the symmetric polynomials  \eqref{sn}.

\subsection{Symmetric theta functions}\label{stfs}

We  denote by $V_n$ the space of $BC_{3n+1}$ theta functions   of nome $p^2$ satisfying property (ii) of Proposition \ref{php}.
For $t_i\in\mathbb C^\ast/\{t=p^2t=t^{-1}\}$, we  denote 
by $V_n(t_1,\dots,t_{k})$
the subspace of $V_n$ consisting of functions $\psi$ such that,  
 apart from the trivial zeroes at $1$ and $\pm p$, $\psi$ vanishes at 
$t_1,\dots, t_{k}$ (counted with multiplicity). 
Then, properties (i)--(iii) of Proposition \ref{php} can be summarized as
\begin{equation}\label{piv}\Phi_n\in V_{n-1}(\underbrace{1,\dots,1}_{n-1},\underbrace{p,\dots,p}_{n-1}). \end{equation}

\begin{lemma}\label{dl} 
The space $V_n$ 
has dimension $2n+1$. The space  $V_n(t_1,\dots,t_k)$ has dimension at least $2n+1-k$, with equality for $k\leq n$.
\end{lemma}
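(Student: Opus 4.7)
The plan is to reduce both statements to a standard theta-function count on an auxiliary elliptic curve. The key device is the unique $\mathbb{Z}/3$-decomposition $\Phi(t) = f(s) + t g(s) + t^2 h(s)$ with $s = t^3$, induced by the action $t \mapsto \omega t$. A direct character sum gives $\sum_{k=0}^2 \omega^k \Phi(\omega^k t) = 3 t^2 h(s)$, so property (ii) of Proposition \ref{php} is equivalent to $h \equiv 0$, and the effective analysis takes place on the elliptic curve $E' = \mathbb{C}^*/\{s \sim p^6 s\}$ of nome $p^6$.

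For the dimension count, I would substitute $\Phi(t) = f(s) + t g(s)$ into the $BC_{3n+1}$ quasi-periodicity $\Phi(p^2 t) = p^{-(6n+2)} t^{-(6n+3)} \Phi(t)$ and match the two $\mathbb{Z}/3$-components, using $t^{6n+3} = s^{2n+1}$. This yields
$$f(p^6 s) = \frac{f(s)}{p^{6n+2} s^{2n+1}}, \qquad g(p^6 s) = \frac{g(s)}{p^{6n+4} s^{2n+1}},$$
while the inversion $\Phi(t^{-1}) = -t^{-1} \Phi(t)$ forces $g(s) = -f(s^{-1})$. A short direct check confirms that this relation is consistent with both quasi-periodicities, so $\Phi$ is determined by $f$ alone, which is an $A_{2n}$ theta function of nome $p^6$ and norm $-p^{6n+2}$. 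Lemma \ref{tfl} then gives $\dim V_n = 2n+1$.

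The lower bound $\dim V_n(t_1,\dots,t_k) \geq 2n+1-k$ is immediate, since each condition $\Phi(t_i) = 0$ cuts the dimension by at most one. For the equality when $k \leq n$, I would translate the vanishing via the bijection $\Phi \leftrightarrow f$ into the linear functional $\ell_i(f) := f(s_i) - t_i f(s_i^{-1})$ with $s_i = t_i^3$, each supported on at most two points of $E'$. The crux is then a Riemann–Roch count on $E'$: the space of $A_{2n}$ theta functions above corresponds to $H^0(E', L)$ for a line bundle $L$ of degree $2n+1$, and since the relevant $H^1$ vanishes after twisting by any effective divisor of degree at most $2n$, the $m$ point evaluations at any $m \leq 2n$ distinct points of $E'$ are linearly independent functionals. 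When $k \leq n$, the $2k \leq 2n$ points $\{s_i, s_i^{-1}\}_{i=1}^k$ are generically distinct, so any relation $\sum_i \alpha_i \ell_i = 0$ forces every $\alpha_i t_i$ and $\alpha_i$ to vanish, yielding linear independence of the $\ell_i$ and hence the claimed equality.

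The main obstacle I anticipate is handling configurations where the $2k$ points on $E'$ fail to be distinct — for instance when $s_i^2 \in p^{6\mathbb{Z}}$ (so $s_i = s_i^{-1}$) or when $s_i \equiv s_j^{\pm 1}$ on $E'$ for some $i \neq j$. I plan to dispatch these either by reinterpreting the vanishing of $\Phi$ at a repeated $t_i$ as vanishing to the appropriate order, so that the associated divisor on $E'$ still has degree at most $2n$ and the same Riemann–Roch count applies, or by an upper-semicontinuity argument that propagates the equality from generic $(t_1,\dots,t_k)$ to the boundary while ruling out extraneous dimension jumps beyond those forced by multiplicity.
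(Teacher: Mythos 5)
Your proposal is correct in outline but follows a genuinely different route from the paper's. For $\dim V_n=2n+1$, the paper simply selects, from the explicit basis of $BC_{3n+1}$ theta functions of nome $p^2$ given in \cite[Prop.\ 6.1]{rs}, the $2n+1$ basis vectors whose exponents avoid the residue class $2$ mod $3$; your reduction to a single $A_{2n}$ theta function $f$ of nome $p^6$ and norm $-p^{6n+2}$ via $\Phi(t)=f(t^3)-t f(t^{-3})$ is an equivalent and arguably cleaner packaging (just record explicitly that $f\mapsto f(t^3)-tf(t^{-3})$ is a bijection onto $V_n$, not merely an injection out of it, and note that Lemma \ref{tfl} gives a parametrization rather than a dimension count, so the $(2n+1)$-dimensionality of the $A_{2n}$ space still rests on the same standard fact from \cite{rs}). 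For the equality when $k\le n$, the paper argues constructively: by the quintuple product identity, $V_n$ contains $\tha(t,\pm pt;p^2)\tha(b_1t^{\pm3},\dots,b_nt^{\pm3};p^6)$, and matching some of the $b_i$ to the prescribed zeroes while keeping the rest generic exhibits, at each step, a function separating $V_n(t_1,\dots,t_{j-1})$ from $V_n(t_1,\dots,t_j)$. Your dual argument --- linear independence of the functionals $\ell_i(f)=f(s_i)-t_i f(s_i^{-1})$ via Riemann--Roch on $E'$ --- is valid whenever the $2k\le 2n$ points $s_i^{\pm1}$ are distinct, and in fact also survives a single coincidence $s_i=s_j^{\pm1}$, since two functionals supported on the same pair $\{s,s^{-1}\}$ with distinct ``slopes'' remain independent.

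The weak point is your treatment of the remaining degenerate configurations, and here one of your two fallbacks is unsound. Semicontinuity goes the wrong way: $\dim V_n(t_1,\dots,t_k)$ can only jump \emph{up} at special configurations, so propagating equality from generic parameters cannot rule out such jumps --- and a jump really occurs. If three of the $t_i$ form a full orbit $t,\om t,\om^2t$ (three distinct points of the quotient for generic $t$), then property (ii) of Proposition \ref{php}, $\sum_k\om^k\Phi(\om^kt)=0$, makes the third vanishing condition a consequence of the first two; equivalently, the three functionals $\ell_i$ are all supported on the single pair $\{s,s^{-1}\}$ and are therefore linearly dependent, and no reinterpretation by multiplicities rescues this, since the associated divisor on $E'$ still has degree $2$. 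So for $n\ge 3$ the equality claim fails for such configurations. This defect is shared by the statement itself and by the paper's own proof (its certificate function vanishes on the whole fibre $\{t_i,\om t_i,\om^2t_i\}$ of the cube map, so the same unstated genericity is assumed), and it is harmless because the equality clause is only ever invoked with $k=1$ (in the proof of Lemma \ref{dcl}); for that case your argument is complete once you verify directly that $\ell_t\equiv 0$ exactly at the trivial zeroes $1,\pm p$.
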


\begin{proof}
A basis for the  $BC_{3n+1}$ theta functions
 of nome $p^2$ is given by
$$t^{j-3n-1}\theta(-p^{2j}t^{6n+3};p^{12n+6})-t^{3n+2-j}\theta(-p^{2j}t^{-6n-3};p^{12n+6}),\qquad 1\leq j\leq 3n+1, $$
  see 
\cite[Proposition 6.1]{rs}. It is clear that the subspace $V_n$ is spanned by the $2n+1$ basis vectors with $j\not\equiv 0\ \operatorname{mod}\ 3$.

Since $V_n(t_1,\dots,t_k)$ is obtained by imposing $k$ linear conditions on $V_n$, it  has dimension at least $2n+1-k$. For the final statement, note that by the quintuple product identity \eqref{qp}, $V_n$ contains all functions of the form 
$$\tha(t,\pm pt;p^2)\tha(b_1t^{\pm 3},\dotsm b_{n}t^{\pm 3};p^6).$$
When $b_i=t_i$ for $1\leq i\leq k$ and the remaining $b_i$ are generic, this function is in $V_n(t_1,\dots,t_k)\setminus V_n(t_1,\dots,t_{k+1})$. Thus, as long as we impose at most $n$ vanishing conditions, each additional condition decreases the dimension by one.
\end{proof}

Although we do not need it in full generality, the reader may find the  following characterization of the space $V_n$  helpful.

\begin{lemma}\label{rl}
Fix $\al\in\mathbb C^\ast/\{t=p^2t=t^{-1}\}$ with $\al\neq-\om$. Then,  $V_n$ is the space of all functions $\psi$ that can be written
\begin{multline*}\psi(t)=\tha(t,\pm pt;p^2)\left(A\,\tha(-t^\pm,-\om t^\pm,\al t^\pm;p^2)
\tha(a_1t^{\pm 3},\dotsm a_{n-1}t^{\pm 3};p^6)\right.\\
\left.+B\,\tha(b_1t^{\pm 3},\dotsm b_{n}t^{\pm 3};p^6)
\right), \end{multline*}
with $A,B\in\mathbb C$ and $a_1,\dots,a_{n-1},b_1,\dots,b_n\in\mathbb C^\ast$.
\end{lemma}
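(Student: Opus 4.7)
My approach factors every candidate $\psi$ as $\psi(t) = \tha(t, \pm pt; p^2)\,\phi(t)$ and exploits the quintuple product identity \eqref{qp}:
\[
\tha(t, \pm pt; p^2) = c(p)\bigl[\tha(-p^2 t^3; p^6) - t\,\tha(-p^2 t^{-3}; p^6)\bigr].
\]
Decomposing $\phi(t) = \phi_0(t^3) + t\phi_1(t^3) + t^2\phi_2(t^3)$ under the $\mathbb{Z}/3$-action $t \mapsto \om t$, multiplying out, and collecting terms shows that $\psi$ satisfies property (ii) if and only if
\[
\phi_2(s)\,\tha(-p^2 s; p^6) = \phi_1(s)\,\tha(-p^2/s; p^6).
\]
Combined with the quasi-periodicity and the symmetry $\psi(t^{-1}) = -t^{-1}\psi(t)$ (which force $\phi$ to be a $D_{3n+1}$ theta function of nome $p^2$), this reformulates membership in $V_n$ as a concrete linear condition on $\phi$. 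My plan is to verify containment of the two summand families and then conclude by a dimension count, invoking $\dim V_n = 2n+1$ from Lemma \ref{dl}.

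For the Type 2 summand, $\phi$ is already a function of $t^3$, so $\phi_1 = \phi_2 = 0$ and the displayed condition is automatic. For the Type 1 summand, I will use the classical cubing identity $\tha(-t, -\om t, -\om^2 t; p^2) = \tha(-t^3; p^6)$, obtained by applying $\prod_{k=0}^{2}(1+\om^k u) = 1+u^3$ termwise in the infinite product, to rewrite
\[
\tha(-t^\pm, -\om t^\pm, \al t^\pm; p^2) = \tha(-t^{\pm 3}; p^6) \cdot \frac{\tha(\al t^\pm; p^2)}{\tha(-\om^2 t^\pm; p^2)}.
\]
When $\al = -\om^2$ the rational factor is $1$ and Type 1 collapses into a Type 2 function (with $b_1 = -1$). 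For general $\al$, expanding the rational factor via the addition formula \eqref{tad} produces the $\om$-graded components $\phi_1, \phi_2$ in a form where the required relation can be verified directly, with the ancillary poles of the denominator absorbed by the double zeros of $\tha(-t^{\pm 3}; p^6)$ at $t = -\om, -\om^2$.

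For the dimension count, Type 1 is an $n$-dimensional subspace (parametrized by the $D_n$ theta function $\Psi(s) = \tha(a_1 s^\pm, \ldots, a_{n-1} s^\pm; p^6)$ of nome $p^6$ in $s = t^3$), and Type 2 is $(n+1)$-dimensional (the $D_{n+1}$ space). The hypothesis $\al \neq -\om$, which in the quotient $\mathbb{C}^\ast/\{t = p^2 t = t^{-1}\}$ also excludes $\al = -\om^2$, is precisely what guarantees that $\tha(-t^\pm, -\om t^\pm, \al t^\pm; p^2)$ is not invariant under $t \mapsto \om t$; consequently no nonzero Type 1 function has the form $\tha(t, \pm pt; p^2)\,\Phi(t^3)$, so Type 1 $\cap$ Type 2 $= \{0\}$. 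The total dimension $(n) + (n+1) = 2n+1$ then matches $\dim V_n$ and forces equality. The main obstacle is the Type 1 containment: tracking the $\om$-graded decomposition through the rational factor $\tha(\al t^\pm; p^2)/\tha(-\om^2 t^\pm; p^2)$ and checking the displayed linear relation is the technical heart of the argument.
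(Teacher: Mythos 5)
Your architecture is genuinely different from the paper's: you propose to show that both summand families lie in $V_n$ and then finish by a dimension count, whereas the paper starts from an arbitrary $\psi\in V_n$ and successively factors it into the stated form. Several of your components are sound and match what a correct proof needs: the reduction of property (ii) to the graded identity $\phi_2(s)\tha(-p^2s;p^6)=\phi_1(s)\tha(-p^2s^{-1};p^6)$ is right, the Type 2 containment and the $n+(n+1)=2n+1$ bookkeeping against Lemma \ref{dl} are fine, and you correctly note that in $\mathbb C^\ast/\{t=p^2t=t^{-1}\}$ the hypothesis $\al\neq-\om$ also excludes $\al=-\om^2$, which is exactly what keeps the two families from collapsing into one another.

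The gap is the Type 1 containment, which you flag as the technical heart but do not carry out, and the mechanism you sketch will not deliver it. The addition formula \eqref{tad} is a linear relation inside the two-dimensional space of functions $t\mapsto\tha(\gamma t^{\pm};p^2)$; it cannot by itself produce the decomposition of $\tha(\al t^{\pm};p^2)/\tha(-\om^2t^{\pm};p^2)$, or of $\tha(-t^\pm,-\om t^\pm,\al t^\pm;p^2)$, into components graded by $t\mapsto\om t$ --- that grading lives on the cube cover and needs a quintuple-product-type input. Nor does general position rescue you: since $h(t^3)$ factors out of the degree-$2$ component, it suffices to treat $h=1$, and then that component is a $BC_4$ theta function of nome $p^2$ whose zeros you get for free number only six ($1,\,p,\,-p$ forced by the functional equations, plus the orbit $\{-1,-\om,-\om^2\}$ on which the Type 1 function itself vanishes) out of the nine it would carry if nonzero, so zero-counting does not force it to vanish. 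What is missing is the identity the paper extracts from \eqref{qp}: $\tha(-p^2t^3;p^6)+t\,\tha(-p^2t^{-3};p^6)$ vanishes on the zeros of $\tha(-t;p^2)$ and hence equals $Ct\,\tha(-t;p^2)\tha(\be t^{\pm};p^2)$ for one specific $\be=\be(p)$. That identity singles out a distinguished parameter for which the Type 1 containment can be verified directly; only then does the addition formula enter, to expand $\tha(\al t^{\pm};p^2)$ in the basis $\{\tha(\be t^{\pm};p^2),\tha(-\om^2t^{\pm};p^2)\}$ and so reduce general $\al$ to the case $\be$ modulo a Type 2 correction. Without supplying this step, your argument does not close.
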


\begin{proof}
It is easy to check that any $BC_{3n+1}$ theta function 
$\psi$ of nome $p^2$
can be written
$$\psi(t)=\tha(t,\pm pt;p^2)\left(f(t^3)+tg(t^{3})+t^{-1}g(t^{-3})\right), $$where $f$ is a $D_{n+1}$ theta function of nome $p^6$, and $g$ satisfies
\begin{equation}\label{gq}g(p^6t)=\frac{1}{p^{6n+2}t^{2n}}\,g(t). \end{equation}
Since the term involving $f$ has the desired form, we  restrict to the case $f\equiv 0$.
By the quintuple product identity \eqref{qp}, $\psi\in V_n$ if and only if
$$\tha(-p^2t;p^6)g(t^{-1})=t\tha(-p^2t^{-1};p^6)g(t). $$
Together with \eqref{gq}, this implies that $g$ vanishes on all zeroes of
$\tha(-t,-p^2t;p^6)$. Factoring $g(t)=t^{-1}\tha(-t,-p^2t;p^6)h(t)$ gives
$$\psi(t)=t^{-2}\tha(t,\pm pt;p^2)\tha(-t^3;p^6)\left(\tha(-p^2t^3;p^6)+t\tha(-p^2t^{-3};p^6)\right)h(t^3), $$
where
$h$ is a  $D_{n}$ theta function of nome $p^6$.

Next, we observe that  $\tha(-p^2t^3;p^6)+t\tha(-p^2t^{-3};p^6)$ vanishes on the zeroes of $\tha(-t;p^2)$. Thus
$$\tha(-p^2t^3;p^6)+\tha(-p^2t^{-3};p^6)=t\tha(-t;p^2)k(t),
$$
where $k$ is a $D_2$ theta function of nome $p^2$, that is, $k(t)=C\tha(\be t^{\pm};p^2)$ for some $C$ and $\beta$ (depending on $p$).
After simplification, we conclude that
$$\psi(t)=C\tha(t,\pm pt;p^2)\tha(-t^\pm,-\om t^{\pm},\be t^{\pm};p^2)h(t^3). $$Using \eqref{tad} to write 
$$\tha(\be t^{\pm};p^2)=A\tha(\al t^{\pm};p^2)+B\tha(-\om^2 t^{\pm};p^2), $$
we arrive at an expression of the desired form. This shows that any $\psi\in V_n$ can be expressed as indicated. The converse follows by similar arguments.
\end{proof}

In view of \eqref{piv},
we are mainly interested in the space  $V_n(t_1,\dots,t_{2n})$. 
Generically, one expects it to be one-dimensional and spanned by
 the alternant
$$\psi(t)=\det_{1\leq i,j\leq 2n+1}(\psi_j(t_i)), $$
with $(\psi_j)_{j=1}^{2n+1}$ a basis of $V_n$ and
$t_{2n+1}=t$. 
However, we have not found such  expressions 
 useful for our purposes. Instead, we will work with
 the following less symmetric 
determinants.

\begin{theorem}\label{met}
Fixing a basis $\psi_1$, $\psi_2$, $\psi_3$ of $V_1$, define
$$\Psi_1(t_1,t_2,t_3)=\det_{1\leq i,j\leq 3}(\psi_j(t_i)) $$
and, more generally, 
$$\Psi_n(t_1,\dots,t_n,u_1,\dots,u_n,v)=\frac{\prod_{i,j=1}^nu_j^{-3}\theta(u_j^3t_i^{\pm 3};p^6)}{\tha(v,\pm pv;p^2)^{n-1}}\det_{1\leq i,j\leq n}\left(\frac{\Psi_1(t_i,u_j,v)}{u_j^{-3}\theta(u_j^3t_i^{\pm 3};p^6)}\right).
 $$
Let $(\psi_j^{(n)})_{j=1}^{2n+1}$ be a basis of $V_n$. Then, 
$$\Psi_n(t_1,\dots,t_{2n+1})=C\det_{1\leq i,j\leq 2n+1}(\psi_j^{(n)}(t_i)), $$
with $C$ independent of each $t_i$. In particular,
$\Psi_n$ is anti-symmetric in all $2n+1$ variables.
\end{theorem}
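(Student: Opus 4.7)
The plan is to characterize $\Psi_n$ by its analytic and symmetry properties, and then invoke dimension counting in $V_n$ to identify it with the alternant. Concretely, I would prove two things: first, as a function of any one of its $2n+1$ arguments with the others held fixed, $\Psi_n$ lies in $V_n$; and second, $\Psi_n$ vanishes whenever two of its arguments coincide (on the torus $\mathbb C^\ast/\{z=p^2 z=z^{-1}\}$). Combined with Lemma~\ref{dl}, these two properties force $\Psi_n = C\det(\psi_j^{(n)}(t_i))$ for some constant $C$ independent of all variables, from which antisymmetry follows immediately.

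For the first point, consider the dependence on $t_i$. Distribute the $n$ factors $u_j^{-3}\theta(u_j^3 t_i^{\pm 3};p^6)$ from the prefactor into the $i$-th row of the matrix, so that the $(i,j)$-entry becomes $\Psi_1(t_i,u_j,v)\prod_{k\ne j}u_k^{-3}\theta(u_k^3 t_i^{\pm 3};p^6)$. Each such entry is a product of $\Psi_1(t_i,u_j,v)\in V_1$ (in $t_i$) with $n-1$ additional theta factors; a direct check of quasi-periodicity under $t_i\mapsto p^2 t_i$ and reflection under $t_i\mapsto t_i^{-1}$ shows the product is a $BC_{3n+1}$ theta function of nome $p^2$, while property (ii) of Proposition~\ref{php} is inherited because each factor $\theta(u_k^3 t_i^{\pm 3};p^6)$ is invariant under $t_i\mapsto\omega t_i$. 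The multilinear expansion of the determinant along row $i$ then certifies $\Psi_n\in V_n$ as a function of $t_i$. The argument for $u_j$ is symmetric, and for $v$ the denominator $\theta(v,\pm pv;p^2)^{n-1}$ is precisely what is needed to absorb the excess from the product of $n$ copies of $\Psi_1$ in $v$, each of which is $BC_4$ of nome $p^2$ and already carries a zero at $v=1,-1,p,-p$.

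For the second point, expand the determinant as a signed sum over permutations $\sigma$: after cancellation with the prefactor, each summand is proportional to $\prod_k\Psi_1(t_k,u_{\sigma(k)},v)\cdot\prod_{(i,j)\notin\mathrm{graph}(\sigma)}\theta(u_j^3 t_i^{\pm 3};p^6)$. If $t_i=u_j$, then when $\sigma(i)=j$ the factor $\Psi_1(u_j,u_j,v)$ vanishes by antisymmetry of $\Psi_1$, and when $\sigma(i)\ne j$ the leftover factor $\theta(u_j^3 t_i^{-3};p^6)$ is present and vanishes. Coincidences $t_i=t_{i'}$ or $u_j=u_{j'}$ yield two equal rows or columns of the determinant, and vanishings at $v=t_i$ or $v=u_j$ again follow from antisymmetry of $\Psi_1$ in its three arguments. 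Finally, fixing all variables but one and viewing both $\Psi_n$ and $A:=\det(\psi_j^{(n)}(t_i))$ as elements of $V_n$ sharing $2n$ prescribed zeros, Lemma~\ref{dl} forces $\Psi_n/A$ to be constant in that variable; iterating across all $2n+1$ variables upgrades this to a genuine constant.

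The main obstacle I expect is the first step, specifically verifying that the quasi-periodicity of $\Psi_1$ (a $BC_4$ theta function of nome $p^2$) combines with those of the $n-1$ auxiliary factors $\theta(u_k^3 t_i^{\pm 3};p^6)$ to yield exactly the $BC_{3n+1}$-of-nome-$p^2$ behavior required by $V_n$. The bookkeeping of $p$- and $t_i$-exponents under quasi-periodic shifts is where the particular prefactor in the definition of $\Psi_n$ turns out to be precisely the right choice, and analogous separate checks are needed for the more asymmetric roles of $u_j$ and $v$ in the defining formula.
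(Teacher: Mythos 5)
Your treatment of the first $2n$ variables (the $t_i$ and $u_j$) is sound and matches what the paper relegates to a ``straight-forward'' lemma, but there is a genuine gap in your first step for the last variable $v$, and it is precisely the point on which the whole proof turns. Membership in $V_n$ requires two things: the $BC_{3n+1}$ quasi-periodicity \emph{and} property (ii) of Proposition \ref{php}, i.e.\ $\sum_{k=0}^2\omega^k\psi(\omega^k v)=0$. Your check for $v$ only addresses the former (the bookkeeping with the denominator $\theta(v,\pm pv;p^2)^{n-1}$, which indeed works out). But property (ii) is a codimension-$n$ condition inside the $(3n+1)$-dimensional space of $BC_{3n+1}$ theta functions, and it is \emph{not} inherited by products: if $\psi_1,\psi_2$ both have the form $f(v^3)+vg(v^3)$, their product acquires a $v^2g_1(v^3)g_2(v^3)$ component. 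Since every entry of the matrix depends on $v$, the determinant is a sum over permutations of products of $n$ elements of $V_1$ in the variable $v$, and there is no term-by-term reason for (ii) to survive. (Contrast this with the $t_i$-dependence, where only one row depends on $t_i$ and the extra factors $\theta(u_k^3t_i^{\pm3};p^6)$ are functions of $t_i^3$, so (ii) passes through by row-linearity.) This is exactly why the paper states its analogue of your first claim only for the first $2n$ slots and remarks that the case of the last slot follows only \emph{a posteriori} from the theorem. The paper closes the loop differently: having shown $C=\Psi_n/\det$ is independent of $t_1,\dots,t_{2n}$, it proves $C$ is also independent of $v$ by specializing $u_n=\omega t_n$, which collapses $\Psi_n$ to a multiple of $\Psi_{n-1}$ and sets up an induction terminating at the manifestly antisymmetric $\Psi_1$. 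You would need either that argument or a direct verification of property (ii) in $v$ (which does not appear to be accessible by inspection, since the denominator $\theta(\omega^kv,\pm p\omega^kv;p^2)^{n-1}$ itself varies with $k$).

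A secondary point: Lemma \ref{dl} only guarantees $\dim V_n(t_1,\dots,t_k)=2n+1-k$ for $k\le n$; for the $k=2n$ conditions you need, it gives only a lower bound of $1$. The conclusion that $\Psi_n$ and the alternant are proportional in each variable therefore requires the paper's Lemma \ref{dcl} (one-dimensionality of $V_n(t_1,\dots,t_{2n})$ whenever $\Psi_n$ is not identically zero in the remaining slot, proved by exhibiting an explicit spanning set built from $\Psi_n$ itself), or at least the dichotomy ``either $\dim=1$ or $\Psi_n\equiv0$''; citing Lemma \ref{dl} alone does not suffice. This is repairable, but the gap in the $v$-variable argument is not repairable along the lines you propose.
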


Note that $\Psi_n$ is only defined up to a multiplicative constant, which we do not specify. 

\begin{remark}\label{alr}
If we choose the basis $\psi_j^{(n)}$  as in the proof of Lemma~\ref{dl}, then the alternant is a minor of the determinant corresponding to the $BC_{3n+1}$ Macdonald identity. This should mean that $\Psi_n$ can be interpreted as an affine Lie algebra character.
Although this observation may have interesting consequences, 
 we will  not explore it here.
 In \S \ref{tes}, we will see that the trigonometric limit case $p=0$ corresponds to characters of the orthogonal and symplectic groups.
The corresponding limit of  Theorem  \ref{met}
is closely related to some determinant identities of
 Okada \cite{o}, see \eqref{oia}.
\end{remark}

We  divide the proof of Theorem \ref{met} into a few lemmas.

\begin{lemma}\label{fl}
For any $i=1,\dots,2n$, the map $t_i\mapsto\Psi_n(t_1,\dots,t_{2n+1})$ is an element of $V_n(t_1,\dots,\hat t_i,\dots,t_{2n+1})$. 
\end{lemma}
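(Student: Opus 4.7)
The plan is to work directly from the Leibniz expansion of the determinant and verify the required properties term by term. After clearing denominators one obtains
$$\Psi_n=\frac{1}{\theta(v,\pm pv;p^2)^{n-1}}\sum_{\sigma\in S_n}\mathrm{sign}(\sigma)\prod_{i=1}^n\left(\Psi_1(t_i,u_{\sigma(i)},v)\prod_{j\neq\sigma(i)}N_{ij}\right),$$
where $N_{ij}:=u_j^{-3}\theta(u_j^3t_i^{\pm3};p^6)$, so that $\Psi_n$ is manifestly holomorphic on $\mathbb{C}^*$ in each variable. Since the prefactor $\prod_{i,j}N_{ij}$ is symmetric under permutations of the $t_i$'s (resp.\ of the $u_j$'s) while the determinant is antisymmetric under the corresponding row (resp.\ column) swaps, $\Psi_n$ is antisymmetric in the $t$'s and in the $u$'s separately. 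Expanding the defining $3\times3$ determinant of $\Psi_1(t,u,v)$ along any of its three columns shows that $\Psi_1(t,u,v)$ lies in $V_1$ as a function of each of its arguments, so the case of a $u$-variable is handled by the same argument as that of a $t$-variable, and I describe only $t_1$ below.

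For the vanishing, antisymmetry in the $t$'s immediately gives $\Psi_n|_{t_1=t_j}=0$ for $j\geq 2$. At $t_1=u_k$, inspect each $\sigma$-term: if $\sigma(1)=k$ then $\Psi_1(u_k,u_k,v)=0$ by antisymmetry of $\Psi_1$; otherwise $\sigma(1)\neq k$, so $N_{1k}$ occurs in the product $\prod_{j\neq\sigma(1)}N_{1j}$, and the factor $\theta(1;p^6)$ inside $N_{1k}|_{t_1=u_k}=u_k^{-3}\theta(u_k^6,1;p^6)$ kills the term. At $t_1=v$, every term contains $\Psi_1(v,u_{\sigma(1)},v)=0$. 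Thus $\Psi_n$ vanishes at all $2n$ prescribed points.

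It remains to verify that $\Psi_n$ belongs to $V_n$ as a function of $t_1$. A short calculation using $\theta(p^6x;p^6)=-x^{-1}\theta(x;p^6)$ and the derived relation $\theta(p^{-6}y;p^6)=-p^{-6}y\,\theta(y;p^6)$ yields
$$N_{1j}(p^2t_1)=p^{-6}t_1^{-6}N_{1j}(t_1),$$
with \emph{no residual $u_j$-factor}, while $N_{1j}$ is manifestly invariant under both $t_1\mapsto t_1^{-1}$ and $t_1\mapsto\omega t_1$. Combined with the $BC_4$ behavior of $\Psi_1(\cdot,u_j,v)\in V_1$, namely $\Psi_1(p^2t_1,u_j,v)=p^{-8}t_1^{-9}\Psi_1(t_1,u_j,v)$ and $\Psi_1(t_1^{-1},u_j,v)=-t_1^{-1}\Psi_1(t_1,u_j,v)$, every $\sigma$-term in the expansion transforms by the \emph{same} factor $p^{-8-6(n-1)}t_1^{-9-6(n-1)}=p^{-6n-2}t_1^{-6n-3}$ under $t_1\mapsto p^2t_1$ and by $-t_1^{-1}$ under $t_1\mapsto t_1^{-1}$; these are precisely the $BC_{3n+1}$ relations for nome $p^2$. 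The cubic identity $\sum_k\omega^k\Psi_n(\omega^kt_1,\ldots)=0$ reduces term-by-term to the corresponding identity for $\Psi_1(\cdot,u_{\sigma(1)},v)\in V_1$, since the $N_{1j}$ factors are $\omega$-invariant. The single point at which care is needed—and which I expect to be the main bookkeeping obstacle—is the verification that the $N_{1j}$ transformation factor at $p^2t_1$ is free of any $u_j$-dependence; this is what makes all $\sigma$-terms transform uniformly and hence keeps their sum inside $V_n$.
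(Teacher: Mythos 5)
Your proof is correct and is exactly the direct verification the paper has in mind when it declares the proof of Lemma \ref{fl} ``straight-forward'' and omits it: a term-by-term analysis of the Leibniz expansion of the determinant, with the key observation that $N_{ij}(p^2t_i)=p^{-6}t_i^{-6}N_{ij}(t_i)$ carries no residual $u_j$-factor, so that every $\sigma$-term transforms uniformly and the quasi-periodicity, inversion symmetry, cubic-sum identity and the vanishing at the $2n$ prescribed points all follow. (One trivial slip: since the alternant is $\det_{1\leq i,j\leq 3}(\psi_j(t_i))$, the dependence on a given argument sits in a \emph{row}, so the cofactor expansion showing $\Psi_1\in V_1$ in each variable is along a row rather than a column.)
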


The proof of Lemma \ref{fl} is straight-forward. It will follow from Theorem \ref{met} that the statement holds also for $i=2n+1$.

\begin{lemma}\label{dcl}
If the map $t\mapsto\Psi_n(t,t_1,\dots,t_{2n})$ is not identically zero, then 
$$\dim V_n(t_1,\dots,t_{2n})=1.$$
\end{lemma}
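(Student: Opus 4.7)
The plan is to prove the contrapositive: if $\dim V_n(t_1,\dots,t_{2n})\geq 2$, then $\phi(t):=\Psi_n(t,t_1,\dots,t_{2n})$ must vanish identically. By Lemma~\ref{fl} applied at $i=1$, the function $\phi$ already sits in $V_n(t_1,\dots,t_{2n})$, so the hypothesis $\dim\geq 2$ produces a linear dependence among the evaluation functionals on $V_n$: there exist scalars $c_1,\dots,c_{2n}$, not all zero, with $\sum_{j=1}^{2n} c_j\,\psi(t_j)=0$ for every $\psi\in V_n$. The task is to turn this dependence into the statement $\phi\equiv 0$ using the internal structure of $\Psi_n$.

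The key step is to test the dependence against auxiliary elements that isolate each $c_j$ in turn. For $j\in\{1,\dots,2n-1\}$, set $\psi_j(s):=\Psi_n(t,t_1,\dots,t_{j-1},s,t_{j+1},\dots,t_{2n})$; since the free variable $s$ occupies position $j+1\leq 2n$, Lemma~\ref{fl} yields $\psi_j\in V_n$, and moreover $\psi_j(t_k)=0$ for every $k\neq j$ (the value $t_k$ would then be repeated at positions $j+1$ and $k+1$, and the former is among the first $2n$). Substituting into the dependence, $0=\sum_k c_k\,\psi_j(t_k)=c_j\phi(t)$ for all $t$, so $\phi\not\equiv 0$ forces $c_j=0$ for $j=1,\dots,2n-1$. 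The dependence therefore collapses to $c_{2n}\neq 0$ with $\psi(t_{2n})=0$ for every $\psi\in V_n$, meaning that $t_{2n}$ is a common zero of the entire space $V_n$.

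The main obstacle is exactly that Lemma~\ref{fl} does not cover the last argument of $\Psi_n$, so $c_{2n}$ has to be ruled out directly from the structures of $V_n$ and $\Psi_n$. First I would show that a common zero of $V_n$ can lie only in $\{1,\pm p\}$ on the torus: the quintuple product identity (Lemma~\ref{rl}) shows that $\theta(t,\pm pt;p^2)\,\theta(b_1t^{\pm 3},\dots,b_nt^{\pm 3};p^6)\in V_n$ for any $b_i$, and for generic $b_i$ the second factor is nonzero at $t=t_{2n}$, forcing $\theta(t_{2n},\pm pt_{2n};p^2)=0$ and hence $t_{2n}\in\{1,\pm p\}$. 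It then remains to verify that $\Psi_n$ itself vanishes whenever $v\in\{1,\pm p\}$, which immediately gives $\phi\equiv 0$ and closes the argument by contradiction. This last point is a direct zero/pole count in the defining formula: the prefactor $\theta(v,\pm pv;p^2)^{-(n-1)}$ has a pole of exact order $n-1$ at each of $1,\pm p$, while each matrix entry $\Psi_1(t_i,u_j,v)$, regarded as a function of $v$ through the alternant formula for $\Psi_1$, belongs to $V_1$ and therefore vanishes simply at $v\in\{1,\pm p\}$; consequently $\det M$ vanishes to order at least $n$ there, and the net order of $\Psi_n$ is strictly positive.
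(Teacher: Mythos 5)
Your proof is correct, and its engine is the same as the paper's: the one-variable slices of $\Psi_n$ supplied by Lemma \ref{fl}, which vanish at all prescribed points except one. But you run the argument in dual form --- extracting a linear dependence $\sum_j c_j\psi(t_j)=0$ on $V_n$ from $\dim\geq 2$ and killing the $c_j$ one at a time --- whereas the paper argues primally: it fixes $t_0$ with $\Psi_n(t_0,t_1,\dots,t_{2n})\neq 0$, shows the $2n$ slices obtained by freeing each of the first $2n$ positions are linearly independent elements of $V_n(t_{2n})$, invokes $\dim V_n(t_{2n})=2n$ from Lemma \ref{dl} to conclude they form a basis, and then expands an arbitrary $f\in V_n(t_1,\dots,t_{2n})$ in that basis. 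The real divergence is the distinguished last slot of $\Psi_n$: by working inside $V_n(t_{2n})$ from the outset the paper never needs a slice that detects $t_{2n}$, while you must rule out $c_{2n}\neq 0$ by hand. Your two auxiliary facts --- that the common zeroes of $V_n$ are exactly the trivial points $1,\pm p$ (via the quintuple-product family already used in the proof of Lemma \ref{dl}), and that $\Psi_n$ vanishes identically when $v\in\{1,\pm p\}$ (pole of order $n-1$ from the prefactor against a zero of order at least $n$ from the determinant, each entry lying in $V_1$ as a function of $v$) --- are both correct and neither appears in the paper's proof, so this part is genuinely different, bought at the price of some extra theta-function bookkeeping that the paper's choice of ambient space makes unnecessary. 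One caveat you share with the paper: both arguments use plain evaluation at the $t_j$ and so implicitly assume the $t_j$ are distinct and avoid $\{1,\pm p\}$; the degenerate configurations to which the lemma is later applied (e.g.\ in Corollary \ref{doc}) are handled only implicitly, via \eqref{pss} and continuity.
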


\begin{proof}
Take $t_0$ with $\Psi_n(t_0,t_1,\dots,t_{2n})\neq 0$. Consider the functions 
$$\psi_k(t)=\Psi_n(t_0,t_1,\dots,t_{k-1},t,t_{k+1},\dots,t_{2n}),
\qquad k=0,1,\dots,2n-1.$$
By Lemma \ref{fl},  these functions all belong to the space
$V_n(t_{2n})$. Moreover, they are linearly independent since
$$\psi_k(t_j)\neq 0\quad \Longleftrightarrow \quad j=k, \qquad j,k=0,\dots,2n-1.$$ 
Since, by Lemma \ref{dl},  $\dim V_n(t_{2n})=2n$, we conclude that $\psi_0,\dots,\psi_{2n-1}$ span $V_n(t_{2n})$.

Suppose now that $f\in V_n(t_1,\dots,t_{2n})$. In particular,
$f\in V_n(t_{2n})$, so we can expand
$$f(t)=\sum_{k=0}^{2n-1}A_k\psi_k(t). $$
The remaining vanishing conditions for $f$ give $A_1=\dots=A_{2n-1}=0$.
Thus, $f$ is proportional to $\psi_0$, so $\dim  V_n(t_1,\dots,t_{2n})=1$.
\end{proof}

\begin{proof}[Proof of \emph{Theorem \ref{met}}]
Let
$$C(t_1,\dots,t_{2n+1})=\frac{\Psi_n(t_1,\dots,t_{2n+1})}{\det_{1\leq i,j\leq 2n+1}(\psi_j^{(n)}(t_i))}.$$
We first prove that $C$ is independent of  $t_1$. 
 By Lemma \ref{fl},  the denominator and numerator are both in 
$V_n(t_2,\dots,t_{2n+1})$. If that space is 
one-dimensional, $C$ is  independent of $t_1$. 
Otherwise, Lemma 
\ref{dcl} gives $C\equiv 0$.
The same argument applies to the variables $t_2,\dots,t_{2n}$.
 Thus,
$$\Psi_n(t_1,\dots,t_{2n+1})=C(t_{2n+1})\det_{1\leq i,j\leq 2n+1}(\psi_j^{(n)}(t_i)),$$
where it remains to show that $C$ is independent of $t_{2n+1}$. 

To complete the proof,  we 
write
\begin{equation}\label{cc}\frac{C(v)}{C(t_1)}=-\frac{\Psi_n(t_1,\dots,t_n,u_1,\dots,u_n,v)}{\Psi_n(v,t_2,\dots,t_n,u_1,\dots,u_n,t_1)}\end{equation}
and specialize   $u_n=\omega t_n$. It follows from the definition of $\Psi_n$ that 
\begin{multline*}\Psi_n(t_1,\dots,t_n,u_1,\dots,u_{n-1},\om t_n,v)
=\frac{\prod_{j=1}^nt_n^{-3}u_j^{-3}\tha(t_n^3t_j^{\pm 3},u_j^3t_n^{\pm 3};p^6)}{\tha(v,\pm pv;p^2)}\\
\times\Psi_1(t_n,\om t_n,v)\Psi_{n-1}(t_1,\dots,t_{n-1},u_1,\dots,u_{n-1},v).\end{multline*}
Using this in \eqref{cc} gives
\begin{equation*}\begin{split}\frac{C(v)}{C(t_1)}
&=-
\frac{\tha(t_1,\pm pt_1;p^2)\tha(t_n^3t_1^{\pm 3};p^6)\Psi_1(t_n,\om t_n,v)}{\tha(v,\pm pv;p^2)\tha(t_n^3v^{\pm 3};p^6)\Psi_1(t_n,\om t_n,t_1)}
\\
&\quad\times\frac{\Psi_{n-1}(t_1,\dots,t_{n-1},u_1,\dots,u_{n-1},v)}{\Psi_{n-1}(v,t_2,\dots,t_{n-1},u_1,\dots,u_{n-1},t_1)}. \end{split}\end{equation*}
We  claim that 
$$\frac{\tha(t_1,\pm pt_1;p^2)\tha(t_n^3t_1^{\pm 3};p^6)\Psi_1(t_n,\om t_n,v)}{\tha(v,\pm pv;p^2)\tha(t_n^3v^{\pm 3};p^6)\Psi_1(t_n,\om t_n,t_1)}=1. $$
To see this, consider the denominator and numerator as functions of $v$. They both belong to the space $V_1(t_n,\om t_n)$, which is one-dimensional for generic $t_n$. Thus, the quotient is independent of $v$ and can be computed by letting $v=t_1$. 

We have now reduced \eqref{cc} to
$$\frac{C(v)}{C(t_1)}=-
\frac{\Psi_{n-1}(t_1,\dots,t_{n-1},u_1,\dots,u_{n-1},v)}{\Psi_{n-1}(v,t_2,\dots,t_{n-1},u_1,\dots,u_{n-1},t_1)}. $$
By iteration, we  conclude that
 $$\frac{C(v)}{C(t_1)}=-
\frac{\Psi_{1}(t_1,u_1,v)}{\Psi_{1}(v,u_1,t_1)}=1. $$
This completes the proof of Theorem \ref{met}.
\end{proof}

\subsection{Uniformization}\label{us}

The next step is uniformization, see \S \ref{tfs}. 
We will work with the 
 uniformizing map
\begin{equation}\label{xi}\xi(t)=\frac{\theta(- p\omega;p^2)^2\theta(\omega t^\pm;p^2)}{\theta(-\omega;p^2)^2\theta( p\omega t^\pm;p^2)} \end{equation}
 and write
\begin{equation}\label{z}\zeta=\frac{\omega^2\theta(-1,- p\omega;p^2)}{\theta(- p,-\omega;p^2)}.\end{equation}

\begin{lemma}\label{xvl} We have
$$\xi(-1)=1, \qquad \xi(-\omega)=\zeta, $$
$$\xi(1)=2\zeta+1,\qquad \xi(p)=\frac{\zeta}{\zeta+2}. $$
\end{lemma}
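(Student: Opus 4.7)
My plan is to verify each of the four claimed values by direct substitution into \eqref{xi}, using only the elementary manipulations of $\theta(\cdot;p^2)$ recalled in \S \ref{tfs}.

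The cases $\xi(-1)$ and $\xi(-\omega)$ are immediate. At $t=-1$ both $\theta(\omega t^\pm;p^2)=\theta(-\omega;p^2)^2$ and $\theta(p\omega t^\pm;p^2)=\theta(-p\omega;p^2)^2$, so \eqref{xi} collapses to $1$. At $t=-\omega$ the numerator becomes $\theta(-p\omega;p^2)^2\theta(-\omega^2,-1;p^2)$ and the denominator $\theta(-\omega;p^2)^2\theta(-p\omega^2,-p;p^2)$. Using the two identities
$$\theta(-\omega^2;p^2)=\omega^2\theta(-\omega;p^2),\qquad \theta(-p\omega^2;p^2)=\theta(-p\omega;p^2),$$
which follow respectively from $-\omega^2=(-\omega)^{-1}$ together with the inversion relation $\theta(x^{-1};p^2)=-x^{-1}\theta(x;p^2)$, and from $-p\omega^2=p^2(-p\omega)^{-1}$ together with inversion and the quasi-periodicity $\theta(p^2 x;p^2)=-x^{-1}\theta(x;p^2)$, the ratio simplifies to
$$\xi(-\omega)=\frac{\omega^2\theta(-p\omega,-1;p^2)}{\theta(-\omega,-p;p^2)}=\zeta.$$

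For $\xi(1)$ and $\xi(p)$, substitution yields
$$\xi(1)=\frac{\theta(-p\omega,\omega;p^2)^2}{\theta(-\omega,p\omega;p^2)^2},\qquad \xi(p)=\frac{\omega^2}{p}\cdot\frac{\theta(-p\omega,p\omega;p^2)^2}{\theta(-\omega,\omega;p^2)^2},$$
the second following after applying $\theta(\omega p^{-1};p^2)=-\omega p^{-1}\theta(\omega p;p^2)$ and $\theta(p^2\omega;p^2)=-\omega^2\theta(\omega;p^2)$ from the same principles. To identify these with $2\zeta+1$ and $\zeta/(\zeta+2)$, I would use a uniformization argument based on Lemma \ref{tfl}. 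Since $t=1$ is a fixed point of the involution $t\mapsto t^{-1}$, the function $\xi(t)-\xi(1)$---a ratio of $D_2$ theta functions of nome $p^2$---must vanish to order $2$ at $t=1$, hence equals $C_1\,\theta(t^\pm;p^2)/\theta(p\omega t^\pm;p^2)$ for some constant $C_1$. Evaluating at $t=-1$ using $\xi(-1)=1$ gives one expression for $C_1$, and evaluating at $t=-\omega$ using $\xi(-\omega)=\zeta$ gives another; elimination of $C_1$ produces a linear equation for $\xi(1)$ whose solution is $2\zeta+1$. The identical argument applied to $\xi(t)-\xi(p)$, which has a double zero at $t=p$, yields $\xi(p)=\zeta/(\zeta+2)$.

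The main obstacle is the evaluation at $t=-\omega$ in this last step: it again invokes the quasi-periodicity identities $\theta(-\omega^2;p^2)=\omega^2\theta(-\omega;p^2)$ and $\theta(-p\omega^2;p^2)=\theta(-p\omega;p^2)$ and introduces several phases that must be tracked with care through the cross-multiplications in order to produce exactly the required linear relations. Alternatively one can derive the same relations directly from the addition formula \eqref{tad} with $p$ replaced by $p^2$ and a suitable substitution such as $(x,y,z,w)=(-\omega,\omega,p,1)$, simplified using the special values \eqref{tqa} and \eqref{tqb}, but this route is computationally heavier.
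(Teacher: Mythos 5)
Your treatment of $\xi(-1)$ and $\xi(-\omega)$ is correct, and it is all the paper itself does for these two values (it declares them obvious); the auxiliary identities $\theta(-\omega^2;p^2)=\omega^2\theta(-\omega;p^2)$ and $\theta(-p\omega^2;p^2)=\theta(-p\omega;p^2)$, and the closed forms you obtain for $\xi(1)$ and $\xi(p)$ by direct substitution, are all right.

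The gap is in the identification of $\xi(1)$ with $2\zeta+1$ (and likewise of $\xi(p)$ with $\zeta/(\zeta+2)$). Writing $\xi(t)-\xi(1)=C_1\theta(t^{\pm};p^2)/\theta(p\omega t^{\pm};p^2)$ is legitimate, but eliminating $C_1$ between the evaluations at $t=-1$ and $t=-\omega$ yields $\xi(1)=(\zeta A-B)/(A-B)$ with
$$A=\frac{\theta(-1;p^2)^2}{\theta(-p\omega;p^2)^2},\qquad B=\frac{\omega^2\theta(-\omega;p^2)^2}{\theta(-p\omega,-p;p^2)},$$
and since $B=\zeta A\,\theta(-\omega;p^2)^3/\theta(-1;p^2)^3$, the assertion that this equals $2\zeta+1$ is equivalent to the relation $\theta(-\omega;p^2)^3/\theta(-1;p^2)^3=(1+\zeta)/(2\zeta^2)$. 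That is an \emph{additive} three-term identity among theta values: the quasi-periodicity and inversion rules of \S\ref{tfs} and the product evaluations \eqref{tq} are purely multiplicative and cannot produce it, so this is not a matter of tracking phases --- the remaining identity is essentially an instance of the addition formula and carries the whole content of the claim. Your fallback substitution $(x,y,z,w)=(-\omega,\omega,p,1)$ in \eqref{tad} gives $\xi(1)+1=\omega^2\theta(-1;p^2)\theta(p;p^2)^2/\theta(-\omega;p^2)\theta(p\omega;p^2)^2$, which again leaves an additive identity to verify. The paper avoids the problem by using \eqref{eu} to write $\xi(s)-\xi(t)$ as a single explicit product with the constant already determined; specializing $(s,t)=(1,-1)$ and $(s,t)=(p,-1)$ then gives plain theta quotients which \eqref{tqa}--\eqref{tqb} reduce directly to $\xi(1)-1=2\zeta$ and $\xi(p)-1=-2\xi(p)/\zeta$. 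To repair your argument, replace the two-point elimination by the explicit constant from \eqref{eu}, or else supply a separate proof of the displayed cube identity.
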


\begin{proof}
The first two identities are obvious. By \eqref{eu}, 
\begin{equation}\label{xid}\xi(s)-\xi(t)=-\frac{\om\tha( p, p\om;p^2)\tha(- p\om;p^2)^2\tha(st^{\pm};p^2)}{s\tha(-\om;p^2)^2\tha( p\om s^\pm, p\om t^\pm;p^2)}. 
\end{equation}
Plugging in $s=1$ and $t=-1$, and using
\eqref{tq}, we obtain $\xi(1)-1=2\zeta$. The case $s= p$ and $t=-1$ similarly gives $\xi(p)-1=-2\xi(p)/\zeta$.
\end{proof}

We denote by
$W_n$ the space of polynomials 
$q$ of degree at most $3n$ such that
$$\psi(t)=\theta(t,\pm p t;p^2)\theta( p\omega t^\pm;p^2)^{3n} q(\xi(t)), $$
is an element of   $V_n$. Then, the correspondence between $\psi$ and $q$ is a bijection. We denote by $W_n(x_1,\dots,x_{k})$  the subspace of $W_n$ such that $\psi\in V_n(t_1,\dots,t_k)$, where $x_i=\xi(t_i)$.

\begin{lemma}\label{wl}
The space $W_1$ is spanned by the three polynomials $f$, $g$ and $h$ defined in \eqref{fgh}.
\end{lemma}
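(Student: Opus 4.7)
The plan is to invoke the uniformization bijection between $V_n$ and $W_n$ set up just before the lemma, together with Lemma \ref{dl} and Lemma \ref{rl}, to identify $f, g, h$ as the uniformizations of three explicit elements of $V_1$. By Lemma \ref{dl}, $\dim V_1 = 3$, hence $\dim W_1 = 3$; since $f, g, h$ have distinct degrees $1, 2, 3$, they are linearly independent, so it suffices to show each lies in $W_1$.

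For $h$ and $f$, I would use the second summand of Lemma \ref{rl} with $b_1 = \pm 1$. The triplication identity $\theta(t^{\pm 3}; p^6) = \theta(t^\pm, \omega t^\pm, \omega^2 t^\pm; p^2)$ (immediate from $1 - z^3 = (1-z)(1-\omega z)(1-\omega^2 z)$) exhibits $\theta(t,\pm pt;p^2)\theta(t^{\pm 3};p^6) \in V_1$ as a $BC_4$ theta function with zero pairs at $t = 1, \omega, \omega^2$; by Lemma \ref{xvl} these map under $\xi$ to $2\zeta+1, 0, 0$, and the uniformization formula \eqref{eu} identifies the corresponding polynomial as $(\xi-(2\zeta+1))\xi^2 = h(\xi)$. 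The same procedure with $b_1 = -1$, using the symmetry $\xi(t)=\xi(t^{-1})$ to see $\xi(-\omega^2) = \xi(-\omega) = \zeta$, yields the polynomial $(\xi-1)(\xi-\zeta)^2 \in W_1$. Expanding,
\[
(\xi-1)(\xi-\zeta)^2 - h(\xi) = \zeta\bigl((\zeta+2)\xi - \zeta\bigr) = \zeta\,f(\xi),
\]
and hence $f \in W_1$ as well.

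For $g$, I would use the first summand of Lemma \ref{rl} with $\alpha = p\omega$, namely $\theta(t,\pm pt;p^2)\theta(-t^\pm, -\omega t^\pm, p\omega t^\pm;p^2) \in V_1$. After factoring out $\theta(p\omega t^\pm; p^2)$, the remaining $D_3$ theta function $\theta(-t^\pm, -\omega t^\pm; p^2)$ has zero pairs at $-1, -\omega$ mapping under $\xi$ to $1, \zeta$, so \eqref{eu} yields a polynomial proportional to $(\xi-1)(\xi-\zeta) = g(\xi)$. Thus $g \in W_1$, and combined with the previous step we have three linearly independent elements in the three-dimensional $W_1$, so $\{f, g, h\}$ is a basis.

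The main obstacle is the membership in $V_1$ of the first summand used for $g$: for the two $b_1 = \pm 1$ elements, property (ii) of Proposition \ref{php} is easy to check directly using the quintuple product identity \eqref{qp}, since the factor $\theta((\omega^k t)^{\pm 3}; p^6) = \theta(t^{\pm 3}; p^6)$ is $k$-independent and \eqref{qp} makes $\sum_k \omega^k \theta(\omega^k t,\pm p\omega^k t;p^2) = 0$ immediate; but the first summand transforms non-trivially under $t \mapsto \omega t$, so property (ii) is awkward to verify by hand, and we invoke Lemma \ref{rl} instead. A secondary subtlety is that, in the uniformization for $g$, one of the zero pairs of $\theta(-t^\pm,-\omega t^\pm, p\omega t^\pm; p^2)$ sits at $t = p\omega$ where $\xi = \infty$, so that factor must be absorbed into $\theta(p\omega t^\pm; p^2)$ and lowers the degree of the resulting polynomial from $3$ to $2$.
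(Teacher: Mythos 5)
Your proposal is correct and follows essentially the same route as the paper: both produce explicit elements of $V_1$ from the two summands of Lemma \ref{rl} and read off the corresponding polynomials via Lemma \ref{xvl} and the uniformization \eqref{eu}. The only (harmless) deviations are that you take $b_1=-1$ and recover $f$ as the combination $\frac1\zeta\bigl((\xi-1)(\xi-\zeta)^2-h(\xi)\bigr)$, where the paper takes $b_1=p^3$ to get $f$ directly, and that you make explicit the dimension count from Lemma \ref{dl} that the paper leaves implicit.
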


\begin{proof} By Lemma \ref{rl}, $V_1$ is spanned by the functions 
$$\tha(p^{3}t^{\pm 3};p^6), \qquad \tha(-t^{\pm},-\om t^{\pm},p\om t^\pm;p^2),\qquad \tha(t^{\pm 3};p^6).$$
 By Lemma \ref{xvl}, the uniformization of these functions are indeed, up to multiplicative constants, respectively
$$(\zeta+2)x-\zeta, \qquad (x-1)(x-\zeta), \qquad x^2(x-(2\zeta+1)).$$
\end{proof}

Taking determinants of the basis elements \eqref{fgh}, we get the following results.

\begin{corollary} For any elements $q_1$, $q_2$, $q_3$ of $W_1$,
$$\det_{1\leq i,j\leq 3}(q_j(x_i))=C(x_2-x_1)(x_3-x_1)(x_3-x_2)F(x_1,x_2,x_3),
 $$
where $C$ is a constant and  $F$ is as in \eqref{p}.

Moreover, writing $x=\xi(t)$ and $y=\xi(u)$, 
$$u^{-3}\theta(u^3t^{\pm 3};p^6)\tha(p\om t^\pm,p\om u^{\pm};p^2)
=C(y-x)G(x,y), $$
where $C$ is a constant and $G$ is as in \eqref{q}.
\end{corollary}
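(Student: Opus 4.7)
My plan is to derive both identities from the basis description of $W_1$ in Lemma \ref{wl} together with the elementary addition formula \eqref{tad}.

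For the first identity, I will use multilinearity of the $3 \times 3$ determinant. Since $\{f, g, h\}$ is a basis of $W_1$ by Lemma \ref{wl}, and $\det_{ij}(q_j(x_i))$ is multilinear (in fact alternating) in $(q_1, q_2, q_3)$, it suffices to verify the identity on the basis triple $(q_1, q_2, q_3) = (f, g, h)$.  In that case it is exactly the defining formula \eqref{p} of $F$, with $C = 1$. For an arbitrary triple, $C$ is the determinant of the matrix expressing $(q_1, q_2, q_3)$ in the basis $(f, g, h)$.

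For the second identity, the key observation is that a single application of \eqref{tad} produces precisely a $2 \times 2$ alternant of the same form as the defining determinant of $G$. With nome $p^6$ and the substitution $x = p^3$, $y = 1$, $z = t^3$, $w = u^3$, \eqref{tad} becomes
\[
\theta(p^3 t^{\pm 3}, u^{\pm 3}; p^6) - \theta(p^3 u^{\pm 3}, t^{\pm 3}; p^6) = t^{-3}\theta(p^3; p^6)^2\,\theta(t^3 u^{\pm 3}; p^6).
\]
The elementary relation $\theta(t^3 u^{\pm 3}; p^6) = -u^{-3}t^3\,\theta(u^3 t^{\pm 3}; p^6)$, which follows from $\theta(x^{-1};p) = -x^{-1}\theta(x;p)$, then identifies $u^{-3}\theta(u^3 t^{\pm 3}; p^6)$, up to a constant in $p$, with the $2 \times 2$ alternant of the two $V_1$ basis functions $\theta(p^3 t^{\pm 3}; p^6)$ and $\theta(t^{\pm 3}; p^6)$ at the points $t$ and $u$.

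The final step is uniformization. From the proof of Lemma \ref{wl}, the basis functions $\theta(p^3 t^{\pm 3}; p^6)$ and $\theta(t^{\pm 3}; p^6)$ equal, up to constants depending only on $p$, $\theta(p\om t^\pm; p^2)^3 f(\xi(t))$ and $\theta(p\om t^\pm; p^2)^3 h(\xi(t))$ respectively. Substituting, the alternant becomes a constant multiple of $\theta(p\om t^\pm; p^2)^3 \theta(p\om u^\pm; p^2)^3 (f(x)h(y) - f(y)h(x))$. Multiplying through by $\theta(p\om t^\pm, p\om u^\pm; p^2)$ and invoking the $2 \times 2$ minor definition \eqref{q}, namely $f(x)h(y) - f(y)h(x) = (y - x) G(x,y)$, yields the stated identity. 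The only nontrivial step is the correct choice of substitution in \eqref{tad}; once in place, the reduction to the minor formula is a matter of tracking theta-function prefactors.
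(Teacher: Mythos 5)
Your treatment of the first identity is exactly the argument the paper intends (the paper's entire proof of this Corollary is the sentence ``Taking determinants of the basis elements \eqref{fgh}''): by multilinearity of the determinant in $(q_1,q_2,q_3)$ it suffices to check the basis triple $(f,g,h)$, where the identity is precisely the definition \eqref{p}. Your route to the second identity --- specializing \eqref{tad} with nome $p^6$ at $x=p^3$, $y=1$, $z=t^3$, $w=u^3$ so as to exhibit $u^{-3}\tha(u^3t^{\pm 3};p^6)$ as the $2\times 2$ alternant of the two basis functions $\tha(p^3t^{\pm 3};p^6)$ and $\tha(t^{\pm 3};p^6)$, and then uniformizing --- is also the right idea and a legitimate way to fill in the unstated proof.

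However, your last step does not close. Your own computation gives
$$u^{-3}\tha(u^3t^{\pm 3};p^6)=C\,\tha(p\om t^{\pm},p\om u^{\pm};p^2)^{3}\,(y-x)G(x,y),$$
and multiplying this by $\tha(p\om t^{\pm},p\om u^{\pm};p^2)$ produces a \emph{fourth} power of that factor on one side, not the identity as printed. The two cannot be reconciled: the printed left-hand side is a nontrivial $D_5$ theta function of $t$ of nome $p^2$ (it acquires the factor $p^{-8}t^{-8}$ under $t\mapsto p^2t$), whereas $(y-x)G(x,y)$ depends on $t$ only through $\xi(t)$ and is therefore invariant under $t\mapsto p^2t$. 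The identity you actually derived, with $\tha(p\om t^{\pm},p\om u^{\pm};p^2)^3$ multiplying the right-hand side, is the one consistent with the bidegree $(3,3)$ of $(y-x)G(x,y)$ and with the way the Corollary is used to obtain \eqref{pss}; the statement of the Corollary evidently contains a misprint in the placement and exponent of the theta prefactor. So the substance of your proposal is sound, but the closing sentence (``multiplying through by $\tha(p\om t^\pm,p\om u^\pm;p^2)$ \dots yields the stated identity'') asserts something that is not true as written; you should instead record the identity you proved and flag the discrepancy with the printed statement.
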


It follows that the function $\Psi_n$
from Theorem \ref{met} is given by
\begin{multline}\label{pss}\Psi_n(t_1,\dots,t_{2n+1})=C\prod_{i=1}^{2n+1}\tha(t_i,\pm pt_i;p^2)\tha(p\om t_i^{\pm};p^2)^n\prod_{1\leq i<j\leq 2n+1}t_j^{-1}\tha(t_jt_i^\pm;p^2)\\
\times S_n(x_1,\dots,x_{2n+1}), \end{multline}
where $C$ is a constant, $x_i=\xi(t_i)$, and $S_n$ is 
the polynomial introduced in \eqref{sn}.
In particular, $S_n$ is symmetric in all variables.

Finally, we note the following symmetry of $S_n$.

\begin{lemma}\label{il} Indicating also the dependence on $\zeta$, the
 polynomial $S_n$ satisfies
$$S_n(x_1^{-1},\dots,x_{2n+1}^{-1};\zeta^{-1})=\frac 1{\zeta^{2n^2}x_1^n\dotsm x_{2n+1}^n}\,S_n(x_1,\dots,x_{2n+1};\zeta) $$
\end{lemma}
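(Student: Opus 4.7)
The plan is to reduce the claimed symmetry of $S_n$ to two elementary identities for $F$ and $G$, then track how every factor in the definition \eqref{sn} transforms under the simultaneous inversion. Setting $X=x_1\dotsm x_n$ and $Y=y_1\dotsm y_n$, the goal is to show
\[
S_n(x_1^{-1},\dots,x_n^{-1},y_1^{-1},\dots,y_n^{-1},z^{-1};\zeta^{-1})=\frac{1}{\zeta^{2n^2}(XYz)^n}\,S_n(x_1,\dots,x_n,y_1,\dots,y_n,z;\zeta),
\]
since by the symmetry of $S_n$ in all $2n+1$ variables (established in \S\ref{us} via \eqref{pss}) this is equivalent to the formula in the statement.

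The first step is a routine verification, from the explicit polynomial formulas \eqref{p} and \eqref{q}, of the elementary rules
\[
F(x^{-1},y^{-1},z^{-1};\zeta^{-1})=\frac{F(x,y,z;\zeta)}{\zeta^2 xyz},\qquad G(x^{-1},y^{-1};\zeta^{-1})=\frac{G(x,y;\zeta)}{\zeta^2 x^2 y^2}.
\]
These follow at once after multiplying out and collecting coefficients. (Alternatively one can read them off from the determinantal presentations of $F$ and $G$ once one notes that $f(1/x;1/\zeta)=-h(x;\zeta)/(\zeta x^2)$, $g(1/x;1/\zeta)=g(x;\zeta)/(\zeta x^2)$, $h(1/x;1/\zeta)=-f(x;\zeta)/(\zeta x^3)$, but the direct computation is quicker.)

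The second step is bookkeeping in \eqref{sn}. The product $\prod_{i,j=1}^n G(x_i,y_j)$ contributes a factor $\zeta^{-2n^2}X^{-2n}Y^{-2n}$. Each Vandermonde $\prod_{i<j}(x_j-x_i)$ inverts to $(-1)^{\binom n2}X^{-(n-1)}$ times itself, and the two signs cancel, producing a total factor $(XY)^{n-1}$ in the denominator. Finally,
\[
\frac{F(x_i^{-1},y_j^{-1},z^{-1};\zeta^{-1})}{G(x_i^{-1},y_j^{-1};\zeta^{-1})}=\frac{x_iy_j}{z}\,\frac{F(x_i,y_j,z;\zeta)}{G(x_i,y_j;\zeta)},
\]
so pulling $x_i$ out of row $i$ and $y_j/z$ out of column $j$ multiplies the $n\times n$ determinant by $XY/z^n$. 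Assembling everything,
\[
\frac{\zeta^{-2n^2}(XY)^{-2n}(XY)^{n-1}(XY)/z^n}{1}=\frac{1}{\zeta^{2n^2}(XYz)^n},
\]
which is exactly the required factor.

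There is no serious obstacle; the only place care is needed is in confirming that the signs arising from the two Vandermonde inversions cancel (which they do, since $(-1)^{n(n-1)}=1$) and in keeping track of the exponents of $X$ and $Y$ through the three cumulative steps. The overall argument is clean once the $n=1$ transformation rules for $F$ and $G$ are in hand.
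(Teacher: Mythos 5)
Your proof is correct and follows the same route as the paper, which likewise reduces the lemma to the two inversion rules for $F$ and $G$ and calls the rest an elementary computation (you have simply written out the bookkeeping that the paper leaves implicit). The only blemish is the exponent in your parenthetical aside, where it should read $f(1/x;1/\zeta)=-h(x;\zeta)/(\zeta x^3)$ rather than $/(\zeta x^2)$, but since you explicitly do not use that route, nothing is affected.
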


\begin{proof}
This follows from an elementary computation, using
$$F(x^{-1},y^{-1},z^{-1};\zeta^{-1})=\frac{1}{\zeta^2xyz}\,F(x,y,z;\zeta), $$
$$G(x^{-1},y^{-1};\zeta^{-1})=\frac{1}{\zeta^2x^2y^2}\,G(x,y;\zeta). $$
\end{proof}

\subsection{Recursions}\label{rs}

Note that any minor  of the determinant in \eqref{sn} is  a determinant of the same type. Thus, any algebraic relation between minors (see e.g.\ \cite[Chapter 6]{n})  implies a relation involving the polynomials $S_n$. 
It is not our purpose  to give an exhaustive list of such identities; we only mention a few examples that will be used below or otherwise seem of particular interest.

First, we apply Jacobi's  identity (sometimes called the Lewis Carroll formula after one of its  proponents)
$$X_{n-1,n}^{n-1,n}X=X_n^nX_{n-1}^{n-1}-X_{n-1}^nX_n^{n-1}, $$
where
$X=\det_{1\leq i,j\leq n}(x_{ij})$
and the other quantities are minors, upper and lower indices signifying omitted 
rows and columns. Choosing
$$ x_{ij}=\frac{F(x_i,y_j,z)}{G(x_i,y_j)},$$
one obtains after  relabelling
 the following recursion for the polynomials $S_n$.

\begin{lemma}\label{tp}
For  $\mathbf x=(x_1,\dots,x_{2n-1})$,
\begin{multline*}(a-b)(c-d)S_{n-1}(\mathbf x)S_{n+1}(a,b,c,d,\mathbf x)\\
=G(a,d)G(b,c)S_{n}(a,c,\mathbf x)S_{n}(b,d,\mathbf x)-G(a,c)G(b,d)S_{n}(a,d,\mathbf x)S_{n}(b,c,\mathbf x).\end{multline*}
\end{lemma}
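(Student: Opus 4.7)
The plan is to apply the Desnanot--Jacobi (Lewis Carroll) identity
$$\det M\cdot\det M^{1,n+1}_{1,n+1}=\det M^{1}_{1}\det M^{n+1}_{n+1}-\det M^{1}_{n+1}\det M^{n+1}_{1}$$
to the $(n+1)\times(n+1)$ matrix
$$M=\left(\frac{F(\xi_i,\eta_j,z)}{G(\xi_i,\eta_j)}\right)_{1\le i,j\le n+1},\qquad \xi_1=a,\ \xi_{n+1}=b,\ \eta_1=c,\ \eta_{n+1}=d,$$
with the $2n-1$ remaining entries $(\xi_2,\dots,\xi_n,\eta_2,\dots,\eta_n,z)$ identified with the tuple $\mathbf{x}$. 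By definition \eqref{sn}, each of the six minors that appears is an $S_k$ evaluation divided by a prefactor of the form
$$\tilde P(\xi_1,\dots,\xi_k;\eta_1,\dots,\eta_k)=\frac{\prod_{i,j=1}^{k} G(\xi_i,\eta_j)}{\prod_{1\le i<j\le k}(\xi_j-\xi_i)(\eta_j-\eta_i)}.$$

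Multiplying Desnanot--Jacobi through by the single product $\tilde P(\xi_1,\dots,\xi_{n+1};\eta_1,\dots,\eta_{n+1})\,\tilde P(\xi_2,\dots,\xi_n;\eta_2,\dots,\eta_n)$ collapses the left-hand side to $S_{n+1}(a,b,c,d,\mathbf{x})\,S_{n-1}(\mathbf{x})$, while each term on the right becomes an $S_n S_n$ product times a ratio of $\tilde P$-factors. The routine part is simplifying these two ratios. An index-pair by index-pair multiplicity count shows that in both cases the Vandermonde factors cancel everywhere except for a single surviving $(\xi_{n+1}-\xi_1)(\eta_{n+1}-\eta_1)=(b-a)(d-c)$ in the denominator; and in the $G$-numerators, every pair $(i,j)$ other than the four corners cancels. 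The two ratios differ only in which pair of corners survives: the diagonal product $\det M^{1}_{1}\det M^{n+1}_{n+1}$ leaves the off-diagonal corners $G(a,d)G(b,c)$, whereas the off-diagonal product $\det M^{1}_{n+1}\det M^{n+1}_{1}$ leaves the diagonal corners $G(a,c)G(b,d)$. Clearing the common denominator $(b-a)(d-c)=(a-b)(c-d)$ then delivers exactly the identity in the lemma.

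The complete symmetry of $S_k$ in all $2k+1$ of its arguments, proved in \S\ref{us} via uniformization of the anti-symmetric theta function $\Psi_k$ from Theorem \ref{met}, is what allows us to consolidate the $2n-1$ middle variables into a single unordered tuple $\mathbf{x}$ regardless of whether they originally occupied a row, column, or auxiliary position. The only genuine obstacle is the prefactor bookkeeping just sketched; it is not conceptually deep, but one must carefully verify the multiplicity of each $G(\xi_i,\eta_j)$ and each $(\xi_j-\xi_i)$, $(\eta_j-\eta_i)$ at every index pair to confirm that cancellation is as complete as claimed and that the residual $G$-factors sit on the predicted corners.
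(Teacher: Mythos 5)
Your proposal is correct and follows essentially the same route as the paper: the paper also applies the Desnanot--Jacobi identity to the matrix with entries $F(x_i,y_j,z)/G(x_i,y_j)$ (using the last two rows and columns rather than the first and last, and then relabelling), with the symmetry of $S_n$ and the prefactor bookkeeping playing exactly the roles you describe. Your multiplicity count of the surviving $G$-factors and Vandermonde factors checks out, so the argument is complete.
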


One of the Pl\"ucker relations is
$$X_{n,n+1}^nX_{n-1}-X_{n-1,n+1}^nX_{n}+X_{n-1,n}^nX_{n+1}=0, $$
valid for minors of an $n\times(n+1)$ matrix. 
Specializing the matrix entries as above yields the following result.

\begin{lemma}\label{pl}
For $\mathbf x=(x_1,\dots,x_{2n-2})$,
\begin{multline*}(c-d)G(a,b)S_{n-1}(b,\mathbf x)S_{n}(a,c,d,\mathbf x)+(d-b)G(a,c)S_{n-1}(c,\mathbf x)S_{n}(a,b,d,\mathbf x)\\
+(b-c)G(a,d)S_{n-1}(d,\mathbf x)S_{n}(a,b,c,\mathbf x)
=0.
 \end{multline*}
\end{lemma}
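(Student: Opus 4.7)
The identity has the shape of a Plücker-type three-term relation: each summand is a product $S_{n-1}(\cdot)\,S_n(\cdot)$, and the three ``scalar'' coefficients $(c-d)G(a,b),\,(d-b)G(a,c),\,(b-c)G(a,d)$ sum to zero. This parallels the derivation of Lemma \ref{tp} from Jacobi's identity, but uses the Plücker relation
$$X^n_{n,n+1}\,X_{n-1}-X^n_{n-1,n+1}\,X_n+X^n_{n-1,n}\,X_{n+1}=0$$
among the minors of an $n\times(n+1)$ matrix.

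The matrix I would apply it to is $M=\bigl(F(u_i,v_j,z)/G(u_i,v_j)\bigr)_{1\le i\le n,\,1\le j\le n+1}$, with the assignment $u_n=a$, $v_{n-1}=b$, $v_n=c$, $v_{n+1}=d$, while the remaining quantities $u_1,\dots,u_{n-1},v_1,\dots,v_{n-2},z$ are chosen so that collectively they form $\mathbf{x}$ (note that $(n-1)+(n-2)+1=2n-2$). By \eqref{sn} and the symmetry of $S_n$, the three $n\times n$ minors $X_{n-1},X_n,X_{n+1}$ (delete the column corresponding to $b$, $c$, $d$, respectively) equal prefactors times $S_n(a,c,d,\mathbf{x})$, $S_n(a,b,d,\mathbf{x})$, $S_n(a,b,c,\mathbf{x})$. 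Similarly, the three $(n-1)\times(n-1)$ minors $X^n_{n,n+1},X^n_{n-1,n+1},X^n_{n-1,n}$ (delete the row $u_n=a$ together with two of the columns in $\{n-1,n,n+1\}$) equal prefactors times $S_{n-1}(b,\mathbf{x}), S_{n-1}(c,\mathbf{x}), S_{n-1}(d,\mathbf{x})$.

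The remaining task is to verify that after substituting these identifications, the Plücker relation reduces to the claim. Concretely, I would compute the ratios $\Pi^n_{n-1,n+1}\Pi_n / \Pi^n_{n,n+1}\Pi_{n-1}$ and $\Pi^n_{n-1,n}\Pi_{n+1} / \Pi^n_{n,n+1}\Pi_{n-1}$ of the prefactors and check that they equal $(d-b)G(a,c)/(d-c)G(a,b)$ and $(c-b)G(a,d)/(d-c)G(a,b)$, respectively. Each prefactor is (up to sign) a Vandermonde-type product $\prod(u_{i'}-u_i)\prod(v_{j'}-v_j)$ divided by a product of $G$-factors. In these ratios, all contributions involving the ``generic'' columns $1,\dots,n-2$ and the rows $1,\dots,n-1$ cancel, the $u$-differences cancel completely, and what survives telescopes to a single $v$-difference involving only $\{b,c,d\}$ and a single $G$-factor of the form $G(u_n,v_*)=G(a,*)$. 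Multiplying through by $(d-c)G(a,b)$ and absorbing the overall sign of the Plücker alternation then produces the stated identity.

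\textbf{Expected obstacle.} The computation is conceptually transparent but notationally heavy: organizing the Vandermonde and $G$-products so that the unwanted factors visibly cancel requires care. I do not foresee any substantive difficulty beyond this bookkeeping, since once the matrix $M$ is set up correctly, the combinatorial identification of minors with $S_n$ and $S_{n-1}$ is forced by \eqref{sn} and the symmetry of $S_n$.
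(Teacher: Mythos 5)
Your proposal is correct and is exactly the paper's argument: the paper derives the lemma by applying the same Pl\"ucker relation $X^n_{n,n+1}X_{n-1}-X^n_{n-1,n+1}X_n+X^n_{n-1,n}X_{n+1}=0$ to the $n\times(n+1)$ matrix with entries $F(x_i,y_j,z)/G(x_i,y_j)$ and simply states that the specialization yields the result, while you supply the (correct) prefactor ratios. One small caveat: your motivational remark that the three coefficients $(c-d)G(a,b)$, $(d-b)G(a,c)$, $(b-c)G(a,d)$ sum to zero is false in general, since $G(a,\cdot)$ is quadratic (their sum is $-f(a)(b-c)(c-d)(b-d)$); this plays no role in the actual derivation, which stands.
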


Combining Lemmas \ref{tp} and \ref{pl}, we obtain the following  recursion.

\begin{corollary}\label{grc} For $\mathbf x=(x_1,\dots,x_{2n-2})$,
\begin{multline*}
(b-d)(y-a)(y-c)S_{n-1}(a,\mathbf x)S_{n-1}(c,\mathbf x)S_{n+1}(y,a,b,c,d,\mathbf x)\\
\begin{split}&+(a-y)G(b,c)G(y,d)S_{n-1}(c,\mathbf x)S_n(a,c,d,\mathbf x)S_n(y,a,b,\mathbf x)\\
&+(y-c)G(a,d)G(y,b)S_{n-1}(a,\mathbf x)S_n(a,b,c,\mathbf x)S_n(y,c,d,\mathbf x)\\
&+(c-a)G(y,b)G(y,d)S_n(a,b,c,\mathbf x)S_n(a,c,d,\mathbf x)S_{n-1}(y,\mathbf x)
=0.
\end{split} \end{multline*}
\end{corollary}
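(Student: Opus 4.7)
The plan is to deduce Corollary \ref{grc} by a two-step combination: apply Lemma \ref{tp} once to produce the $S_{n+1}$-term, then use Lemma \ref{pl} to recombine the remaining $S_n\cdot S_n\cdot S_{n-1}$ products into the three terms that appear in the claim. Throughout, the key structural facts I will exploit are that $G$ is symmetric in its two arguments (visible from \eqref{q}) and that $S_n$ is symmetric in all its arguments (stated after \eqref{pss}).

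First I would invoke Lemma \ref{tp} with the substitution $(a,b,c,d)\mapsto(y,a,b,d)$ and $\mathbf x\mapsto(c,\mathbf x)$, absorbing $c$ into the common tail. Using symmetry of $S_{n+1}$ to reorder its arguments, this gives
\begin{equation*}
(y-a)(b-d)\,S_{n-1}(c,\mathbf x)\,S_{n+1}(y,a,b,c,d,\mathbf x)
=G(y,d)G(a,b)\,S_n(y,b,c,\mathbf x)S_n(a,c,d,\mathbf x)-G(y,b)G(a,d)\,S_n(y,c,d,\mathbf x)S_n(a,b,c,\mathbf x).
\end{equation*}
Multiplying both sides by $(y-c)\,S_{n-1}(a,\mathbf x)$ creates exactly the $S_{n+1}$-coefficient $(b-d)(y-a)(y-c)S_{n-1}(a,\mathbf x)S_{n-1}(c,\mathbf x)$ demanded by the corollary, and the second term on the right becomes $-(y-c)G(a,d)G(y,b)S_{n-1}(a,\mathbf x)S_n(a,b,c,\mathbf x)S_n(y,c,d,\mathbf x)$, which is precisely the negative of one of the three remaining terms in the claim. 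What is left over is the ``residual'' expression $(y-c)G(y,d)G(a,b)S_{n-1}(a,\mathbf x)S_n(y,b,c,\mathbf x)S_n(a,c,d,\mathbf x)$, which must be shown to equal the sum of the other two corollary terms.

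For this second step I would apply Lemma \ref{pl} with the substitution $(a,b,c,d)\mapsto(b,a,y,c)$ (keeping $\mathbf x$ the same). Exploiting the symmetry of $G$ and of $S_n$ in their arguments, the identity rewrites as
\begin{equation*}
(y-c)G(a,b)\,S_{n-1}(a,\mathbf x)S_n(y,b,c,\mathbf x)+(c-a)G(y,b)\,S_{n-1}(y,\mathbf x)S_n(a,b,c,\mathbf x)+(a-y)G(b,c)\,S_{n-1}(c,\mathbf x)S_n(y,a,b,\mathbf x)=0.
\end{equation*}
Multiplying this by the common factor $G(y,d)\,S_n(a,c,d,\mathbf x)$ expresses the residual in precisely the form needed, and substituting it back collapses everything into the asserted vanishing relation.

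There is no real obstacle here beyond bookkeeping; the only genuine choice is which of the five new variables $\{y,a,b,c,d\}$ to absorb into $\mathbf x$ when applying Lemma \ref{tp}, and which analogous choice to make in Lemma \ref{pl}. The selections above are the ones whose output shares the common factor $S_n(a,c,d,\mathbf x)$ (present in two of the three non-$S_{n+1}$ terms of the corollary), so that a single application of the Pl\"ucker-type relation suffices to reshape the remaining product $S_{n-1}(a,\mathbf x)S_n(y,b,c,\mathbf x)$ into the two missing terms simultaneously. Once this alignment is found, the verification is a direct algebraic substitution with no further identities required.
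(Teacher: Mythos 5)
Your proof is correct and takes essentially the same route as the paper: a single application of Lemma \ref{tp} to dispose of the $S_{n+1}$ term and a single application of Lemma \ref{pl} to recombine the remaining $S_n S_n S_{n-1}$ products, differing only in the choice of substitutions and of which terms are grouped. (One small wording slip: the residual must equal the \emph{negative} of the sum of the other two corollary terms, not the sum itself, but the identity you actually derive, $R+T_2+T_4=0$, is exactly what the cancellation requires.)
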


\begin{proof}
The sum of the first two terms can be written
\begin{multline*}(y-a)S_{n-1}(c,\mathbf x)\big\{(b-d)(y-c)S_{n-1}(a,\mathbf x)S_{n+1}(y,a,b,c,d,\mathbf x)\\
-G(b,c)G(y,d)S_n(a,c,d,\mathbf x)S_n(y,a,b,\mathbf x)
\big\}. \end{multline*}
By Lemma \ref{tp}, with $(\mathbf x,a,b,c,d)\mapsto((\mathbf x,a),d,b,c,y)$, the factor in brackets equals 
$$-G(c,d)G(y,b)S_n(a,b,c,\mathbf x)S_n(y,a,d,\mathbf x). $$
The sum of the last two terms is 
\begin{multline*}G(y,b)S_n(a,b,c,\mathbf x)\big\{(y-c)G(a,d)S_{n-1}(a,\mathbf x)S_n(y,c,d,\mathbf x)\\
+(c-a)G(y,d)S_n(a,c,d,\mathbf x)S_{n-1}(y,\mathbf x)\big\}.\end{multline*}
By Lemma \ref{pl}, with $(\mathbf x,a,b,c,d)\mapsto(\mathbf x,d,a,y,c)$, the factor in brackets is
$$(y-a)G(c,d)S_{n-1}(c,\mathbf x)S_n(y,a,d,\mathbf x). $$
It is now clear that the sum of all four terms vanishes.
\end{proof}

Corollary \ref{grc} is particularly interesting in the case when 
$$\mathbf x=(\underbrace{a,\dots,a}_{n-1},\underbrace{b,\dots,b}_{n-1}),$$
$c=b$ and $d=a$. As we will see, one special case is the three-term recursion for the polynomials 
$P_n$.

\begin{corollary}\label{syr} For fixed $a$ and $b$, the polynomials
$$S_n(y)=S_n(y,\underbrace{a,\dots,a}_{n},\underbrace{b,\dots,b}_{n})$$
satisfy
\begin{multline*}
\begin{split}&(b-a)(y-a)(y-b)S_{n-1}(a)S_{n-1}(b)S_{n+1}(y)\\
&+\Big((a-y)G(b,b)G(y,a)S_{n-1}(b)S_n(a) \end{split}\\
\begin{split}&+(y-b)G(a,a)G(y,b)S_{n-1}(a)S_n(b)\Big)S_n(y)\\
&+(b-a)G(y,a)G(y,b)S_n(a)S_n(b)S_{n-1}(y)=0
.
 \end{split}\end{multline*}
\end{corollary}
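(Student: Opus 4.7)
The plan is to recognize Corollary \ref{syr} as a direct specialization of Corollary \ref{grc}. Specifically, in the statement of Corollary \ref{grc} I would set
$$\mathbf{x} = (\underbrace{a,\dots,a}_{n-1}, \underbrace{b,\dots,b}_{n-1}), \qquad c = b, \qquad d = a.$$
First I would verify that the total count of arguments matches: $S_{n+1}$ is evaluated at $y,a,b,c,d$ together with $\mathbf{x}$, giving $5 + (2n-2) = 2n+3 = 2(n+1)+1$ variables, which is the correct number for $S_{n+1}$.

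Next I would use the full symmetry of $S_n$ in its $2n+1$ arguments, established after equation \eqref{pss}, to identify each of the six $S$-factors in Corollary \ref{grc} with those of Corollary \ref{syr}. For example, $S_{n+1}(y,a,b,c,d,\mathbf{x})$ acquires $n+1$ copies of $a$ and of $b$ together with $y$, hence equals $S_{n+1}(y)$ in the notation of the corollary. Similarly $S_n(a,c,d,\mathbf{x}) = S_n(a,b,a,\mathbf{x})$ contains $n+1$ copies of $a$ and $n$ copies of $b$, which equals $S_n(a)$; and $S_n(a,b,c,\mathbf{x}) = S_n(a,b,b,\mathbf{x})$ equals $S_n(b)$. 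The mixed terms $S_n(y,a,b,\mathbf{x})$ and $S_n(y,c,d,\mathbf{x})$ both reduce to $S_n(y)$, while $S_{n-1}(a,\mathbf{x})$, $S_{n-1}(c,\mathbf{x})$, and $S_{n-1}(y,\mathbf{x})$ collapse to $S_{n-1}(a)$, $S_{n-1}(b)$, and $S_{n-1}(y)$, respectively.

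Having matched the $S$-factors, the specialization of the four scalar coefficients is immediate: $(b-d)(y-a)(y-c)$ becomes $(b-a)(y-a)(y-b)$; the coefficient $(a-y)G(b,c)G(y,d)$ becomes $(a-y)G(b,b)G(y,a)$; the coefficient $(y-c)G(a,d)G(y,b)$ becomes $(y-b)G(a,a)G(y,b)$; and $(c-a)G(y,b)G(y,d)$ becomes $(b-a)G(y,b)G(y,a)$. Substituting, one obtains precisely the identity asserted in Corollary \ref{syr}.

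No genuine obstacle is expected: the argument is essentially a bookkeeping exercise of counting multiplicities of $a$ and $b$ after specialization, resting entirely on the symmetry of $S_n$ and on the already-established Corollary \ref{grc}. The only point that requires a moment of care is the verification that, in each $S$-factor, the \emph{exact} multiplicities of $a$ and $b$ match those required by the notation $S_k(z) = S_k(z,\underbrace{a,\dots,a}_{k},\underbrace{b,\dots,b}_{k})$ of the corollary; once this matching is done, no further computation is needed.
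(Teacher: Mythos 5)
Your proposal is correct and is exactly the paper's route: the paper obtains Corollary \ref{syr} by specializing Corollary \ref{grc} to $\mathbf x=(a,\dots,a,b,\dots,b)$ with $n-1$ copies of each, $c=b$, $d=a$, and then using the full symmetry of $S_n$ to collapse the arguments, just as you describe. Your multiplicity bookkeeping for each $S$-factor checks out, so there is nothing to add.
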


We also consider a similar specialization of Lemma \ref{tp}.

\begin{corollary}\label{pc} For fixed $a$, $b$, and $z$, the
 polynomials
$$S_n(x,y)=S_n(x,y,z,\underbrace{a,\dots,a}_{n-1},\underbrace{b,\dots,b}_{n-1}) $$
satisfy 
\begin{multline*}(x-a)(y-b)S_{n-1}(a,b)S_{n+1}(x,y)\\
=G(a,y)G(b,x)S_{n}(a,b)S_{n}(x,y)-G(a,b)G(x,y)S_{n}(x,b)S_{n}(a,y).
\end{multline*}
\end{corollary}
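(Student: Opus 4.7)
The plan is to derive Corollary \ref{pc} as a direct specialization of Lemma \ref{tp}, exploiting two structural facts that have already been established in the preceding discussion: the full symmetry of $S_n$ in all of its $2n+1$ arguments (proved via the identification \eqref{pss} with the anti-symmetric alternant $\Psi_n$ in Theorem \ref{met}), and the manifest symmetry $G(x,y)=G(y,x)$ visible from the explicit formula \eqref{q}.

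Concretely, I would apply Lemma \ref{tp} with the specialization
$$\mathbf{x} = \bigl(z,\underbrace{a,\dots,a}_{n-1},\underbrace{b,\dots,b}_{n-1}\bigr),$$
a tuple of length $2n-1$ as required, and with the four free variables of Lemma \ref{tp} renamed $(a,b,c,d)\mapsto(x,a,y,b)$ (in that order). This choice is essentially forced by matching the multiset of arguments $\{x,y,z,a^n,b^n\}$ of $S_{n+1}(x,y)$ as defined in Corollary \ref{pc} with the $2n+3$ arguments appearing on the left-hand side of Lemma \ref{tp}.

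The next step is purely bookkeeping. Using the symmetry of $S_k$ in all arguments, each factor appearing after substitution collapses to the intended abbreviated form:
\begin{align*}
S_{n-1}(\mathbf{x}) &= S_{n-1}(a,b,z,a^{n-2},b^{n-2}) = S_{n-1}(a,b),\\
S_{n+1}(x,a,y,b,\mathbf{x}) &= S_{n+1}(x,y,z,a^n,b^n) = S_{n+1}(x,y),
\end{align*}
and analogously $S_n(x,y,\mathbf{x})=S_n(x,y)$, $S_n(a,b,\mathbf{x})=S_n(a,b)$, $S_n(x,b,\mathbf{x})=S_n(x,b)$, $S_n(a,y,\mathbf{x})=S_n(a,y)$. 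Matching the $G$-prefactors gives $G(a,d)G(b,c)=G(x,b)G(a,y)=G(b,x)G(a,y)$ and $G(a,c)G(b,d)=G(x,y)G(a,b)$, and the prefactor $(a-b)(c-d)$ becomes $(x-a)(y-b)$. Collecting everything reproduces the identity in Corollary \ref{pc} on the nose.

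There is no real obstacle here: both ingredients (the general minor identity of Lemma \ref{tp} and the total symmetry of $S_n$) are already in place, so the argument reduces to choosing the right specialization and reading off the resulting equation. If one wanted a conceptual proof instead, one could observe that fixing the repeated arguments $a^{n-1}, b^{n-1}, z$ corresponds precisely to the determinantal setting in which Jacobi's identity was applied to derive Lemma \ref{tp} in the first place, so Corollary \ref{pc} is simply the two-row, two-column Jacobi identity for the determinant defining $S_n$ with those rows/columns frozen.
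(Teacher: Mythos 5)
Your proposal is correct and matches the paper's (implicitly stated) proof: Corollary \ref{pc} is obtained exactly by specializing Lemma \ref{tp} with $\mathbf x=(z,a,\dots,a,b,\dots,b)$ and relabelling the four free variables, using the total symmetry of $S_n$ and the symmetry of $G$ to collapse the arguments. The bookkeeping in your substitution checks out term by term.
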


Two cases of Corollary \ref{pc} are of special interest. 
The first one is   $x=b$ and $y=a$, when it reads
\begin{multline}\label{abr}(a-b)^2S_{n-1}(a,b)S_{n+1}(a,b)\\
=G(a,b)^2S_{n}(a,a)S_{n}(b,b)-G(a,a)G(b,b)S_{n}(a,b)^2.
\end{multline}
This will yield 
Theorem \ref{prt}. The second one is the limit case 
$x\rightarrow a$, $y\rightarrow b$, where we first divide through with
$(x-a)(y-b)$ and then use l'H\^{o}pital's rule on the right-hand side. Let
$$T_n(x,y)=S_n(\underbrace{x,\dots,x}_{n},\underbrace{y,\dots,y}_{n},z). $$
Since
$$\frac{\partial T_n}{\partial x}\Bigg|_{x=a,y=b}=n\frac{\partial S_n}{\partial x}\Bigg|_{x=a,y=b} $$
and similarly for  $y$, the result can be expressed in terms of  $T_n$ and its derivatives.

\begin{corollary}\label{tl} The polynomials $T_n=T_n(x,y)$ satisfy
$$T_{n-1}T_{n+1}=\left(\frac{\partial G}{\partial x}\frac{\partial G}{\partial y}-G\frac{\partial^2 G}{\partial x\partial y}\right)T_n^2+\frac 1{n^2}\,G^2\left(T_n\frac{\partial^2 T_n}{\partial x\partial y}-\frac{\partial T_n}{\partial x}\frac{\partial T_n}{\partial y}\right).$$
\end{corollary}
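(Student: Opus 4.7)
The plan is to derive Corollary \ref{tl} as the degenerate limit of Corollary \ref{pc} when $x\to a$ and $y\to b$, as anticipated in the surrounding text. Starting from
\begin{equation*}
(x-a)(y-b)S_{n-1}(a,b)S_{n+1}(x,y) = G(a,y)G(b,x)S_n(a,b)S_n(x,y) - G(a,b)G(x,y)S_n(x,b)S_n(a,y),
\end{equation*}
write $H(x,y)$ for the right-hand side. One checks directly that $H(a,b)=0$, since both summands reduce to $G(a,b)^2 S_n(a,b)^2$; and moreover that $(\partial H/\partial x)(a,b) = (\partial H/\partial y)(a,b) = 0$, using the symmetry $G(u,v)=G(v,u)$, equivalently $(\partial G/\partial v)(b,a) = (\partial G/\partial u)(a,b)$. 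Hence $H(x,y)/[(x-a)(y-b)]$ is regular at $(a,b)$ with limit $(\partial^2 H/\partial x\,\partial y)(a,b)$.

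The first step is to compute this mixed second partial. Expanding by the product rule gives eight summands; four of them cancel in pairs via the same symmetry of $G$, leaving
\begin{equation*}
\frac{\partial^2 H}{\partial x\,\partial y}(a,b) = \left(\frac{\partial G}{\partial x}\frac{\partial G}{\partial y} - G\frac{\partial^2 G}{\partial x\,\partial y}\right)(a,b)\,S_n(a,b)^2 + G(a,b)^2 \left(S_n\frac{\partial^2 S_n}{\partial x\,\partial y} - \frac{\partial S_n}{\partial x}\frac{\partial S_n}{\partial y}\right)(a,b).
\end{equation*}

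The second step converts the $S_n$-quantities into $T_n$-quantities, using the full $S_{2n+1}$-symmetry of $S_n$ established in \S \ref{us}. At $(x,y)=(a,b)$, the argument tuple $(a,b,z,a,\dots,a,b,\dots,b)$ defining $S_n(x,y)$ is a permutation of the tuple $(a,\dots,a,b,\dots,b,z)$ defining $T_n(a,b)$, so $S_n(a,b)=T_n(a,b)$ and likewise $S_{n\pm 1}(a,b)=T_{n\pm 1}(a,b)$. Since $a$ and $b$ each occur $n$ times in the $T_n$-tuple, the chain rule combined with the symmetry of $S_n$ gives
\begin{equation*}
\frac{\partial T_n}{\partial x}\bigg|_{(a,b)} = n\,\frac{\partial S_n}{\partial x}\bigg|_{(a,b)},\qquad \frac{\partial T_n}{\partial y}\bigg|_{(a,b)} = n\,\frac{\partial S_n}{\partial y}\bigg|_{(a,b)},\qquad \frac{\partial^2 T_n}{\partial x\,\partial y}\bigg|_{(a,b)} = n^2\,\frac{\partial^2 S_n}{\partial x\,\partial y}\bigg|_{(a,b)}.
\end{equation*}
Substituting these and equating with the limit of the left-hand side, namely $T_{n-1}(a,b)T_{n+1}(a,b)$, yields the stated identity after relabelling $(a,b)$ as $(x,y)$. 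The only routine hazard is sign bookkeeping in the expansion of $\partial^2 H/\partial x\,\partial y$; I do not anticipate a serious obstacle, since the algebraic substance is already packaged in Corollary \ref{pc} and in the symmetry of $G$.
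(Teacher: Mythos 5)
Your proposal is correct and follows essentially the same route as the paper, which likewise obtains the corollary by dividing Corollary \ref{pc} by $(x-a)(y-b)$, passing to the limit $x\rightarrow a$, $y\rightarrow b$ via l'H\^opital's rule, and converting $S_n$-derivatives to $T_n$-derivatives through the chain-rule factors $n$ and $n^2$. (One cosmetic point: the vanishing of $H$ and its first partials at $(a,b)$ does not by itself imply divisibility by $(x-a)(y-b)$, but that divisibility is already guaranteed by the left-hand side of Corollary \ref{pc}, so your evaluation of the limit as $\partial^2 H/\partial x\,\partial y\,(a,b)$ is justified.)
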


Equivalently, if we let
$$\tau_n=\frac{1}{\prod_{k=0}^{n-1}(k!)^2}\cdot\frac{T_n}{G^{n^2}}, $$
then $\tau_n$ satisfies   the two-dimensional Toda molecule equation
$$\frac{\partial ^2}{\partial x\partial y}\log\tau_n=\frac{\tau_{n-1}\tau_{n+1}}{\tau_n^2}. $$
We have recovered an instance of the well-known fact that this equation 
can be solved by determinants, which can be found already in Darboux's classic treatise \cite[\S 378]{d} (we owe this reference to Jacques Perk), see \cite{h,l} for more recent accounts.

The  one-dimensional Toda equation
$$\frac{\partial ^2}{\partial x^2}\log\tau_n=\frac{\tau_{n-1}\tau_{n+1}}{\tau_n^2} $$
plays an important role in the analysis of the six-vertex model with domain wall boundary conditions \cite{bf,bl1,bl2,bl3,kz,z}. This suggests that 
Corollary~\ref{tl} might be useful for proving Conjecture~\ref{tc}.

\subsection{Trigonometric limit}
\label{tes}

In the trigonometric limit $p\rightarrow 0$, \eqref{z} reduces to $\zeta=-2$.
We will show that the corresponding limit of the function $S_n$ is related to symplectic and odd orthogonal characters.

Let $\lambda=(\lambda_1,\dots,\lambda_n)$ be a partition. 
Recall \cite[\S 24.2]{fh} that the characters of the Lie algebras 
$\mathfrak{sp}(2n)$ and $\mathfrak{so}(2n+1)$ are given by
$$\chi_\lambda^{\mathfrak{sp}(2n)}(t_1,\dots,t_n)=\frac{\det_{1\leq i,j\leq n}\left(t_j^{-(\lambda_i+n-i+1)}-t_j^{\lambda_i+n-i+1}\right)}{\prod_{i=1}^nt_i^{-n}(1-t_i^2)\prod_{1\leq i<j\leq n}(t_j-t_i)(1-t_it_j)},$$
$$\chi_\lambda^{\mathfrak{so}(2n+1)}(t_1,\dots,t_n)=\frac{\det_{1\leq i,j\leq n}\left(t_j^{-(\lambda_i+n-i+\frac 12)}-t_j^{\lambda_i+n-i+\frac 12}\right)}{\prod_{i=1}^nt_i^{\frac 12-n}(1-t_i)\prod_{1\leq i<j\leq n}(t_j-t_i)(1-t_it_j)}.$$
In this notation, we have the following result.

\begin{theorem}\label{tst} Let the variables $t_i$, $u_i$, $x_i$ and $y_i$ be related by
$$x_i=-(1+t_i+t_i^{-1}),\qquad y_i=\frac 3{1+u_i+u_i^{-1}}.$$
 Then,
\begin{multline}\label{tsta}\lim_{\zeta\rightarrow -2}(\zeta+2)^{2mn-m(m-1)}S_n\left(x_1,\dots,x_{2n+1-2m},\frac{\zeta y_1}{\zeta+2},\dots,\frac{\zeta y_{2m}}{\zeta+2}\right)\\
\begin{split}&=2^{(2m+n)n-m(m-1)}3^{2mn-m(m-1)}
\prod_{i=1}^{2m}\frac{1}{(1+u_i+u_i^{-1})^{n}}\\
&\quad\times\chi_{(n-m,n-m,n-m-1,n-m-1,\dots,1,1,0)}^{\mathfrak{so}(4n-4m+3)}(t_1,\dots,t_{2n-2m+1})\\
&\quad\times\chi_{(m-1,m-1,\dots,1,1,0,0)}^{\mathfrak{sp}(4m)}(u_1,\dots,u_{2m})
,\end{split}
\end {multline} 
\begin{multline}\label{tstb}\lim_{\zeta\rightarrow -2}(\zeta+2)^{(2m+1)n-m^2}S_n\left(x_1,\dots,x_{2n-2m},\frac{\zeta y_1}{\zeta+2},\dots,\frac{\zeta y_{2m+1}}{\zeta+2}\right)\\
\begin{split}&=2^{(2m+n+1)n-m^2}3^{(2m+1)n-m^2}
\prod_{i=1}^{2m+1}\frac{1}{(1+u_i+u_i^{-1})^{n}}\\
&\quad\times\chi_{(n-m,n-m-1,n-m-1,\dots,1,1,0)}^{\mathfrak{so}(4n-4m+1)}(t_1,\dots,t_{2n-2m})\\
&\quad\times\chi_{(m,m-1,m-1,\dots,1,1,0,0)}^{\mathfrak{sp}(4m+2)}(u_1,\dots,u_{2m+1}).
\end{split}\end {multline} 
\end{theorem}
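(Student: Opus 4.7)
The strategy is to rewrite the symmetric polynomial $S_n$ as a single alternant divided by a Vandermonde, and then analyze the limit $\zeta\to -2$ with the specified rescaling of some of the variables. After extracting the leading-order behavior in $\zeta+2$, the resulting alternant will split in a block-triangular fashion into an odd-orthogonal Weyl numerator in the $t_i$ and a symplectic Weyl numerator in the $u_i$---precisely the Okada-type factorization alluded to in Remark \ref{alr}.

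First I would derive an alternant representation
$$ S_n(x_1,\dots,x_{2n+1}) = C \, \frac{\det_{1\le i,j\le 2n+1}\phi_j(x_i)}{\prod_{1\le i<j\le 2n+1}(x_j-x_i)}, $$
where $\{\phi_j\}_{j=0}^{2n}$ is a basis of $W_n$ with a prescribed degree profile. This follows from Theorem \ref{met} and \eqref{pss}: the anti-symmetry of $\Psi_n$ in all $2n+1$ variables, combined with the uniformization formula \eqref{xid}, turns the theta-``Vandermonde'' $\prod t_j^{-1}\theta(t_jt_i^{\pm};p^2)$ into an ordinary Vandermonde in the $x_i$ times factors that are absorbed into the $\theta(p\omega t_i^{\pm};p^2)^{n}$ terms already present in \eqref{pss}. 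Specializing to $p=0$ and $\zeta=-2$ and choosing $\phi_j$ so that, under $x=-(1+t+t^{-1})$, they correspond to the Weyl-character basis indexed by the staircase partition $(n,n,n-1,n-1,\dots,1,1,0)$, one recovers the formula $S_n|_{\zeta=-2} = 2^{n^2}\chi^{\mathfrak{so}(4n+3)}_{(n,n,\dots,1,1,0)}$ quoted just above Theorem \ref{tst}, thereby fixing the constant $C$.

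The heart of the proof is the analysis of the limit in which $k$ of the variables are rescaled as $x_i = \zeta y_i/(\zeta+2)$. Writing $\epsilon=\zeta+2$, each such variable grows like $-2y_i/\epsilon$, so $\phi_j(\zeta y/(\zeta+2)) = \epsilon^{-\deg\phi_j}\, c_j\, y^{\deg\phi_j} + O(\epsilon^{1-\deg\phi_j})$, where $c_j$ is the leading coefficient of $\phi_j$. The alternant thus acquires a block structure: the ``finite'' rows (indexed by $x_i=-(1+t_i+t_i^{-1})$) contribute a smaller alternant converging to the Weyl numerator of $\chi^{\mathfrak{so}}$ in the $t_i$, while the ``rescaled'' rows---after extracting an overall power of $\epsilon$ equal to the sum of the $\deg\phi_j$ taken on those rows---yield an alternant in the $y_i$ that, via $y_i = 3/(1+u_i+u_i^{-1})$, reorganizes into the Weyl numerator of $\chi^{\mathfrak{sp}}$ in the $u_i$. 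The prefactors $(\zeta+2)^{2mn-m(m-1)}$ and $(\zeta+2)^{(2m+1)n-m^2}$ in the statement are precisely these sums of degrees.

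The main technical obstacle is choosing the basis $\{\phi_j\}$ so that its degree profile matches the expected weights of both characters simultaneously, and verifying that the block splitting is exactly leading-order (no contamination from the $O(\epsilon^{1-\deg\phi_j})$ corrections once the Vandermonde of rescaled variables is divided out). This is essentially the Okada determinant identity \cite{o}: it asserts that a $(2n+1)\times(2n+1)$ alternant built from polynomials with an appropriate interlaced degree sequence factorizes as a product of the two Weyl numerators appearing in \eqref{tsta}--\eqref{tstb}. Given that identification, matching the remaining explicit prefactors (powers of $2$, $3$, and $(1+u_i+u_i^{-1})^{-n}$) is straightforward book-keeping using the Weyl denominator formulas for $\mathfrak{so}(2N+1)$ and $\mathfrak{sp}(2N)$ together with the substitution $y = 3/(1+u+u^{-1})$.
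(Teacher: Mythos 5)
Your route is genuinely different from the paper's. You propose to write $S_n$ as a single $(2n+1)\times(2n+1)$ alternant over a Vandermonde and to extract the limit row by row; the paper instead works directly with the $n\times n$ determinant \eqref{sn} whose entries are $F(x_i,y_j,z)/G(x_i,y_j)$, distributes the rescaled variables between the two argument groups, and uses the explicit limits of Lemma \ref{ell} to show that the mixed (one finite, one rescaled) blocks vanish as $\zeta\to-2$. The determinant then factors into a purely finite and a purely rescaled block, reducing everything to the extreme cases $m=0$ and $m=n$, which are evaluated by Okada's identities \eqref{oia} (converting the $n\times n$ determinant into a single $W^{2n}$ or $W^{2n+1}$) and then Lemma \ref{cwl}. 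Measured against that, your proposal has several genuine gaps rather than mere unfinished bookkeeping.

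First, the basis $\{\phi_j\}$ of $W_n$ with a ``prescribed degree profile'' is never produced: the paper only determines $W_1$, and for general $n$ you would need to establish the degree filtration of $W_n$, that the degrees are distinct, and that the span of the low-degree columns restricts to exactly the space whose alternant gives the $\mathfrak{so}$ numerator in the finite variables. Second, and more seriously, both the basis and your normalizing constant $C$ depend on $\zeta$ and degenerate precisely at $\zeta=-2$ (already $f(x)=(\zeta+2)x-\zeta$ from \eqref{fgh} drops degree there), so you cannot fix $C$ by setting $\zeta=-2$ first and then run the rescaling: the blow-up of $\zeta y_i/(\zeta+2)$ and the degeneration of the basis occur at the same rate and must be handled jointly. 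Your leading-order estimate $\phi_j(\zeta y/(\zeta+2))=\epsilon^{-\deg\phi_j}c_jy^{\deg\phi_j}+O(\epsilon^{1-\deg\phi_j})$ tacitly assumes the leading coefficients $c_j(\zeta)$ stay away from zero at $\zeta=-2$, which is exactly what fails. Third, the factorization you attribute to ``Okada's determinant identity'' is not what \eqref{oi2}--\eqref{oi} assert: those identities turn an $n\times n$ determinant of $W^2$'s or $W^3$'s into one large alternant; they do not split a large alternant into a product of an odd-orthogonal and a symplectic Weyl numerator. Finally, your rescaled block is at leading order a monomial alternant $\det(y_i^{d_j})$ in $y_i=3/(1+u_i+u_i^{-1})$, and the claim that this reorganizes into the symplectic numerator in the $u_i$ is itself a nontrivial identity you would still have to prove; in the paper the symplectic characters arise from alternants in $u_i^{3(j-1)}-u_i^{3(2N-j)+1}$ via Lemma \ref{cwl}, not from powers of $1/(1+u_i+u_i^{-1})$. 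The steps you defer as ``straightforward book-keeping'' are where the content of the theorem actually sits.
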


Note that the correspondence between $x_i$ and $t_i$
  is  the trigonometric limit of the uniformization \eqref{xi}:
$$\lim_{p\rightarrow 0}\xi(t_i)=-(1+t_i+t_i^{-1})=x_i. $$
For the variables  $y_i$ and $u_i$, we have instead
$$\left(\frac{\zeta \xi(u_i)}{\zeta+2}\right)^{-1}\Bigg|_{\zeta\mapsto \zeta^{-1}}=\frac{1+2\zeta}{\xi(u_i)}\rightarrow\frac 3{1+u_i+u_i^{-1}}=y_i, \qquad p\rightarrow 0,  $$
which is a natural limit  in view of Lemma \ref{il}.

We will use some  determinant identities due to  Okada \cite{o}. Let
$$W^n(x_1,\dots,x_n;a_1,\dots,a_n)=\det_{1\leq i,j\leq n}(x_i^{j-1}+a_ix_i^{n-j}).$$
(In \cite{o}, $W^n$ denotes the matrix rather than its determinant.)
Then, 
\begin{subequations}\label{oia}
\begin{multline}\label{oi2}\det_{1\leq i,j\leq n}\left(\frac{ W^2(x_i,y_j;a_i,b_j)}{(1-x_iy_j)(y_j-x_i)}\right)
=\frac{1}{\prod_{i,j=1}^n(1-x_iy_j)(y_j-x_i)}\\
\times W^{2n}(x_1,\dots,x_n,y_1,\dots,y_n;a_1,\dots,a_n,b_1,\dots,b_n), \end{multline}
\begin{multline}\label{oi}\det_{1\leq i,j\leq n}\left(\frac{W^3(x_i,y_j,z;a_i,b_j,c)}{(1-x_iy_j)(y_j-x_i)}\right)
=\frac{(1+c)^{n-1}}{\prod_{i,j=1}^n(1-x_iy_j)(y_j-x_i)}\\
\times W^{2n+1}(x_1,\dots,x_n,y_1,\dots,y_n,z;a_1,\dots,a_n,b_1,\dots,b_n,c). \end{multline}
\end{subequations}
Okada used these identities to enumerate certain symmetry classes of
 alternating sign matrices. 

The characters appearing in Theorem \ref{tst} can be obtained as special cases of Okada's determinants.

\begin{lemma}\label{cwl}
The following identities hold:
$$
\frac{W^{2n+1}(t_1^3,\dots,t_{2n+1}^3;-t_1,\dots,-t_{2n+1})}{\prod_{i=1}^{2n+1}t_i^{n}(1-t_i)\prod_{1\leq i<j\leq 2n+1}(t_j-t_i)(1-t_it_j)
}=
\chi_{(n,n,n-1,n-1,\dots,1,1,0)}^{\mathfrak{so}(4n+3)}(t_1,\dots,t_{2n+1}),
$$
$$
\frac{(-1)^n W^{2n}(t_1^3,\dots,t_{2n}^3;-t_1,\dots,-t_{2n})}{\prod_{i=1}^{2n}t_i^{n-1}(1-t_i^2)\prod_{1\leq i<j\leq 2n}(t_j-t_i)(1-t_it_j)
}=
\chi_{(n-1,n-1,\dots,1,1,0,0)}^{\mathfrak{sp}(4n)}(t_1,\dots,t_{2n}),
$$
$$
\frac{W^{2n+1}(t_1^3,\dots,t_{2n+1}^3;-t_1^2,\dots,-t_{2n+1}^2)}{\prod_{i=1}^{2n+1}t_i^{n}(1-t_i^2)\prod_{1\leq i<j\leq 2n+1}(t_j-t_i)(1-t_it_j)
}=
\chi_{(n,n-1,n-1,\dots,1,1,0,0)}^{\mathfrak{sp}(4n+2)}(t_1,\dots,t_{2n+1}),
$$
$$
\frac{(-1)^n W^{2n}(t_1^3,\dots,t_{2n}^3;-t_1^2,\dots,-t_{2n}^2)}{\prod_{i=1}^{2n}t_i^{n}(1-t_i)\prod_{1\leq i<j\leq 2n}(t_j-t_i)(1-t_it_j)
}=
\chi_{(n,n-1,n-1,\dots,1,1,0)}^{\mathfrak{so}(4n+1)}(t_1,\dots,t_{2n}).
$$
\end{lemma}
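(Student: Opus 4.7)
The four identities have a uniform structure: each asserts that an Okada determinant $W^N$, divided by an explicit product, equals a Weyl character alternant. The strategy is to rewrite the rows of $W^N$ in Weyl-type form, match the resulting exponents with those of the Weyl numerator, and track signs carefully.

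For each identity, I factor $t_i^\alpha$ from row $i$ of $W^N$, where $\alpha$ is half the sum of the two monomial exponents in the $(i,j)$-entry: explicitly $\alpha = (6n+1)/2$, $3n-1$, $3n+1$, $(6n-1)/2$ in the four cases respectively. The entry then takes the Weyl-type form $t_i^\alpha(t_i^{\nu_j}-t_i^{-\nu_j})$ where $\nu_j = 3j + \mathrm{const}$ is arithmetic with common difference $3$. The key combinatorial check is that the multiset $\{|\nu_j|\}_{j=1}^N$ equals the Weyl exponent multiset $\{\mu_i\}_{i=1}^N = \{\lambda_i + N - i + \rho\}$ of the target character (with $\rho = \tfrac12$ in the orthogonal cases and $\rho = 1$ in the symplectic cases). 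In every identity this common set consists precisely of the positive (half-)integers up to a certain bound whose doubles are coprime to $3$; a direct listing of $\mu_i$ from the specified partition confirms the match. Dividing by the Weyl denominator and combining the factored row-power $t_i^\alpha$ with the character denominator's $t_i$-factor produces the explicit $t_i^n$ or $t_i^{n-1}$ that appears on the left-hand side.

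The only delicate point is sign bookkeeping. Flipping the columns where $\nu_j > 0$ (to convert $t^\nu - t^{-\nu}$ into the Weyl form $t^{-|\nu|}-t^{|\nu|}$) contributes a sign $(-1)^{\#\{j:\,\nu_j > 0\}}$, while the column permutation $\sigma$ that reorders $|\nu_j|$ into the decreasing $\mu_1 > \mu_2 > \cdots$ contributes $\mathrm{sgn}(\sigma)$. Both are computable from the explicit formula $\nu_j = 3j + \mathrm{const}$: in each identity one finds $\#\{j:\,\nu_j > 0\} = n$, while $\sigma$ decomposes into a predictable pattern of cycles (with the detailed structure depending on $n \bmod 2$). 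A verification for small $n$ followed by induction shows that the combined sign is $+1$ in identities 1 and 3, and $(-1)^n$ in identities 2 and 4, exactly cancelling the explicit $(-1)^n$ prefactors present in those two cases. I anticipate no deeper obstacle: the proof is essentially a careful algebraic matching of two determinantal expressions, with no new ideas required beyond row/column operations and exponent comparison.
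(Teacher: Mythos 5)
Your proposal follows essentially the same route as the paper's own proof: factor $t_i^{\alpha}$ out of each row (with the same values $\alpha=(6n+1)/2$, $3n-1$, $3n+1$, $(6n-1)/2$), recognize a Weyl-type alternant with exponents in arithmetic progression of common difference $3$, match the exponent multiset against $\{\lambda_i+N-i+\rho\}$, and account for the sign via the column negations together with the sorting permutation. The paper carries this out explicitly only for the first identity and is no more detailed than you are on the sign bookkeeping, so your plan is a faithful match.
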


\begin{proof}
This is straight-forward, and we only provide some details for the first identity. By definition,
\begin{equation*}\begin{split}W^{2n+1}(t_1^3,\dots,t_{2n+1}^3;-t_1,\dots,-t_{2n+1})&=\det_{1\leq i,j\leq 2n+1}\left(t_i^{3(j-1)}-t_i^{3(2n+1-j)+1}\right)\\
&=\prod_{i=1}^{2n+1}t_i^{\frac{6n+1}2}\det_{1\leq i,j\leq 2n+1}\left(t_i^{\mu_j}-t_i^{-\mu_j}\right), \end{split}\end{equation*}
where $\mu_j=3n-3j+7/2$.
In order to make all $\mu_j$ positive, we multiply the last $n$ rows by $-1$. To sort the $\mu_j$ in descending order, each of those rows must be moved an odd number of positions upwards. Taken together, these two operations do not change the determinant, and we arrive at the desired result.
\end{proof}

We will need the following elementary identities.

\begin{lemma}\label{ell}
 In the notation above, the following identities
hold:
\begin{align*}x_2-x_1&=\frac{(1-t_1t_2)(t_2-t_1)}{t_1t_2}, \\
y_2-y_1&=\frac{3(1-u_1)(1-u_2)(1-u_1u_2)(u_2-u_1)}{(1-u_1^3)(1-u_2^3)}.\\
\lim_{\zeta\rightarrow -2}G(x_1,x_2)&=\frac{2(t_2^3-t_1^3)(1-t_1^3t_2^3)}{t_1^2t_2^2(t_2-t_1)(1-t_1t_2)}, \\
\lim_{\zeta\rightarrow -2}(\zeta+2)^2G\left(x_1,\frac{\zeta y_1}{\zeta+2}\right)&=\frac{72\,u_1^2(1-u_1)^2}{(1-u_1^3)^2}, \\
\lim_{\zeta\rightarrow -2}(\zeta+2)^2G\left(\frac{\zeta y_1}{\zeta+2},\frac{\zeta y_2}{\zeta+2}\right)&=\frac{72(1-u_1)^2(1-u_2)^2(u_2^3-u_1^3)(1-u_1^3u_2^3)}{(1-u_1^3)^2(1-u_2^3)^2(u_2-u_1)(1-u_1u_2)}, \\
\lim_{\zeta\rightarrow -2}F(x_1,x_2,x_3)&=\frac{2\, W^3(t_1^3,t_2^3,t_3^3;-t_1,-t_2,-t_3)}{\prod_{i=1}^3t_i(1-t_i)\prod_{1\leq i<j\leq 3}(t_j-t_i)(1-t_it_j)}, \\
\lim_{\zeta\rightarrow -2}(\zeta+2)F\left(x_1,x_2,\frac{\zeta y_1}{\zeta+2}\right)&=-\frac{12\,u_1(1-u_1)\, W^2(t_1^3,t_2^3;-t_1^2,-t_2^2)}{(1-u_1^3)t_1(1-t_1)t_2(1-t_2)(t_2-t_1)(1-t_1t_2)}, \end{align*}
\begin{multline*}\lim_{\zeta\rightarrow -2}(\zeta+2)^2F\left(x_1,\frac{\zeta y_1}{\zeta+2},\frac{\zeta y_2}{\zeta+2}\right)\\
=-\frac{72\,u_1u_2  W^2(u_1^3,u_2^3;-u_1,-u_2)}{(1+u_1)(1-u_1^3)(1+u_2)(1-u_2^3)(u_2-u_1)(1-u_1u_2)}, \end{multline*}
\begin{multline*}\lim_{\zeta\rightarrow -2}(\zeta+2)^2F\left(\frac{\zeta y_1}{\zeta+2},\frac{\zeta y_2}{\zeta+2},\frac{\zeta y_3}{\zeta+2}\right)\\
=\frac{72\, W^3(u_1^3,u_2^3,u_3^3;-u_1^2,-u_2^2,-u_3^2)}{\prod_{i=1}^3(1+u_i)(1-u_i^3)\prod_{1\leq i<j\leq 3}(u_j-u_i)(1-u_iu_j)}. \end{multline*}
\end{lemma}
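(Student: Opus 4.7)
The proof of Lemma \ref{ell} is a sequence of direct computations using the explicit polynomial forms of $F$ and $G$ from \eqref{p}--\eqref{q}. Here is the plan.

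The two differences at the top are immediate. Writing $x_i = -(1+t_i+t_i^{-1})$ gives $x_2 - x_1 = (t_1-t_2) + (t_1^{-1} - t_2^{-1})$, which clears denominators to $(1-t_1t_2)(t_2-t_1)/(t_1t_2)$; similarly for the $y_i$, after noting that $1+u+u^{-1} = (1-u^3)/(u(1-u))$ and hence $y_i = 3 u_i(1-u_i)/(1-u_i^3)$. The first $G$-identity, $\lim_{\zeta\to -2} G(x_1,x_2)$, is then a direct substitution into \eqref{q} with $\zeta = -2$, giving the polynomial $2(x^2+xy+y^2)+6(x+y)$ in $x,y$; expressing this in the variables $t_1,t_2$ and factoring yields the stated right-hand side. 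All formulas for the plain $F$ and $G$ at $\zeta=-2$ follow in the same spirit.

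For the limits involving $\zeta y_i/(\zeta+2)$ one must track the pole at $\zeta=-2$. Set $\epsilon = \zeta+2$, so a substituted argument $\zeta y_i/(\zeta+2) = -2y_i/\epsilon + y_i$ diverges like $\epsilon^{-1}$. Since $G(x,y)$ has total degree $3$ in $(x,y)$ with leading term $(\zeta+2)xy(x+y)$ whose coefficient also vanishes at $\zeta=-2$, each substituted argument inflates the order by $\epsilon^{-1}$ and the overall power of $\epsilon$ in front of the limit is exactly the number of substituted arguments (for $G$) or that number minus $1$ (for $F$, because $F$ has degree only $1$ in each variable and its leading coefficient $\zeta+2$ absorbs one power of $\epsilon$). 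Expanding $F$ and $G$ in descending powers of the substituted arguments and dividing by the appropriate power of $\epsilon$ produces a finite limit, which I then simplify using $y_i = 3 u_i(1-u_i)/(1-u_i^3)$.

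The main obstacle is not the algebra of the limits themselves but the identification of the resulting rational expressions with Okada's determinants $W^2$ and $W^3$. After simplification, each $F$-limit involves a symmetric expression in the relevant $t_j$'s or $u_j$'s that I would recognize as a $2\times 2$ or $3\times 3$ determinant of the form $\det(t_i^{j-1} + a_i t_i^{n-j})$ with either $a_i = -t_i$ or $a_i = -t_i^2$ (similarly for $u_i$). The cleanest way is to verify these identifications directly by expanding $W^2$ and $W^3$ via their definitions and comparing with the simplified limits; in the two-variable cases this is immediate, while in the three-variable cases it reduces to a short symmetric-function identity. Once this dictionary is set up, the listed formulas follow, and the factorizations in the denominators match by elementary manipulation of the factors $(t_j-t_i)$, $(1-t_it_j)$, $(1\pm t_i)$ and their $u$-counterparts that are already visible in the formulas for $x_j-x_i$, $y_j-y_i$, and the plain $G$-limit.
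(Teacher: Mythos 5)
The paper itself offers no proof of this lemma: it is stated as a list of ``elementary identities'' and used immediately in the proof of Theorem \ref{tst}, so your direct-computation plan is essentially the intended argument, and most of it (the two difference formulas, the substitution $y_i=3u_i(1-u_i)/(1-u_i^3)$, the evaluation $G|_{\zeta=-2}=2(x^2+xy+y^2)+6(x+y)$, and the identification of the resulting symmetric expressions with expansions of $W^2$ and $W^3$) is correct and would go through.

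There is, however, a concrete error in your bookkeeping of the pole at $\zeta=-2$, and as stated that step would fail. Writing $\epsilon=\zeta+2$ and $w=\zeta y/\epsilon$, the order of the pole is \emph{not} governed by the top-degree terms with coefficient $\zeta+2$; it is governed by the terms with coefficient $-\zeta$, which does not vanish at $\zeta=-2$. For $G$ the term $-\zeta(x^2+y^2)$ already contributes $-\zeta w^2=O(\epsilon^{-2})$ as soon as a \emph{single} argument is substituted, so the pole has order $2$ in both $G$-cases (consistent with the prefactor $(\zeta+2)^2$ in the statement), not order equal to the number of substituted arguments as you claim; with your rule you would multiply $G\bigl(x_1,\zeta y_1/(\zeta+2)\bigr)$ by $\epsilon^1$ and obtain a divergent expression rather than $8y_1^2=72u_1^2(1-u_1)^2/(1-u_1^3)^2$. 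Likewise for $F$ the bilinear terms $-\zeta(xy+yz+zx)$ give a pole of order $\min(k,2)$ for $k$ substituted arguments (order $1$ for one, order $2$ for two or three), whereas your rule ``$k-1$'' gives $0$, $1$, $2$ and is wrong in the first two cases. The correct prefactors are of course the ones printed in the lemma, so a careful expansion in powers of $\epsilon$ still produces the stated limits; but the justification you give for which power of $\zeta+2$ to insert must be replaced by the count above, keeping track of which monomials of $F$ and $G$ survive at $\zeta=-2$.
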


\begin{proof}[Proof of \emph{Theorem \ref{tst}}]
To prove \eqref{tsta}, we relabel the variables, considering instead
\begin{multline}\label{rpl}\lim_{\zeta\rightarrow -2}(\zeta+2)^{2mn-m(m-1)}\\
\times S_n\left(\frac{\zeta x_1}{\zeta+2},\dots,\frac{\zeta x_m}{\zeta+2},x_{m+1},\dots,x_{n},\frac{\zeta y_1}{\zeta+2},\dots,\frac{\zeta y_{m}}{\zeta+2},y_{m+1},\dots,y_{n},z\right).\end{multline}
Applying \eqref{sn} leads to
 the  block determinant
$$\det\left(\begin{matrix}
\displaystyle\frac{F\left(\frac{\zeta x_i}{\zeta+2},\frac{\zeta y_j}{\zeta+2},z\right)}
{G\left(\frac{\zeta x_i}{\zeta+2},\frac{\zeta y_j}{\zeta+2}\right)}
&\displaystyle \frac{F\left(\frac{\zeta x_i}{\zeta+2},y_j,z\right)}
{G\left(\frac{\zeta x_i}{\zeta+2},y_j\right)}\\
\displaystyle\frac{F\left(x_i,\frac{\zeta y_j}{\zeta+2},z\right)}
{G\left(x_i,\frac{\zeta y_j}{\zeta+2}\right)}
&\displaystyle \frac{F\left(x_i,y_j,z\right)}
{G\left(x_i,y_j\right)}
\end{matrix}\right). $$
By Lemma \ref{ell}, the off-diagonal  blocks vanish in the limit $\zeta\rightarrow -2$, so the determinant splits as the product of the diagonal blocks. It follows that
\eqref{rpl} can be written
\begin{multline*}\lim_{\zeta\rightarrow -2}\prod_{\substack{1\leq i\leq m,\\ m+1\leq j\leq n}}
\frac{(\zeta+2)^4G\left(x_i,\frac{\zeta y_j}{\zeta+2}\right)G\left(\frac{\zeta x_i}{\zeta+2},y_j\right)}{(\zeta+2)^2\left(x_j-\frac{\zeta x_i}{\zeta+2}\right)\left(y_j-\frac{\zeta y_i}{\zeta+2}\right)}\\ 
\begin{split}&\times(\zeta+2)^{m(m+1)}S_m\left(\frac{\zeta x_1}{\zeta+2},\dots,\frac{\zeta x_m}{\zeta+2},\frac{\zeta y_1}{\zeta+2},\dots,\frac{\zeta y_{m}}{\zeta+2},z\right)\\
&\times S_{n-m}(x_{m+1},\dots,x_{n},y_{m+1},\dots,y_{n},z).
\end{split}\end{multline*}
Computing the prefactor using Lemma  \ref{ell}, we are reduced to proving the two cases $m=0$ and $m=n$ of \eqref{tsta}, that is,
$$\lim_{\zeta\rightarrow -2}S_n(x_1,\dots,x_{2n+1})=2^{n^2}\chi_{(n,n,n-1,n-1,\dots,1,1,0)}^{\mathfrak{so}(4n+3)}(t_1,\dots,t_{2n+1}),$$ 
\begin{multline*}\lim_{\zeta\rightarrow -2}(\zeta+2)^{n(n+1)}S_n\left(\frac{\zeta y_1}{\zeta+2},\dots,\frac{\zeta y_{2n}}{\zeta+2},z\right)\\
=2^{n(2n+1)}3^{n(n+1)}\prod_{i=1}^{2n}\frac{1}{(1+u_i+u_i^{-1})^n}\,\chi_{(n-1,n-1,\dots,1,1,0,0)}^{\mathfrak{sp}(4n)}(u_1,\dots,u_{2n}).\end{multline*}
This is now  straight-forward, using Lemma \ref{ell}, Okada's identities \eqref{oia} and finally Lemma \ref{cwl}.

The proof of  \eqref{tstb} is similar. We consider  the limit
\begin{multline*}\lim_{\zeta\rightarrow -2}(\zeta+2)^{(2m+1)n-m^2}\\
\times S_n\left(\frac{\zeta x_1}{\zeta+2},\dots,\frac{\zeta x_m}{\zeta+2},x_{m+1},\dots,x_{n},\frac{\zeta y_1}{\zeta+2},\dots,\frac{\zeta y_{m}}{\zeta+2},y_{m+1},\dots,y_{n},\frac{\zeta z}{\zeta+2}\right).\end{multline*}
Incorporating a factor $(\zeta+2)^m$ into the determinant, we  consider
$$\lim_{\zeta\rightarrow -2}\det\left(\begin{matrix}
\displaystyle\frac{(\zeta+2)F\left(\frac{\zeta x_i}{\zeta+2},\frac{\zeta y_j}{\zeta+2},\frac{\zeta z}{\zeta+2}\right)}
{G\left(\frac{\zeta x_i}{\zeta+2},\frac{\zeta y_j}{\zeta+2}\right)}
&\displaystyle \frac{(\zeta+2)F\left(\frac{\zeta x_i}{\zeta+2},y_j,\frac{\zeta z}{\zeta+2}\right)}
{G\left(\frac{\zeta x_i}{\zeta+2},y_j\right)}\\
\displaystyle\frac{F\left(x_i,\frac{\zeta y_j}{\zeta+2},\frac{\zeta z}{\zeta+2}\right)}
{G\left(x_i,\frac{\zeta y_j}{\zeta+2}\right)}
&\displaystyle \frac{F\left(x_i,y_j,\frac{\zeta z}{\zeta+2}\right)}
{G\left(x_i,y_j\right)}
\end{matrix}\right). $$
In the limit,  the upper right block  vanishes. Similarly as before, this reduces the proof of \eqref{tstb} to a straight-forward verification of 
the special cases
\begin{multline*}\lim_{\zeta\rightarrow -2}(\zeta+2)^nS_n\left(x_1,\dots,x_{2n},\frac{\zeta y_1}{\zeta+2}\right)\\
=\frac{2^{n(n+1)}3^n}{(1+u_1+u_1^{-1})^n}\chi_{(n,n-1,n-1,\dots,1,1,0)}^{\mathfrak{so}(4n+1)}(t_1,\dots,t_{2n}),\end{multline*} 
\begin{multline*}\lim_{\zeta\rightarrow -2}(\zeta+2)^{n(n+1)}S_n\left(\frac{\zeta y_1}{\zeta+2},\dots,\frac{\zeta y_{2n+1}}{\zeta+2}\right)\\
=2^{n(2n+1)}3^{n(n+1)}\prod_{i=1}^{2n+1}\frac{1}{(1+u_i+u_i^{-1})^n}\,\chi_{(n,n-1,n-1,\dots,1,1,0,0)}^{\mathfrak{sp}(4n+2)}(u_1,\dots,u_{2n+1}).\end{multline*}
\end{proof}

As a consequence of Theorem \ref{tst}, we have the following identities.

\begin{corollary}\label{acc}
One has
\begin{multline*}\lim_{\zeta\rightarrow -2}\left(1+\frac\zeta 2\right)^{n^2-\big[\frac{(n-1)^2}4\big]}
S_n\Big(\underbrace{2\zeta+1,\dots,2\zeta+1}_{n+1},\underbrace{\frac\zeta{\zeta+2},\dots,\frac\zeta{\zeta+2}}_{n}\Big)\\
=\begin{cases}
2^{n^2}3^{\frac{n^2}4}A_{n+1}, & n \text{ \emph{even}},\\
2^{n^2}3^{\frac{n^2-1}4}C_{n+1}, & n \text{ \emph{odd}},
\end{cases}
 \end{multline*}
\begin{multline*}\lim_{\zeta\rightarrow -2}\left(1+\frac\zeta 2\right)^{n(n+1)-\big[\frac{(n-1)^2}4\big]}
S_n\Big(\underbrace{2\zeta+1,\dots,2\zeta+1}_{n},\underbrace{\frac\zeta{\zeta+2},\dots,\frac\zeta{\zeta+2}}_{n+1}\Big)\\
=\begin{cases}
2^{n^2-1}3^{\frac{n(n-2)}4}C_{n+1}, & n \text{ \emph{even}},\\
2^{n^2-1}3^{\frac{(n-1)^2}4}A_{n+1}, & n \text{ \emph{odd}},
\end{cases}
\end{multline*}
where $A_n$ and $C_n$ are as in \eqref{an} and \eqref{cn}.
\end{corollary}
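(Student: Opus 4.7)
My plan is to deduce Corollary \ref{acc} directly from Theorem \ref{tst} by specializing all auxiliary parameters $t_i$ and $u_i$ to $1$. Under the correspondences
$x_i=-(1+t_i+t_i^{-1})$ and $y_i=3/(1+u_i+u_i^{-1})$ set up in Theorem \ref{tst}, the choice $t_i=1$ gives $x_i=-3=\lim_{\zeta\to -2}(2\zeta+1)$, and $u_i=1$ gives $y_i=1$ and hence $\zeta y_i/(\zeta+2)=\zeta/(\zeta+2)$. Consequently the arguments of $S_n$ appearing on the left of each identity in Corollary \ref{acc} coincide exactly with the specialised arguments on the left of Theorem \ref{tst}. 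It remains only to match multiplicities: for the first identity, one has $n+1$ copies of $x_i$ and $n$ copies of the $y$-variable, so one invokes \eqref{tsta} with $2m=n$ when $n$ is even and \eqref{tstb} with $2m+1=n$ when $n$ is odd; for the second identity, one has $n$ and $n+1$ copies respectively, which correspondingly calls for \eqref{tstb} with $2m+1=n+1$ for $n$ even and \eqref{tsta} with $2m=n+1$ for $n$ odd.

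The next step is routine exponent-matching. Writing $(1+\zeta/2)=(\zeta+2)/2$, the power of $(1+\zeta/2)$ on the left side of Corollary \ref{acc} translates into a power of $(\zeta+2)$ plus a power of $1/2$; substituting, e.g., $2m=n$ into $2mn-m(m-1)$ one checks that the exponent of $(\zeta+2)$ agrees with $n^2-\lfloor(n-1)^2/4\rfloor$, so the limit is finite. Simultaneously, the factor $\prod(1+u_i+u_i^{-1})^{-n}$ reduces to $3^{-n\cdot(\#\,u_i)}$, which combines with the explicit powers of $2$ and $3$ in Theorem \ref{tst} to produce the $2^{n^2}$ or $2^{n^2-1}$ asserted in the Corollary, together with a residual power of $3$. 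In each of the four parity cases one obtains an identity of the form
\begin{equation*}
\dim\chi^{\mathfrak{so}}\cdot\dim\chi^{\mathfrak{sp}}=3^{\kappa(n)}\cdot(A_{n+1}\text{ or }C_{n+1}),
\end{equation*}
with $\kappa(n)$ an explicit quadratic in $n$ and with the two characters evaluated at the identity (so becoming dimensions of the underlying irreducible representations).

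The main obstacle is this final step: the identification of these specific orthogonal and symplectic character dimensions with (powers of $3$ times) $A_{n+1}$ and $C_{n+1}$. I would handle it by invoking the Weyl dimension formula
$\dim V_\lambda=\prod_{\alpha>0}\langle\lambda+\rho,\alpha\rangle/\langle\rho,\alpha\rangle$
for the partitions $(n-m,n-m,\dots,1,1,0)$ and $(m-1,m-1,\dots,0,0)$ (or the analogous ones in the odd case) and then collapsing the resulting ratios using the product presentations $A_n=\prod_{k=0}^{n-1}(3k+1)!/(n+k)!$ and $C_n=A_n\cdot\prod_{k=1}^{n}(3k-1)/(3k-2)$. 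Since these are precisely the partitions (and the representations) whose dimensions Okada computes in \cite{o} via his identities \eqref{oia} in order to enumerate symmetry classes of alternating sign matrices, the required evaluations can in fact be read off from his paper: his determinant identities, applied to Theorem \ref{met} at $t_i=u_i=1$, yield the desired closed forms without further combinatorial input. A sanity check at $n=2$ (the adjoint representation of $\mathfrak{so}(7)$ has dimension $21=3\cdot A_3$, while $\mathfrak{sp}(4)$ acts by the trivial representation) confirms the pattern. The remaining work is careful bookkeeping of the exponents case by case, which is tedious but wholly mechanical.
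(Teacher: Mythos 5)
Your proposal is correct and follows essentially the same route as the paper: the paper likewise specializes Theorem \ref{tst} at $t_i=u_i=1$, evaluates the four characters via the Weyl dimension formulas for $\mathfrak{sp}(2n)$ and $\mathfrak{so}(2n+1)$, and matches the resulting products with $A_{n+1}$ and $C_{n+1}$. Your parity/multiplicity bookkeeping and the $n=2$ sanity check are consistent with what the paper does.
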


To verify these identities, we note that the
 limits correspond to special cases of Theorem \ref{tst} where $t_i=u_i=1$ for all $i$. Recall that, if $\mu_i=\lambda_i+n-i$, then \cite{fh}
$$\chi_\lambda^{\mathfrak{sp}(2n)}(1,\dots,1)=\frac{\prod_{1\leq i<j\leq n}(\mu_i-\mu_j)\prod_{1\leq i\leq j\leq n}(\mu_i+\mu_j+2)}{2^n 1! 3!\dotsm (2n-1)!}, $$
$$\chi_\lambda^{\mathfrak{so}(2n+1)}(1,\dots,1)=\frac{\prod_{1\leq i<j\leq n}(\mu_i-\mu_j)\prod_{1\leq i\leq j\leq n}(\mu_i+\mu_j+1)}{1! 3!\dotsm (2n-1)!}.$$
The relevant special cases can be simplified as
\begin{align*}\chi_{(n,n,n-1,n-1,\dots,1,1,0)}^{\mathfrak{so}(4n+3)}(1,\dots,1)&=3^{n^2}
\prod_{k=0}^{n}\frac{(6k+1)!}{(2n+2k+1)!}, \\
\chi_{(n-1,n-1,\dots,1,1,0,0)}^{\mathfrak{sp}(4n)}(1,\dots,1)&=3^{n(n-1)}
\prod_{k=0}^{n-1}\frac{(6k+4)!}{(2n+2k+2)!}, \\
\chi_{(n,n-1,n-1,\dots,1,1,0,0)}^{\mathfrak{sp}(4n+2)}(1,\dots,1)&=\frac{3^{n^2}}{2^{2n+1}}
\prod_{k=0}^{n}\frac{(6k+2)(6k)!}{(2n+2k+1)!}, \\
\chi_{(n,n-1,n-1,\dots,1,1,0)}^{\mathfrak{so}(4n+1)}(1,\dots,1)&={2^{2n}3^{n(n-1)}}\prod_{k=0}^{n-1}\frac{(6k+5)(6k+3)!}{(2n+2k+2)!}. \end{align*}
Using these identities, it is
 straight-forward to derive Corollary \ref{acc} from Theorem~\ref{tst}.

\section{The polynomials $P_n$ and $p_n$}
\label{pps}

In this Section, we show how specialization of the variables in $S_n$
lead to the  polynomials $P_n$ and $p_n$, and deduce a number of properties of the latter two systems.

\subsection{Elementary factors}
Our first task is to identify some elementary factors that appear 
when specializing  the polynomials $S_n$.
We will use the notation
$$\delta_n=\left[\frac {n^2}4\right];$$
cf.\ \eqref{de}. For later reference, we mention the identities
\begin{equation}\label{lac}\delta_{n+1}+\delta_{n-1}-2\delta_n=\chi(n\text{ odd}),\end{equation}
\begin{equation}\label{eli}\left[\frac{n(n+2)}2\right]=
n^2-2\delta_{n-1}=2\delta_n+n. \end{equation}

\begin{proposition}\label{efp}
There exist polynomials $P_n(x,\zeta)$, $p_n(\zeta)$ and $y_n(\zeta)$ such that
\begin{subequations}\label{ef}
 \begin{multline}\label{cpe}\left(1+\frac\zeta2\right)^{n^2}S_n\Big(x,\underbrace{2\zeta+1,\dots,2\zeta+1}_{n},\underbrace{\frac\zeta
{\zeta+2},\dots,\frac\zeta{\zeta+2}}_{n}\Big)\\
=(-1)^{\binom{n+1}2}\zeta^{n^2}(1+\zeta)^{n^2}(1+2\zeta)^{\delta_{n-1}}\left(1+\frac\zeta2\right)^{\delta_{n-1}}P_n(x,\zeta),
\end{multline}
\begin{multline}\label{pe}\left(1+\frac\zeta2\right)^{n^2}S_n\Big(\underbrace{2\zeta+1,\dots,2\zeta+1}_{n+1},\underbrace{\frac\zeta{\zeta+2},\dots,\frac\zeta{\zeta+2}}_{n}\Big)\\
=(-1)^{\binom{n+1}2}\zeta^{n^2}(1+\zeta)^{n^2}(1+2\zeta)^{\delta_n}\left(1+\frac\zeta 2\right)^{\delta_ {n-1}}p_n(\zeta), \end{multline}
and, for $n\geq 1$,
\begin{multline}\label{ye}\left(1+\frac\zeta2\right)^{n(n-1)}S_n\Big(\underbrace{2\zeta+1,\dots,2\zeta+1}_{n+2},\underbrace{\frac\zeta{\zeta+2},\dots,\frac\zeta{\zeta+2}}_{n-1}\Big)\\
=(-1)^{\binom{n-1}2}2^{\left[\frac{n+5}2\right]}\zeta^{n^2-1}(1+\zeta)^{n^2}(1+2\zeta)^{\delta_{n+1}}\left(1+\frac\zeta 2\right)^{\delta_ {n-2}}y_{n}(\zeta). \end{multline}
\end{subequations}
\end{proposition}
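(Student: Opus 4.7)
The strategy is to combine direct evaluation of $S_n$ at $\zeta = 0$ and $\zeta = -1$ (where $F$ and $G$ simplify and the determinant in \eqref{sn} collapses by Cauchy) with the inversion symmetry of Lemma~\ref{il} to handle the remaining points $\zeta = -1/2$ and $\zeta = -2$, thereby producing the four required factors. Once divisibility is established, signs and constants are fixed by leading-coefficient bookkeeping.

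At $\zeta = 0$ one has $F = 2xyz$ and $G = 2xy(x+y-1)$, so $F/G = z/(x+y-1)$ is of Cauchy type and the determinant collapses to $S_n|_{\zeta=0} = 2^{n^2} z^n \prod_i x_i^n \prod_j y_j^n$. Similarly, at $\zeta = -1$ one has $F = (x+1)(y+1)(z+1)$ and $G = (x+y)(x+1)(y+1)$, whence $S_n|_{\zeta = -1} = (z+1)^n \prod_i (x_i+1)^n \prod_j (y_j+1)^n$. After substituting $x_i = 2\zeta+1$ and $y_j = \zeta/(\zeta+2)$ and multiplying by $(1+\zeta/2)^{n^2}$ to clear denominators, the first formula contributes $\zeta^{n^2}$ (from $\prod y_j^n$), while the second contributes $(1+\zeta)^{2n^2}$, both at least matching the factors claimed in \eqref{cpe}. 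To upgrade these leading-order statements to divisibility of the full Taylor series of $S_n$ around $\zeta = 0$ (resp.\ $\zeta = -1$), I would assign weight one to $\zeta$ and to each $y_j$ (resp.\ to $1+\zeta$ and each $1+y_j$) and show from the determinantal formula \eqref{sn} that every monomial of $S_n$ has total weight at least $n^2$, so no Taylor correction can lower the order.

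The values $\zeta = -1/2$ and $\zeta = -2$ correspond to $a = 0$ and $b = \infty$ respectively, and they are swapped by $\zeta \mapsto \zeta^{-1}$ via Lemma~\ref{il} (since then $a \leftrightarrow 1/b$ and $b \leftrightarrow 1/a$); the two factorizations are therefore equivalent. At $\zeta = -1/2$, the quotient $F(0, y, z)/G(0, y)$ is a linear combination of $1/y$ and $1/y^2$, so the $n$ rows of the matrix $\{F(x_i, y_j, z)/G(x_i, y_j)\}$ corresponding to $x_i = 0$ span at most a two-dimensional space, forcing vanishing of large minors; the resulting rank drop produces a factor $(1+2\zeta)^{\delta_{n-1}}$.

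The main obstacle will be pinning down the exponent $\delta_{n-1} = [(n-1)^2/4]$ \emph{exactly} (and not just as a lower bound) at $\zeta = -1/2$ and $\zeta = -2$, with its parity-dependent floor, since the clean Cauchy collapse of the second paragraph is unavailable there. My plan is to propagate the three factorizations \eqref{cpe}--\eqref{ye} simultaneously by induction on $n$, using the three-term recursion of Corollary~\ref{syr} (which couples $P_n$, $p_n$, and its $\zeta \mapsto \zeta^{-1}$ partner) together with Corollary~\ref{pc} (which brings in $y_n$ via the specialization $z = a$): the recursion coefficients $G(a, a), G(a, b), G(b, b)$, and $(a-b)^2$ factor explicitly in $(1+2\zeta)$ and $(1+\zeta/2)$ with increments that match $\delta_n - \delta_{n-1}$ at each step. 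Finally, the sign $(-1)^{\binom{n+1}{2}}$ and the prefactor $2^{[(n+5)/2]}$ in \eqref{ye} are fixed by matching leading coefficients against the explicit base case $S_1 = F$.
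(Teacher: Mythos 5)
Your treatment of the points $\zeta=0$ and $\zeta=-1$ starts from a nice observation (the Cauchy collapse of $F/G$ there), but the device you propose for upgrading it to divisibility rests on a false claim. Assigning weight one to $\zeta$ and to each $y_j$ only is not enough: for generic $x$ one has $G(x,\zeta b)=\zeta x(x-1)(2b-1)+\mathcal O(\zeta^2)$, so each $G(x_i,y_j)$ vanishes only to first order, and already $S_2(x_1,x_2,\zeta b_1,\zeta b_2,z)$ has $\zeta$-adic valuation $3$, not $n^2=4$ (prefactor $\zeta^4$, denominator $\zeta$, determinant of valuation $0$). The missing power comes precisely from the fact that $x_i=2\zeta+1=1+2\zeta$ also degenerates; the paper's proof therefore substitutes $x_i=1+\zeta a_i$ and $y_j=\zeta b_j$ simultaneously, getting $\zeta^2\mid G$, $\zeta\mid F$, and the count $2n^2-n(n-1)-n=n^2$. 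The same omission occurs at $\zeta=-1$, where one needs both $(1+x_i)$ and $(1+y_j)$ small to extract $(\zeta+1)^2$ from each $G$. This step is fixable, but as written it fails; note also that your heuristic evaluation over-predicts the order (it suggests $(1+\zeta)^{2n^2}$, while the true order is $n^2$, as $P_n(x,-1)\neq 0$ shows), which is a sign that the naive evaluation controls the order of vanishing in neither direction.

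The more serious gap is at $\zeta=-1/2$ and $\zeta=-2$. A rank-two (in fact, in your setting rank-one) row space forces vanishing of order roughly $n-2$, linear in $n$, whereas the required exponent $\delta_{n-1}=[(n-1)^2/4]$ is quadratic; moreover all $x_i$ coincide here, so the determinant is confluent and the rank statement about distinct rows does not even apply directly. The paper's mechanism (Lemma \ref{zl}, extracted from the first step of the proof of Theorem \ref{tst} and transported to $\zeta=-1/2$ by the inversion symmetry of Lemma \ref{il}, which you do correctly identify) is a block factorization: when $k$ of the $2n+1$ arguments are scaled by $\zeta/(\zeta+2)$, the off-diagonal blocks of the matrix $(F/G)$ vanish as $\zeta\to-2$, the determinant degenerates into a product of two smaller determinants, and splitting the $k$ scaled variables between rows and columns yields exactly the quadratic exponent $\delta_{k-1}$. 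Your fallback --- propagating the factorizations by induction through Corollary \ref{syr} --- runs into the circularity the paper itself flags after Proposition \ref{prp}: to solve \eqref{ttr} for $P_{n+1}$ one must divide by $P_{n-1}(2\zeta+1,\zeta)P_{n-1}(\zeta/(\zeta+2),\zeta)$, so one needs to know in advance that these do not vanish identically and that the numerator is divisible by them as a polynomial; the paper only obtains this non-vanishing (Corollary \ref{pctc}) after Proposition \ref{efp} has been proved by other means. Finally, the proposition asserts only the existence of the polynomials, i.e.\ lower bounds on the vanishing orders; exactness of the exponents is not needed, and the sign $(-1)^{\binom{n+1}2}$ and the constant $2^{[\frac{n+5}2]}$ are normalizations absorbed into the definitions rather than facts to be verified.
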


Applying  Lemma \ref{il} to  \eqref{cpe} gives, using also 
\eqref{eli},
\begin{equation}\label{pis}P_n(x,\zeta)=x^n\zeta^{\left[\frac{n(n+2)}{2}\right]}P_n(1/x,1/\zeta). \end{equation}
Similarly, applying  Lemma \ref{il} to
\eqref{pe}  gives
\begin{multline*}\left(1+\frac\zeta2\right)^{n(n+1)}S_n\Big(\underbrace{2\zeta+1,\dots,2\zeta+1}_{n},\underbrace{\frac\zeta{\zeta+2},\dots,\frac\zeta{\zeta+2}}_{n+1}\Big)\\
=(-1)^{\binom{n+1}2}2^{-\left[\frac {n+1}2\right]}\zeta^{n(n+1)}(1+\zeta)^{n^2}(1+2\zeta)^{\delta_{n-1}}\left(1+\frac\zeta 2\right)^{\delta_ {n}}\tilde p_n(\zeta), \end{multline*}
where
$$\tilde p_n(\zeta)=\zeta^{n(n+1)/2}p_n(1/\zeta). $$
We will see in Proposition \ref{psv} that $\deg p_n=n(n+1)/2$, so the notation agrees with \eqref{tn}.
Moreover, we clearly have
\begin{subequations}\label{ppc}
\begin{equation}\label{ppca}
P_n(2\zeta+1,\zeta)=(1+2\zeta)^{\left[\frac n2\right]}p_n(\zeta),\end{equation}
\begin{equation}\label{ppcb}P_n\left(\frac\zeta{\zeta+2},\zeta\right)=\zeta^n(\zeta+2)^{-\left[\frac {n+1}2\right]}\tilde p_n(\zeta). \end{equation}
\end{subequations}

To prove Proposition \ref{efp},
we first observe  that $S_n(x_1,\dots,x_{2n+1})$ 
is a polynomial of degree at most $n$ in each $x_i$. This is obvious for the variable $z$ in \eqref{sn}, and thus holds in general by symmetry. It follows 
that the  left-hand sides in \eqref{ef} are  polynomials  in $x$ and $\zeta$. 
We need to show that they vanish of appropriate degree at the points $\zeta=0,\,-1,\,-1/2$ and $-2$.

We first consider the function
\begin{equation}\label{sns}S_n(1+\zeta x_1,\dots,1+\zeta x_n,\zeta y_1,\dots,\zeta y_n,z). \end{equation}
Expressing it as in \eqref{sn}, we observe that
since  $\zeta^2\mid G(1+\zeta x,\zeta y)$, the prefactor $\prod_{i,j=1}^nG(1+\zeta x_i,y_j)$ is divisible by $\zeta^{2n^2}$. The product 
$$\prod_{1\leq i<j\leq n}\big((1+\zeta x_j)-(1+\zeta x_i)\big)(\zeta y_j-\zeta y_i\big)$$ 
contributes a factor $\zeta^{n(n-1)}$ to the denominator. 
Finally, since $\zeta\mid F(1+\zeta x,\zeta y,z)$,
each matrix entry has a single pole at $\zeta=0$, so the determinant has  a pole of degree at most $n$. In total, we conclude that \eqref{sns} is divisible by 
$\zeta^{2n^2-n(n-1)-n}=\zeta^{n^2}.$
This implies the same statement for the left-hand sides of 
 \eqref{cpe} and \eqref{pe}.

In the case of  \eqref{ye},
we consider instead 
\begin{equation}\label{ts}S_n\left(1+\zeta x_1,\dots,1+\zeta x_{n-2},x_{n-1},x_n,\zeta y_1,\dots,\zeta y_{n-2},y_{n-1},\frac\zeta{\zeta+2},z\right).
\end{equation}
Using that  $\zeta\mid G(x,\zeta y)$ and that  
\begin{equation}\label{gsp}G\left(x,\frac{\zeta}{\zeta+2}\right)=\frac{2\zeta^2(\zeta+1)^2}{(\zeta+2)^2}\end{equation}
for all $x$, obvious modifications of the previous argument shows that 
\eqref{ts}, and hence also  the left-hand side of  \eqref{ye}, is divisible by $\zeta^{n^2-1}$.

The case $\zeta=-1$ is  simple. Considering
\begin{equation}\label{sss}S_n(-1+(\zeta+1) x_1,\dots,-1+(\zeta+1) x_n,-1+(\zeta+1) y_1,\dots,-1+(\zeta+1) y_n,z),\end{equation}
and observing that
$$(\zeta+1)\mid F\big(-1+(\zeta+1)x,-1+(\zeta+1)y,z\big),$$ 
$$(\zeta+1)^2\mid G\big(-1+(\zeta+1)x,-1+(\zeta+1)y\big),$$
one checks that \eqref{sss} is divisible by $(\zeta+1)^{n^2}$. This implies the corresponding statement for the three left-hand sides of  \eqref{ef}.

The  vanishing conditions at $\zeta=-1/2$ and  $\zeta=-2$
follow from the following Lemma.

\begin{lemma}\label{zl}
As a polynomial in $\zeta$,
 $$S_n\big((2\zeta+1)x_1,\dots,(2\zeta+1)x_{k},x_{k+1},\dots,x_{2n+1}\big) $$
 is divisible by $(2\zeta+1)^{\delta_{k-1}}$, and
$$(\zeta+2)^{kn}S_n\left(\frac{\zeta x_1}{\zeta+2},\dots,\frac{\zeta x_{k}}{\zeta+2},x_{k+1},\dots,x_{2n+1}\right) $$
 is divisible by $(\zeta+2)^{\delta_{k-1}}$.
\end{lemma}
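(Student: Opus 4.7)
I plan to establish the first divisibility statement directly; the second follows from it via Lemma \ref{il}, since the substitution $\zeta\mapsto 1/\zeta$, $x_i\mapsto x_i^{-1}$ turns the specialization $x_i\mapsto(2\zeta+1)u_i$ into $x_i\mapsto \zeta u_i^{-1}/(\zeta+2)$, and the $\prod x_i^n$ factor in Lemma \ref{il} together with the $(\zeta+2)^{kn}$ prefactor in the second statement absorb the difference exactly.

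I use the symmetry of $S_n$ in all $2n+1$ arguments to distribute the $k$ specialized arguments as evenly as possible among the slots of \eqref{sn}: for $k\leq 2n$ I place $k_1=\lceil k/2\rceil$ of them in $x$-positions and $k_2=\lfloor k/2\rfloor$ in $y$-positions; for $k=2n+1$ I take $k_1=k_2=n$ and put the last specialized argument at $z$. A direct computation from \eqref{fgh}--\eqref{q} gives $G((2\zeta+1)u,(2\zeta+1)v)=(2\zeta+1)^2\,\widetilde G(u,v;\zeta)$ with $\widetilde G$ generically nonzero at $\zeta=-1/2$, while both $F$ and $G$ take generic finite values at $\zeta=-1/2$ whenever at most one of their arguments is specialized. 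Hence the prefactor $\prod_{i,j}G(x_i,y_j)$ contributes $(2\zeta+1)^{2k_1k_2}$, the Vandermonde factors $\prod_{i<j}(x_j-x_i)\prod_{i<j}(y_j-y_i)$ contribute $(2\zeta+1)^{\binom{k_1}{2}+\binom{k_2}{2}}$ to the denominator, and the matrix $M$ with entries $F(x_i,y_j,z)/G(x_i,y_j)$ has poles of order $2$ at the $k_1k_2$ entries where both indices correspond to specialized arguments. Rescaling those rows and columns by $(2\zeta+1)$ yields a regular matrix $M'$ with $\det M=(2\zeta+1)^{-(k_1+k_2)}\det M'$.

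At $\zeta=-1/2$ the ``mixed'' entries of $M'$ (exactly one of $i,j$ specialized) vanish, so $M'|_{\zeta=-1/2}$ has a block structure with a $k_1\times(n-k_2)$ and an $(n-k_1)\times k_2$ zero block. A rank count then gives $\operatorname{rank} M'|_{\zeta=-1/2}\leq \min(k_1,k_2)+\min(n-k_1,n-k_2)=n-|k_1-k_2|$, and by the standard fact that rank deficiency $r$ of an analytic matrix family forces determinant vanishing of order at least $r$, we get $\det M'=O\bigl((2\zeta+1)^{|k_1-k_2|}\bigr)$. Collecting all contributions, $S_n$ vanishes at $\zeta=-1/2$ to order at least
$$2k_1k_2-\binom{k_1}{2}-\binom{k_2}{2}-(k_1+k_2)+|k_1-k_2|,$$
which, for the balanced choice $|k_1-k_2|\leq 1$, simplifies to $[(k-1)^2/4]=\delta_{k-1}$, as required.

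The case $k=2n+1$ is handled similarly, the only difference being that $F((2\zeta+1)u,(2\zeta+1)v,(2\zeta+1)w)$ carries an extra factor of $(2\zeta+1)$, so the ``both-specialized'' entries of $M$ now have poles of order only $1$; rescaling by a single factor of $(2\zeta+1)$ on rows (or columns) and the analogous rank-deficiency argument again yield the required bound $n^2=\delta_{2n}$. The main obstacle is the combinatorial bookkeeping of these exponents and the verification that the explicit count simplifies to $\delta_{k-1}$ for the balanced split; the rank-deficiency step itself is short, and the reduction from the second statement to the first via Lemma \ref{il} is an elementary manipulation.
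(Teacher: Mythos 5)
Your proof is correct, and the arithmetic checks out: with the balanced split $k_1=\lceil k/2\rceil$, $k_2=\lfloor k/2\rfloor$ the count $2k_1k_2-\binom{k_1}{2}-\binom{k_2}{2}-(k_1+k_2)+|k_1-k_2|$ does equal $\delta_{k-1}$ in both parities, the factorization $G((2\zeta+1)u,(2\zeta+1)v)=(2\zeta+1)^2\widetilde G$ and the regularity of the other entries at $\zeta=-1/2$ are as you claim, and the rank-deficiency bound on $\det M'$ is a legitimate standard fact. The paper disposes of the lemma in two sentences by citing the first step of the proof of Theorem \ref{tst} — the block-determinant degeneration at $\zeta=-2$ under the $\zeta x_i/(\zeta+2)$ specialization, whose order count is carried by Lemma \ref{ell} — to get the $(\zeta+2)$-divisibility first, and then transfers it to the $(2\zeta+1)$ statement via Lemma \ref{il}. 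You run the identical mechanism in the mirror-image direction: a direct order-of-vanishing count at $\zeta=-1/2$ (where the mixed entries of the rescaled matrix vanish, giving rank deficiency $|k_1-k_2|$ rather than a clean block factorization), followed by the same Lemma \ref{il} transfer to the $(\zeta+2)$ statement. The two are equivalent under $\zeta\mapsto\zeta^{-1}$; what your version buys is self-containedness — it does not lean on the machinery set up for the trigonometric limit and the Okada/character formulas — at the cost of redoing bookkeeping that the paper has already paid for in the proof of Theorem \ref{tst}. One presentational point: when you invoke the symmetry of $S_n$ in all $2n+1$ variables to redistribute the specialized arguments, note that this symmetry is only established via Theorem \ref{met} and uniformization in \S\ref{sfs}--\S\ref{us}, so your proof, while not using Theorem \ref{tst}, is not independent of that part of the paper either.
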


\begin{proof}
The second statement follows from 
Theorem \ref{tst} (note that we only need the very first step in the proof, not the  formulas involving Lie algebra characters). The first statement then follows using
 Lemma \ref{il}. 
\end{proof}

\subsection{Recursions}\label{prs}

We will now specialize the recursions for $S_n$ given in \S  \ref{rs}
to obtain recursions for $P_n$ and $p_n$.

If we let $a=2\zeta+1$ and $b=\zeta/(\zeta+2)$ in \eqref{abr}, using  
 \eqref{gsp} and the identities
$$G(2\zeta+1,x)=2(\zeta+1)^2x^2, $$
$$2\zeta+1-\frac{\zeta}{\zeta+2}=\frac{2(\zeta+1)^2}{\zeta+2}, $$
we obtain the following result after simplification.

\begin{proposition}\label{pyr}
The polynomials $p_n$ and $y_n$ satisfy
\begin{multline}\label{pyri}(\zeta+1)^2p_{n+1}(\zeta)p_{n-1}(\zeta)=\zeta^{n+1} \tilde p_n(\zeta)y_n(\zeta)\\
+(1+2\zeta)^{1+\chi(n \text{\emph{ even}})}\left(1+\frac \zeta 2\right)^{1+\chi(n \text{\emph{ odd}})}p_n(\zeta)^2.\end{multline}
Equivalently, \eqref{le} holds, where $Y=y_n$.
\end{proposition}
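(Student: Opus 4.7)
The plan is to specialize the minor identity \eqref{abr} from Corollary \ref{pc}, taking $a = 2\zeta+1$, $b = \zeta/(\zeta+2)$, and setting the free variable $z$ equal to $a$. With this choice the five quantities $S_{n-1}(a,b)$, $S_{n+1}(a,b)$, $S_n(a,a)$, $S_n(b,b)$, $S_n(a,b)$ become respectively $S_{n-1}(a^n,b^{n-1})$, $S_{n+1}(a^{n+2},b^{n+1})$, $S_n(a^{n+2},b^{n-1})$, $S_n(a^n,b^{n+1})$, $S_n(a^{n+1},b^n)$, which Proposition \ref{efp} (together with its inverted counterpart \eqref{ppcb}) expresses as multiples of $p_{n-1}$, $p_{n+1}$, $y_n$, $\tilde p_n$, and $p_n$ respectively.

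Using the four elementary evaluations displayed immediately before the statement ($a-b$, $G(a,a)$, $G(b,b)$, $G(a,b)$), equation \eqref{abr}, after dividing by $(a-b)^2 = 4(\zeta+1)^4/(\zeta+2)^2$, simplifies to
\begin{equation*}
S_{n-1}(a^n,b^{n-1})\,S_{n+1}(a^{n+2},b^{n+1}) \;=\; \frac{\zeta^4}{(\zeta+2)^2}\,S_n(a^{n+2},b^{n-1})\,S_n(a^n,b^{n+1}) \;-\; \zeta^2(2\zeta+1)^2\,S_n(a^{n+1},b^n)^2.
\end{equation*}
I would then substitute the closed-form expressions from Proposition \ref{efp} into each factor and track the four monomial-type prefactors separately: powers of $\zeta$, of $1+\zeta$, of $A := 1+\zeta/2$, and of $1+2\zeta$. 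The $\zeta$- and $(1+\zeta)$-exponents go through mechanically: the left-hand side carries $\zeta^{2n^2+2}(1+\zeta)^{2n^2+2}$, the $y_n\tilde p_n$ term (after absorbing the explicit $\zeta^4/(\zeta+2)^2$) carries $\zeta^{2n^2+n+3}(1+\zeta)^{2n^2}$, and the $p_n^2$ term carries $\zeta^{2n^2+2}(1+\zeta)^{2n^2}$, so after dividing through by the common $\zeta^{2n^2+2}(1+\zeta)^{2n^2}$ we recover exactly the factor $(\zeta+1)^2$ on the left of \eqref{pyri} and the $\zeta^{n+1}$ in front of $\tilde p_n y_n$.

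The more delicate step is the balance of $A$- and $(1+2\zeta)$-exponents. Here one needs the identity \eqref{lac}, $\delta_{n+1}+\delta_{n-1}-2\delta_n = \chi(n\text{ odd})$, together with its shift $\delta_n+\delta_{n-2} = 2\delta_{n-1}+\chi(n\text{ even})$. These imply that the $p_n^2$ term has an excess exponent of $1+\chi(n\text{ odd})$ in $A$ and of $1+\chi(n\text{ even})$ in $1+2\zeta$, relative to the other two terms, yielding precisely the coefficient $(1+2\zeta)^{1+\chi(n\text{ even})}(1+\zeta/2)^{1+\chi(n\text{ odd})}$ of $p_n^2$ in \eqref{pyri}. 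The three sign contributions $(-1)^{\binom{n}{2}+\binom{n+2}{2}}$, $(-1)^{\binom{n-1}{2}+\binom{n+1}{2}}$, and the explicit minus in front of $p_n^2$ all equal $-1$ (since $n(n\pm1)$ is always even), so after dividing through by $-1$ the recursion emerges with the asserted positive signs. The equivalent form \eqref{le} with $Y=y_n$ is then immediate from the identifications given in Theorem~\ref{prt}. The main obstacle is purely computational — nothing beyond \eqref{abr} and Proposition \ref{efp} is needed — but the four independent exponent counts (two straightforward, two requiring the parity identities from \eqref{lac}) must be carried out consistently, and a slip in any one of the five substitutions would destroy the cancellation.
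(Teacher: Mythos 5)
Your proposal is correct and follows exactly the paper's (very terse) proof: specialize \eqref{abr} at $a=2\zeta+1$, $b=\zeta/(\zeta+2)$ with $z=a$, use \eqref{gsp} and the two displayed evaluations of $G$ and $a-b$, and substitute the normalizations of Proposition \ref{efp}; your exponent bookkeeping, including the use of \eqref{lac} and its shifted form, supplies precisely the ``simplification'' the paper omits. The one item you do not track is the overall power of $2$ --- the factors $2^{-2}$ from $(\zeta+2)^{-2}=2^{-2}(1+\zeta/2)^{-2}$, $2^{[(n+5)/2]}$ from the normalization of $y_n$ in \eqref{ye}, and $2^{-[(n+1)/2]}$ from that of $\tilde p_n$ --- but these cancel, so the coefficient of $\zeta^{n+1}\tilde p_n(\zeta)y_n(\zeta)$ is indeed $1$ and the argument closes.
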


By induction, we obtain the following consequence, which is  a key result in our approach.

\begin{corollary}\label{pctc} The polynomial $p_n$ satisfies $p_n(0)=1$;
in particular, it does not vanish identically.
\end{corollary}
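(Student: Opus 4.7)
The plan is to specialize the recurrence of Proposition \ref{pyr} to $\zeta=0$. Observe that at $\zeta=0$ the prefactor $(\zeta+1)^2$ equals $1$, and so do the factors $(1+2\zeta)^{1+\chi(n\text{ even})}$ and $(1+\zeta/2)^{1+\chi(n\text{ odd})}$; moreover the term $\zeta^{n+1}\tilde p_n(\zeta)\,y_n(\zeta)$ vanishes because $n+1\ge 2$ throughout the range of validity of \eqref{pyri}. Thus \eqref{pyri} collapses to the simple multiplicative two-step recurrence
\[
p_{n+1}(0)\,p_{n-1}(0) \;=\; p_n(0)^2 \qquad (n\ge 1).
\]

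Next I would pin down the two base cases $p_0(0)=1$ and $p_1(0)=1$. For $n=0$, the matrix in \eqref{sn} is empty, so $S_0\equiv 1$, and \eqref{pe} collapses to $p_0(\zeta)\equiv 1$; in particular $p_0(0)=1$. For $n=1$, one can either read off $p_1(\zeta)=3\zeta+1$ from Table \ref{pt}, or give a self-contained check by evaluating $S_1(2\zeta+1,\,2\zeta+1,\,\zeta/(\zeta+2))$ directly from the explicit formula \eqref{p} for $F$ together with the defining relation \eqref{pe}; in either case $p_1(0)=1$.

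With both base cases in hand, a trivial induction on $n$ closes the argument: assuming $p_{n-1}(0)=p_n(0)=1$, the recurrence gives $p_{n+1}(0)=p_n(0)^2/p_{n-1}(0)=1$. Hence $p_n(0)=1$ for every $n\ge 0$, and in particular $p_n$ does not vanish identically. There is really no obstacle; the only item requiring a touch of care is the evaluation of $p_1(0)$ from the definition, should one wish to avoid appealing to Table \ref{pt}.
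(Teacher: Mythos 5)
Your proposal is correct and is essentially the paper's own argument: the paper derives Corollary \ref{pctc} from Proposition \ref{pyr} "by induction," which is precisely the specialization $\zeta=0$ you spell out, giving $p_{n+1}(0)p_{n-1}(0)=p_n(0)^2$ and closing with the trivial base cases. Your write-up merely makes explicit what the paper leaves to the reader.
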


From this, another key result follows.

\begin{corollary}\label{doc}
The space $V_{n}(\underbrace{1,\dots,1}_{n},\underbrace{p,\dots,p}_{n})$
defined in \emph{\S \ref{stfs}} has dimension~$1$.
\end{corollary}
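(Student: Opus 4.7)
The lower bound $\dim\ge 1$ is part of Lemma~\ref{dl} (the $2n$ vanishing conditions exceed the threshold $k\le n$ that guarantees equality), so the task is to prove $\dim\le 1$. My plan is to adapt Lemma~\ref{dcl}: the obstruction to applying it verbatim is that $\Psi_n$ vanishes on coincident arguments by antisymmetry, so direct substitution of the repeated values $(t_2,\dots,t_{2n+1})=(\underbrace{1,\dots,1}_n,\underbrace{p,\dots,p}_n)$ gives zero. To bypass this, I would work with the symmetric renormalization
$$\widetilde\Psi_n\;=\;\Psi_n/D,\qquad D(t_2,\dots,t_{2n+1})\;=\;\prod_{2\le i<j\le 2n+1}t_j^{-1}\theta(t_jt_i^{\pm};p^2),$$
where $D$ is the Vandermonde-type factor appearing in the product decomposition \eqref{pss}. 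Then $\widetilde\Psi_n$ is symmetric in the last $2n$ variables and extends holomorphically through the coincident specialization.

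The first step exhibits a nonzero generator of $V_n(\underbrace{1,\dots,1}_n,\underbrace{p,\dots,p}_n)$. Combining \eqref{pss} with the uniformization of \S\ref{us}, the evaluation $\widetilde\Psi_n(t,\underbrace{1,\dots,1}_n,\underbrace{p,\dots,p}_n)$ equals, up to a nonzero scalar in $\zeta$, the product $\theta(t,\pm pt;p^2)\theta(p\omega t^{\pm};p^2)^n$ times the specialization of $S_n$ on the left-hand side of \eqref{cpe}. By Proposition~\ref{efp} this specialization is a nonzero multiple of $P_n(\xi(t),\zeta)$, and by \eqref{ppca} together with Corollary~\ref{pctc} (which provides $p_n(0)=1$), the polynomial $P_n$ is not identically zero. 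Hence $\widetilde\Psi_n$ at the specialization is a nonzero element of the target space, which simultaneously reconfirms the lower bound.

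The second step is the uniqueness. I would repeat the argument of Lemma~\ref{dcl} with $\widetilde\Psi_n$ in place of $\Psi_n$: fix $t_0$ at which the generator above is nonzero, and build auxiliary functions by perturbing one of the coincident arguments away from its specialized value while differentiating in the remaining collapsed directions. These should furnish $2n$ linearly independent elements of $V_n(p)$ (whose dimension is $2n$ by Lemma~\ref{dl}), so that when the full vanishing conditions are imposed on an arbitrary element of $V_n(\underbrace{1,\dots,1}_n,\underbrace{p,\dots,p}_n)$, all coefficients but one are forced to vanish, giving proportionality to the generator.

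The main obstacle is precisely the auxiliary-function construction in the second step: the naive $\psi_k$ of Lemma~\ref{dcl} collapse to equal functions at the coincident specialization, and must be replaced by higher-order derivatives of $\widetilde\Psi_n$ in the repeated arguments. The required linear independence of the resulting higher-order jets becomes, in uniformized coordinates via $P_n$, a Wronskian-type nondegeneracy, and I expect its verification to again reduce to the non-vanishing of $P_n(x,\zeta)$ supplied by Corollary~\ref{pctc}. The bookkeeping is somewhat intricate, but the underlying mechanism is that all the independence statements ultimately rest on the single initial condition $p_n(0)=1$.
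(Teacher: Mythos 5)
Your route is, at its core, the paper's own: use \eqref{pss} together with (a confluent form of) Lemma~\ref{dcl} to reduce the claim to the non-vanishing of the specialized symmetric polynomial $S_n\big(x,2\zeta+1,\dots,2\zeta+1,\zeta/(\zeta+2),\dots,\zeta/(\zeta+2)\big)$, and then conclude from Proposition~\ref{efp} and $p_n(0)=1$ (Corollary~\ref{pctc}). The paper's proof is exactly this two-line reduction. However, your first step contains a concrete error. Dividing $\Psi_n$ only by the Vandermonde-type factor $D$ does \emph{not} yield something nonzero at the coincident specialization: by \eqref{pss}, the quotient $\Psi_n/D$ still carries the factor $\prod_{i=2}^{2n+1}\theta(t_i,\pm pt_i;p^2)$, and both $\theta(1,\pm p;p^2)$ and $\theta(p,\pm p^2;p^2)$ vanish (since $\theta(1;p^2)=\theta(p^2;p^2)=0$). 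Hence your $\widetilde\Psi_n(t,1,\dots,1,p,\dots,p)$ is identically zero in $t$. Relatedly, your claimed formula for the generator, $\theta(t,\pm pt;p^2)\theta(p\omega t^{\pm};p^2)^n$ times the specialized $S_n$, omits the factor $\theta(t^{\pm},pt^{\pm};p^2)^{n}$ coming from the cross terms $\prod_{j\geq 2}\theta(t_jt^{\pm};p^2)$; without that factor the function does not even lie in $V_n(1,\dots,1,p,\dots,p)$, since it is precisely what supplies the required extra vanishing at $t=1$ and $t=p$ (compare \eqref{phu}). Both slips are repairable by normalizing so that what survives is exactly the symmetric polynomial $S_n$, which is the point of \eqref{pss}.

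On the uniqueness step: you are right that Lemma~\ref{dcl} does not apply verbatim, since its hypothesis (that $t\mapsto\Psi_n(t,t_1,\dots,t_{2n})$ is not identically zero) fails outright for repeated arguments by antisymmetry, and a confluent version with jets in place of point evaluations is needed. You leave this as a sketch, with the Wronskian-type nondegeneracy unverified, so as written the proposal does not close the argument. In fairness, the paper itself simply cites Lemma~\ref{dcl} at this point and treats the degeneration to coincident arguments as routine; your proposal at least makes the issue visible. But to count as a proof, the confluent independence of the auxiliary functions (equivalently, that some $2n\times 2n$ confluent minor of the alternant is nonzero, which is what the non-vanishing of the specialized $S_n$ encodes after uniformization) has to be carried out rather than asserted.
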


\begin{proof}
 By Lemma \ref{dcl} and \eqref{pss}, it is enough to show that the polynomial
$$S_n\Big(x,\underbrace{2\zeta+1,\dots,2\zeta+1}_{n},\underbrace{\frac\zeta
{\zeta+2},\dots,\frac\zeta{\zeta+2}}_{n}\Big)$$
does not vanish identically. By \eqref{pe}, this follows from Corollary \ref{pctc}.
\end{proof}

We remark that Corollary \ref{doc} can alternatively be deduced from
 Theorem \ref{tst}.

\begin{proposition}\label{prp}
The polynomials $P_n(x,\zeta)$ are determined 
from the starting values $P_0=1$ and $P_1=x+\zeta$
by the  recursion
\begin{equation}\label{ttr}A_n(x,\zeta)P_{n+1}(x,\zeta)=B_n(x,\zeta)P_n(x,\zeta)+C_n(x,\zeta)P_{n-1}(x,\zeta), \end{equation}
where
\begin{align*}A_n(x,\zeta)&=(1+2\zeta)^{\chi(n \text{\emph{ even}})}\left(1+\frac \zeta 2\right)^{\chi(n \text{\emph{ even}})}
(x-2\zeta-1)\left(x-\frac\zeta{\zeta+2}\right)\\
&\quad\times P_{n-1}(2\zeta+1,\zeta)P_{n-1}\left(\frac\zeta{\zeta+2},\zeta\right),\\
B_n(x,\zeta)&=\left(1+\frac \zeta 2\right)\left(
x^2(x-2\zeta-1)P_{n-1}\left(\frac\zeta{\zeta+2},\zeta\right)P_n(2\zeta+1,\zeta)
\right.\\
&\quad-\left.(2\zeta+1)^2\left(x-\frac\zeta{\zeta+2}\right)
P_{n-1}(2\zeta+1,\zeta)P_n\left(\frac\zeta{\zeta+2},\zeta\right)\right),\\
C_n(x,\zeta)&=(\zeta+1)^2x^2P_n(2\zeta+1,\zeta)P_n\left(\frac\zeta{\zeta+2},\zeta\right). \end{align*}
\end{proposition}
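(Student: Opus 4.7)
The plan is to derive the recursion \eqref{ttr} from the three-term identity of Corollary \ref{syr} by specializing its parameters to $a = 2\zeta+1$, $b = \zeta/(\zeta+2)$, and $y = x$. Under this choice the polynomial $S_n(y)$ of Corollary \ref{syr} has its $2n+1$ arguments matching exactly those in \eqref{cpe}, so $S_n(y)$ is an explicit $\zeta$-scalar times $P_n(x,\zeta)$; likewise the boundary evaluations $S_n(a) = S_n(\underbrace{a,\dots,a}_{n+1}, \underbrace{b,\dots,b}_n)$ and $S_n(b)$ reduce, via \eqref{pe} and its $\tilde p_n$-analogue (the identity displayed just after \eqref{pe}), to $\zeta$-scalars times $p_n(\zeta)$ and $\tilde p_n(\zeta)$, respectively.

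The algebraic input needed is a short table of specialized $G$-values. From the explicit formula \eqref{q} one computes
$$b-a \;=\; -\frac{2(\zeta+1)^2}{\zeta+2},\qquad G(x,\,2\zeta+1) \;=\; 2(\zeta+1)^2 x^2,\qquad G\!\left(x,\tfrac{\zeta}{\zeta+2}\right) \;=\; \frac{2\zeta^2(\zeta+1)^2}{(\zeta+2)^2},$$
the crucial observation being that $G(x,b)$ is independent of $x$; the diagonal values $G(a,a)$ and $G(b,b)$ follow as the same expressions with $x$ specialized to $a$ and to $b$. Substituting these into Corollary \ref{syr} and dividing out the common factor $b-a$ turns the identity into one of the shape $A_n P_{n+1} = B_n P_n + C_n P_{n-1}$, with coefficients expressed as $\zeta$-scalars times products drawn from $\{p_{n-1}, p_n, \tilde p_{n-1}, \tilde p_n\}$.

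To match the explicit formulas for $A_n$, $B_n$, $C_n$ stated in the proposition, I would then use \eqref{ppca}--\eqref{ppcb} to re-express these four factors as values of $P_{n-1}$ and $P_n$ at $x = 2\zeta+1$ and $x = \zeta/(\zeta+2)$, and collect the overall $\zeta$-prefactors coming from \eqref{cpe}, \eqref{pe}, and its $\tilde p_n$-counterpart. The parity-dependent exponents $\chi(n \text{ even})$ in $A_n$ and $B_n$ emerge naturally from the mismatch between the powers $\delta_n$ (for $p_n$) and $\delta_{n-1}$ (for $\tilde p_n$) of $1+2\zeta$ in those factoring identities; the elementary relations \eqref{lac} and \eqref{eli} are the natural book-keeping tools for tracking these exponents.

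For the base cases, $S_0 = 1$ gives $P_0 = 1$ directly from \eqref{cpe}, and $S_1 = F$ (from \eqref{sn} with $n=1$) together with a one-line evaluation of \eqref{p} at the specialized arguments yields $P_1 = x + \zeta$. That the recursion determines $P_{n+1}$ uniquely from $P_n$ and $P_{n-1}$ follows from $A_n \not\equiv 0$, which is a consequence of $p_{n-1}(0) = 1$ (Corollary \ref{pctc}). The principal obstacle is purely computational: no individual step is deep, but the proliferation of factors $\zeta$, $1+\zeta$, $1+2\zeta$, $1+\zeta/2$ together with the parity splits and signs requires careful organisation; in particular, ensuring that the $(\zeta+2)$-denominators arising from $G(x,b)$, $G(b,b)$, and \eqref{ppcb} cancel cleanly against the positive powers of $\zeta+2$ implicit in the $\tilde p_n$ factoring identity is the one step that most easily hides an error.
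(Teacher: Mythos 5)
Your proposal follows essentially the same route as the paper: the paper's proof consists precisely of setting $a=2\zeta+1$, $b=\zeta/(\zeta+2)$ in Corollary \ref{syr}, simplifying with the same specialized $G$-values (\eqref{gsp} and $G(2\zeta+1,x)=2(\zeta+1)^2x^2$, $a-b=2(\zeta+1)^2/(\zeta+2)$), translating via \eqref{ef} and \eqref{ppc}, and then invoking Corollary \ref{pctc} to guarantee $A_n\not\equiv 0$. Your bookkeeping of the parity exponents and the base cases is correct, so the proposal is a faithful, more detailed version of the paper's argument.
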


\begin{proof}
Letting  $a=2\zeta+1$ and $b=\zeta/(\zeta+2)$ in Corollary \ref{syr}
gives \eqref{ttr} after simplification. 
 The polynomials are determined by this recursion, provided that
 $A_n(x,\zeta)$ does not vanish identically. By \eqref{ppc}, this follows from Corollary \ref{pctc}.
\end{proof}

At this point, a remark on our approach is in order.
It is easy to see that if a system of polynomials satisfies \eqref{ttr}, then it can be used to construct  (via uniformization)
 functions $\Phi_{n}$ satisfying the properties of Proposition \ref{php}.
The reader may ask why we do not take \eqref{ttr} as our starting point, 
 thus avoiding introducing the multivariable polynomials $S_n$.
The  problem is that, to use \eqref{ttr} as a definition, we must know that $A_n(x,\zeta)$ does not vanish identically. Moreover, we need to
 know  that  $\Phi_{n}$ is uniquely determined by the 
 properties of Proposition~\ref{php}. Both these facts have been 
deduced from Corollary \ref{pctc}. If one could find a way to
deduce inductively from
 \eqref{ttr} that $P_n(2\zeta+1,\zeta)$ and 
 $P_n(\zeta/(\zeta+2),\zeta)$ never vanish identically, then the proofs of our main results could   be significantly shortened.
 
\subsection{Further properties}

Using
Proposition \ref{prp}, we can  obtain quite detailed information on the  polynomials $P_n$.

\begin{proposition}\label{pfp}
The polynomial $P_n(x,\zeta)$ can be written
\begin{equation}\label{pfd}P_n(x,\zeta)=\sum_{k=0}^n f^n_k(\zeta)\zeta^{n-k} x^k,
\end{equation}
where $f^n_k$ is a polynomial of degree $2\delta_n$, with leading coefficient
\begin{equation}\label{lcf}\frac 1{2^k}\binom{n+k}{n}. \end{equation}
In the notation \eqref{tn}, these polynomials satisfy
\begin{equation}\label{fs}f^n_k(\zeta)=\tilde f^n_{n-k}(\zeta). \end{equation}
For $k>[(n+1)/2]$,  $f^n_k(\zeta)$ is divisible by $(\zeta+2)^{k-\left[\frac {n+1}2\right]}$.
Moreover,
\begin{subequations}\label{ftt}
\begin{align}\label{flta}f^n_n(\zeta)&=\left(1+\frac \zeta 2\right)^{\left[\frac n2\right]}p_{n-1}(\zeta),  \\
\label{fltb} f_{n-1}^n(\zeta)&=\frac 12\left(1+\frac \zeta 2\right)^{\left[\frac n2\right]-1}(\zeta+n+1)\,p_{n-1}(\zeta). \end{align}
\end{subequations}
\end{proposition}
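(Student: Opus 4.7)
The plan is to prove the six assertions by a coordinated induction on $n$, driven by the three-term recursion \eqref{ttr} from Proposition \ref{prp}, the symmetry \eqref{pis}, and Lemma \ref{zl}. The base cases $n=0,1$ are immediate from $P_0=1$ and $P_1=x+\zeta$. To establish (e) and (f), I will extract the coefficients of $x^{n+3}$ and $x^{n+2}$ in $A_nP_{n+1}=B_nP_n+C_nP_{n-1}$. Since $\deg_xA_n=\deg_xC_n=2$ and $\deg_xB_n=3$, only $A_nP_{n+1}$ and $B_nP_n$ contribute at $x^{n+3}$, producing $f^{n+1}_{n+1}=(b_3/a_2)f^n_n$; the ratio $b_3/a_2$ simplifies via \eqref{ppc} and $[n/2]-[(n-1)/2]=\chi(n\text{ even})$ to $(1+\zeta/2)^{\chi(n\text{ odd})}p_n/p_{n-1}$, and substituting the inductive hypothesis for $f^n_n$ yields (e). The $x^{n+2}$-equation, after applying the identity $-(2\zeta+1)+s=\zeta/(\zeta+2)$ with $s=2\zeta+1+\zeta/(\zeta+2)$, collapses to $f^{n+1}_n=(b_3/a_2)\bigl(f^n_{n-1}+f^n_n/(\zeta+2)\bigr)$; substituting (e) and the inductive hypothesis for $f^n_{n-1}$ and combining over the common denominator $\zeta+2$ gives (f).

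Next, the divisibility $\zeta^{n-k}\mid[x^k]P_n(x,\zeta)$---which makes each $f^n_k$ a well-defined polynomial, establishing half of (a)---follows from the stronger claim $v_\zeta(P_n(\zeta x,\zeta))\geq n$, proved by induction. From \eqref{ppc} the $\zeta$-valuations at $0$ of $A_n(\zeta x,\zeta)$, $B_n(\zeta x,\zeta)$, $C_n(\zeta x,\zeta)$ come out to exactly $n$, at least $n+1$, and exactly $n+2$, so the recursion propagates the valuation to $n+1$. The symmetry \eqref{pis} then translates into the functional equation $f^n_k(\zeta)=\zeta^{2\delta_n}f^n_{n-k}(1/\zeta)$, which forces $\deg f^n_k\leq 2\delta_n$ and, once equality is known, specializes to $f^n_k=\tilde f^n_{n-k}$, giving (c). For (d), I will substitute $x=\zeta y/(\zeta+2)$ into the identity defining $P_n$, converting $\sum f^n_k(\zeta)\zeta^{n-k}x^k$ into $\zeta^n(\zeta+2)^{-n}\sum f^n_k(\zeta)(\zeta+2)^{n-k}y^k$; Lemma \ref{zl} applied with the $n+1$ arguments $\zeta y/(\zeta+2)$ together with the $n$ copies of $\zeta/(\zeta+2)$, combined with the $(\zeta+2)$-prefactor $(\zeta+2)^{\delta_{n-1}}$ coming from \eqref{cpe}, forces divisibility of the $y$-polynomial $\sum_kf^n_k(\zeta)(\zeta+2)^{n-k}y^k$ by $(\zeta+2)^{\delta_n-\delta_{n-1}}=(\zeta+2)^{[n/2]}$; reading off the coefficient of $y^k$ then yields the stated divisibility of $f^n_k$ by $(\zeta+2)^{k-[(n+1)/2]}$.

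The main obstacle is (b), which also supplies the missing strict equality $\deg f^n_k=2\delta_n$. Writing $L^n_k$ for the leading coefficient of $f^n_k$ (provisionally zero if $\deg f^n_k<2\delta_n$) and $L^n(x)=\sum_{k=0}^nL^n_kx^k$, the substitution $x\mapsto x\zeta$ in the recursion followed by the limit $\zeta\to\infty$ (after dividing by a suitable power of $\zeta$) produces a three-term recursion for the polynomials $L^n(x)$, whose coefficients are computable from the leading $\zeta$-asymptotics of $P_n(2\zeta+1,\zeta)$ and $P_n(\zeta/(\zeta+2),\zeta)$ read off from the leading term of $p_n$. The endpoint values are transparent: (e) gives $L^n_n=2^{-[n/2]}\cdot(\text{leading of }p_{n-1})$, which equals $\binom{2n}{n}/2^n$ via $2\binom{2n-1}{n-1}=\binom{2n}{n}$, while (c) together with $f^n_n(0)=p_{n-1}(0)=1$ gives $L^n_0=1=\binom{n}{n}$. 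Verifying that $L^n(x)=\sum_{k=0}^n\binom{n+k}{n}(x/2)^k$---the degree-$n$ truncation of the Taylor series of $(1-x/2)^{-n-1}$---is the unique solution to the limiting recursion subject to these boundary values is the last, somewhat delicate step, requiring careful identification of which terms in $B_nP_n$ and $C_nP_{n-1}$ dominate in the limit and an identity between truncated binomial series.
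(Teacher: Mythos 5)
Your overall route coincides with the paper's: the base of everything is the three-term recursion \eqref{ttr}, the identities \eqref{ftt} come from the coefficients of $x^{n+3}$ and $x^{n+2}$ (your reduction $f^{n+1}_n=(b_3/a_2)\bigl(f^n_{n-1}+f^n_n/(\zeta+2)\bigr)$ is exactly what the paper obtains after cancelling the $x^{n+3}$ relation), \eqref{fs} comes from \eqref{pis}, and the $(\zeta+2)$-divisibility comes from Lemma \ref{zl} applied with $n+1$ special arguments, i.e.\ from the finiteness of $\lim_{\zeta\to-2}(\zeta+2)^{[(n+1)/2]}P_n(\zeta x/(\zeta+2),\zeta)$; all of these parts are correct. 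Your valuation count for $A_n(\zeta x,\zeta)$, $B_n(\zeta x,\zeta)$, $C_n(\zeta x,\zeta)$ is also right and does establish that each $f^n_k$ is a polynomial.

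The genuine gap is \eqref{lcf}, which you yourself flag as "the last, somewhat delicate step" and do not carry out. This is not a peripheral detail: \eqref{lcf} is the quantitative heart of the proposition (it is what later yields the degree and leading coefficient of $p_n$, hence Theorem \ref{qrdp}), and without it you also cannot upgrade $\deg f^n_k\leq 2\delta_n$ to equality, so \eqref{fs} remains conditional. Your plan of extracting a limiting recursion for the top coefficients as $\zeta\to\infty$ is workable but needlessly awkward, because at $\zeta\to\infty$ several terms of comparable order must be tracked. The paper does the mirror-image computation at $\zeta\to 0$: it shows by induction that $F_n(x)=\lim_{\zeta\to0}P_n(\zeta x,\zeta)/\zeta^n$ exists and satisfies
\begin{equation*}
\Bigl(x-\tfrac 12\Bigr)F_{n-1}(\tfrac12)F_{n+1}(x)
=\Bigl(x^2F_{n-1}(\tfrac12)+\bigl(x-\tfrac 12\bigr)F_n(\tfrac12)\Bigr)F_n(x)-x^2F_n(\tfrac12)F_{n-1}(x),
\end{equation*}
and that $F_n(x)=\sum_{k=0}^n\binom{n+k}{k}x^{n-k}/2^k$ solves it (using $F_n(1/2)=2^{-n-1}\binom{2n+2}{n+1}$); this gives the constant terms $f^n_{n-k}(0)=2^{-k}\binom{n+k}{n}$, and your own functional equation $f^n_k(\zeta)=\zeta^{2\delta_n}f^n_{n-k}(1/\zeta)$ then converts these into the leading coefficients of $f^n_k$ for free, simultaneously proving $\deg f^n_k=2\delta_n$. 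Since your step for the $\zeta$-adic valuation already establishes that this limit exists, you should close the gap this way rather than at $\zeta=\infty$.
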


Further results follow  by
applying \eqref{fs} to the other statements.
In particular, from \eqref{flta} we obtain
\begin{equation}\label{pept}P_n(0,\zeta)=\zeta^n\left(\frac{2\zeta+1}2\right)^{\left[\frac {n}2\right]}\tilde p_{n-1}(\zeta). \end{equation}

\begin{proof} 
From the proof of Proposition \ref{efp}, and also from 
Proposition \ref{prp}, it is clear that $P_n(x,\zeta)$ is of degree at most $n$ in $x$, so we can write it as in  \eqref{pfd}, where  a priori $f_k^n(\zeta)\zeta^{n-k}$ is a polynomial. 

To show that $f_k^n$ is a polynomial, we consider the function
$$F_n(x) =
\lim_{\zeta\rightarrow 0}\frac {P_n(\zeta x,\zeta)}{\zeta^n}
=\sum_{k=0}^n\lim_{\zeta\rightarrow 0}f_k^n(\zeta)x^k. $$
We will prove by induction that the limit exists and equals
\begin{equation}\label{fn}F_n(x)=\sum_{k=0}^n\binom{n+k}{k}\frac{x^{n-k}}{2^k}
=
x^n\,{}_2F_1\left(\begin{matrix}-n,\,n+1\\-n\end{matrix};\frac 1{2x}\right), \end{equation}
in standard hypergeometric notation.
 To this end,
we divide \eqref{ttr} by $\zeta^{2n}$ and then let $\zeta\rightarrow 0$. We need that,  by \eqref{ppca} and Corollary \ref{pctc},
$$P_n(1,0)=p_n(0)=1,$$
and we also write
 $$\lim_{\zeta\rightarrow 0}\frac{P_n(\zeta/(\zeta+2),\zeta)}{\zeta^n}=F_n(1/2).$$
Then, the resulting identity simplifies to
\begin{multline*}\left(x-\frac 12\right)F_{n-1}(1/2)F_{n+1}(x)\\
=\left(x^2F_{n-1}(1/2)+\left(x-\frac 12\right)F_n(1/2)\right)F_n(x)
-x^2F_n(1/2)F_{n-1}(x).
 \end{multline*}
We must show that  this recursion is solved by \eqref{fn}. 
Equivalently, since
$$F_n(1/2)=\frac 1{2^n}\,{}_2F_1\left(\begin{matrix}-n,\,n+1\\-n\end{matrix};1\right)=\frac 1{2^{n+1}}\binom{2n+2}{n+1}, $$
we need to check that
$$\left(x-\frac 12\right)F_{n+1}(x)
=\left(x^2+\frac{2n+1}{n+1}\left(x-\frac 12\right)\right)F_n(x)
-\frac{2n+1}{n+1}\,x^2F_{n-1}(x),
$$
which can be done by a straight-forward computation.

The equation \eqref{fn} shows that $f_k^n$ is a polynomial, with
\begin{equation}\label{fev}f^n_{n-k}(0)=\frac 1{2^k}\binom{n+k}{n}. \end{equation}
 It follows from \eqref{pis}, using also \eqref{eli}, that 
$$f^n_{k}(\zeta)=\zeta^{2\delta_n}f^n_{n-k}(1/\zeta). $$
Together with \eqref{fev}, this
 proves that $f^n_k$ has degree   $2\delta_n$, as well as \eqref{lcf} and \eqref{fs}. 

The statement on divisibility by powers of $\zeta+2$
is equivalent to saying that
$$\lim_{\zeta\rightarrow -2}(\zeta+2)^{\left[\frac{n+1}2\right]}P_n\left(\frac{\zeta x}{\zeta+2},\zeta\right) $$
exists finitely. This is a consequence of \eqref{cpe} and 
Theorem \ref{tst}.

To prove \eqref{ftt}, we  pick out the coefficients of $x^{n+3}$ and $x^{n+2}$ on both sides of \eqref{ttr}.  On the right, neither $C_n$ nor the second of the two terms in $B_n$ contribute. Using 
\eqref{ppc} and simplifying, we find that
\begin{multline*}p_{n-1}(\zeta)\left(x^2-\left(\frac\zeta{\zeta+2}+2\zeta+1\right)x\right)
\left(f_{n+1}^{n+1}(\zeta)x^{n+1}+\zeta f_{n}^{n+1}(\zeta)x^{n}\right)\\
=\left(1+\frac \zeta 2\right)^{\chi(n \text{{ odd}})}p_n(\zeta)x^2(x-2\zeta-1)
\left(f_{n}^{n}(\zeta)x^{n}+\zeta f_{n-1}^{n}(\zeta)x^{n-1}\right)+\cdots, 
\end{multline*}
where the ellipsis denotes terms of order at most $n+1$ in $x$.
Picking out the top coefficient gives the recursion
$$p_{n-1}(\zeta)f_{n+1}^{n+1}(\zeta)=\left(1+\frac \zeta 2\right)^{\chi(n \text{{ odd}})}p_n(\zeta)f_{n}^{n}(\zeta), $$ 
which is  solved by \eqref{flta}.
Picking out the next coefficient then yields
$$p_{n-1}(\zeta)f_{n}^{n+1}(\zeta)=\left(1+\frac \zeta 2\right)^{\chi(n \text{{ odd}})}p_n(\zeta)f_{n-1}^{n}(\zeta)+\frac 12\left(1+\frac \zeta 2\right)^{\big[\frac{n-1}2\big]}p_{n-1}(\zeta)p_n(\zeta), $$ 
which is solved by \eqref{fltb}.
\end{proof}

For the formulation of the next result, it is convenient to introduce the polynomials
$$\phi_n(x)={}_2F_1\left(\begin{matrix}-n/2,\,-(n-1)/2\\n+3/2\end{matrix};x\right), $$
$$\psi_n(x)={}_3F_2\left(\begin{matrix}-n/2,\,-(n+1)/2,\,(n+5)/4\\n+3/2,\,(n+1)/4\end{matrix};x\right) $$
of degree $[n/2]$, $[(n+1)/2]$, respectively.

We will need the identities
\begin{equation}\label{ppa}(3x+1)\psi_n(x)=\frac{3(3n+1)(3n+4)}{(2n+1)(2n+3)}\,x\phi_{n+1}(x)+(x-1)^2\phi_{n-1}(x),\end{equation}
\begin{equation}\label{ppb}(3x+1)^2\phi_n(x)=\frac{3(3n+2)(3n+5)}{(2n+1)(2n+3)}\,x\psi_{n+1}(x)+(x-1)^2\psi_{n-1}(x),\end{equation}
\begin{equation}\label{phr}\frac{3(3n+2)(3n+4)}{(2n+1)(2n+3)}\,x\phi_{n+1}(x)
=(9x-1)\phi_n(x)+(x-1)^2\phi_{n-1}(x),\end{equation}
which are straight-forward to verify.

\begin{proposition} \label{pvp}
For special values of $\zeta$, 
$P_n(x,\zeta)$ may be expressed as 
\begin{align}
\label{peva}P_n(x,0)&=x^n,\\
\label{pevb}P_n(x,-1)&=(-1)^{\chi(n\equiv 2\, \operatorname{mod}\, 4)} 2^{\delta_{n-1}}(x-1)^n, \\
\label{pevc}P_n(x,1)&=2^{\delta_ {n-1}-n}3^{\left[\frac n2\right]}A_{n+1}(1+x)^n\phi_n\left(\frac{(3x-1)(x-3)}{3(1+x)^2}\right),\\
\label{pevd}P_n(x,-2)&=\begin{cases}\displaystyle(-3/4)^{\frac n2}A_{n+1}(1-x)^{\frac n2}\phi_n\left(\frac{x+3}{3(1-x)}\right),& n \text{ even},\\[5mm]
\displaystyle -2^{-n-1}3^{\frac{n-1}2}C_{n+1}(1-x)^{\frac{n+1}2}\psi_n\left(\frac{x+3}{3(1-x)}\right),& n \text{ odd}.\end{cases}
\end{align}
\end{proposition}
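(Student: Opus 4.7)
The plan is to prove \eqref{peva}--\eqref{pevd} by induction on $n$ using the three-term recursion of Proposition \ref{prp}, case by case in $\zeta$, with the $\zeta=-2$ case requiring a limiting procedure.

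Part \eqref{peva} will be immediate from Proposition \ref{pfp}: in the expansion $P_n(x,\zeta) = \sum_k f_k^n(\zeta)\zeta^{n-k}x^k$, every term with $k<n$ vanishes at $\zeta=0$, while the top coefficient equals $f_n^n(0) = p_{n-1}(0) = 1$ by \eqref{flta} and Corollary \ref{pctc}. For \eqref{pevb}, I would specialize Proposition \ref{prp} at $\zeta=-1$: the factor $(\zeta+1)^2$ kills $C_n$, and because $2\zeta+1=\zeta/(\zeta+2)=-1$ the two $P_{n-1}$-factors in $A_n$ collapse to $P_{n-1}(-1,-1)^2$. Substituting the ansatz $P_n(x,-1) = \epsilon_n 2^{\delta_{n-1}}(x-1)^n$ then reduces the recursion to the two numerical identities $\delta_n + \delta_{n-2} = 2\delta_{n-1} + \chi(n\text{ even})$ (which is \eqref{lac} with $n\mapsto n-1$) and $\epsilon_{n+1}\epsilon_{n-1} = -(-1)^{\chi(n\text{ even})}$, both trivially verified on the four residues of $n$ modulo $4$; the base cases $n=0,1$ close the induction.

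For \eqref{pevc}, I would plug $\zeta=1$ into Proposition \ref{prp}. The auxiliary argument $u = \frac{(3x-1)(x-3)}{3(1+x)^2}$ of the ansatz vanishes precisely at $x=3=2\zeta+1$ and at $x=1/3=\zeta/(\zeta+2)$, so $\phi_n(0)=1$ delivers closed evaluations of $P_n(3,1)$ and $P_n(1/3,1)$. Rewriting both sides in terms of $\phi_n(u)$ and clearing the common factor $(x-3)(x-1/3)$ from $A_n$ reduces the recursion to the three-term identity \eqref{phr} for $\phi_n$ evaluated at $u$. The overall scalar prefactor is then fixed by matching the leading $x$-coefficient via \eqref{flta} and Corollary \ref{acc}, which identifies $p_n(1)$ as the appropriate multiple of $A_{n+1}$.

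Part \eqref{pevd} will be the most delicate, since at $\zeta=-2$ the factor $(1+\zeta/2)^{\chi(n\text{ even})}$ in $A_n$ vanishes while $\zeta/(\zeta+2)\to\infty$, so Proposition \ref{prp} must be treated in the limit. Setting $\zeta=-2+\epsilon$ and using \eqref{ppcb} to extract the leading $\epsilon$-behavior of $P_{n-1}(\zeta/(\zeta+2),\zeta)$ should produce a well-defined limiting three-term recursion for $P_n(x,-2)$. Under the substitution $u = \frac{x+3}{3(1-x)}$, which sends $x=-3$ to $u=0$, this limit coincides with \eqref{phr} for even $n$, giving the form $(1-x)^{n/2}\phi_n(u)$, whereas the parity flip in the vanishing factor forces the odd case to alternate $\phi_n$ with $\psi_n$ via the mixed recursions \eqref{ppa}--\eqref{ppb}, giving $(1-x)^{(n+1)/2}\psi_n(u)$. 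The prefactors $(-3/4)^{n/2}A_{n+1}$ and $-2^{-n-1}3^{(n-1)/2}C_{n+1}$ are pinned down by evaluating at $x=-3$ (where $\phi_n(0)=\psi_n(0)=1$ and \eqref{ppca} applies) and identifying $p_n(-2)$ via Corollary \ref{acc} with $A_{n+1}$ for even $n$ and with $(-1)^{\binom{n+1}{2}}C_{n+1}$ for odd $n$. The main obstacle will be precisely this limit argument: tracking how the competing zero and pole factors conspire to produce a finite, correctly normalized three-term recursion matching \eqref{ppa}--\eqref{phr} in each parity.
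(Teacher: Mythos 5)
Your proposal follows essentially the same route as the paper: \eqref{peva} from the expansion of Proposition \ref{pfp}, and \eqref{pevb}--\eqref{pevd} by substituting the ansatz into the three-term recursion \eqref{ttr} specialized at $\zeta=-1$, $1$, and (after multiplying by $(1+\zeta/2)^{[\frac{n+1}2]}$ and taking the limit) $\zeta=-2$, with the constants pinned down via \eqref{ppc}, Proposition \ref{psv} and Corollary \ref{acc}. One small correction to your sketch of \eqref{pevd}: at $\zeta=-2$ neither parity reduces to the pure-$\phi$ recursion \eqref{phr} (that identity is only used for \eqref{pevc}); since the limiting recursion links $P_{n+1},P_n,P_{n-1}$ whose types alternate between $\phi$ and $\psi$, it reduces to the mixed identity \eqref{ppb} when $n$ is even and to \eqref{ppa} when $n$ is odd.
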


Applying \eqref{pis} to \eqref{pevd} one may also evaluate $P_n(x,-1/2)$.

\begin{proof}
The identity \eqref{peva} follows from \eqref{pfd} and \eqref{fev}.

If we let $\zeta=-1$ in \eqref{ttr}, we obtain after simplification
$$(-1)^{\chi(n \text{ even})}2^{\chi(n \text{ odd})}P_{n-1}(-1,-1)P_{n+1}(x,-1)
=(x-1)P_{n}(-1,-1)P_{n}(x,-1). $$
It is easy to check that this is solved by \eqref{pevb}, using
 \eqref{lac} to simplify the exponent of $2$. 

Similarly,  if we 
let $\zeta=-1$ in \eqref{ttr} and
substitute
\eqref{pevc}, we are reduced to the recursion
 \eqref{phr}. For this computation, it is useful to note that
$$\frac{A_nA_{n+2}}{A_{n+1}^2}=\frac{3(3n+2)(3n+4)}{4(2n+1)(2n+3)} $$
and that
$$y=\frac{(3x-1)(x-3)}{3(1+x)^2}\quad \Longrightarrow\quad
y-1=-\frac{16x}{3(x+1)^2}, \quad 9y-1= \frac{8(x^2-4x+1)}{(x+1)^2}.
 $$

To prove \eqref{pevd}, 
we  multiply \eqref{ttr} by $(1+\zeta/2)^{[\frac{n+1}2]}$ and then let $\zeta\rightarrow -2$. This gives
\begin{multline}\label{dr}
(-3)^{\chi(n \text{ even})}(x+3)X_{n-1}P_{n-1}(-3,-2)P_{n+1}(x,-2)\\
=X_n\left(-9P_{n-1}(-3,-2)P_n(x,-2)+x^2P_n(-3,-2)P_{n-1}(x,-2)\right),
\end{multline}
where
$$X_n=\lim_{\zeta\rightarrow -2}\left(1+\frac\zeta 2\right)^{\left[\frac{n+1}2\right]}P_n\left(\frac\zeta{\zeta+2},\zeta\right).$$
It follows from \eqref{cpe} and Corollary \ref{acc}  that
\begin{equation}\label{x}X_n=\begin{cases} (-1)^{\frac n2}2^{-1}C_{n+1}, & n \text{ even},\\
-2^{-1}A_{n+1}, & n \text{ odd}.\end{cases}
 \end{equation}
It is now straight-forward to check that  substituting
\eqref{pevd} in \eqref{dr} yields \eqref{ppa} when $n$ is odd and \eqref{ppb} when $n$ is even. For this computation, one needs  the identities
$$\frac{A_{n+2}C_n}{A_{n+1}C_{n+1}}=\frac{3(3n+1)(3n+4)}{4(2n+1)(2n+3)}, $$
$$\frac{A_nC_{n+2}}{A_{n+1}C_{n+1}}=\frac{3(3n+2)(3n+5)}{4(2n+1)(2n+3)}, $$
and also that, 
$$z=\frac{x+3}{3(1-x)}\quad\Longrightarrow\quad z-1=\frac{4x}{3(1-x)},\quad 3z+1=\frac 4{1-x}.$$
\end{proof}

\begin{proposition}\label{psv}
The polynomial $p_n$ has degree $n(n+1)/2$ and leading coefficient 
$2^{-\left[\frac {n+2}2\right]}\binom{2n+2}{n+1}.$
Moreover,  $p_n$ assumes the  special values
\begin{align}
\label{seva} p_n(-1)&=(-1)^{\chi(n\equiv 1\, \operatorname{mod}\, 4)} 2^{\delta_{n+1}}\\
\label{sevb} p_n(1)&=2^{\delta_{n+1}}A_{n+1}\\
\label{sevc} p_n(-2)&= \begin{cases}
A_{n+1}, & n \text{ \emph{even}},\\
(-1)^{\frac{n+1}2}C_{n+1}, & n \text{ \emph{odd}},
\end{cases} \\
\label{sevd}p_n(-1/2)&=\begin{cases}2^{-\frac 12(n^2+2n+2)}C_{n+1}, & n \text{ \emph{even}},\\
(-1)^\frac{n+1}2 2^{-\frac 12(n+1)^2}A_{n+1}, & n \text{ \emph{odd}}.\end{cases} 
\end{align}
\end{proposition}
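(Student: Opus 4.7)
The plan is to use \eqref{ppca}, $P_n(2\zeta+1,\zeta)=(1+2\zeta)^{[n/2]}p_n(\zeta)$, to transfer information from the two-variable polynomial $P_n$ to $p_n$, invoking the structural Proposition \ref{pfp} for leading-term data and Proposition \ref{pvp} for special-value data. This handles the degree, the leading coefficient, and the three evaluations $\zeta=-1,1,-2$; the remaining point $\zeta=-1/2$ is then recovered via the reciprocal polynomial $\tilde p_n$.

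For the degree and leading coefficient I would substitute the expansion $P_n(x,\zeta)=\sum_{k=0}^n f_k^n(\zeta)\,\zeta^{n-k}x^k$ of Proposition \ref{pfp} into \eqref{ppca} with $x=2\zeta+1$. Since each $f_k^n$ has degree $2\delta_n$ with leading coefficient $\binom{n+k}{n}/2^k$, every term $f_k^n(\zeta)\,\zeta^{n-k}(2\zeta+1)^k$ is a polynomial in $\zeta$ of degree $2\delta_n+n=[n(n+2)/2]$ (by \eqref{eli}), and the leading coefficients sum via the hockey-stick identity to $\sum_{k=0}^n\binom{n+k}{n}=\binom{2n+1}{n+1}$. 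Dividing by $(1+2\zeta)^{[n/2]}$, whose leading term is $2^{[n/2]}\zeta^{[n/2]}$, then yields $\deg p_n=n(n+1)/2$ with leading coefficient $\binom{2n+1}{n+1}/2^{[n/2]}$; using $\binom{2n+2}{n+1}=2\binom{2n+1}{n+1}$ and $[n/2]+1=[(n+2)/2]$ rewrites this as $2^{-[(n+2)/2]}\binom{2n+2}{n+1}$.

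Once the degree is known, the three evaluations $\zeta=-1,1,-2$ follow by substituting into \eqref{ppca} and reading off $P_n$ from Proposition \ref{pvp}. At $\zeta=-1$, \eqref{pevb} gives $P_n(-1,-1)=(-1)^{\chi(n\equiv 2\bmod 4)}2^{\delta_{n-1}}(-2)^n$; dividing by $(-1)^{[n/2]}$ and using $\delta_{n+1}=\delta_{n-1}+n$ together with a modulo-$4$ sign check gives \eqref{seva}. At $\zeta=1$, the argument of $\phi_n$ in \eqref{pevc} vanishes at $x=3$, so $\phi_n(0)=1$ collapses the formula to $P_n(3,1)=2^{\delta_{n-1}+n}3^{[n/2]}A_{n+1}$, giving \eqref{sevb}. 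At $\zeta=-2$, the argument of $\phi_n$ or $\psi_n$ in \eqref{pevd} also vanishes at $x=-3$, and $(1-(-3))^{n/2}=4^{n/2}$ (or $4^{(n+1)/2}$) cancels the prefactor to produce $P_n(-3,-2)=(-3)^{n/2}A_{n+1}$ in the even case and $-3^{(n-1)/2}C_{n+1}$ in the odd case; dividing by $(-3)^{[n/2]}$ and tracking the sign gives \eqref{sevc}.

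The value at $\zeta=-1/2$ cannot be read off directly from \eqref{ppca} because $(1+2\zeta)^{[n/2]}$ vanishes there. However, once the degree $n(n+1)/2$ has been established, the definition of $\tilde p_n$ gives the reciprocal symmetry $p_n(\zeta)=\zeta^{n(n+1)/2}\tilde p_n(1/\zeta)$, so that $p_n(-1/2)=(-1/2)^{n(n+1)/2}\tilde p_n(-2)$. To evaluate $\tilde p_n(-2)$ I would set $x=\zeta/(\zeta+2)$ in \eqref{cpe} and apply \eqref{ppcb}; this produces a companion of \eqref{pe} expressing the ``swapped'' specialization $S_n(\underbrace{2\zeta+1,\dots}_n,\underbrace{\zeta/(\zeta+2),\dots}_{n+1};\zeta)$ in terms of $\tilde p_n(\zeta)$ times an explicit product of elementary factors. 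Taking $\zeta\to -2$ and matching against the second limit in Corollary \ref{acc} (whose right-hand side is the corresponding $A$- or $C$-number) solves for $\tilde p_n(-2)$, and substituting into the symmetry yields \eqref{sevd} after parity bookkeeping. The only genuine obstacle is this $\zeta=-1/2$ case, where \eqref{ppca} degenerates and one is forced to detour through $\tilde p_n$ and Corollary \ref{acc}; the remainder of the proof is routine substitution and careful modulo-$4$ sign analysis.
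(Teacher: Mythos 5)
Your proposal is correct and follows essentially the same route as the paper: the degree and leading coefficient come from Proposition \ref{pfp} (you sum the top coefficients of all the $f_k^n$ at $x=2\zeta+1$ via the hockey-stick identity, whereas the paper reads the same data off the single coefficient $f_{n+1}^{n+1}$ through \eqref{flta}, a cosmetic difference), the values at $\zeta=-1,1,-2$ come from Proposition \ref{pvp} together with \eqref{ppca}, and the value at $\zeta=-1/2$ is obtained by passing to $\tilde p_n(-2)$ via the swapped specialization and Corollary \ref{acc}, which is exactly how the paper's quantity \eqref{x} is derived. The only caveat is that the exponent $n(n+1)-\bigl[\frac{(n-1)^2}{4}\bigr]$ in the second identity of Corollary \ref{acc} should read $n(n+1)-\bigl[\frac{n^2}{4}\bigr]$ for the matching to produce a finite nonzero value of $\tilde p_n(-2)$; this is an issue with the stated corollary rather than with your argument.
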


\begin{proof} 
The first statement follows \eqref{lcf} and \eqref{flta}.
The evaluations \eqref{seva}, \eqref{sevb} and \eqref{sevc} follow from 
Proposition \ref{pvp}, using \eqref{ppca}. Finally, 
by \eqref{ppcb},
\eqref{sevd}
is equivalent to \eqref{x}.
\end{proof}

Writing $p_n(\zeta)=\sum_{k=0}^{n(n+1)/2}a_k\zeta^k$, we have simple formulas for $a_0$ (Corollary \ref{pctc}) and  $a_{n(n+1)/2}$.
It is easy to deduce from Proposition \ref{pyr} that
$$a_1=\begin{cases}\displaystyle \frac{7n(n+2)}8, & n \text{ {even}},\\[5mm]
\displaystyle \frac{7(n^2+2n+3)}8, & n \text{ {odd}}.
\end{cases} $$
 It also seems that
$$a_{\frac{n(n+1)}2-1}=\begin{cases}
\displaystyle \frac{n^2(7n+10)}{2^{(n+8)/2}(n+2)}\binom{2n+2}{n+1}, & n \text{{ even}},\\[5mm]
\displaystyle \frac{(n+1)(7n^2+3n-6)}{2^{(n+7)/2}(n+2)}\binom{2n+2}{n+1},& n \text{ {odd}}.
\end{cases}
$$
One can probably prove this  using Proposition \ref{prp}, though we have not worked out the details.  It does not seem that the other coefficients admit 
such simple expressions. 

Finally, we use the first part of Proposition \ref{psv}
to pick out the leading term on both sides of \eqref{pyri}.
This allows us to compute the leading term in the polynomials 
 $y_n$. In particular, we may conclude that 
$\deg Y=\deg A$ in \eqref{le}.

\begin{corollary}\label{ydc}
The polynomial $y_n$ has degree $\frac{n(n-1)}2+2 $ and leading coefficient
$$\frac {(2n+2)!(2n)!}{2^{n+\chi(n\text{\emph{ odd}})}(n+2)!(n+1)!^2n!}. $$
\end{corollary}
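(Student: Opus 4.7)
The plan is to extract the coefficient of the highest-degree term from both sides of the identity \eqref{pyri} given by Proposition~\ref{pyr}. The three ingredients I need are already in place: from Proposition~\ref{psv} the polynomial $p_n$ has degree $n(n+1)/2$ and leading coefficient $2^{-[(n+2)/2]}\binom{2n+2}{n+1}$, and from Corollary~\ref{pctc} one has $p_n(0)=1$, so $\tilde p_n$ has the same degree as $p_n$ with leading coefficient $1$. With these, both the left-hand side $(\zeta+1)^2 p_{n+1}(\zeta)p_{n-1}(\zeta)$ and the known summand $(1+2\zeta)^{1+\chi(n\text{ even})}(1+\zeta/2)^{1+\chi(n\text{ odd})}p_n(\zeta)^2$ of the right-hand side have degree exactly $n^2+n+3$ (note that the polynomial prefactor of the latter always has total degree $3$, regardless of the parity of~$n$).

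It follows that $\zeta^{n+1}\tilde p_n(\zeta) y_n(\zeta)$ has degree at most $n^2+n+3$, hence $\deg y_n \le n(n-1)/2+2$. Equating coefficients of $\zeta^{n^2+n+3}$ on the two sides, and using $[(n+3)/2]+[(n+1)/2]=n+1+\chi(n\text{ odd})$ to consolidate the powers of~$2$, one obtains
\begin{equation*}
\mathrm{lc}(y_n)=\frac{1}{2^{n+1+\chi(n\text{ odd})}}\left[\binom{2n+4}{n+2}\binom{2n}{n}-\binom{2n+2}{n+1}^2\right].
\end{equation*}
A short binomial computation using $\binom{2n+4}{n+2}=\frac{2(2n+3)}{n+2}\binom{2n+2}{n+1}$ and $\binom{2n}{n}=\frac{n+1}{2(2n+1)}\binom{2n+2}{n+1}$ collapses the bracketed quantity to $\binom{2n+2}{n+1}^2/((n+2)(2n+1))$. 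Rewriting the resulting expression in factorials yields precisely the formula stated in the corollary, and since the leading coefficient is manifestly positive, $\deg y_n$ indeed equals $n(n-1)/2+2$.

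There is no conceptual obstacle: the whole argument is a combination of degree counting and one elementary binomial simplification. The only mildly delicate point is to keep careful track of the two $\chi$-functions through the exponents of~$2$, so that the resulting formula matches the stated one for both parities of~$n$ simultaneously.
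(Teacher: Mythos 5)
Your proposal is correct and follows essentially the same route as the paper, which likewise obtains Corollary \ref{ydc} by using the first part of Proposition \ref{psv} to pick out the leading term on both sides of \eqref{pyri} (together with $p_n(0)=1$ from Corollary \ref{pctc} for the leading coefficient of $\tilde p_n$). Your bookkeeping of the powers of $2$ and the binomial simplification both check out against the stated formula.
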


\subsection{Zeroes}
\label{zs}

In this Section we  prove Propositions \ref{zp} and \ref{zcp}.

\begin{proof}[Proof of \emph{Proposition \ref{zp}}]
We proceed by induction on $n$, treating the cases of odd and even $n$ separately. Assume that the statement holds for the zeroes of $p_{2n}$ and $p_{2n+1}$.
Let  $a_1,\dots,a_n$ denote the real zeroes of  $p_{2n}$, 
$b_1,\dots,b_{n+1}$  the real zeroes of  $\tilde p_{2n+1}$, and 
 $c_1,\dots,c_{n+1}$  the real zeroes of  $p_{2n+2}$.
We  assume that
\begin{equation}\label{abz}b_1<a_1<b_2<a_2<\dots<a_n<b_{n+1}<-2, \end{equation}
and need to prove
$$b_1<c_1<b_2<c_2<\dots<b_{n+1}<c_{n+1}<-2. $$
Since, by Proposition \ref{psv}, $p_{2n}$ has positive leading coefficient, it follows from \eqref{abz} 
that
$(-1)^{n+i+1}p_{2n}(b_i)>0. $
Moreover, by Proposition \ref{pyr}, if $\tilde p_{2n+1}(\zeta)=0$, then
$$(\zeta+1)^2p_{2n}(\zeta)p_{2n+2}(\zeta)=(1+2\zeta)\left(1+\frac\zeta 2\right)^2p_{2n+1}(\zeta)^2. $$
We conclude that
$(-1)^{n+i}p_{2n+2}(b_i)>0. $
Thus,  $p_{2n+2}$ has one zero between each consecutive 
pair of points $b_i,\,b_{i+1}$. Since, by \eqref{sevc},
$p_{2n+2}(-2)>0$, the remaining zero lies between $b_{n+1}$ and $-2$. 
The induction step for odd $n$ is similar, and we do not give the details.
\end{proof}

\begin{lemma}\label{sl}
Fix $\zeta$ in the interval $-2<\zeta<-1/2$ and
assume \emph{Conjecture~\ref{nvc}}. Then,  $p_n(\zeta)$ is negative if $n\equiv 1\ \operatorname{mod}\ 4$ and positive else. Further, $\tilde p_n(\zeta)$ is negative if $n\equiv 2\ \operatorname{mod}\ 4$ and positive else.
Moreover, $P_n(0,\zeta)$ is positive if  $n\equiv 0\ \operatorname{mod}\ 4$ and negative else. Finally, the leading coefficient of $P_n(x,\zeta)$ is 
negative if  $n\equiv 2\ \operatorname{mod}\ 4$ and positive else.
\end{lemma}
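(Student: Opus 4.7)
The plan is to exploit Conjecture \ref{nvc}, which guarantees that $p_n$ has constant sign on the open interval $(-2,-1/2)$, so each sign claim reduces to evaluation at one convenient point together with sign tracking through the algebraic relations already established.

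First, for the assertion about $p_n(\zeta)$: since $-1 \in (-2,-1/2)$ and $p_n$ has no zero there, the sign of $p_n(\zeta)$ on the interval coincides with the sign of $p_n(-1)$, which is read off from the special value \eqref{seva} as $(-1)^{\chi(n\equiv 1\bmod 4)}$. Comparing the four residue classes modulo $4$ gives precisely the pattern claimed. Next, for $\tilde p_n(\zeta) = \zeta^{n(n+1)/2} p_n(1/\zeta)$: the interval $(-2,-1/2)$ is stable under the involution $\zeta \mapsto 1/\zeta$, so $p_n(1/\zeta)$ has the same sign as $p_n(\zeta)$ just determined, while $\zeta^{n(n+1)/2}$ contributes the sign $(-1)^{n(n+1)/2}$. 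A short case analysis modulo $4$ combines these two signs into the desired statement.

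For $P_n(0,\zeta)$, I will apply \eqref{pept}, which gives
\[
P_n(0,\zeta) = \zeta^n\left(\frac{2\zeta+1}{2}\right)^{[n/2]} \tilde p_{n-1}(\zeta).
\]
On $(-2,-1/2)$ we have $\zeta<0$ and $2\zeta+1<0$, so the signs of the first two factors are $(-1)^n$ and $(-1)^{[n/2]}$ respectively; the sign of $\tilde p_{n-1}(\zeta)$ is already known from the previous step. Multiplying these three signs in each residue class modulo $4$ yields the claim that $P_n(0,\zeta)>0$ iff $n\equiv 0\bmod 4$. Finally, for the leading coefficient of $P_n(x,\zeta)$ in $x$, formula \eqref{flta} in Proposition \ref{pfp} gives $f_n^n(\zeta)=(1+\zeta/2)^{[n/2]} p_{n-1}(\zeta)$; since $1+\zeta/2\in(0,3/4)$ is positive on the interval, the sign equals that of $p_{n-1}(\zeta)$, and a last case distinction modulo $4$ matches the claimed pattern.

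There is no genuine obstacle here beyond bookkeeping: every ingredient (constancy of sign from Conjecture \ref{nvc}, the evaluation \eqref{seva}, the reversal identity defining $\tilde p_n$, and the formulas \eqref{pept} and \eqref{flta}) is already in hand. The only care needed is to keep the four parities modulo $4$ straight and to verify that each composite sign calculation reproduces exactly the dichotomy stated in the lemma; this is routine and can be displayed in a single four-row table for each of the four assertions.
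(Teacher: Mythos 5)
Your proposal is correct and follows essentially the same route as the paper: constancy of sign on $(-2,-1/2)$ from Conjecture \ref{nvc} plus the evaluation \eqref{seva} at $\zeta=-1$ for $p_n$, the reversal identity (with the interval stable under $\zeta\mapsto\zeta^{-1}$) for $\tilde p_n$, and \eqref{pept} and \eqref{flta} for the last two assertions. The sign bookkeeping in each residue class modulo $4$ checks out.
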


\begin{proof} Since we assume that $p_n$ does not vanish in the interval 
$-2<\zeta<-1/2$, $p_n(\zeta)$ has the same sign as $p_n(-1)$, so the first statement is obtained from \eqref{seva}. The sign of $\tilde p_n(\zeta)$  is determined as  a consequence. The remaining statements follow using \eqref{flta} and \eqref{pept}.
\end{proof}

\begin{proof}[Proof of \emph{Proposition \ref{zcp}}]
  For fixed $\zeta$, with $-2<\zeta<-1/2$ and $\zeta\neq -1$, let
$a_1,\dots,a_{n-1}$ denote the  zeroes of  $P_{n-1}$, 
$b_1,\dots,b_{n}$  the  zeroes of  $P_n$, and 
 $c_1,\dots,c_{n+1}$  the  zeroes of  $P_{n+1}$.
By induction, we  assume
\begin{equation}\label{xabz}0<b_1<a_1<b_2<a_2<\dots<a_{n-1}<b_{n}, \end{equation}
and  prove
$$0<c_1<b_1<c_2<\dots<b_{n}<c_{n+1}. $$

Using Lemma \ref{sl}, we deduce from \eqref{xabz} that
$$(-1)^{i+\chi(n\equiv 1\,\operatorname {mod}\,4)}P_{n-1}(b_i,\zeta)>0. $$
It follows from \eqref{ttr} that, if $P_n(x,\zeta)=0$, then
$$
P_{n+1}(x,\zeta)=\frac{2^{\chi(n \text{ even})}\zeta(\zeta+1)^2x^2\tilde p_{n}(\zeta)p_{n}(\zeta)}{(x-2\zeta-1)((\zeta+2)x-\zeta)\tilde p_{n-1}(\zeta)p_{n-1}(\zeta)}\,P_{n-1}(x,\zeta). $$
Again using Lemma \ref{sl},
$$(-1)^n\frac{\tilde p_{n}(\zeta)p_{n}(\zeta)}{\tilde p_{n-1}(\zeta)p_{n-1}(\zeta)}>0. $$
Combining these facts we find that
$$(-1)^{\chi(n\not\equiv 3\,\operatorname {mod}\,4)+i}P_{n+1}(b_i,\zeta)>0. $$
Thus, $P_{n+1}$ has a zero between any two consecutive $b_i$. Moreover, yet again using  Lemma \ref{sl},
$P_{n+1}(0,\zeta)P_{n+1}(b_1,\zeta)<0$, so there is an additional zero between $0$ and $b_1$. Finally,  $P_{n+1}(b_n,\zeta)$ and the leading coefficient of $P_{n+1}(x,\zeta)$ have opposite signs, so the final zero is to the right of $b_n$.
\end{proof}

\section{Return to three-colour model}
\label{rtcs}

\subsection{Uniformization of $\Phi_n$}
After the long detour in  \S \ref{sfs} and \S \ref{pps}, we are now ready to apply our results to the three-colour model. First, we express the function $\Phi_n$, introduced in \S \ref{phs},  in terms of the polynomials $P_{n-1}$. The following identities will be useful.

 \begin{lemma}\label{ztl}
With $\zeta$ as in \eqref{z},
\begin{align}\nonumber 2\zeta+1&=\frac{\tha(-p\om,\om;p^2)^2}{\tha(-\om,p\om;p^2)^2}, \\
\nonumber \zeta+2&=p\frac{\tha(-1,-\om;p^2)\tha(\om;p^2)^2}{\tha(-p,-p\om ;p^2)\tha(p\om;p^2)^2}, \\
\label{ztc}\zeta+1&=-\frac{\tha(p,-p\om;p^2)}{\tha(-p,p\om;p^2)}, \\
\label{ztd}\zeta-1&=\frac{\tha(p,p\om;p^2)\tha(\om;p^2)^2}{\tha(-p,-p\om;p^2)\tha(-\om;p^2)^2}. \end{align}
\end{lemma}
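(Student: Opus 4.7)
My plan is to recognize each of the four quantities in Lemma \ref{ztl} as a value (or a simple algebraic function of a value) of an appropriate difference of the uniformizing map $\xi$ defined in \eqref{xi}. The base values $\xi(-1)=1$, $\xi(-\omega)=\zeta$, $\xi(1)=2\zeta+1$, $\xi(p)=\zeta/(\zeta+2)$ provided by Lemma \ref{xvl} yield directly $\xi(1)=2\zeta+1$, $\xi(1)-\xi(-\omega)=\zeta+1$, $\xi(-\omega)-\xi(-1)=\zeta-1$, and $\xi(-1)-\xi(p)=2/(\zeta+2)$. Each identity of the Lemma then reduces to evaluating the corresponding right-hand side using the defining formula \eqref{xi} or the explicit difference formula \eqref{xid}, together with standard theta function manipulations.

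For the first identity, substituting $t=1$ in \eqref{xi} makes the pairs $\omega t^{\pm}$ and $p\omega t^{\pm}$ collapse to repeated arguments, yielding the claimed squared ratio at once. For the third and fourth identities, I apply \eqref{xid} with $(s,t)=(1,-\omega)$ and $(-\omega,-1)$ respectively; the key simplifications are that $\theta(st^{\pm};p^2)$ reduces to a scalar multiple of $\theta(\pm\omega;p^2)^2$ via the identity $\theta(-\omega^2;p^2)=\omega^2\theta(-\omega;p^2)$ (a consequence of the inversion rule $\theta(x^{-1};p^2)=-x^{-1}\theta(x;p^2)$), while the factors $\theta(p\omega s^{\pm},p\omega t^{\pm};p^2)$ simplify using $\theta(-p\omega^{\pm 2};p^2)=\theta(-p\omega;p^2)$, which follows from combining inversion with the quasi-periodicity $\theta(p^2 x;p^2)=-x^{-1}\theta(x;p^2)$ and $\omega^3=1$. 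After cancelling common factors in numerator and denominator, the claimed expressions for $\zeta+1$ and $\zeta-1$ emerge directly.

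For the second identity, I apply \eqref{xid} with $(s,t)=(-1,p)$ to evaluate $\xi(-1)-\xi(p)$, then invert to obtain $\zeta+2=2/(\xi(-1)-\xi(p))$. Matching the resulting expression against the stated form of $\zeta+2$ requires one additional simplification: after cross-multiplying and cancelling common factors, the equality reduces to $-2\omega\,\theta(-\omega,p\omega,-p\omega;p^2)=\theta(-1,p,-p;p^2)$, which by the constant-product identities \eqref{tq} becomes $-2\omega\cdot(-\omega^2)=2$, a trivial tautology. The main obstacle throughout is clerical — tracking the many factors of $\omega$ and signs through repeated applications of the quasi-periodicity and inversion relations — but no new theta function identity is required beyond those already collected in \S\ref{tfs}; the proof is a uniform application of \eqref{xid} together with \eqref{tq}.
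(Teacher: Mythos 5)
Your proposal is correct and follows essentially the same route as the paper: the paper's proof likewise derives all four identities from Lemma \ref{xvl}, obtaining the last two by writing $\zeta+1=\xi(1)-\xi(-\om)$ and $\zeta-1=\xi(-\om)-\xi(-1)$ and applying \eqref{xid}, with the remaining simplifications reducing to the inversion/quasi-periodicity rules and \eqref{tq} exactly as you describe.
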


\begin{proof}
All four identities  follow from
 Lemma \ref{xvl}, in the last two cases by writing
 $\zeta+1=\xi(1)-\xi(-\om)$, $\zeta-1=\xi(-\om)-\xi(-1)$ and using
 \eqref{xid}.
\end{proof}

We note in passing that  Lemma \ref{ztl} can be used to prove
\eqref{omp}. It is enough to check that, after elementary simplification,
$$\frac{(2\zeta+1)(\zeta+2)(\zeta-1)^4}{\zeta(\zeta+1)^4}=p\omega\frac{\tha(\om;p^2)^{12}\tha(p\om;p^2)^4}{\tha(-\om,-p\om;p^2)^8}=3^6p\frac{(p^3;p^3)_\infty^{12}}{(p;p)_\infty^{12}}; $$
this implies \eqref{omp} in view of \eqref{tr}.

\begin{lemma}\label{ecl} One has
$$\zeta(\zeta+1)^4=2\eta^3, $$
where
$$\eta=-\frac{\tha(p;p^2)\tha(-p\om;p^2)^2}{\tha(p\om;p^2)\tha(-p;p^2)^2}. $$
\end{lemma}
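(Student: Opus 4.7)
My plan is to verify the identity by direct computation using the explicit theta-function expressions for $\zeta$ and $\zeta+1$ already recorded in Lemma~\ref{ztl}, then reducing the result to the pair of normalization identities \eqref{tq}.

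First, from \eqref{ztc} in Lemma~\ref{ztl} I would raise to the fourth power to obtain
$$(\zeta+1)^{4}=\frac{\theta(p;p^2)^{4}\theta(-p\omega;p^2)^{4}}{\theta(-p;p^2)^{4}\theta(p\omega;p^2)^{4}}.$$
Cubing the definition of $\eta$ and dividing then gives, after cancellation,
$$\frac{2\eta^{3}}{(\zeta+1)^{4}}=-\frac{2\,\theta(p\omega;p^2)\theta(-p\omega;p^2)^{2}}{\theta(p;p^2)\theta(-p;p^2)^{2}}.$$
Thus the lemma is equivalent to showing that the right-hand side above equals $\zeta$.

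Second, I would substitute the definition \eqref{z} of $\zeta$, so that the task reduces to the single identity
$$\omega^{2}\,\theta(-1;p^2)\theta(p;p^2)\theta(-p;p^2)
=-2\,\theta(-\omega;p^2)\theta(p\omega;p^2)\theta(-p\omega;p^2),$$
obtained after cross-multiplying and cancelling the common factor $\theta(-p\omega;p^2)/\theta(-p;p^2)$. This is precisely the combination of \eqref{tqa} and \eqref{tqb}: the left-hand side equals $2\omega^{2}$, while the right-hand side equals $-2\cdot(-\omega^{2})=2\omega^{2}$.

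There is essentially no obstacle in this argument; the only point to be careful about is the condensed-notation convention, so that $\theta(-1,\pm p;p^2)$ is correctly read as the triple product $\theta(-1;p^2)\theta(p;p^2)\theta(-p;p^2)$ (and analogously for the $\omega$-shifted version). Once that is unambiguous, the whole proof is a two-line comparison of theta-function products.
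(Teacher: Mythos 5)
Your proof is correct and follows exactly the route the paper intends: the paper's own proof consists of the single sentence that the lemma ``follows easily from \eqref{ztc}, using \eqref{tq}'', and your computation (combining the definition \eqref{z} of $\zeta$ with \eqref{ztc} and reducing to the evaluations \eqref{tqa}, \eqref{tqb}) is precisely the verification being left to the reader. No issues.
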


\begin{proof}
This follows easily from \eqref{ztc}, using   \eqref{tq}.
\end{proof}

\begin{proposition}\label{phut} In the notation above,
\begin{equation}\label{phu}\Phi_n(t)=B_{n}\theta(t,\pm pt;p^2)\theta(t^{\pm},pt^{\pm},p\omega t^{\pm};p^2)^{n-1}P_{n-1}(\xi(t),\zeta), \end{equation}
where 
$$B_n=\begin{cases}\displaystyle
-\frac{p\om^{1-n^2-n}\tha(-\om;p^2)}{\tha(p,-p;p^2)\tha(p\om;p^2)^{2n-2}\eta^{\frac{n^2}4}(2\zeta+1)^{\frac {n-2}2}}, & n \text{\emph{ even}},\\[5mm]
\displaystyle\frac{\om^{-n^2-n}}{\tha(p;p^2)\tha(p\om;p^2)^{2n-2}\eta^{\frac{n^2-1}4}(2\zeta+1)^{\frac {n-1}2}}, & n \text{\emph{ odd}}.
\end{cases}
$$
\end{proposition}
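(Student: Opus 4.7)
The plan is to apply Corollary \ref{doc}, which asserts that the space $V_{n-1}(\underbrace{1,\ldots,1}_{n-1},\underbrace{p,\ldots,p}_{n-1})$ is one-dimensional and, by \eqref{piv}, contains $\Phi_n$. Denoting the right-hand side of \eqref{phu} by $\Phi^*(t)$, it suffices to show that $\Phi^*$ lies in the same space and is nonzero; then $\Phi^*$ is a scalar multiple of $\Phi_n$, and the scalar is determined by matching at a single value.

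For the structural conditions (i) and (iii) of Proposition \ref{php}, I would proceed through the uniformization of \S\ref{us}. Using $\theta(p\omega t^{\pm};p^2)\xi(t)=C\theta(\omega t^{\pm};p^2)$ (a special case of \eqref{eu}), each monomial in the expansion of $\theta(p\omega t^{\pm};p^2)^{n-1}P_{n-1}(\xi(t),\zeta)$ is a $D_n$ theta function of nome $p^2$ (by Lemma \ref{tfl}). Combining with $\theta(t^{\pm},pt^{\pm};p^2)^{n-1}$ (contributing $2(n-1)$ more pairs, which vanish doubly at $t=1$ and $t=p$ respectively) and the $BC_1$ prefactor $\theta(t,\pm pt;p^2)$, the full expression is a $BC_{3n-2}$ theta function of nome $p^2$ with zeros of multiplicity $1+2(n-1)=2n-1$ at $t=1,p$.

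Property (ii) is the main obstacle. My strategy is to exhibit a nonzero element of $V_{n-1}(\underbrace{1,\ldots,1}_{n-1},\underbrace{p,\ldots,p}_{n-1})$ whose factored form matches $\Phi^*$ up to a constant. Consider $\Psi_{n-1}(t,t_2,\ldots,t_n,u_1,\ldots,u_{n-1})$; by Theorem \ref{met} and Lemma \ref{fl}, this is an element of $V_{n-1}(t_2,\ldots,u_{n-1})$ as a function of $t$. Taking the limit $(t_2,\ldots,t_n,u_1,\ldots,u_{n-1})\to(1,\ldots,1,p,\ldots,p)$ and dividing by the vanishing antisymmetric factors $\prod_{2\le i<j\le n}\theta(t_jt_i^{\pm};p^2)\prod_{1\le i<j\le n-1}\theta(u_ju_i^{\pm};p^2)$ appearing in \eqref{pss}, one extracts a nonzero element of the desired space. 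By \eqref{pss} and identity \eqref{cpe} of Proposition \ref{efp}, this element is proportional to $\theta(t,\pm pt;p^2)\theta(t^{\pm},pt^{\pm},p\omega t^{\pm};p^2)^{n-1}P_{n-1}(\xi(t),\zeta)$, so Corollary \ref{doc} yields $\Phi^*\propto\Phi_n$.

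The constant $B_n$ is determined by induction on $n$. The base case is immediate from property (v): since $P_0=1$, formula \eqref{phu} collapses to $\Phi_1(t)=B_1\theta(t,\pm pt;p^2)$, forcing $B_1=\omega/\theta(p;p^2)$, which matches the odd-$n$ formula (using $\omega^{-2}=\omega$). The inductive step substitutes \eqref{phu} into the recursion (iv): evaluating the left side at $t=\omega$ via $\xi(\omega)=\zeta$, and the right side via $\lim_{t\to p}$ with $\xi(p)=\zeta/(\zeta+2)$ (both from Lemma \ref{xvl}), then using \eqref{ppcb} for $P_{n-2}(\zeta/(\zeta+2),\zeta)$ together with the recursion \eqref{pyri} to handle the awkward quantity $P_{n-1}(\zeta,\zeta)$, and finally simplifying the theta-function constants by Lemmas \ref{ztl} and \ref{ecl} together with \eqref{tq}, yields the stated closed forms after splitting on the parity of $n$. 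I expect the principal difficulties to be the careful normalization in the limiting argument of stage~1 and the theta-function bookkeeping and parity-case distinctions in stage~2.
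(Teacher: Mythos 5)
Your overall strategy coincides with the paper's: both sides of \eqref{phu} are shown to lie in the one\mbox{-}dimensional space $V_{n-1}(1,\dots,1,p,\dots,p)$ of Corollary \ref{doc}, and the constant is then fixed by induction through property (iv) of Proposition \ref{php}, with base case from property (v). Your stage 1 supplies details the paper leaves implicit, and the idea (coalescing the parameters of $\Psi_{n-1}$ and invoking \eqref{pss} together with \eqref{cpe}) is sound; note only that dividing by the antisymmetric factors alone gives a zero limit, because the prefactor $\prod_{i\geq 2}\tha(t_i,\pm pt_i;p^2)$ in \eqref{pss} also vanishes as $t_i\to 1$ or $t_i\to p$ --- it is cleaner to pass to the polynomial side and take the coalescence limit inside the fixed finite-dimensional space $W_{n-1}$, where the vanishing conditions with multiplicity survive the limit automatically.

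The genuine problem is in your inductive step. Lemma \ref{xvl} gives $\xi(-\om)=\zeta$, not $\xi(\om)=\zeta$; in fact $\xi(\om)=0$, since $\tha(\om t^{\pm};p^2)$ contains the factor $\tha(1;p^2)=0$ at $t=\om$. Consequently, evaluating the right-hand side of \eqref{phu} at $t=\om$ produces $P_{n-1}(0,\zeta)$, which has the closed form \eqref{pept}; combined with \eqref{ppcb} this gives $P_{n-1}(0,\zeta)/P_{n-2}(\zeta/(\zeta+2),\zeta)=\zeta\bigl((\zeta+2)(2\zeta+1)/2\bigr)^{[(n-1)/2]}$, and the recursion for $B_n/B_{n-1}$ closes. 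Your version instead leads to the quantity $P_{n-1}(\zeta,\zeta)$, for which no evaluation is available anywhere in the paper, and \eqref{pyri} --- a relation among $p_{n+1}p_{n-1}$, $p_n^2$, $\tilde p_n$ and $y_n$ --- cannot supply one. As written the induction therefore cannot be completed; replacing $\xi(\om)=\zeta$ by $\xi(\om)=0$ and using \eqref{pept} repairs it and recovers exactly the paper's computation.
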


\begin{proof}
Both sides of \eqref{phu} belong to the space
 $$V_{n-1}(\underbrace{1,\dots,1}_{n-1},\underbrace{p,\dots,p}_{n-1}),$$
which is one-dimensional by Corollary \ref{doc}.
Thus, \eqref{phu} holds for some constant $B_n$, which we compute using 
part (iv) of  Proposition \ref{php}. 
We have
$$\Phi_n(\om)=(-1)^{n+1}B_n\om^{1-n}\tha(-p\om;p^2)\tha(p;p^2)^{n-1}\tha(\om;p^2)^{2n-1}\tha(p\om;p^2)^{3n-2}P_{n-1}(0,\zeta), $$
$$\frac{\Phi_{n-1}(t)}{\tha(t;p)^{2n-3}}\Bigg|_{t=p}=B_{n-1}p^{2-n}\om^{-n-1}
\tha(-1;p^2)\tha(\om;p^2)^{2n-4}P_{n-2}\left(\frac\zeta{\zeta+2},\zeta\right). $$
By \eqref{ppcb} and \eqref{pept},
$$\frac{P_{n-1}(0,\zeta)}{P_{n-2}(\zeta/(\zeta+2),\zeta)}=\zeta\left(\frac{(\zeta+2)(2\zeta+1)}2\right)^{\left[\frac{n-1}2\right]}. $$
We conclude that
\begin{equation*}\begin{split} \frac{B_n}{B_{n-1}}&=(-1)^np^{3\left[\frac{n-2}2\right ]+3-n}\om^2\frac{\tha(-1;p^2)\tha(\om;p^2)^{2n-4}}{\tha(-p\om;p^2)\tha(p,p\om;p^2)^{n-1}}\frac{1}{\zeta}\left(\frac2{(\zeta+2)(2\zeta+1)}\right)^{\left[\frac{n-1}2\right]}\\
&=\begin{cases}\displaystyle
-\frac{p\om^{n+1}\tha(-\om;p^2)}{\tha(-p;p^2)\tha(p\om;p^2)^2\eta^{\frac n 2 }}, & n \text{ even},\\[5mm]
\displaystyle -\frac{p^{-1}\om^{n-1}\tha(-p;p^2)}{\tha(-\om;p^2)\tha(p\om;p^2)^2\eta^{\frac {n-1} 2 }(2\zeta+1)}, & n \text{ odd},
\end{cases}
\end{split}\end{equation*}
where we used Lemma \ref{ztl} to simplify the expression.
It is  clear that this recursion can be solved as indicated;
 the starting value
$B_1={\om}/{\tha(p;p^2)} $
follows from part (v) of  Proposition \ref{php}.
\end{proof}

\begin{corollary}\label{tcpca}
When $t_i$ are as in \eqref{ti},  
\begin{multline*}
Z_n^{3C}(t_0,t_1,t_2)=\frac{\tha(\la\om^2,\la\om^{n+1};p)^2}{\tha(p;p^2)\eta^{\frac{n^2}4}\tha(\la^3;p^3)^{n^2+2n+3}}\\
\times\left(\zeta^{\frac n2}\tilde p_{n-1}(\zeta)\tha(-p\om,-\om^n\la^2;p^2)
-\om^{1-n}\la p_{n-1}(\zeta)\tha(-\om,-p\om^n\la^2;p^2)
\right)
\end{multline*}
for $n$ even, while for odd $n$
\begin{multline*}
Z_n^{3C}(t_0,t_1,t_2)=\frac{\tha(\la\om^2,\la\om^{n+1};p)^2}{\tha(p;p^2)\eta^{\frac{n^2-1}4}\tha(\la^3;p^3)^{n^2+2n+3}}\\
\times\left(p_{n-1}(\zeta)\tha(-p,-\om^n\la^2;p^2)
-\om^{-n}\la\zeta^{\frac{n-1}2}\tilde p_{n-1}(\zeta)\tha(-1,-p\om^n\la^2;p^2)
\right).
\end{multline*}
\end{corollary}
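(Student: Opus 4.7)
\emph{Proof proposal.} The corollary is a direct computation combining Corollary~\ref{tcpc} with the uniformization of $\Phi_n$ in Proposition~\ref{phut}. By Corollary~\ref{tcpc}, it suffices to evaluate the two limits
$$L_1:=\lim_{t\to 1}\frac{\Phi_n(t)}{\theta(t;p)^{2n-1}},\qquad L_p:=\lim_{t\to p}\frac{\Phi_n(t)}{\theta(t;p)^{2n-1}}.$$
Using the factorization $\theta(t;p)=\theta(t,pt;p^2)$ together with the reflection formula $\theta(t^{-1};p^2)=-t^{-1}\theta(t;p^2)$ and the identity $\theta(pt^{-1};p^2)=\theta(pt;p^2)$ (which follows from reflection and quasi-periodicity for the nome $p^2$), one computes
$$\theta(t^{\pm},pt^{\pm};p^2)=-t^{-1}\theta(t,pt;p^2)^2=-t^{-1}\theta(t;p)^2.$$
Substituting this into Proposition~\ref{phut} cancels all the $\theta(t;p)^{2n-1}$ singularities and yields
$$\frac{\Phi_n(t)}{\theta(t;p)^{2n-1}}=B_n(-t^{-1})^{n-1}\theta(-pt;p^2)\,\theta(p\omega t^{\pm};p^2)^{n-1}P_{n-1}(\xi(t),\zeta),$$
an expression manifestly regular at $t=1$ and $t=p$.

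Next, I would evaluate at the two limit points. By Lemma~\ref{xvl}, $\xi(1)=2\zeta+1$ and $\xi(p)=\zeta/(\zeta+2)$, so by \eqref{ppca} and \eqref{ppcb} the polynomial factor becomes $(2\zeta+1)^{[(n-1)/2]}p_{n-1}(\zeta)$ at $t=1$ and $\zeta^{n-1}(\zeta+2)^{-[n/2]}\tilde p_{n-1}(\zeta)$ at $t=p$. The theta function factors evaluate directly: at $t=1$ one has $\theta(-pt;p^2)\to\theta(-p;p^2)$ and $\theta(p\omega t^{\pm};p^2)\to\theta(p\omega,p\omega^2;p^2)$, while at $t=p$ quasi-periodicity gives $\theta(-pt;p^2)\to\theta(-p^2;p^2)$ and $\theta(p\omega t^{\pm};p^2)\to-\omega^2\theta(\omega;p^2)^2$.

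The final step is to substitute $L_1$, $L_p$ and the explicit constant $B_n$ from Proposition~\ref{phut} into Corollary~\ref{tcpc}, and simplify. The key cancellations are: the powers of $(2\zeta+1)$ and $(\zeta+2)$ appearing in $B_n$ and in the specializations \eqref{ppc} cancel each other out; the remaining theta function prefactors at the special arguments $\pm 1,\pm p,\pm\omega,\pm p\omega$ are then repackaged using Lemma~\ref{ztl} and Lemma~\ref{ecl} (in particular $\zeta(\zeta+1)^4=2\eta^3$) into the factor $\eta^{-n^2/4}$ (resp.\ $\eta^{-(n^2-1)/4}$) and the two theta products $\theta(-p\omega,-\omega^n\lambda^2;p^2)$, $\theta(-\omega,-p\omega^n\lambda^2;p^2)$ appearing in the statement; and the powers of $\omega$ and $p$ combine to give the stated prefactors.

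The main obstacle is not conceptual but purely bookkeeping: one must track signs, powers of $\omega$ and $p$, and the parity-dependent exponents, and handle the $n$ even and $n$ odd cases separately (since both Corollary~\ref{tcpc} and Proposition~\ref{phut} split along this parity). Given the explicit cancellations above, however, no new ideas are needed beyond the theta identities already established, so the computation is tedious but entirely mechanical.
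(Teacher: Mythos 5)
Your proposal follows essentially the same route as the paper: substitute the uniformization of $\Phi_n$ from Proposition \ref{phut} into Corollary \ref{tcpc}, evaluate the two limits at $t=1$ and $t=p$ via $\xi(1)=2\zeta+1$, $\xi(p)=\zeta/(\zeta+2)$ and the specializations \eqref{ppc}, and then simplify the resulting constants with Lemma \ref{ztl}. The intermediate computations you display (the cancellation $\tha(t^{\pm},pt^{\pm};p^2)=-t^{-1}\tha(t;p)^2$ and the evaluations of the theta factors at $t=1$ and $t=p$) are correct and reproduce the paper's coefficients $D_n$ and $E_n$.
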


\begin{proof}
In Corollary \ref{tcpc}, express $\Phi_n$ as in Theorem \ref{phu}. Using  \eqref{ppc}, we see that
$$\lim_{t\rightarrow 1}\frac{\Phi_n(t)}{\theta(t;p)^{2n-1}}
= D_{n} p_{n-1}(\zeta),$$
$$\lim_{t\rightarrow p}\frac{\Phi_n(t)}{\theta(t;p)^{2n-1}}
= E_{n} \tilde p_{n-1}(\zeta),$$
where 
$$D_n=(-1)^{n+1}\tha(-p;p^2)\tha(p\omega;p^2)^{2n-2}(2\zeta+1)^{\left[\frac{n-1}2\right]}B_n, $$
$$E_n=p^{1-n}\om^{1-n} \tha(-1;p^2)\tha(\omega;p^2)^{2n-2}\zeta^{n-1}(\zeta+2)^{-\left[\frac{n}2\right]}B_n.$$
To complete the proof, we use Lemma \ref{ztl} to write these
expressions as 
$$D_n=\begin{cases}\displaystyle p\om^{1-n-n^2}\frac{\tha(-\om;p^2)}{\tha(p;p^2)\eta^{\frac{n^2}4}}, & n \text{{ even}}\\[5mm]
\displaystyle \om^{-n-n^2}\frac{\tha(-p;p^2)}{\tha(p;p^2)\eta^{\frac{n^2-1}4}}, & n \text{{ odd}},\end{cases}$$
$$E_n=\begin{cases}\displaystyle -p^{2-\frac{3n}2}\om^{-n(n+1)}\frac{\tha(-p\om;p^2)\zeta^{\frac n2}}{\tha(p;p^2)\eta^{\frac{n^2}4}}, & n \text{{ even}}\\[5mm]
\displaystyle p^{\frac 32(1-n)}\om^{-n(n+1)}\frac{\tha(-1;p^2)\zeta^{\frac {n-1}2}}{\tha(p;p^2)\eta^{\frac{n^2-1}4}}, & n \text{{ odd}}.
\end{cases}
$$
\end{proof}

 Using \eqref{sevc} and \eqref{sevd}, one may check that
in the trigonometric case  $p=0$ (which implies $\eta=-1$ and 
$\zeta=-2$),
Corollary \ref{tcpca} reduces to \eqref{ttf}.

When $n=2$, we obtain the following  important consequence.

\begin{corollary}\label{tzc}
In the notation of \emph{Lemma \ref{fil}},
$$\tau(p)=\frac{\zeta^2+4\zeta+1}\eta. $$
\end{corollary}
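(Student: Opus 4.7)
The plan is to compare the two different expressions for the partition function $Z_2^{3C}$ obtained from Theorem \ref{st} and Corollary \ref{tcpca}, which will force an explicit identification of $\tau(p)$ in terms of $\zeta$ and $\eta$.

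First I would specialize Theorem \ref{st} to $n=2$. Since $n\equiv 2\pmod 3$ and $q_2=r_2=1$, the computation in \S \ref{qrs} (namely \eqref{zfd} combined with \eqref{aba}) yields $X_2(p)=-1$ and $Y_2(p)=\tau(p)$, so that
$$\frac{\theta(\lambda^3;p^3)^{11}}{\theta(\lambda\omega^2,\lambda;p)^2}\,Z_2^{3C} \;=\; -\theta(\lambda\omega;p)^2 + \tau(p)\,\theta(\lambda\omega^2,\lambda;p).$$
Specializing Corollary \ref{tcpca} to $n=2$, using $p_1(\zeta)=3\zeta+1$ and $\tilde p_1(\zeta)=\zeta+3$, the same left-hand side equals
$$\frac{\zeta(\zeta+3)\,\theta(-p\omega,-\omega^2\lambda^2;p^2) - \omega^2(3\zeta+1)\lambda\,\theta(-\omega,-p\omega^2\lambda^2;p^2)}{\eta\,\theta(p;p^2)}.$$

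Both expressions live in the two-dimensional space of $A_1$ theta functions of nome $p$ and norm $\omega^2$ in $\lambda$. I would apply the $A_1$ Macdonald identity \eqref{et} to $\theta(\lambda\omega;p)^2=\theta(\omega\lambda,\omega\lambda;p)$ (with $a=b=\omega$) and to $\theta(\lambda\omega^2,\lambda;p)$ (with $a=\omega^2$, $b=1$), in order to expand both sides in the common basis $\{\theta(-\omega^2\lambda^2;p^2),\,\lambda\,\theta(-p\omega^2\lambda^2;p^2)\}$, using that $\theta(-p\omega^k,-\omega^2\lambda^2;p^2)=\theta(-p\omega^k;p^2)\,\theta(-\omega^2\lambda^2;p^2)$ and similarly for the companion basis element. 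Matching coefficients produces two equivalent linear equations for $\tau(p)$; extracting the coefficient of $\lambda\,\theta(-p\omega^2\lambda^2;p^2)$ and using $\theta(-1;p^2)=2$, one obtains
$$\tau(p)\,\eta\,\theta(-\omega^2;p^2) \;=\; 2\omega\,\eta + \omega^2(3\zeta+1)\,\theta(-\omega;p^2).$$

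It remains to verify the theta-constant identity $(\zeta^2+4\zeta+1)\,\theta(-\omega^2;p^2)=2\omega\,\eta+\omega^2(3\zeta+1)\,\theta(-\omega;p^2)$, which is equivalent to the claim. Upon substituting the explicit expressions for $\zeta$ from \eqref{z} and $\eta$ from Lemma \ref{ecl}, this becomes a polynomial identity in the theta constants $\theta(\pm 1,\pm\omega;p^2)$ and $\theta(\pm p,\pm p\omega;p^2)$, provable by combining the special values \eqref{tq}, the identities in Lemma \ref{ztl}, and the triplication formula $\theta(x;p^2)\theta(\omega x;p^2)\theta(\omega^2 x;p^2)=\theta(x^3;p^6)$. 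This theta-function bookkeeping, with its careful tracking of signs and powers of $\omega$, is the main technical obstacle; as a sanity check, at $p=0$ one has $\zeta=-2$ and $\eta=-1$, both sides reduce to $3\omega$, and the identity correctly recovers $\tau(0)=3$ from \eqref{cp}.
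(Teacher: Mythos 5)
Your strategy is viable and non-circular: the values $q_2=r_2=1$ follow from the explicit two-term partition function $Z_2^{3C}=t_0^2t_1^4t_2^2(t_0+t_2)$, so comparing the decomposition \eqref{zfd}--\eqref{aba} at $n=2$ with Corollary \ref{tcpca} at $n=2$ in a common $A_1$ basis does determine $\tau(p)$. This is a close cousin of the paper's argument, which also hinges on the $n=2$ case of Corollary \ref{tcpca}, but the paper avoids the change of basis altogether: it writes $\tau(p)=\lim_{\lambda\rightarrow\omega^2}\big(\theta(\lambda;p)^3+\theta(\lambda\omega^2;p)^3\big)/\theta(\lambda^3;p^3)$ (the middle term of \eqref{ssf} vanishing at $\lambda=\omega^2$), substitutes the $n=2$ case of Corollary \ref{tcpca} into the explicit $Z_2^{3C}$, and arrives at $\tau(p)=-\frac{\omega^2\theta(-p,-\omega;p^2)}{\theta(\omega;p)^2\theta(p;p^2)\eta}\big(\zeta^2(\zeta+3)-(3\zeta+1)\big)$; the factorization $\zeta^2(\zeta+3)-(3\zeta+1)=(\zeta-1)(\zeta^2+4\zeta+1)$ together with \eqref{ztd} and \eqref{tqb} then finishes the proof with no new theta-constant identity to establish.

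There is, however, a concrete error in your coefficient extraction: $\theta(-1;p^2)\neq 2$. Identity \eqref{tqa} asserts $\theta(-1,\pm p;p^2)=2$, i.e.\ the \emph{triple} product $\theta(-1;p^2)\theta(p;p^2)\theta(-p;p^2)$ equals $2$, whereas the single factor is $\theta(-1;p^2)=2\prod_{j\geq 1}(1+p^{2j})^2$. The correct matching of the coefficient of $\lambda\theta(-p\omega^2\lambda^2;p^2)$ reads $\tau(p)\,\eta\,\theta(-\omega^2;p^2)=\omega\,\eta\,\theta(-1;p^2)+\omega^2(3\zeta+1)\,\theta(-\omega;p^2)$, so the theta-constant identity you propose to verify is false as stated; it coincides with the correct one only modulo $p^2$, which is why your sanity check at $p=0$ passes. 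With that correction the remaining identity is true and equivalent to the claim, but it is precisely the nontrivial content of the corollary, and your proposal defers it as ``bookkeeping.'' I would either carry it out explicitly (it can be done from \eqref{z}, Lemma \ref{ztl} and \eqref{tq}) or switch to the paper's endgame, where the only identity needed beyond the factorization above is the already-established \eqref{ztd}.
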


By  Lemma \ref{ecl}, it follows that, when $t_i$ are given by \eqref{ti},
\begin{equation}\label{ttz}T(t_0,t_1,t_2)=\frac{2(\zeta^2+4\zeta+1)^3}{\zeta(\zeta+1)^4}. \end{equation}

\begin{proof}
Noting that 
$$Z_2^{3C}(t_0,t_1,t_2)= \frac{\tha(\la;p)^3+\tha(\la\om^2;p)^3}{\tha(\la;p)^9\tha(\la\om;p)^{12}\tha(\la\om^2;p)^9} $$
and that
$$\tau(p)=\frac{\tha(\la;p)^3+\tha(\la\om;p)^3+\tha(\la\om^2;p)^3}{\tha(\la^3;p^3)} =\lim_{\la\rightarrow\om^2}\frac{\tha(\la;p)^3+\tha(\la\om^2;p)^3}{\tha(\la^3;p^3)}, $$
we express $\tau(p)$ using the case $n=2$ of Corollary \ref{tcpca}. 
After simplification, we obtain 
$$\tau(p)=-\frac{\om^2\tha(-p,-\om;p^2)}{\tha(\om;p)^2\tha(p;p^2)\eta}\left(\zeta^2(\zeta+3)-(3\zeta+1)\right). $$
Factoring
$$\zeta^2(\zeta+3)-(3\zeta+1)=(\zeta-1)(\zeta^2+4\zeta+1), $$
applying \eqref{ztd} and  using \eqref{tqb},
 we arrive at the  stated result.
\end{proof}

\subsection{Return to the polynomials $q_n$ and $r_n$}
\label{qrss}

Combining Theorem \ref{st} and
  Corollary \ref{tcpca}, one easily recovers two of our main results: Theorem \ref{qrdp} and Theorem~\ref{pqrt}.

As a starting point, we note the theta function identities
\begin{subequations}\label{ctb}
\begin{multline}\tha(-\om^n\lambda^2;p^2)\\
=\frac{\om^2\tha(-\om;p^2)^2\tha(\la\om^{2n};p)^2-\om\tha(-1;p^2)\tha(\la\om^{2n+1},\la\om^{2n+2};p)}{\tha(\om;p)^2}, \end{multline}
\begin{multline}
\la\tha(-p\om^n\lambda^2;p^2)\\
=\om^{n+1}\frac{\tha(-p\om;p^2)^2\tha(\la\om^{2n};p)^2-\tha(-p;p^2)\tha(\la\om^{2n+1},\la\om^{2n+2};p)}{\tha(\om;p)^2}. 
\end{multline}
\end{subequations}
These can be obtained as special cases of \eqref{tad}, or  simply by noting that since they relate three functions in a two-dimensional space, it is enough to verify them at the points $\lambda=\om^n$ and $\lambda=\om^{n+2}$.

Using \eqref{ctb} in 
 Corollary \ref{tcpca}, we obtain an expression of the form \eqref{zfd}, where
for $n$ even,
\begin{equation*}\begin{split}X_n(p)&=-\frac{\om^2\tha(-\om;p)}{\tha(p;p^2)\tha(\om;p)^2\eta^{\frac{n^2}4}}\left(p_{n-1}(\zeta)-\zeta^{\frac n2}\tilde p_{n-1}(\zeta)\right)\\
 &=-\frac{p_{n-1}(\zeta)-\zeta^{\frac n2}\tilde p_{n-1}(\zeta)}{\eta^{\frac{n^2-4}4}(1-\zeta^2)},\end{split}\end{equation*}
\begin{equation*}\begin{split}Y_n(p)&=\frac{\om^2\tha(-\om,-p;p^2)}{\tha(p;p^2)\tha(\om;p)^2\eta^{\frac{n^2}4}}\left(p_{n-1}(\zeta)-\zeta^{\frac {n+2}2}\tilde p_{n-1}(\zeta)\right)\\
 &=\frac{p_{n-1}(\zeta)-\zeta^{\frac {n+2}2}\tilde p_{n-1}(\zeta)}{\eta^{\frac{n^2}4}(1-\zeta)},\end{split}\end{equation*}
where we used Lemma \ref{ztl} and \eqref{tq}.
Similarly, when $n$ is odd,
$$X_n(p)=\frac{p_{n-1}(\zeta)-\zeta^{\frac {n+1}2}\tilde p_{n-1}(\zeta)}{\eta^{\frac{n^2-1}4}(1-\zeta)}, $$
$$Y_n(p)=-2\frac{p_{n-1}(\zeta)-\zeta^{\frac {n-1}2}\tilde p_{n-1}(\zeta)}{\eta^{\frac{n^2-9}4}(1-\zeta)(1+\zeta)^3}. $$

We now compare these expressions
 with \eqref{aba}. For instance, when $n\equiv 0\ \operatorname{mod}\ 6$,
$$-X_n=\frac{p_{n-1}(\zeta)-\zeta^{\frac n2}\tilde p_{n-1}(\zeta)}{\eta^{\frac{n^2-4}4}(1-\zeta^2)}=\frac{Tq_n(T)}{\tau(p)}. $$
By Corollary \ref{tzc} and \eqref{ttz}, this can be written as
\begin{multline*}p_{n-1}(\zeta)-\zeta^{\frac n2}\tilde p_{n-1}(\zeta)=(1-\zeta)^2(\zeta^2+4\zeta+1)^{\frac{n^2-4}4}\\
\times\left(\frac{\zeta(\zeta+1)^4}{2(\zeta^2+4\zeta+1)^3)}\right)^{\frac{n^2}{12}-1}q_n\left(\frac{2(\zeta^2+4\zeta+1)^3)}{\zeta(\zeta+1)^4}\right). 
\end{multline*}
Since, by Corollary \ref{pctc}, $p_{n-1}(0)=1$, it follows that $q_n$ is monic of degree $n^2/12-1$, and that the final equation of Theorem \ref{pqrt} holds.
Repeating the same argument for all cases, one obtains
Theorem \ref{qrdp} and Theorem \ref{pqrt}.

Finally, we comment on the deduction of Corollary \ref{ecc} from Theorem \ref{qrdp}. This is a tedious  exercise, and we will only explain the case when $n\equiv 1\ \operatorname{mod}\ 6$ and we consider squares of colour $2$. In this case,
\begin{equation*}\begin{split}Z_n^{3C}(t_0,t_1,t_2)&=(t_0t_1t_2)^{\frac{n(n+2)}3}\left(\frac{t_0t_1}{t_2}\,q_{n}(T)-2\frac{t_0t_1+t_0t_2+t_1t_2}{t_2}\,r_{n}(T)\right)\\
&=(t_0t_1t_2)^{\frac{n(n+2)}3}\left(\frac{t_0t_1}{t_2}\,T^{\frac{n^2-1}{12}}-2\frac{t_0t_1+t_0t_2+t_1t_2}{t_2}\,T^{\frac{n^2-13}{12}}\right)+\cdots,\end{split}\end{equation*}
the ellipsis denoting lower terms in  $T$, hence also in  $t_2$, Writing 
$$ T=t_2\frac{(t_0+t_1)^3}{(t_0t_1)^2}+\cdots,$$
this can be simplified to
$$(t_0t_1)^{\frac{n^2+4n+7}6}t_2^{\frac{5n^2+8n-13}{12}}(t_0+t_1)^{\frac{n^2-9}4}(t_0^2+t_1^2)+\cdots.$$ 
Thus, the leading power of $t_2$ is $(5n^2+8n-13)/12$, and  the coefficient of
$$t_0^{\frac{n^2+4n+7}6+k}t_1^{\frac{5n^2+8n+11}{12}-k}t_2^{\frac{5n^2+8n-13}{12}} $$
is
$$\binom{(n^2-9)/4}{k}+\binom{(n^2-9)/4}{k-2}. $$
Repeating the same argument for each colour and each residue class of $n\ \operatorname{mod}\ 6$, one obtains Corollary \ref{ecc}.

\subsection{Proof of Proposition \ref{rip}}
\label{ics}

In this Section, we  prove that 
 $r_{2n+1}$ has integer coefficients. We need the following fact, which is generalized in Corollary \ref{pic}.

\begin{lemma}\label{pip}
The polynomial $2^{\big[\frac{n(n+2)}2\big]}p_n(\zeta/2)$ has integer coefficients.
\end{lemma}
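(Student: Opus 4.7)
The plan is to proceed by strong induction on $n$. Set $P_n(\zeta) := 2^{e_n}p_n(\zeta/2)$ with $e_n = [n(n+2)/2]$; the claim is that $P_n \in \mathbb{Z}[\zeta]$, with the base cases $n \leq 2$ immediate from Table \ref{pt}.

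For the inductive step, substitute $\zeta \mapsto \zeta/2$ in the recursion \eqref{pyri} of Proposition \ref{pyr} and multiply through by $2^{2(1+\chi(n\text{ odd}))}$, in order to convert each factor $(1+\zeta/4)$ into $(4+\zeta)$. Using the elementary identity $e_{n+1}+e_{n-1}-2e_n = 2\chi(n\text{ odd})$, a short calculation then yields
\begin{equation*}
(\zeta+2)^2 P_{n+1}(\zeta)P_{n-1}(\zeta) = 2^{N_n}\zeta^{n+1}\tilde p_n(\zeta/2)y_n(\zeta/2) + (1+\zeta)^{1+\chi(n\text{ even})}(4+\zeta)^{1+\chi(n\text{ odd})}P_n(\zeta)^2,
\end{equation*}
with $N_n = n^2+n+1+\chi(n\text{ odd})$. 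The second summand lies in $\mathbb{Z}[\zeta]$ by the inductive hypothesis. Once the first summand is shown to be integer as well, I would isolate $P_{n+1}$ via Gauss's lemma: the factor $(\zeta+2)^2$ is monic hence primitive, while the division by $P_{n-1}$ is controlled using $P_{n-1}(0)=2^{e_{n-1}}$ (which comes from $p_{n-1}(0)=1$, Corollary \ref{pctc}) together with the explicit leading coefficient from Proposition \ref{psv}; these pin down the 2-adic content of $P_{n-1}$ sharply enough for the quotient to be integer.

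The hard part is the first summand $2^{N_n}\zeta^{n+1}\tilde p_n(\zeta/2)y_n(\zeta/2)$. I would handle it via a parallel induction establishing $2^{g_n}y_n(\zeta/2)\in\mathbb{Z}[\zeta]$ for an exponent $g_n$ chosen compatibly with $N_n$ and the 2-adic denominator of $\tilde p_n(\zeta/2)$; the latter is controlled using the reversal relation $\tilde p_n(\zeta)=\zeta^{n(n+1)/2}p_n(1/\zeta)$, which converts inductive 2-adic bounds on the coefficients of $P_n$ into bounds on those of $\tilde p_n(\zeta/2)$. Since $y_n$ has no independent closed form, its integrality must be extracted from the recursion itself, using Corollary \ref{ydc} for the leading term; making $g_n$ tight enough to close both inductions simultaneously is the delicate point.
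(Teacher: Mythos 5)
Your reduction of the recursion \eqref{pyri} under $\zeta\mapsto\zeta/2$ is algebraically correct (including the identity $e_{n+1}+e_{n-1}-2e_n=2\chi(n\text{ odd})$ and the exponent $N_n$), but the argument has two genuine gaps, and the second one is fatal to this line of attack. First, the division by $P_{n-1}$: Gauss's lemma requires you to know the exact content of $P_{n-1}$, and the two facts you invoke --- $P_{n-1}(0)=2^{e_{n-1}}$ and the leading coefficient from Proposition \ref{psv} --- do not determine it; both of these are even in general (the constant term manifestly, and the leading coefficient of $P_n$ works out to $\binom{2n+1}{n}$, whose $2$-adic valuation fluctuates), so nothing here rules out a common factor of $2$ in all coefficients of $P_{n-1}$, which would wreck the division step. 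Second, and more seriously, the term $2^{N_n}\zeta^{n+1}\tilde p_n(\zeta/2)y_n(\zeta/2)$: you defer its integrality to a ``parallel induction'' on $y_n$ with an unspecified exponent $g_n$, but the only handle the recursion gives on $y_n$ is to solve \eqref{pyri} for it, i.e.\ to divide $(\zeta+1)^2p_{n+1}p_{n-1}-(\cdots)p_n^2$ by $\zeta^{n+1}\tilde p_n$. That division needs $2$-adic control of $\tilde p_n$ and of $p_{n+1}$ --- precisely what the main induction is trying to establish --- so the two inductions are circular rather than mutually supporting. Corollary \ref{ydc} gives only the leading term of $y_n$, which is nowhere near enough to break the circle. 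As it stands the proposal is a plan with the hard step unexecuted.

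The paper avoids the recursion entirely. It proves a closed determinant formula for $p_n$ (Lemma \ref{pdl}), obtained by confluence of the multivariable determinant \eqref{sn}, and then substitutes $\zeta\mapsto\zeta/2$ and Taylor-expands the matrix entry $F/G$ around the special point: the expansion has integer numerator polynomials $C_{kl}(\zeta)$ over an explicit monic denominator, so $2^{[n(n+2)/2]}p_n(\zeta/2)$ becomes a quotient of an integer determinant by a monic integer polynomial, hence lies in $\mathbb{Z}[\zeta]$. If you want to salvage a recursion-based proof you would need an independent source of $2$-adic information about $y_n$ (or about $\tilde p_n(\zeta/2)$); absent that, the determinant route is the one that actually closes.
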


For the proof,  we will use a determinant formula for $p_n$. 

\begin{lemma}\label{pdl}
The polynomials $p_n$ are given by
\begin{multline*}p_n(\zeta)=\frac{(-1)^{\binom{n+1}2}\zeta^{n^2}(\zeta+1)^{n^2}}{2^{n^2}\prod_{j=1}^n(j-1)!^2(1+2\zeta)^{\left[\frac{n^2}4\right]}\left(1+\frac\zeta 2\right)^{n^2+\left[\frac{(n-1)^2}4\right]}}\\
\times\det_{1\leq i,j\leq n}\left(\frac{\partial^{i+j-2}}{\partial x^{i-1}\partial y^{j-1}}\Bigg|_{x=2\zeta+1,\,y=\frac\zeta{\zeta+2}}\frac{F(x,y,2\zeta+1)}{G(x,y)}\right). \end{multline*}
\end{lemma}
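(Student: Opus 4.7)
The proof consists in specializing the $2n+1$ variables in $S_n$ to just two distinct values and then invoking \eqref{pe}. The specialization is a confluent limit, so the determinant and the Vandermonde-type denominator in \eqref{sn} both vanish, and one must use the standard confluent-limit identity. More precisely, I would proceed as follows.

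First, in the definition \eqref{sn} of $S_n$, I would set $z=2\zeta+1$ and let $x_1,\dots,x_n\to 2\zeta+1$, $y_1,\dots,y_n\to\zeta/(\zeta+2)$. By \eqref{pe}, the limit of $S_n$ is
\begin{equation*}
S_n\!\left(\underbrace{2\zeta+1,\dots,2\zeta+1}_{n+1},\underbrace{\tfrac{\zeta}{\zeta+2},\dots,\tfrac{\zeta}{\zeta+2}}_{n}\right)
=(-1)^{\binom{n+1}2}\zeta^{n^2}(1+\zeta)^{n^2}(1+2\zeta)^{\delta_n}\!\left(1+\tfrac\zeta2\right)^{\delta_{n-1}-n^2}\!p_n(\zeta),
\end{equation*}
so it suffices to compute the specialization on the right-hand side of \eqref{sn} directly.

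The $G$-prefactor is trivial: by \eqref{gsp}, $G(x,\zeta/(\zeta+2))=2\zeta^2(\zeta+1)^2/(\zeta+2)^2$ for every $x$, hence
\begin{equation*}
\prod_{i,j=1}^n G(x_i,y_j)\ \longrightarrow\ \left(\frac{2\zeta^2(\zeta+1)^2}{(\zeta+2)^2}\right)^{\!n^2}
=\frac{\zeta^{2n^2}(1+\zeta)^{2n^2}}{2^{n^2}(1+\zeta/2)^{2n^2}}.
\end{equation*}
For the remaining factor, set $M(x,y)=F(x,y,2\zeta+1)/G(x,y)$. Writing the Taylor expansion $M(x,y)=\sum_{k,l\ge 0}c_{k,l}(x-a)^k(y-b)^l$ with $a=2\zeta+1$, $b=\zeta/(\zeta+2)$, and applying Cauchy--Binet to the factorization $M(x_i,y_j)=\sum_{k,l}(x_i-a)^k c_{k,l}(y_j-b)^l$, only the minimal index set $K=L=\{0,\dots,n-1\}$ contributes to leading order, so
\begin{equation*}
\lim_{\substack{x_i\to a\\ y_j\to b}}\frac{\det_{i,j}(M(x_i,y_j))}{\prod_{i<j}(x_j-x_i)(y_j-y_i)}
=\det_{0\le k,l\le n-1}(c_{k,l})
=\frac{1}{\prod_{j=1}^n(j-1)!^2}\det_{1\le i,j\le n}\!\left(\frac{\partial^{i+j-2}M}{\partial x^{i-1}\partial y^{j-1}}(a,b)\right).
\end{equation*}

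Finally, combining the two displays above with the inversion relation from \eqref{pe} and using $\delta_n=[n^2/4]$, $\delta_{n-1}=[(n-1)^2/4]$, together with $(\zeta+2)^{2n^2}=2^{2n^2}(1+\zeta/2)^{2n^2}$, the powers of $\zeta$, $(1+\zeta)$, $(1+2\zeta)$ and $(1+\zeta/2)$ collect exactly to the prefactor stated in Lemma \ref{pdl}. The only subtlety is the bookkeeping of these four elementary factors; the determinantal identity itself is standard and all the individual ingredients (the value of $G$ on the diagonal $y=\zeta/(\zeta+2)$, and the formula \eqref{pe}) are already established, so no genuine obstacle arises.
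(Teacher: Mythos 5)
Your proposal is correct and follows essentially the same route as the paper: specialize \eqref{sn} at $z=2\zeta+1$ with $x_i\to 2\zeta+1$, $y_j\to\zeta/(\zeta+2)$, identify the limit via \eqref{pe}, evaluate the $G$-prefactor with \eqref{gsp}, and apply the standard confluent-limit determinant identity (which the paper simply quotes as well known, while you sketch its Cauchy--Binet justification). The bookkeeping of the elementary factors works out exactly as you indicate.
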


\begin{proof} Let $x_i\rightarrow 2\zeta+1$, $y_i\rightarrow \zeta/(\zeta+2)$ and $z=2\zeta+1$ in \eqref{sn}.  The left-hand side can be expressed in terms of the polynomials $p_n$ using
\eqref{pe}. We now apply the well-known identity 
$$
\lim_{\substack{x_{1},\dots,x_n\rightarrow a,\\ y_{1},\dots,y_n\rightarrow b}}
\frac{\det_{1\leq i,j\leq n}(f(x_i,y_j))}{\prod_{1\leq i<j\leq n}(x_i-x_j)(y_i-y_j)}
=\frac{1}{\prod_{j=1}^{n}(j-1)!^2}
\det_{1\leq i,j\leq n}\left(\frac{\partial^{i+j-2}f(a,b)}{\partial x^{i-1}\partial y^{j-1}}\right).$$
 Simplifying the prefactor using \eqref{gsp}, we obtain the desired result.
\end{proof}

\begin{proof}[Proof of \emph{Lemma \ref{pip}}]
In Lemma \ref{pdl}, replace $x$ by $x+2\zeta+1$ and $y$ by $y+\zeta/(\zeta+2)$ inside the determinant, and then replace $\zeta$ by $\zeta/2$. Note that
\begin{equation}\label{mq}\frac{F(x+\zeta+1,y+\zeta/(\zeta+4),\zeta+1;\zeta/2)}{G(x+\zeta+1,y+\zeta/(\zeta+4);\zeta/2)}=\frac{(\zeta+2)(\zeta+4)A(x,y,\zeta)}{B(x,y,\zeta)},
 \end{equation}
where $A,\,B\in\mathbb Z[x,y,\zeta]$ and $B(0,0,\zeta)=\zeta^2(\zeta+2)^2$. It follows that the Taylor expansion of \eqref{mq} at $x=y=0$ has the form
$$\frac{(\zeta+4)}{\zeta^2(\zeta+2)}\sum_{k,l=0}^\infty\frac{C_{kl}(\zeta)}{(\zeta^2(\zeta+2)^2)^{k+l}}\,x^ky^l, $$
where $C_{kl}\in\mathbb Z[\zeta]$.
Using  \eqref{eli},
Lemma \ref{pdl} can then be written 
$$2^{\big[\frac{n(n+2)}2\big]}p_n(\zeta/2)=\frac{(-1)^{\binom{n+1}2}\det_{1\leq i,j\leq n}\left(C_{i-1,j-1}(\zeta)\right)}{\zeta^{n^2}(\zeta+2)^{n(n-1)}(\zeta+1)^{\left[\frac{n^2}4\right]}\left(\zeta+4 \right)^{n(n-1)+\left[\frac{(n-1)^2}4\right]}}.$$
The right-hand side is a
 quotient of two polynomials with integer coefficients. Since the denominator is monic, the left-hand side  has integer coefficients. 
\end{proof}

\begin{proof}[Proof of \emph{Proposition \ref{rip}}]
Let $n$ be even, and
write \eqref{pqre} in the form
\begin{multline}\label{pqrm}
\sum_{k=0}^{n(n+1)/2}2^{k-\frac{n(n+2)}2}b_k\zeta^k=
\sum_{k=0}^{[n(n+2)/12]}\frac{c_k}{2^k}\,\zeta^k(\zeta+1)^{4k}(\zeta^2+4\zeta+1)^{\frac{n(n+2)}4-3k}\\
-
\sum_{k=0}^{[(n-2)(n+4)/12]}\frac{d_k}{2^k}\,\zeta^{k+1}(\zeta+1)^{4k+3}(\zeta^2+4\zeta+1)^{\frac{(n-2)(n+4)}4-3k}.
 \end{multline}
Here, $b_k$ are coefficients of the polynomial $2^{\frac{n(n+2)}2}p_n(\zeta/2)$, which are integer by Lemma \ref{pip}. The numbers
$c_k$ are coefficients of $\tilde q_{n+1}$, which are also integers, see
 \S \ref{qrs}.  We need to show that $d_k$ are integers. To this end, let $0\leq m\leq [(n-2)(n+4)/12]$, pick out the coefficient of $\zeta^{\frac{n(n+2)}2-m}$
on both sides of \eqref{pqrm}, and multiply the result by $2^m$. This leads to a triangular system of the form
$$c_m-d_m+\sum_{j<m}(\mathbb Z c_j+\mathbb Z d_j)=\begin{cases}
 0, & 0\leq m< \frac n 2,\\
 b_{\frac{n(n+2)}2-m}, & \frac n2\leq m\leq \big[\frac{(n-2)(n+4)}{12}\big].
\end{cases} $$
By induction on      $m$,
 it follows  that each $d_m$  is an integer.
\end{proof}

Proposition \ref{rip} implies an integrality result for the coefficients of $p_n$. Although it seems to be far from sharp, it may still have some interest.

\begin{corollary}\label{pic}
Let $p_n(\zeta)=\sum_{k=0}^{n(n+1)/2}a_k\zeta^k$. Then, $2^{\mu_k}a_k\in\mathbb Z$, where
 $$\mu_k=\begin{cases} \min\left(\big[\frac{n(n+2)}{12}\big],k,\frac{n(n+2)}2-k\right), & n \text{\emph{ even}},\\
\min\left(\big[\frac{(n+1)^2}{12}\big],k,\frac{(n+1)^2}2-k\right), & n \text{\emph{ odd}}.
\end{cases} $$
\end{corollary}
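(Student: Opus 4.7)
The plan is to read the bound directly off the explicit formulas displayed immediately after equation \eqref{pqre}, combining three facts: (i) the polynomials $q_{n+1}$ and $r_{n+1}$ have integer coefficients (by the discussion at the end of \S\ref{qrs} together with Proposition \ref{rip}); (ii) the $1/2^j$ appearing in the expansion of $\tilde q_{n+1}(w)$ and $\tilde r_{n+1}(w)$, where $w=\zeta(\zeta+1)^4/(2(\zeta^2+4\zeta+1)^3)$, is always accompanied by a factor $\zeta^j$; and (iii) the involution $\zeta\mapsto 1/\zeta$ fixes $w$, producing a palindromic symmetry of $p_n$.

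Set $N=n(n+2)/2$ and $D=[n(n+2)/12]$ when $n$ is even, and $N=(n+1)^2/2$, $D=[(n+1)^2/12]$ when $n$ is odd. An inspection of Theorem \ref{qrdp} across the six residue classes of $n$ modulo $6$ shows $\deg q_{n+1},\deg r_{n+1}\le D$ in every case. Writing $p_n=A_1-A_2$ as in the two formulas following \eqref{pqre}, each $A_i$ is a $\mathbb{Z}[1/2]$-linear combination of integer polynomials of the form $\zeta^{j+\epsilon_i}(\zeta+1)^{4j+\epsilon_i'}(\zeta^2+4\zeta+1)^{E_i-3j}$ with prefactor $c_j/2^j$ or $d_j/2^j$, where $c_j,d_j\in\mathbb Z$ and $0\le j\le D$. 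Since the $j$-th summand is divisible by $\zeta^j$, the coefficient of $\zeta^k$ in $p_n$ picks up contributions only from $j\le k$, so its denominator divides $2^{\min(k,D)}$.

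Next, a direct computation gives $w(1/\zeta)=w(\zeta)$, and checking the outer factors shows $A_i(1/\zeta)=\zeta^{-N}A_i(\zeta)$ for $i=1,2$ (the total degree $N$ arising from the outer factors happens to coincide between the two summands, in both parities). Therefore $p_n$ is palindromic of pseudo-degree $N$, i.e.\ $a_k=a_{N-k}$. Applying the denominator bound from the previous step at index $N-k$ yields the additional bound $2^{\min(N-k,D)}$. Intersecting the three bounds produces $2^{\mu_k}a_k\in\mathbb Z$ with $\mu_k=\min(D,k,N-k)$, which is the claim.

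The main obstacle I anticipate is simply the bookkeeping of exponents: one must verify that $E_i-3j\ge 0$ throughout the allowed range $j\le D$ (so the expansions really are integer polynomials away from the $1/2^j$), that the two outer factors for $A_1$ and $A_2$ contribute the same exponent $N$ under $\zeta\mapsto 1/\zeta$ (an identity like $4+(n-2)(n+4)/2=n(n+2)/2$ in the even case, and its analogue $2+(n-1)(n+3)/2=(n+1)^2/2$ in the odd case), and that the roles of $\tilde q_{n+1}$ and $\tilde r_{n+1}$ swap in passing between the two parities of $n$. Each of these is a routine verification from the formulas after \eqref{pqre} and Theorem~\ref{qrdp}.
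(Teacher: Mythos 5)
Your argument for the bound $2^{\min(k,D)}a_k\in\mathbb Z$ is essentially the paper's (it is read off from \eqref{pqrm}, which is just \eqref{pqre} with the left side rewritten), but the third bound $2^{N-k}a_k\in\mathbb Z$ does not follow from the route you propose, and this is a genuine gap. The claimed palindromic symmetry $a_k=a_{N-k}$ is false: for $n=1$ one has $p_1(\zeta)=3\zeta+1$ and $N=(n+1)^2/2=2$, so $a_0=1\neq 0=a_2$; for $n=3$, $a_0=1$ while $a_8=0$. The error is in the step ``$A_i(1/\zeta)=\zeta^{-N}A_i(\zeta)$ for $i=1,2$''. While both summands do have \emph{degree} $N$ (your identity $4+(n-2)(n+4)/2=n(n+2)/2$ checks the degrees), degree matching is not the same as matching of the reciprocal exponent: the monomial factor $\zeta$ in $A_2$ satisfies $\zeta\mapsto\zeta^{-2}\cdot\zeta$ under inversion, i.e.\ it behaves like a degree-$2$ self-reciprocal factor while contributing only $1$ to the degree. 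A direct computation gives $A_1(1/\zeta)=\zeta^{-N}A_1(\zeta)$ but $A_2(1/\zeta)=\zeta^{-(N+1)}A_2(\zeta)$ in both parities, so $\zeta^{N}p_n(1/\zeta)=A_1(\zeta)-\zeta^{-1}A_2(\zeta)\neq p_n(\zeta)$. This off-by-one mismatch between the two summands is precisely what Theorem \ref{pqrt} encodes: it is the combinations $p_{n-1}(\zeta)-\zeta^{m}\tilde p_{n-1}(\zeta)$ for two \emph{different} values of $m$ that produce $q_n$ and $r_n$, which would be impossible if $p_{n-1}$ were self-reciprocal.

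The paper obtains the missing bound $2^{N-k}a_k\in\mathbb Z$ from an independent input, namely Lemma \ref{pip}: $2^{[n(n+2)/2]}p_n(\zeta/2)$ has integer coefficients, equivalently $b_k=2^{N-k}a_k\in\mathbb Z$. That lemma is proved by a separate determinant argument (Lemma \ref{pdl}, a confluent specialization of the determinant defining $S_n$), not by any symmetry of \eqref{pqre}. Note also that the integrality of the coefficients $d_k$ of $\tilde r_{n+1}$ for $n$ even (i.e.\ of $r_{2m+1}$), which you invoke via Proposition \ref{rip}, is itself deduced in the paper from Lemma \ref{pip}; so that lemma cannot be bypassed in this chain of reasoning. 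To repair your proof you would need to supply the $2^{N-k}$ bound by some such independent means.
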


 \begin{proof}
When $n$ is even, this follows easily from \eqref{pqrm}, using that 
$2^{k-\frac{n(n+2)}2}b_k=a_k$
and that $c_k$ and $d_k$ are integers. For odd $n$, the proof is similar.
\end{proof}

\section{Thermodynamic limit}
\label{tds}

In this Section, we give a formal derivation of our conjectured expression \eqref{fee} for the free energy. Essentially, we assume that the free energy can be expressed in terms of a  formal power series in the variable $\zeta$, and then 
 deduce  \eqref{fee} by formally taking $n\rightarrow\infty $ in
 Proposition \ref{pyr}. 
 To turn this into a rigorous proof would require strong analyticity assumptions that seem difficult to verify a priori. However, one can observe that \eqref{fee} gives the correct result not only as $\zeta\rightarrow 0$, but also at $\zeta=1$ (because of \eqref{ttf} and \eqref{aa}) and as $\zeta\rightarrow\infty$ (since it has the correct behaviour under $\zeta\mapsto\zeta^{-1}$). It also seems to agree  with numerical experiment.

We will assume that 
\begin{equation}\label{g}\lim_{n\rightarrow\infty}\frac{\log(p_n(\zeta))}{n^2}=\log g(\zeta) \end{equation}
exists in the algebra of formal power series in $\zeta$. 
We also assume that the convergence is regular in the sense that if we factor
\begin{equation}\label{pa}p_n(\zeta)=g(\zeta)^{n^2}\phi_n(\zeta), \end{equation}
then
\begin{equation}\label{gr}\lim_{n\rightarrow\infty} \frac{\phi_n(\zeta)^2}{\phi_{n+2}(\zeta)\phi_{n-2}(\zeta)}=1, \end{equation}
still  in the sense of formal power series.
As a weak motivation for this assumption, we note that, 
by Proposition  \ref{psv} and \eqref{aa}, 
 $\phi_n(1)\sim C n^\alpha \beta^n$, where $C$ is different for even and odd $n$. Thus, \eqref{gr} holds pointwise at $\zeta=1$, whereas the limit
$$\lim_{n\rightarrow\infty} \frac{\phi_n(\zeta)^2}{\phi_{n+1}(\zeta)\phi_{n-1}(\zeta)}\Bigg|_{\zeta=1} $$
 does not  exist.

Let us now apply Proposition \ref{pyr}, which we write in the form
$$(1+\zeta)^2p_{n+1}(\zeta)p_{n-1}(\zeta)=(1+2\zeta)^{1+\chi(n \text{{ even}})}\left(1+\frac \zeta 2\right)^{1+\chi(n \text{{ odd}})}p_n(\zeta)^2+\mathcal O(\zeta^{n+1}), $$
where $\mathcal O(\zeta^{n+1})$  is a polynomial divisible by $\zeta^{n+1}$.
Iterating this gives
$$(1+\zeta)^8p_{n+2}(\zeta)p_{n-2}(\zeta)=(1+2\zeta)^{6}\left(1+\frac \zeta 2\right)^{6}p_n(\zeta)^2+\mathcal O(\zeta^{n}). $$
Applying the factorization \eqref{pa} we obtain
$$ g(\zeta)^8=\frac{(1+2\zeta)^{6}\left(1+\frac \zeta 2\right)^{6}}{(1+\zeta)^8}\cdot\frac{\phi_n(\zeta)^2}{\phi_{n+2}(\zeta)\phi_{n-2}(\zeta)}+\frac{\mathcal O(\zeta^{n})}{(1+\zeta)^8g(\zeta)^{2n^2}\phi_{n+2}(\zeta)\phi_{n-2}(\zeta)}.$$ 
Letting  $n\rightarrow\infty $ and using \eqref{gr} we conclude that
$$g(\zeta)=\frac{(1+2\zeta)^{\frac 34}\left(1+\frac \zeta 2\right)^{\frac 34}}{1+\zeta}.$$
Replacing $\zeta$ by $\zeta^{-1}$ in \eqref{g} then gives
\begin{equation}\label{gt}\lim_{n\rightarrow\infty}\frac{\log(\tilde p_n(\zeta))}{n^2}=\log g(\zeta)\end{equation}
Finally, using \eqref{g} and \eqref{gt} 
in Corollary \ref{tcpca} gives
$$\lim_{n\rightarrow\infty}\frac{\log Z_n^{3C}(t_0,t_1,t_2)}{n^2}=\log\left(\frac{g(\zeta)}{\eta^{1/4}\tha(\lambda^3;p^3)}\right)
=\frac 13\log(t_0t_1t_2)+\log\left(\frac{g(\zeta)}{\eta^{1/4}}\right).
 $$
By Lemma \ref{ecl}, this may be written as in \eqref{fee}.


\begin{thebibliography}{99}

\bibitem[A]{a}
G.\ E.\ Andrews, \emph{Plane partitions. III. The weak Macdonald conjecture},  Invent.\ Math.\  53  (1979) 193--225.


\bibitem[B1]{b2} R.\ J.\ Baxter, \emph{Three-colorings of the square lattice: A hard squares model}, J.\ Math.\ Phys.\ 11 (1970) 3116--3124.

\bibitem[B2]{b}  R.\ J.\ Baxter,
\emph{Eight-vertex model in lattice statistics and one-dimensional
anisotropic Heisenberg chain II. Equivalence to a generalized ice-type
model}, Ann.\ Phys.\ 76 (1973) 25--47.

\bibitem[BM1]{bm1}  V.\ V.\ Bazhanov, V.\ V.\ Mangazeev, \emph{Eight-vertex model and non-stationary Lam\'e equation}, J.\ Phys.\ A  38  (2005)  L145--L153. 

\bibitem[BM2]{bm2}  V.\ V.\ Bazhanov, V.\ V.\ Mangazeev, \emph{The eight-vertex model and Painlev\'e VI},  J.\ Phys.\ A  39  (2006)  12235--12243. 

\bibitem[BM3]{bm3}  V.\ V.\ Bazhanov, V.\ V.\ Mangazeev, \emph{Analytic theory of the eight-vertex model},  Nuclear Phys.\ B  775  (2007)  225--282. 

\bibitem[BM4]{bm4} V.\ V.\ Bazhanov, V.\ V.\ Mangazeev, \emph{Eight-vertex model and Painlev\'e VI equation.\ II.\ Eigenvector results},
0912.2163. 

\bibitem[Be]{br} B.\ C.\ Berndt,  Ramanujan's Notebooks, Part III, Springer-Verlag, New York, 1991.



\bibitem[BF]{bf} 
P.\ M.\ Bleher, V.\ V.\  Fokin, 
\emph{Exact solution of the six-vertex model with domain wall boundary conditions. Disordered phase}, 
Comm.\ Math.\ Phys.\ 268 (2006)  223--284. 


\bibitem[BL1]{bl1} P.\ Bleher, K.\ Liechty,
 \emph{Exact solution of the six-vertex model with domain wall boundary conditions. Ferroelectric phase}, Comm.\ Math.\ Phys.\  286  (2009)  777--801.

\bibitem[BL2]{bl2} P.\ Bleher, K.\ Liechty,
\emph{Exact solution of the six-vertex model with domain wall boundary condition.\ Critical line between ferroelectric and disordered phases},
 J.\ Stat.\ Phys.\  134  (2009)  463--485. 

\bibitem[BL3]{bl3} P.\ Bleher, K.\ Liechty,
\emph{Exact solution of the six-vertex model with domain wall boundary conditions. Antiferroelectric phase},
0904.3088.

\bibitem[D]{d} G.\ Darboux, Le\c cons sur la th\'eorie g\'en\'erale des surfaces 
et les applications g\'eom\'etriques du calcul infinit\'esimal, Deuxi\`eme partie, Gauthier--Villars, Paris, 1889.


\bibitem[FH]{fh} W.\ Fulton, J.\ Harris, Representation Theory, Springer-Verlag, New York, 1991.  

\bibitem[H]{h} R.\  Hirota, Y.\ Ohta, J.\ Satsuma, \emph{Wronskian structure of solutions for soliton equations}, Progr.\ Theoret.\ Phys.\ Suppl.\   94  (1988) 59--72. 


\bibitem[I]{i} A.\ G.\ Izergin,
\emph{Partition function of a six-vertex model in a finite volume},
Soviet Phys.\ Dokl.\ 32 (1987) 878--879.

\bibitem[ICK]{ik} A.\ G.\ Izergin, D.\ A.\ Coker, V.\ E.\ Korepin, \emph{Determinant formula for the six-vertex model},  J.\ Phys.\ A  25  (1992)  4315--4334.


\bibitem[KZ]{kz} V.\ Korepin, P.\ Zinn--Justin, 
 \emph{Thermodynamic limit of the six-vertex model with domain wall boundary conditions}, J.\ Phys.\ A  33  (2000) 7053--7066. 

\bibitem[K]{ku} G.\ Kuperberg, \emph{Another proof of the alternating-sign matrix conjecture},  Internat.\ Math.\ Res.\ Notices  1996 (1996) 139--150.



\bibitem[LS]{l} A.\ N.\ Leznov, M.\ V.\ Saveliev, \emph{Theory of group representations and integration of nonlinear systems $x_{a,z\bar z}=\exp(kx)_a$}, Physica D 3 (1981) 62--72.


\bibitem[L]{li} E.\ H.\ Lieb, \emph{The residual entropy of square ice}, Phys.\ Rev.\
162 (1967) 162--172.


\bibitem[M1]{m} I.\ G.\ Macdonald,
\emph{Affine root systems and Dedekind's $\eta $-function},
Invent.\ Math.\ 15 (1972) 91--143.

\bibitem[M2]{mb} I.\ G.\ Macdonald, Symmetric Functions and Hall Polynomials,
Clarendon Press, Oxford, 1979.

\bibitem[MRR]{mrr} W.\ H.\ Mills, D.\ P.\ Robbins, H.\  Rumsey,
\emph{Alternating sign matrices and descending plane partitions},
J.\ Combin.\ Theory Ser.\ A 34 (1983) 340--359.


\bibitem[N]{n} M.\ Noumi, Painlev\'e Equations through Symmetry, Amer.\ Math.\ Soc., Providence,  2004. 

\bibitem[O]{o} S.\ Okada, \emph{Enumeration of symmetry classes of alternating sign matrices and characters of classical groups}, J.\ Algebraic Combin.\  23  (2006)   43--69.

\bibitem[P]{p} J.\ Propp, \emph{The many faces of alternating sign matrices},
in: Discrete Models: Combinatorics,
Computation, and Geometry, MIMD, Paris, 2001, pp.\  043--058.

\bibitem[RS]{ras} A.\ V.\ Razumov, Yu.\ G.\ Stroganov,
\emph{Three-coloring statistical model with domain wall boundary conditions. I. Functional equations}, Theoret.\ and Math.\ Phys.\ 161 (2009) 1325--1339.


\bibitem[R]{r} H.\ Rosengren, 
\emph{An Izergin-Korepin-type identity for the \emph{8VSOS} model, with applications to alternating sign matrices}, Adv.\ Appl.\ Math.\ 43 (2009) 137--155.

\bibitem[RSc]{rs} H.\ Rosengren, M.\ Schlosser, \emph{Elliptic determinant evaluations and the Macdonald identities for affine root systems}, Compositio Math.\ 142 (2006) 937--961.

\bibitem[S]{sc} B.\  Schoeneberg,  Elliptic Modular Functions, Springer-Verlag, New York--Heidelberg, 1974. 

\bibitem[St]{s} Yu.\ G.\ Stroganov, 
\emph{The Izergin-Korepin determinant at a cube root of unity},
Theoret.\  Math.\ Phys.\ 146 (2006) 53--62. 


\bibitem[W]{w} G.\ N.\ Watson, \emph{Theorems stated by
    Ramanujan. VII: Theorems on continued fractions}, J.~London
    Math.\ Soc.\ 4 (1929) 39--48.

\bibitem[Z1]{z1} D.\ Zeilberger, \emph{Proof of the alternating sign matrix conjecture}, Electron.\ J.\ Combin.\  3  (1996) Research Paper 13.

\bibitem[Z2]{z2}  D.\ Zeilberger, \emph{Proof of the refined alternating sign matrix conjecture},  New York J.\ Math.\  2  (1996) 59--68, 

\bibitem[Zi]{z} P.\ Zinn-Justin,
 \emph{Six-vertex model with domain wall boundary conditions and one-matrix model}, Phys.\ Rev.\ E 62  (2000)  3411--3418. 


\vskip 3mm
\end{thebibliography}
 \end{document}